\tikzset{>=latex} 
\tikzset{font=\small}
\tikzset{mark size=1.5pt, mark options=thin}
\tikzset{pin distance=4pt,
  every pin edge/.style={<-, thin, shorten <= -2pt}}
\definecolor{myblue}{rgb}{0,0,0.6}
\newcommand{\e}{\epsilon}
\DeclareMathOperator{\Op}{Op}
\definecolor{uipoppy}{RGB}{221,128,71}
\definecolor{uipaleblue2}{RGB}{179,196,215}
\definecolor{uiviolet}{RGB}{86,86,99}
\definecolor{uiblack}{RGB}{0, 0, 0}
\definecolor{azul}{RGB}{0,128,255}
\definecolor{verde}{RGB}{50,180,50}
\definecolor{uipaleblue}{RGB}{108,199,220}
\definecolor{light-gray}{gray}{0.8}
\definecolor{light-blue}{rgb}{0.53,.8,98}
\definecolor{green1}{RGB}{50,180,50}
\definecolor{jeffColor}{RGB}{102, 0, 204}
\definecolor{yaizaColor}{RGB}{0, 153, 153}
\definecolor{pale-verde}{RGB}{155,207,145}
\definecolor{med-blue}{rgb}{0.29,0.28,.87}
\definecolor{periodColor}{RGB}{255, 167, 105}
\definecolor{dark-green}{RGB}{135, 194, 130}
\newcommand{\supp}{\operatorname{supp}}
\newcommand{\Ell}{\operatorname{Ell}}
\newcommand{\WF}{\operatorname{\WFh }}
\newcommand{\Id}{I}
\newcommand{\comp}{\operatorname{comp}}
\renewcommand{\Im}{\operatorname{Im}}
\renewcommand{\Re}{\operatorname{Re}}
\newtheorem{theorem}{Theorem}[section]
\newtheorem{lem}[theorem]{Lemma}
\newtheorem{prop}[theorem]{Proposition}
\newtheorem*{claim*}{Claim}
\numberwithin{equation}{section}
\numberwithin{table}{section}
\numberwithin{figure}{section}
\newtheorem{rem}[theorem]{Remark}
\newtheorem*{rem*}{Remark}
\newcommand{\tfa}{\text{ for all }}
\newcommand{\tas}{\text{ as }}
\newcommand{\tand}{\text{ and }}
\newcommand{\bre}{\begin{rem}}
\newcommand{\ere}{\end{rem}}
\newcommand{\bit}{\begin{itemize}}
\newcommand{\eit}{\end{itemize}}
\newcommand{\ben}{\begin{enumerate}}
\newcommand{\een}{\end{enumerate}}
\newcommand{\beq}{\begin{equation}}
\newcommand{\eeq}{\end{equation}}
\newcommand{\beqs}{\begin{equation*}}
\newcommand{\eeqs}{\end{equation*}}
\newcommand{\bpf}{\begin{proof}}
\newcommand{\epf}{\end{proof}}
\newcommand{\ble}{\begin{lem}}
\newcommand{\ele}{\end{lem}}
\definecolor{dalcol}{rgb}{0,0.3,0}
\definecolor{jecol}{rgb}{0.4,0,0}
\definecolor{escol}{rgb}{0,0,0.8}
\definecolor{estcol}{rgb}{0,0.5,0}
\definecolor{esnewcol}{rgb}{0,0.5,0}
\newcommand{\tendi}{\rightarrow \infty}
\newcommand{\PML}{{\rm pml}}
\newcommand{\tr}{{\rm tr}}
\newcommand{\Rea}{\mathbb{R}}
\newcommand{\cH}{\mathcal{H}}
\newcommand{\cD}{\mathcal{D}}
\newcommand{\WFh}{\operatorname{WF}_{\hbar}}
\newcommand{\mc}[1]{\mathcal{#1}}
\newcommand{\loc}{\operatorname{loc}}
\newcommand{\newchi}{\psi}
\newcommand{\indicator}{1}
\newcommand{\Arg}{\operatorname{Arg}}
\newcommand\reallywidehat[1]{%
\savestack{\tmpbox}{\stretchto{%
  \scaleto{%
    \scalerel*[\widthof{\ensuremath{#1}}]{\kern-.6pt\bigwedge\kern-.6pt}%
    {\rule[-\textheight/2]{1ex}{\textheight}}
  }{\textheight}%
}{0.5ex}}%
\stackon[1pt]{#1}{\tmpbox}%
}
\newcommand{\mythmname}[1]{\emph{(#1.)}}
\newcommand{\rhs}{g}
\newcommand{\resolvent}{\Upsilon}
\begin{document}
\title[Perfectly-matched-layer truncation is exponentially accurate]{Perfectly-matched-layer truncation is exponentially accurate at high frequency}
\author{Jeffrey Galkowski} 
\address{Department of Mathematics, University College London, London, WC1H 0AY, UK}
\email{J.Galkowski@ucl.ac.uk}

\author{David Lafontaine} 
\address{CNRS and Institut de Math\'ematiques de Toulouse, UMR5219;
Universit\'e de Toulouse, CNRS; UPS, F-31062 Toulouse Cedex 9, France}
\email{david.lafontaine@math.univ-toulouse.fr}

\author{Euan A.~Spence}
\address{Department of Mathematical Sciences, University of Bath, Bath, BA2 7AY, UK}
\email{E.A.Spence@bath.ac.uk}

\date{\today}

\begin{abstract}
We consider a wide variety of Helmholtz scattering problems including scattering by Dirichlet, Neumann, and penetrable obstacles. We 
consider a radial perfectly-matched layer (PML) and
show that for any fixed PML 
width and a steep-enough scaling angle, the PML solution is exponentially close, both in frequency and the tangent of the scaling angle, to the true scattering solution. Moreover, for a fixed 
scaling angle and large enough PML width, the PML solution is exponentially close to the true scattering solution in both frequency and the PML width. In fact, the exponential bound holds with rate of decay $c(w\tan\theta -C) k$ where $w$ is the PML width and $\theta$ is the scaling angle. More generally, the results of the paper hold in the framework of black-box scattering under the assumption of an exponential bound on the norm of the cutoff resolvent, thus including problems with strong trapping. These are the first results on the exponential accuracy of PML at high-frequency with non-trivial scatterers.
\end{abstract}

\maketitle


\section{Introduction}
\subsection{Context and background}

Since the work of Berenger~\cite{Be:94}, perfectly matched layers (PMLs) have become a standard tool in the numerical simulation of frequency-domain wave problems such as the Helmholtz equation. This method approximates the solution of a scattering problem in an unbounded domain by making a complex change of variables in a layer away from the region of interest and truncating the problem with a Dirichlet condition. 

It is well known that, for fixed frequency, the error in the truncation decreases exponentially with the width of the perfectly matched layer (PML); see \cite[Theorem 2.1]{LaSo:98}, \cite[Theorem A]{LaSo:01}, \cite[Theorem 5.8]{HoScZs:03}, \cite[Theorem 3.4]{BrPa:07}.
However these error bounds are not explicit in the frequency.

The only frequency-explicit error bounds on the accuracy of PMLs obtained up till now are for the model problem of no scatterer. In this case, the error is known to decrease exponentially in the width of the PML, the tangent of the scaling angle, and the frequency; this was proved in \cite[Lemma 3.4]{ChXi:13} (for $d=2$) and \cite[Theorem 3.7]{LiWu:19} (for $d=2,3$) using 
 the fact that the solution of this problem can be written explicitly. 
 
In this paper, we consider a wide variety of Helmholtz scattering problems, including scattering by Dirichlet, Neumann, and penetrable obstacles in any dimension,
and including problems with strong trapping. 
We consider a radial PML and prove that, provided that the PML change of variables is $C^{3}$, the error decreases exponentially in frequency, the PML width, and the scaling angle with a rate that, at least in one dimension, is sharp.

We first state these results applied to the particular problem of plane-wave scattering by an impenetrable obstacle in \S\ref{s:1.2}, and then state the results in the ``black-box scattering'' framework in \S\ref{sec:introBB}.

\subsection{The main results applied to plane-wave scattering by an impenetrable obstacle}\label{s:1.2}
Let $\Omega_-\subset \mathbb{R}^d$ be bounded and open with Lipschitz boundary $\Gamma_-:=\partial\Omega_-$ and connected open complement, $\Omega_+:=\mathbb{R}^d\setminus \Omega_-$. 
Truncation by a PML is widely used to compute approximations to the exterior Helmholtz problem
\begin{equation}
\label{e:helmholtz}
\begin{gathered}(-\Delta-k^2)u^S=0\text{ in } \Omega_+,\qquad
Bu^S=-B\exp(ikx\cdot a)\text{ for }x\in \Gamma_- ,\\
(\partial_r-ik) u^S=o(r^{\frac{1-d}{2}})\text{ as }r:=|x|\to \infty.
\end{gathered}
\end{equation}
Here, $B$ is an operator on the boundary giving either the Dirichlet (sound-soft) condition, $u\mapsto u|_{\Gamma_- }$ or Neumann (sound-hard) condition $u\mapsto (\partial_\nu u)|_{\Gamma_- }$, and $\nu(x)$ is the outward unit normal to $\Omega_-$.
Physically, $u^S$ corresponds to the scattered wave generated when the plane wave $\exp(ikx\cdot a)$ hits the obstacle $\Omega_-$.

Let $R_{P}(k)$ denote the solution operator for~\eqref{e:helmholtz} (see Proposition~\ref{p:cutoff} for the precise definition); the letter $R$ stands for ``resolvent'', and the subscript $P$ is there because we use this notation for the solution operator for the more-general operator $P$ in \S\ref{sec:introBB} below.
Let $\chi \in C_c^\infty(\mathbb{R}^d)$ with $\chi \equiv 1$ in a neighbourhood of the convex hull of $\Omega_-$. We define the exponential rate of growth for the solution operator through a subset $J\subset \mathbb{R}$ that is  unbounded above:
\begin{equation}
\label{e:Lambda1}
\Lambda(P,J):=\limsup_{\substack{k\to \infty\\k\in J}}\frac{1}{k}\log \|\chi R_P(k)\chi\|_{L^2\to L^2}.
\end{equation}
We write $\Lambda(P)$ for $\Lambda(P,\mathbb{R})$. If $\Gamma_-$ is $C^\infty$ then $\Lambda(P)<\infty$.
If, in addition, $\Gamma_-$ is nontrapping, then $\Lambda(P)=0$.  Finally, if $\Gamma_-$ is \emph{only} Lipschitz, then for all $\delta>0$ there is a set $J\subset \mathbb{R}$ with $|\mathbb{R}\setminus J|\leq \delta$ such that $\Lambda(P,J)=0$; see \S\ref{sec:introBB} and~\S\ref{s:examples} for details and references.

\begin{figure}
\begin{tikzpicture}
\def \R{2.5};
\def \a{72};
\def \PML{1.5}
\def \n{5};
\def \rate{1.25}
\begin{scope}[scale=.7]
\draw[pattern=north west lines] (0:\PML*\R)\foreach \x in {1,...,\n} {
            -- ({\x*360/\n}:\PML*\R)
        } -- cycle  ;
\draw[fill=white] (0,1)--(1,1)--(1,-1)--(.25,-1)--(.25,-.5)--(.5,-.5)--(.5,-.75)--(.75,-.75)--(.75,.25)--(0,.25)-- (-.75,.25)--(-.75,-.75)--(-.5,-.75)--(-.5,-.5)--(-.25,-.5)--(-.25,-1)--(-1,-1)--(-1,1)--cycle;
\draw[fill=white, opacity=.65](0,0) circle ({\PML*\R*cos(360/(2*\n))});
\draw (0,0) circle (\R);
\draw[->] (0,0)--(0,-\R/2)node[right]{$R_1$}--(0,-\R);
\draw[->] (0,0)--(40:{\PML*2*\R/3*cos(360/\n/2)})node[below]{$R_{\tr}$}--(40:{\PML*\R*cos(360/\n/2)});
\draw[fill=light-gray,opacity=.6] (0,1)--(1,1)--(1,-1)--(.25,-1)--(.25,-.5)--(.5,-.5)--(.5,-.75)--(.75,-.75)--(.75,.25)--(0,.25)-- (-.75,.25)--(-.75,-.75)--(-.5,-.75)--(-.5,-.5)--(-.25,-.5)--(-.25,-1)--(-1,-1)--(-1,1)--cycle;
\node at(.1,.5){$\Omega_-$};


\draw (10:.95*\PML*\R)node[right]{$\Omega_{\tr,+}$};

\end{scope}
\end{tikzpicture}
\caption{\label{f:diagram}The diagram shows the obstacle, $\Omega_-$, the ball of radius $R_1$ (outside of which the scaling begins), the ball of radius $R_{\tr}$, and $\Omega_{\tr,+}$ (shaded in the hatched lines) where the domain exterior to $\Omega_-$ is truncated.}
\end{figure}
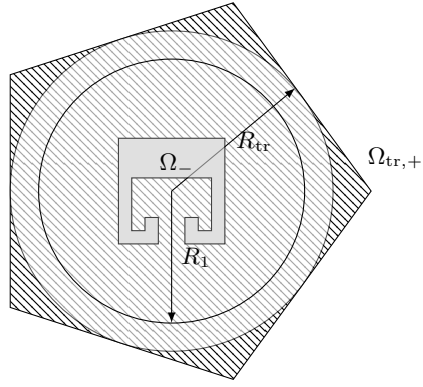
We now describe the geometric set-up for the PML truncation; see Figure~\ref{f:diagram} for a schematic.
Let  $R_2>R_1>0$, such that $\Omega_-\Subset B(0,R_1)$. Next, let $R_{\tr}>R_1$ and $\Omega_{\tr}\subset \mathbb{R}^d$ be a bounded Lipschitz open subset with $B(0,R_{\tr})\subset \Omega_{\tr}$. Finally, let $\Omega_{\tr,+}:=\Omega_{\tr}\cap \Omega_+$, $\Gamma_{\tr}:=\partial\Omega_{\tr}$, and $0\leq \theta<\pi/2$. The PML method replaces~\eqref{e:helmholtz} by the following problem
\begin{equation}
\label{e:PML}
(-\Delta_\theta-k^2)v^S=0\text{ in } \Omega_{\tr,+},\qquad
Bv^S=-B\exp(ikx\cdot a)\text{ for }x\in \Gamma_- ,\qquad
v^S=0\text{ for }x\in \Gamma_{\tr}.
\end{equation}
Here, $-\Delta_\theta$ is a second order differential operator that is given in spherical coordinates $(r,\omega)\in [0,\infty)\times S^{d-1}$  by
\begin{align}
\label{e:deltaTheta}
\Delta_\theta&= \Big(\frac{1}{1+if_\theta'(r)}\partial_r\Big)^2+\frac{d-1}{(r+if_\theta(r))(1+if'_\theta(r))}\partial_r +\frac{1}{(r+if_\theta(r))^2}\Delta_\omega,\\
&= \frac{1}{(1+if_\theta'(r))(r+ i f_\theta(r))^{d-1}}\frac{\partial}{\partial r}
\left( \frac{ (r+ i f_\theta(r))^{d-1}}{1+ i f_\theta'(r)}\frac{\partial}{\partial r}
\right)+\frac{1}{(r+if_\theta(r))^2}\Delta_\omega,
\nonumber
\end{align}
with $\Delta_\omega$ the surface Laplacian on $S^{d-1}$ and $f_\theta(r)\in C^{3}([0,\infty);\mathbb{R})$  given by $f_\theta(r)=f(r)\tan\theta$ for some $f$ satisfying
\begin{equation}
\label{e:fProp}
\begin{gathered}
\{f(r)=0\}=\{f'(r)=0\}=\{r\leq R_1\},\qquad f'(r)\geq 0,\qquad f(r)\equiv r \text{ on }r\geq R_2;
\end{gathered}
\end{equation}
i.e., the scaling ``turns on'' at $r=R_1$, and is linear when $r\geq R_2$. We emphasize that $R_{\tr}$ can be $<R_2$, i.e., we allow truncation before linear scaling is reached. Indeed, $R_2>R_1$ can be arbitrarily large and therefore, given any bounded interval $[0,R]$ and any function $g\in C^3([0,R])$ satisfying 
\begin{equation}
\label{e:fPropA}
\begin{gathered}
\{g(r)=0\}=\{g'(r)=0\}=\{r\leq R_1\},\qquad g'(r)\geq 0,
\end{gathered}
\end{equation}
our results hold for an $f$ with $f|_{[0,R]}=g$. 
A concrete example of a $g(r)$ satisfying the conditions~\eqref{e:fPropA} is the piecewise degree-three polynomial
\beq\label{eq:examplef}
g(r) =(r-R_1)^31_{[R_1,\infty)}(r).
\eeq

\begin{rem}[Link with notation used in the numerical-analysis literature]\label{rem:notation}In \eqref{e:PML}-\eqref{e:fProp} the PML problem is written using notation from the method of complex scaling (see, e.g., \cite[\S4.5]{DyZw:19}).
In the numerical-analysis literature on PML, the scaled variable is often written as $r(1+ i \widetilde{\sigma}(r))$ with $\widetilde{\sigma}(r)= \sigma_0$ for $r$ sufficiently large, see, e.g., \cite[\S4]{HoScZs:03}, \cite[\S2]{BrPa:07}. To convert from our notation, set $\widetilde{\sigma}(r)= f_\theta(r)/r$ and $\sigma_0= \tan\theta$.
We also highlight that, whereas the numerical-analysis literature on radial PMLs often assumes that the exterior domain is truncated by a ball (i.e., $\Omega_{\tr} = B(0,R_{PML})$ for some $R_{PML}>R_1$), the PML problem \eqref{e:PML} is posed with a general truncation boundary.
One practical advantage of allowing an arbitrary truncation boundary is that, 
when solving the PML problem with the finite-element method (FEM)
one can then use simplicial elements without having to deal with error in approximating the truncation boundary. 
\end{rem}

\begin{theorem}
\label{thm:new}
Let $\Gamma_-$ be Lipschitz and $J\subset \mathbb{R}$ unbounded above with $\Lambda(P,J)<\infty$. 
For all $\eta,\e>0$ there exist $C, C',k_0, c_\epsilon>0$ (independent of $R_{\tr}$ and $R_1$) such that for all $R_{\tr}>R_1+\e$, $B(0,R_{\tr})\subset\Omega_{\tr}\Subset \mathbb{R}^d$ with Lipschitz boundary, there exists $\theta_0(P,J,R_{\tr})<\pi/2$ (with $\theta_0(P,J,R_{\tr})$ a non-increasing function of $R_\tr$) such that if $\theta_0(P,J,R_{\tr})+\e<\theta<\pi/2-\e$, $k>k_0$ with $k\in J$, and $a\in \mathbb{R}^d$, then the solution $v^S$ to \eqref{e:PML} exists, is unique, and satisfies 
\begin{equation}
\begin{gathered}
\frac{\|u^S-v^S\|_{H^1(B(0,R_1)\setminus\Omega_-)}}{\|u^S+e^{ikx\cdot a}\|_{L^2(B(0,R_1)\setminus\Omega_-)}}\leq 
C\exp\Big(-k\big( (2-\eta)c_\epsilon (R_{\tr}-R_1-\epsilon)\tan \theta - 2\Lambda(P,J)\big)\Big),\\
\|u^S-v^S\|_{H^1(B(0,R_1)\setminus\Omega_-)}\leq C' \frac{\|u^S-v^S\|_{H^1(B(0,R_1)\setminus\Omega_-)}}{\|u^S+e^{ikx\cdot a}\|_{L^2(B(0,R_1)\setminus\Omega_-)}},
\label{e:relErrorNew}
\end{gathered}
\end{equation}
where $u^S$ is the solution to \eqref{e:helmholtz}. Furthermore, if $\Lambda(P,J)=0$, then $\theta_0(P,J,R_{\tr})=0$.
\end{theorem} 

Theorem \ref{thm:new} shows that both the absolute and the relative error in the PML approximation of the total field $u^S + e^{ikx\cdot a}$ is exponentially small in $k$, the PML width (i.e., $R_{\tr}-R_1$), and the tangent of the scaling angle (i.e., $\tan \theta$).

Theorem \ref{thm:new} is a consequence of the following more general result, which gives explicitly the rate of decay and $\theta_0(P,J,R_{\tr})$. This result involves the following two functions;

\begin{gather}
\label{e:weightDer}
\Phi_\theta(r):=\begin{cases}\inf_{t\geq 0}\Big|\Im \Big((1+if_\theta'(r))\sqrt{1-\frac{t}{(r+if_\theta(r))^2}}\,\Big)\Big|,&d\geq 2,\\
f_\theta'(r),&d=1,
\end{cases}
\\
\theta_0(P,J,R_{\tr}):=
\sup\Big\{\theta\,:\, \int_{R_1}^{R_{\tr}}\Phi_\theta(r)\,dr\leq \Lambda(P,J)\Big\}.\label{eq:theta0}
\end{gather}
To better understand these functions, we record the following:
\bit
\item $\Phi_\theta(r)\geq 0$ for all $r$ (by definition), and $\Phi_\theta(r)>0$ when $r>R_1$ and $\theta>0$ (by Part (1) of Lemma \ref{lem:Phi} below).
\item If $\Lambda(P,J)=0$, then $\theta_0(P,J,R_{\tr})=0$.
Furthermore, for any $\Lambda(P,J)$, $\theta_0(P,J,R_{\tr})<\pi/2$; this follows from Part (1) of Lemma \ref{lem:Phi} below and the fact that $\tan (\pi/2)=\infty$.
In addition, $\theta_0(P,J,R_{\tr})$ is a non-increasing function of $R_\tr$ (as claimed in Theorem \ref{thm:new}) by the properties of $\Phi_\theta$ above.
\eit

Figure \ref{fig:Phi} plots $\Phi_\theta(r)$ (for $d\geq 2$) and its integral for $f(r)$ given by \eqref{eq:examplef}.

\begin{theorem} 
\label{t:expEst}
Let $\Gamma_-$ be Lipschitz and $J\subset \mathbb{R}$ unbounded above with $\Lambda(P,J)<\infty$. Then for all $\eta,\e>0$ there exist $C, C',k_0>0$ (independent of $R_{\tr}$ and $R_1$) such that for all $R_{\tr}>R_1+\e$, $B(0,R_{\tr})\subset\Omega_{\tr}\Subset \mathbb{R}^d$ with Lipschitz boundary, $\theta_0(P,J,R_{\tr})+\e<\theta<\pi/2-\e$, $k>k_0$ with $k\in J$, and $a\in \mathbb{R}^d$, 
the solution $v^S$ to \eqref{e:PML} exists, is unique, and satisfies 
\begin{equation}
\begin{gathered}
\frac{\|u^S-v^S\|_{H^1(B(0,R_1)\setminus\Omega_-)}}{\|u^S+e^{ikx\cdot a}\|_{L^2(B(0,R_1)\setminus\Omega_-)}}\leq C \exp\bigg(-k\Big( (2-\eta) \int_{R_1}^{R_{\tr}}\Phi_\theta(r)\,dr-2\Lambda(P,J)\Big) \bigg),\\
\|u^S-v^S\|_{H^1(B(0,R_1)\setminus\Omega_-)}\leq C' \frac{\|u^S-v^S\|_{H^1(B(0,R_1)\setminus\Omega_-)}}{\|u^S+e^{ikx\cdot a}\|_{L^2(B(0,R_1)\setminus\Omega_-)}}
\label{e:relError1}
\end{gathered}
\end{equation}
where $u^S$ is the solution to \eqref{e:helmholtz}. 
\end{theorem}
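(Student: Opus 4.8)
The plan is to recast the problem in terms of the complex-scaled (``infinite-PML'') operator and to use the classical complex-scaling identity relating its resolvent to $\chi R_P(k)\chi$ (compare Proposition~\ref{p:cutoff} and, e.g., \cite[\S4]{DyZw:19}). First I would introduce the non-truncated problem: let $\widetilde u$ solve $(-\Delta_\theta-k^2)\widetilde u=0$ in $\Omega_+$, $B\widetilde u=-B\exp(ikx\cdot a)$ on $\Gamma_-$, with the decay condition appropriate to $-\Delta_\theta$. Since $f_\theta\equiv0$ on $\{r\le R_1\}$ by \eqref{e:fProp}--\eqref{e:fPropA}, there $-\Delta_\theta=-\Delta$, and the complex-scaling identity gives $\widetilde u=u^S$ on $B(0,R_1)\setminus\Omega_-$. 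Hence on $B(0,R_1)\setminus\Omega_-$ one has $u^S-v^S=\widetilde u-v^S=:w$, where $w$ solves $(-\Delta_\theta-k^2)w=0$ in $\Omega_{\tr,+}$, $Bw=0$ on $\Gamma_-$, and $w=\widetilde u|_{\Gamma_{\tr}}$ on $\Gamma_{\tr}$ --- i.e.\ the homogeneous version of \eqref{e:PML} with the (small) Dirichlet datum $\widetilde u|_{\Gamma_{\tr}}$ in place of $0$. So the theorem reduces to (i) an a priori bound for the truncated PML problem \eqref{e:PML}, together with (ii) an exponential decay estimate for $\widetilde u$ --- and for the error $w$ --- through the scaling region.

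For (ii) I would separate variables in spherical coordinates on $\{r>R_1\}$: writing $\widetilde u=\sum_{\ell,m}c_{\ell m}(r)Y_{\ell m}(\omega)$, each $c_{\ell m}$ solves, by \eqref{e:deltaTheta}, a second-order radial ODE whose semiclassical radial momentum (with $\hbar=1/k$) is $\pm(1+if_\theta'(r))\sqrt{1-\ell(\ell+d-2)/(k^2(r+if_\theta(r))^2)}$. A Liouville--Green/WKB analysis of the outgoing (decaying) branch gives, uniformly in $\ell,m$,
\[
|c_{\ell m}(r)|\lesssim k^{N}\,|c_{\ell m}(R_1)|\,\exp\!\Big(-k\!\int_{R_1}^{r}\!\big|\Im\big((1+if_\theta'(s))\sqrt{1-\tfrac{\ell(\ell+d-2)}{k^2(s+if_\theta(s))^2}}\big)\big|\,ds\Big),
\]
and taking the infimum over $t=\ell(\ell+d-2)/k^2\ge 0$ in the exponent reproduces $\int_{R_1}^{r}\Phi_\theta$ from \eqref{e:weightDer}. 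Squaring, summing over $\ell,m$ (the polynomial loss is harmless since $k^{N}e^{-ck}\le e^{-(c-\eta)k}$ for $k$ large), bounding $\big(\sum_{\ell,m}|c_{\ell m}(R_1)|^2\big)^{1/2}\lesssim\|u^S\|_{H^1(B(0,R_1)\setminus\Omega_-)}$ by the trace theorem, and using $\|u^S\|_{H^1(B(0,R_1)\setminus\Omega_-)}\lesssim k^{N}e^{C\Lambda(P,J)k}\|u^S+e^{ikx\cdot a}\|_{L^2(B(0,R_1)\setminus\Omega_-)}$ (from the cutoff resolvent bound and a standard lower bound on the total field) yields the required decay of $\widetilde u$ on an annulus just inside $\Gamma_{\tr}$. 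Crucially, running the same analysis on the \emph{incoming} branch $-(1+if_\theta'(r))\sqrt{1-t/(r+if_\theta(r))^2}$ shows that a wave sourced near $\Gamma_{\tr}$ and propagated inward to $R_1$ is damped by the \emph{same} factor $\exp(-k\int_{R_1}^{R_{\tr}}\Phi_\theta)$, since $|\Im(-z)|=|\Im z|$.

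For (i) I would lift the boundary data: fix $\psi$ equal to $1$ near $\Gamma_{\tr}$ and supported in that inner annulus (where the scaling is already active). Then $w-\psi\widetilde u$ has homogeneous boundary data on $\Gamma_-$ and zero Dirichlet data on $\Gamma_{\tr}$, and satisfies $(-\Delta_\theta-k^2)(w-\psi\widetilde u)=-[-\Delta_\theta,\psi]\widetilde u$ (the $k^2$ term commutes with $\psi$), a right-hand side supported where $\nabla\psi\ne0$ and hence $\lesssim\|\widetilde u\|_{H^1(\{\nabla\psi\ne0\})}$, which is exponentially small by (ii). It then remains to show \eqref{e:PML} is well-posed for $k$ large with the truncated PML resolvent $R_\theta^{\tr}$ bounded by $k^{N}e^{C\Lambda(P,J)k}$ on the region of interest. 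I would obtain this by comparison with the non-truncated PML resolvent $R_\theta$, which is bounded by $\|\chi R_P(k)\chi\|$ where unscaled (by the complex-scaling identity) and by the exterior estimates of (ii) in the scaled region; the Dirichlet truncation at $\Gamma_{\tr}$ is an exponentially small perturbation of $R_\theta$, so a Fredholm/Neumann-series argument --- convergent because $\theta>\theta_0(P,J,R_{\tr})+\e$ forces $\int_{R_1}^{R_{\tr}}\Phi_\theta>\Lambda(P,J)$ via \eqref{eq:theta0} --- transfers the bound to $R_\theta^{\tr}$. Since $\psi\widetilde u$ vanishes on $B(0,R_1)\setminus\Omega_-$, there $w=-R_\theta^{\tr}[-\Delta_\theta,\psi]\widetilde u$, and the contribution of $R_\theta^{\tr}$ acting from a source near $\Gamma_{\tr}$ to $B(0,R_1)\setminus\Omega_-$ carries, by the incoming-branch estimate, the extra factor $\exp(-k\int_{R_1}^{R_{\tr}}\Phi_\theta)$. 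Multiplying this with the decay already present in $\|[-\Delta_\theta,\psi]\widetilde u\|$ produces the $2\int_{R_1}^{R_{\tr}}\Phi_\theta$ in the exponent, while the copies of $\Lambda(P,J)$ that accrue --- in controlling $u^S$ near $R_1$ by the cutoff resolvent, in the a priori bound for $R_\theta^{\tr}$, and in the inward propagation through the physical region --- combine to $3\Lambda(P,J)$; absorbing all polynomial losses into $\eta$ gives the first line of \eqref{e:relError1}. The second line then follows from the uniform-in-$(k,a)$ upper bound $\|u^S+e^{ikx\cdot a}\|_{L^2(B(0,R_1)\setminus\Omega_-)}\le C'$, a standard estimate for the total field.

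I expect the main obstacle to be step (i): proving well-posedness of the truncated problem \eqref{e:PML} at high frequency together with a quantitative resolvent bound carrying the correct exponential rate. The Dirichlet truncation destroys the clean spectral/complex-scaling structure of the infinite-PML problem, so the perturbation must be controlled purely through the exterior decay estimates of (ii) --- and, crucially, in \emph{both} the outward direction (for $\widetilde u$) and the inward direction (for the error $w$), which is exactly why the single weight $\Phi_\theta$ and the factor $2$ in the rate appear, and where the bookkeeping fixes the constant $3$ in front of $\Lambda(P,J)$. By contrast, the complex-scaling identification of the first step is classical, and the WKB analysis of (ii), though technically involved, is routine in spirit.
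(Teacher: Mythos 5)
Your reduction of the problem is sound and follows the same outline as the paper's: identify $u^S$ on $B(0,R_1)\setminus\Omega_-$ with the infinite-PML solution $\widetilde u$ via the complex-scaling identity (Proposition~\ref{p:scaledFredholmA}), estimate the error through the scaling region, and convert the absolute bound to a relative one using the cutoff resolvent plus a lower bound on $\|u^I+u^S\|$. Your bookkeeping of where the factor $2-\eta$ and the three copies of $\Lambda(P,J)$ come from is also correct and matches the paper's proof of Theorem~\ref{t:blackBoxErr}, where $g(\hbar^{-1})$ appears three times and the inward and outward integrals of $\Phi_\theta$ combine.

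The key point of departure --- and the one genuine gap --- is the replacement of the Carleman estimates (Lemmas~\ref{l:carleman1}, \ref{l:carleman}, \ref{l:carleman2}) by a separation-of-variables / Liouville--Green (WKB) analysis of the radial ODEs. This runs into two problems. First, the theorem assumes only $f\in C^3$; a Liouville--Green expansion with errors small enough to give the sharp rate $\exp(-k(1-\eta)\int\Phi_\theta)$ uniformly over all angular modes $\ell$ (including $\ell\sim k$, the semiclassical regime in which the tangential frequency is comparable to $k$) needs substantially more smoothness of the coefficients, and close to the regime $t=\ell(\ell+d-2)/k^2\approx \Re(r+if_\theta)^2$ the usual WKB ansatz degenerates and must be replaced by a uniform (Airy-type) analysis --- for $C^3$ complex coefficients this is very delicate. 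The paper's Carleman weight $\psi$ built from $\Phi_\theta$ makes $P_\psi$ uniformly elliptic in the full $(\xi_r,\xi_\omega)$ phase space at once (Lemma~\ref{l:carleman1}), so there is no separate-mode analysis, no turning-point issue, and the $C^{3}$ assumption on $f$ is exactly what is needed for the rough-symbol parametrix (via $C^{1,\alpha}S^2$) and the defect-measure argument in \S\ref{s:rough_res}. In short: the Carleman estimate is precisely what makes the estimate robust to rough scalings, and your WKB replacement cannot deliver the theorem as stated.

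Two smaller comments. Your ``standard lower bound on the total field'' is not standard: the paper devotes Lemma~\ref{lem:contruI} to it and proves it microlocally, using that $\operatorname{WF}_\hbar(u^S)$ misses the incoming region $\{\langle x,\xi\rangle<0,\,|x|>R_0\}$ so that $\Op_\hbar(\psi)u^I\approx\Op_\hbar(\psi)(u^I+u^S)$ for an incoming cutoff $\psi$; you should not treat this step as free. And your Neumann-series argument for the truncated resolvent has a hidden circularity: to show the Dirichlet truncation is an exponentially small perturbation you already need the exponential decay of the infinite-PML resolvent through the scaling region applied to compactly supported data --- which is exactly the content of Lemma~\ref{l:carleman} --- together with a boundary estimate near $\Gamma_{\tr}$ (the paper's Lemma~\ref{l:nearBoundary}, an integration by parts valid for Lipschitz $\Omega_{\tr}$, not requiring sphericity). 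Once those are in place the paper proves the resolvent bound directly (Theorem~\ref{t:blackBoxResolve}) rather than via perturbation, which is cleaner, but your alternative packaging could also be made to work once the Carleman estimate is available.
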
 
\noindent Moreover, when $d=1$, explicit calculations show that our estimate is nearly optimal in the sense that the factor $2-\eta$ multiplying {$\int_{R_1}^{R_{\tr}}\Phi_\theta(r)\,dr$} in~\eqref{e:relError1} cannot be replaced by any number larger than $2$. 

To better understand the estimate~\eqref{e:relError1}, we record five properties of the function $\Phi_\theta(r)$;
note that Properties~\eqref{i:thetaDep},~\eqref{i:evLinear} and~\eqref{i:rSmall} are illustrated in the right-hand plots of Figures \ref{fig:Phi} and~\ref{fig:Phi2}.

\begin{figure}[h!]
\begin{center}
    \includegraphics[width=\textwidth]
    {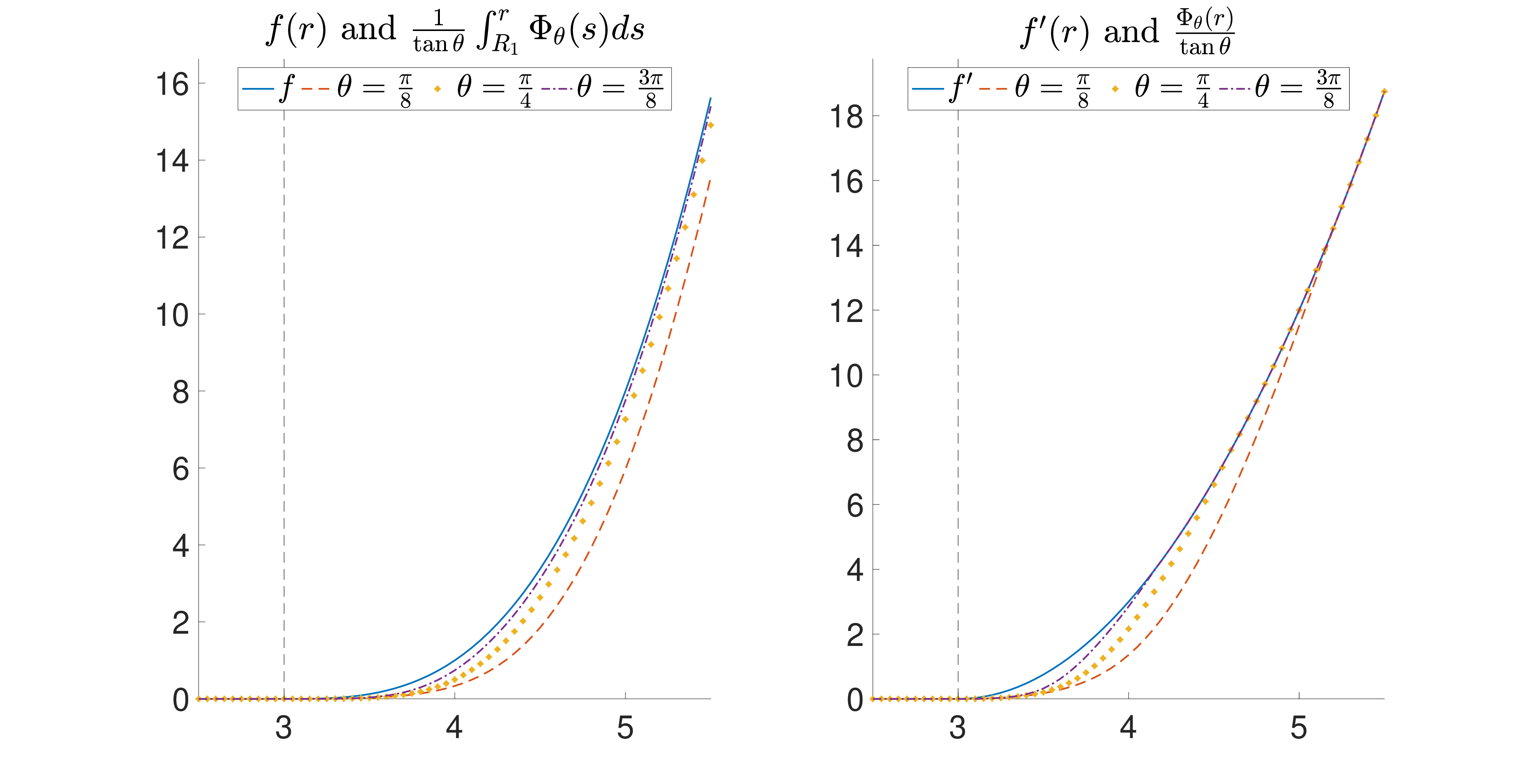}
    \end{center}
    \caption{Plots of $f(r)$ with $f|_{[0,6]}$ given by \eqref{eq:examplef}, $f'(r)$, $\frac{1}{\tan\theta}\Phi_\theta(r)$ (for $d\geq 2$), and $\frac{1}{\tan\theta}\int_R^r \Phi_\theta(r)\,dr$ for $R_1=3$.}
       \label{fig:Phi}
\end{figure}

\begin{figure}[h!]
\begin{center}
    \includegraphics[width=\textwidth]
    {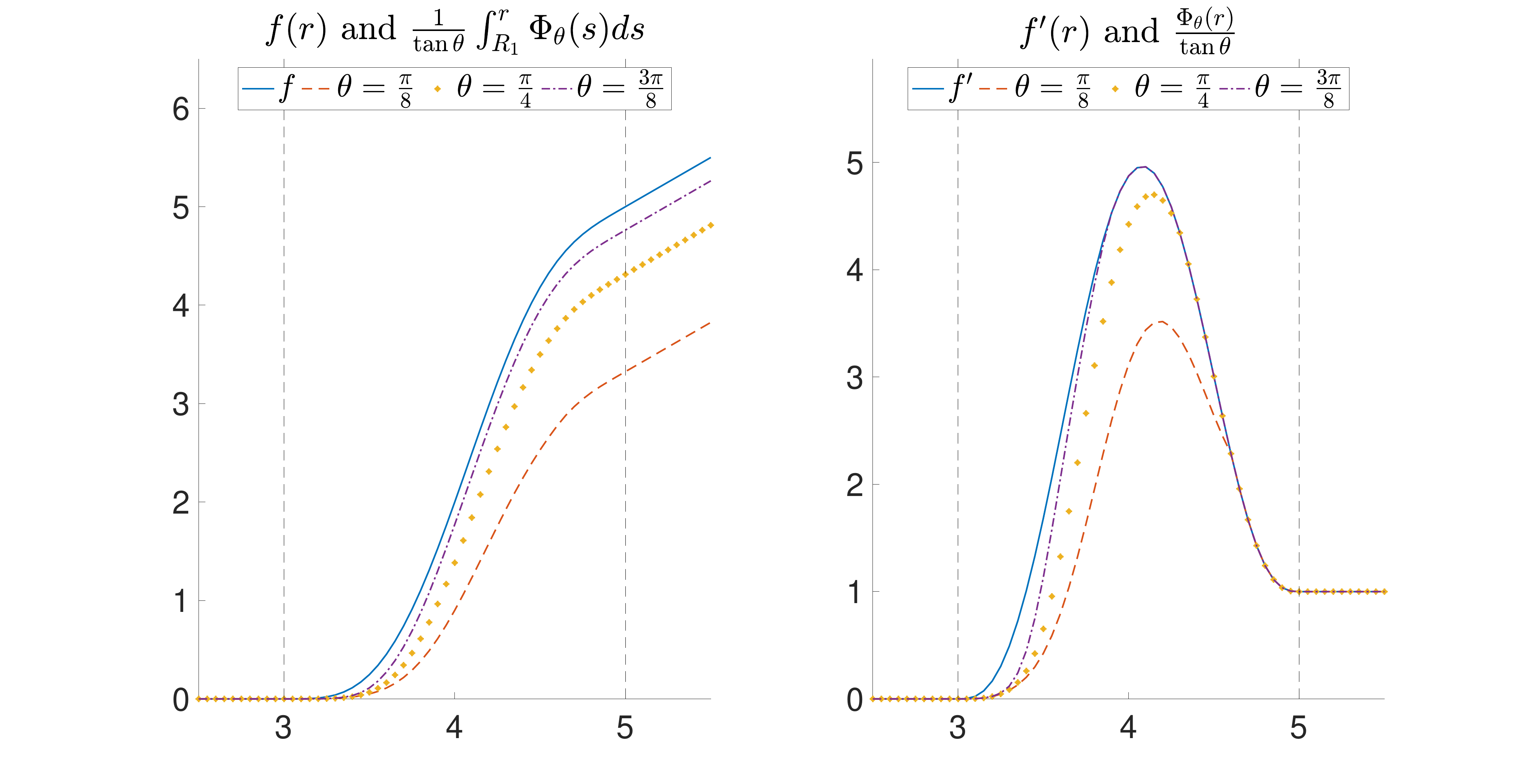}
    \end{center}
    \caption{Plots of $f(r)$ given by \eqref{e:8degrees}, $f'(r)$, $\frac{1}{\tan\theta}\Phi_\theta(r)$ (for $d\geq 2$), and $\frac{1}{\tan\theta}\int_R^r \Phi_\theta(r)\,dr$ for $R_1=3$ and $R_2=5$.}
       \label{fig:Phi2}
\end{figure}

\begin{lem}\label{lem:Phi}
\noindent \vspace{-.5em}
\begin{enumerate}
\item \label{i:thetaDep}For all $\delta>0$, there is $c_\delta>0$ such that $\Phi_\theta(r)>c_\delta\tan \theta$ on $r>R_1+\delta$, $\theta>\delta$. 
\item \label{i:iffTheta} $\Phi_\theta(r)=f_\theta'(r)$ if and only if
\beq\label{e:PhitCond}
\tan^2\theta\geq \frac{r^2}{f(r)^2}-\frac{2r}{f'(r)f(r)}.
\eeq
\item \label{i:evLinear} If $f(r) =r$, $f'(r)=1$, then $\Phi_\theta(r)=f_\theta'(r)$.
\item\label{i:rSmall} For all $\delta>0$, there is $\theta_\delta<\pi/2$ such that for $\theta>\theta_\delta$, $\Phi_\theta(r)=f_\theta'(r)$ on $r\geq R_1+\delta$,
\item \label{i:continuous}The map $(r,\theta)\mapsto\Phi_\theta(r)$ is continuous for $(r,\theta)\in [0,\infty)\times (0,\pi/2)$.
\end{enumerate}
\end{lem}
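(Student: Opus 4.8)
The plan is to analyze the function inside the infimum in \eqref{e:weightDer} directly. Write $z = z_\theta(r) := r + i f_\theta(r)$ and $w = w_\theta(r) := 1 + i f_\theta'(r)$, so that for $d \geq 2$ we have $\Phi_\theta(r) = \inf_{t\geq 0}\big|\Im\big(w\sqrt{1 - t/z^2}\,\big)\big|$, where the square root is the principal branch (note $\Re(1 - t/z^2) $ need not be positive, but $1 - t/z^2$ avoids the negative real axis since $\Im(z^2) = 2 r f_\theta(r) > 0$ for $r > R_1$, so the principal branch is well-defined and holomorphic in $t$). First I would substitute $s = t/|z|^2 \in [0,\infty)$ and parametrize: setting $z^2 = \rho^2 e^{i\beta}$ with $\rho = |z|$ and $\beta = \arg(z^2) \in (0,\pi)$, the quantity becomes $\inf_{s\geq 0}\big|\Im\big(w\sqrt{1 - s e^{-i\beta}}\,\big)\big|$. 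This reduces everything to a two-real-parameter ($\beta$, and the modulus/argument of $w$) minimization over $s$, which can be carried out by elementary calculus. The key observation driving Properties \eqref{i:iffTheta}--\eqref{i:rSmall} is that, as $s$ ranges over $[0,\infty)$, the curve $\sqrt{1 - s e^{-i\beta}}$ traces a path in the right half-plane from $1$ (at $s=0$) down toward the ray of argument $-(\pi-\beta)/2$ as $s \to \infty$; one then asks whether the line through the origin perpendicular to $w$ (i.e.\ the set where $\Im(w\cdot)=0$) intersects this curve. If it does, $\Phi_\theta(r)=0$ — but I expect to show this never happens for $r>R_1$ when the argument of $w$ is constrained as below; if it does not, the infimum is attained either at $s=0$ (giving $|\Im w| = f'_\theta(r)$) or in the limit $s\to\infty$.

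Next I would carry out the five parts. For \eqref{i:iffTheta}: the infimum equals $|\Im w| = f_\theta'(r)$ precisely when $s\mapsto |\Im(w\sqrt{1-se^{-i\beta}})|$ is non-decreasing at $s=0^+$, equivalently when a derivative-sign computation at $s=0$ holds; differentiating $\Im(w\sqrt{1-se^{-i\beta}})$ in $s$ at $s=0$ and comparing signs should produce exactly the algebraic inequality \eqref{e:PhitCond} after substituting $\beta = \arg(z^2)$, $\tan(\arg w) = f'_\theta(r)$, and using $f_\theta = f\tan\theta$, $f'_\theta = f'\tan\theta$ (the $\tan^2\theta$ appears because both $f_\theta$ and $f'_\theta$ scale linearly in $\tan\theta$, and the condition is quadratic-homogeneous after clearing denominators). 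Property \eqref{i:evLinear} is then the special case $f(r)=r$, $f'(r)=1$: substituting into \eqref{e:PhitCond} gives $\tan^2\theta \geq 1/\tan^2\theta - 2/\tan^2\theta = -1/\tan^2\theta$, which always holds, so $\Phi_\theta = f'_\theta$; alternatively, when $f(r)=r$ one has $z = r(1+i\tan\theta)$ and $w = 1+i\tan\theta$ are parallel, and an explicit computation of $\Im(w\sqrt{1 - t/z^2})$ shows it is monotone in $t$. Property \eqref{i:rSmall}: fix $\delta>0$; for $r \geq R_1+\delta$ we have $f(r) \geq c > 0$ and $f'(r)\geq c'>0$ (strictly, by \eqref{e:fProp}/\eqref{e:fPropA} and a compactness argument on $[R_1+\delta, R_2]$ together with $f(r)=r$ for $r\geq R_2$), so the right-hand side of \eqref{e:PhitCond} is bounded above uniformly in $r\geq R_1+\delta$, hence there is $\theta_\delta<\pi/2$ with $\tan^2\theta_\delta$ exceeding this bound, giving \eqref{i:iffTheta}'s condition for all $\theta>\theta_\delta$. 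Property \eqref{i:thetaDep}: by \eqref{i:iffTheta}, either $\Phi_\theta(r)=f'_\theta(r)=f'(r)\tan\theta \geq c_\delta\tan\theta$ directly (with $c_\delta = \inf_{r\geq R_1+\delta}f'(r)>0$), or \eqref{e:PhitCond} fails, in which case I would show the infimum is instead controlled from below by the $s\to\infty$ limit $|\Im(w e^{-i(\pi-\beta)/2})|/ (\text{something bounded})$, which is again bounded below by a constant times $\tan\theta$ because both $\arg w$ and $\pi - \beta = \arg(\overline{z^2})$-type quantities are comparable to $\tan\theta$ for small $\theta$ (using $f(r), f'(r) \sim$ const $>0$ and the Taylor expansion of $\arctan$). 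Property \eqref{i:continuous}: away from $r=R_1$ and for $\theta\in(0,\pi/2)$, the integrand $(s,r,\theta)\mapsto |\Im(w_\theta(r)\sqrt{1 - s e^{-i\beta_\theta(r)}})|$ is jointly continuous, and on any compact $(r,\theta)$-set the infimum over $s\geq 0$ is attained on a compact $s$-interval (since the expression tends to a finite positive limit as $s\to\infty$, uniformly), so continuity of $\Phi_\theta$ follows from the standard fact that the infimum of a jointly continuous function over a parameter-dependent compact set depending continuously (here, uniformly) on the parameters is continuous; for $r\leq R_1$ we have $f_\theta\equiv f'_\theta\equiv 0$, so $\Phi_\theta\equiv 0$ there and continuity across $r=R_1$ follows since $f'_\theta(R_1)=0$ and the lower bound from the $s\to\infty$ branch also vanishes as $r\to R_1^+$.

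The main obstacle I anticipate is Property \eqref{i:thetaDep} in the regime where \eqref{e:PhitCond} fails: there one cannot use the clean formula $\Phi_\theta = f'_\theta$, and must instead track the actual minimizer $s_*(r,\theta)$ of $|\Im(w\sqrt{1-se^{-i\beta}})|$ and show the minimum value stays $\gtrsim \tan\theta$. The delicate point is uniformity as $\theta\to 0$ simultaneously with $r$ ranging over $[R_1+\delta,\infty)$ (including the unbounded tail where $f(r)=r$): in the tail \eqref{i:evLinear} already gives $\Phi_\theta=f'_\theta=\tan\theta$ exactly, so the real work is on the compact annulus $R_1+\delta\leq r\leq R_2$, where I would Taylor-expand in $\tan\theta$ — writing $f_\theta = \tan\theta\cdot f$, $\beta = 2\arctan(f_\theta/r) = 2\tan\theta\, f(r)/r + O(\tan^3\theta)$ — and show the leading-order term of $\Phi_\theta$ in $\tan\theta$ is a strictly positive continuous function of $r$ on this compact set, hence bounded below. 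A secondary technical nuisance is justifying that the principal square root is the correct/only sensible branch and that the infimum in \eqref{e:weightDer} is genuinely attained (or approached) in a controlled way; this is handled once and for all by the observation $\Im(z^2) = 2rf_\theta(r) > 0$ for $r>R_1$, keeping $1 - t/z^2$ in the slit plane $\mathbb{C}\setminus(-\infty,0]$ for all $t\geq 0$.
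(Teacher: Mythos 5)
Your proposed derivative-sign argument for part \eqref{i:iffTheta} has a genuine gap. You claim that $\Phi_\theta(r)=f_\theta'(r)$ ``precisely when $s\mapsto |\Im(w\sqrt{1-se^{-i\beta}})|$ is non-decreasing at $s=0^+$,'' but being non-decreasing at $0^+$ is not, by itself, equivalent to the infimum being attained at $s=0$: the function could rise initially and then dip below its initial value at some $s>0$. To close this you need the key structural observation that the paper uses, which your parametrization obscures: working \emph{before} taking the square root, i.e.\ with $z(t,r):=(1+if_\theta')^2\big(1-\frac{t}{(r+if_\theta)^2}\big)$, the curve $t\mapsto z(t,r)$ is a \emph{ray} (affine in $t$), and the level set $\mc{Z}_a=\{z:\Im\sqrt{z}=a\}$ is a rightward-opening parabola whose interior on the ``low-$\Im\sqrt{\cdot}$'' side is convex. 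A ray starting on the boundary of a convex region and pointing away from it never re-enters; a ray pointing into it enters immediately. That gives the iff. Your picture of $\sqrt{1-se^{-i\beta}}$ in the downstairs plane is a curved path, not a ray, so the one-sided derivative check does not obviously decide where the infimum sits. (Your computation of the sign condition at $s=0$ does, once carried out, produce the paper's inequality $\Im\big(-\frac{1+if_\theta'}{(r+if_\theta)^2}\big)\geq 0$, but that is necessity, not sufficiency.)

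For part \eqref{i:thetaDep}, which you yourself flag as the main obstacle, the Taylor-expansion approach is much more work than needed and is left unfinished. The paper's proof is short: by \eqref{i:rSmall}, for $\theta>\theta_\delta$ one has $\Phi_\theta(r)=f_\theta'(r)=f'(r)\tan\theta\geq c_\delta\tan\theta$; for the remaining compact range $\delta\leq\theta\leq\theta_\delta$, the lower bound $\Phi_\theta(r)>c_{f,\e}>0$ was already established in Lemma~\ref{l:carleman1} (via the quantitative ellipticity argument there), and since $\tan\theta$ is bounded on that compact $\theta$-interval, $c_{f,\e}\geq c_\delta'\tan\theta$ for a suitable $c_\delta'$. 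You do not invoke Lemma~\ref{l:carleman1}, which is the ingredient that makes this part cheap.

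Parts \eqref{i:evLinear} and \eqref{i:rSmall} are fine and match the paper. For part \eqref{i:continuous}, your route (joint continuity of the integrand plus a uniform compactness argument for the location of the minimizer) is plausible and different from the paper's, which instead writes an explicit formula for the minimizer $t_m(r,\theta)$ and checks directly that the apparent singularity in that formula is removable. One small correction in your sketch: $|\Im(w\sqrt{1-se^{-i\beta}})|$ tends to $+\infty$ as $s\to\infty$ (growing like $\sqrt{s}$), not to a ``finite positive limit''; fortunately this is harmless, and if anything makes the compactness of the set of near-minimizers easier.
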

Point \eqref{i:thetaDep} in Lemma \ref{lem:Phi} implies that, for $R_{\tr}>R_1+ \delta$,
\beq\label{eq:Fri1}
-\int_{R_1}^{R_{\tr}}\Phi_\theta(r)\,dr \leq  -c_\delta (R_{\tr}-R_1-\delta)\tan \theta.
\eeq
Points \eqref{i:thetaDep} and~\eqref{i:evLinear} in Lemma \ref{lem:Phi} imply that, for $R_{\tr}>R_2$, 
\beq\label{eq:Fri2}
-\int_{R_1}^{R_{\tr}}\Phi_\theta(r)\,dr \leq  -c_\delta (R_2-R_1-\delta)\tan \theta- (R_{\tr}-R_2)\tan\theta< - (R_{\tr}-R_2)\tan\theta;
\eeq
Point~\eqref{i:rSmall} in Lemma \ref{lem:Phi} implies that for all $\delta>0$ there is $\theta_\delta<\pi/2$ such that for $\theta>\theta_\delta$, 
\beq\label{eq:Fri3}
-\int_{R_1}^{R_{\tr}}\Phi_\theta(r)\,dr \leq -\big(f(R_{\tr})-f(R_1+\delta)\big)\tan\theta.
\eeq
By \eqref{eq:Fri1}, for $R_{\tr}>R_1+ \delta$, the right-hand side of \eqref{e:relError1} is less than or equal to 
\beq\label{eq:Fri4}
C\exp\Big(-k\big( (2-\eta)c_\delta (R_{\tr}-R_1-\delta)\tan \theta - 2\Lambda(P,J)\big)\Big)
\eeq
for some $c_\delta>0$; analogous bounds follow using~\eqref{eq:Fri2} and \eqref{eq:Fri3}. 
These bounds show that the error between $u^S$ and $v^S$ decreases exponentially in the frequency, the PML width, and the tangent of the scaling angle.
 In particular, the result \eqref{e:relErrorNew} follows from using \eqref{eq:Fri4} in \eqref{e:relError1}.

An example $f$ that satisfies~\eqref{e:fProp} is the piecewise degree-eight polynomial
\begin{equation}
\label{e:8degrees}
 f(r)=r\bigg(\int_{R_1}^r (t-R_1)^3 (R_2-t)^31_{[R_1,R_2]}(t) \, dt\bigg)\bigg(\int_{R_1}^{R_2} (t-R_1)^3 (R_2-t)^3 \, dt\bigg)^{-1};
\end{equation}
see \cite[\S2]{BrPa:07}. See Figure~\ref{fig:Phi2} for plots of $\Phi_\theta(r)$ and its integral in this case.

\subsection{The main results for black-box scattering}\label{sec:introBB}

We now describe our results for black-box operators, namely, operators that are equal to the Laplacian outside a ball and are equal to some self-adjoint operator inside the ball; see \S\ref{s:blackBox} for a careful definition of these operators and associated notation. Black-box operators (a.k.a.~black-box Hamiltonians) include examples such as
scattering by Dirichlet, Neumann, and penetrable obstacles, and scattering by inhomogeneous media. Let $R_0>0$ and $P:\mc{D}\to \mc{H}$ be a black-box operator equal to minus the 
Laplacian outside $B(0,R_0)$ (i.e., $B(0,R_0)$ contains the scatterer); 
here $\mc{D}$ is the domain of the operator (see \eqref{e:blackBox}) and $\mc{H}$ is a Hilbert space coinciding with $L^2$ outside $B(0,R_0)$ (see \eqref{e:hilbert}).
Let $\chi \in C_c^\infty(\mathbb{R}^d)$ with $\chi \equiv 1$ on $B(0,R_0)$. Then, by~\cite[Theorem 4.4]{DyZw:19} (see Proposition~\ref{p:cutoff}), the cutoff resolvent
$$
\chi (P-\lambda^2)^{-1}\chi :\mc{H}\to \mc{D},\qquad -\tfrac{\pi}{2}<\Arg(\lambda)<\tfrac{3\pi}{2},
$$
is meromorphic with finite rank poles. Let $R_P(\lambda):=(P-\lambda^2)^{-1}$. 

The analogue of~\eqref{e:Lambda1} in the black box setting is 
\beq\label{eq:LambdaPJ}
\Lambda(P,J):=\limsup_{\substack{k\to \infty\\k\in J}}\frac{1}{k}\log \|\chi R_P(k)\chi\|_{\mc{H}\to \mc{H}}\in[0,\infty].
\eeq

Many black-box Hamiltonians satisfy $\Lambda(P)<\infty$. They include 
scattering by Dirichlet, Neumann, and penetrable obstacles with smooth boundaries, and scattering by inhomogeneous media with smooth wavespeeds (see \S\ref{s:examples} for details). In addition, for \emph{all} black-box Hamiltonians satisfying a polynomial bound on the number of eigenvalues of the reference operator (see, e.g.,~\cite[Equation 4.3.10]{DyZw:19}) and all $\delta>0$, there is a set $J\subset \mathbb{R}$ with $|\mathbb{R}\setminus J|\leq \delta$ such that $\Lambda(P,J)=0$; see \cite[Theorem 1.1]{LaSpWu:20} or (under an additional assumption about how close the resonances can be to the real axis)~\cite[Proposition 3]{St:01}.

 Let $R_{\tr}>R_1>R_0$ and $\Omega_{\tr}$ bounded and open with Lipschitz boundary such that $B(0,R_{\tr})\subset \Omega_{\tr}$, and define $\theta_0(P,J,R_{\tr})$ as in~\eqref{eq:theta0}. We define the complex-scaled operator $P_\theta$  corresponding to a black-box Hamiltonian as in~\eqref{e:deltaTheta} (for the more general setup, see~\eqref{e:GammaTheta}).
We then study the difference between the solutions
\begin{equation}
\label{e:PMLBox}
(P_\theta-k^2)v=\rhs,\qquad v|_{\Gamma_{\tr}}=0
\end{equation}
and 
\begin{equation}
\label{e:PBox}
(P-k^2)u=\rhs,\qquad  (\partial_r-ik)u=o(r^{\frac{1-d}{2}}) \tas r\to \infty.
\end{equation}
\begin{theorem}
\label{t:blackBoxErr}
Let $J\subset \mathbb{R}$ and $P$ be a black-box Hamiltonian with $\Lambda(P,J)<\infty$. Let $\chi \in C_c^\infty(B(0,R_1))$ with $\chi \equiv 1$ in a neighbourhood of $B(0,R_0)$, and $\eta,\e>0$. Then there are $C,k_0>0$ (independent of $R_{\tr}$ and $R_1$) such that for all $R_{\tr}>R_1+\e$, $B(0,R_{\tr})\subset \Omega_{\tr}\subset \mathbb{R}^d$ with Lipschitz boundary, $\theta_0(P,J,R_{\tr})+\e<\theta<\pi/2-\e$, $\widetilde{\rhs}\in \mc{H}$, $k>k_0$, and $k\in J$, 
the solution $v$ of \eqref{e:PMLBox} with $\rhs=\chi\widetilde \rhs$ exists, is unique, and satisfies
\beq\label{e:blackBoxErr}
\|\chi( u-v)\|_{\mc{D}}+\|(1-\chi)(u-v)\|_{H^2(B(0,R_1))}\leq C
\exp\bigg(-k\Big((2-\eta)\int_{R_1}^{R_{\tr}}\Phi_\theta(r)\,dr-2\Lambda(P,J)\Big)\bigg)
\|\widetilde \rhs\|_{\mc{H}},
\eeq
where $u$ is the solution of ~\eqref{e:PBox} with $\rhs=\chi\widetilde \rhs$.
\end{theorem}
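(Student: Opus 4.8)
The plan is to compare $u$ and $v$ by realizing both as restrictions of functions built from the same complex-scaled resolvent, and to quantify the PML truncation error through an exponentially small remainder governed by the weight $\Phi_\theta$. First I would recall the standard complex-scaling picture (as in \cite[\S4.5]{DyZw:19}): the outgoing solution $u$ of \eqref{e:PBox} agrees, inside $B(0,R_1)$, with $R_{P_\theta}(k)f$ where $R_{P_\theta}(k)=(P_\theta-k^2)^{-1}$ is the untruncated scaled resolvent, and this identification is valid precisely because $f=\chi\widetilde f$ is supported where the scaling is trivial and $\chi\equiv 1$. The key analytic input is then a high-frequency bound on $\|R_{P_\theta}(k)\|$ away from the truncation boundary: one must show that the exponential growth of the cutoff resolvent $\|\chi R_P(k)\chi\|\leq Ce^{(\Lambda(P,J)+\eta)k}$ (which holds for $k\in J$, $k$ large, by definition of $\Lambda(P,J)$ in \eqref{eq:LambdaPJ}) transfers to the scaled operator, and moreover that propagating out through the PML region costs a factor $\exp(-k\int_{R_1}^{r}\Phi_\theta)$ in $L^2$. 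This last point is where the function $\Phi_\theta$ enters: in spherical harmonics the radial ODE associated with $\Delta_\theta$ has, for each angular eigenvalue $t\geq 0$, a WKB/Liouville–Green solution whose modulus decays like $\exp\big(-k\int |\Im((1+if_\theta')\sqrt{1-t/(r+if_\theta)^2})|\big)$, and taking the infimum over $t$ gives exactly $\Phi_\theta$ in \eqref{e:weightDer}; a Carleman-type or direct energy estimate with weight $e^{k\phi}$, $\phi'(r)=\Phi_\theta(r)$, packages this uniformly in $t$.

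With these ingredients, I would construct a quasimode: let $w:=R_{P_\theta}(k)f$ (untruncated) and let $\chi_{\tr}$ be a cutoff equal to $1$ on $B(0,R_1)$ and supported in $\Omega_{\tr}$; then $(P_\theta-k^2)(\chi_{\tr}w)=f+[P_\theta,\chi_{\tr}]w$, and the commutator is supported in an annulus near $\Gamma_{\tr}$ where, by the decay estimate above, $\|w\|\lesssim \exp\!\big(-k\int_{R_1}^{R_{\tr}}\Phi_\theta\big)\,e^{(\Lambda+\eta)k}\|\widetilde f\|_{\mc H}$. Applying the truncated solution operator $(P_\theta-k^2)^{-1}$ with Dirichlet condition on $\Gamma_{\tr}$ to this remainder — here one needs an a priori bound on the truncated resolvent, which by Proposition~\ref{p:cutoff}-type arguments is again controlled by $e^{(\Lambda+\eta)k}$ up to another such factor, costing the extra copies of $\Lambda$ seen in the exponent — yields $v-\chi_{\tr}w$ small, hence $u-v$ small on $B(0,R_1)$. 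Bookkeeping the three resolvent factors (one to produce $w$, one in the decay estimate losing constants near $R_{\tr}$, one to invert the truncated problem) gives the $3\Lambda(P,J)$ term, and the factor $2-\eta$ arises because the remainder is estimated at $R_{\tr}$ while the final norm is taken at $R_1$, effectively doubling the gain $\int_{R_1}^{R_{\tr}}\Phi_\theta$ minus an $\eta$-loss from the WKB error terms; the $\|\chi(u-v)\|_{\mc D}+\|(1-\chi)(u-v)\|_{H^2}$ norm on the left is recovered by elliptic regularity for $P$ and $P_\theta$ away from $\Gamma_-$ or the black-box region, since $u-v$ solves a homogeneous equation there.

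The main obstacle I anticipate is making the decay estimate through the PML uniform in the angular parameter $t\geq 0$ and in $\theta$ near $\pi/2$, with constants independent of $R_{\tr}$ and $\Omega_{\tr}$: the naive WKB expansion degenerates near turning points $t\approx(r+if_\theta(r))^2$, and one must either use an exact Airy-type comparison or, more robustly, a weighted energy (Carleman) argument with the weight $\phi'=\Phi_\theta$ chosen so that the conjugated operator $e^{k\phi}(P_\theta-k^2)e^{-k\phi}$ has a favorable sign on its imaginary part for \emph{all} $t$ simultaneously — this is exactly what the infimum defining $\Phi_\theta$ is engineered to guarantee, but verifying the positivity (and the needed $C^3$ regularity of $f_\theta$ to control commutator/error terms of the right order in $k$) is the delicate computation. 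A secondary difficulty is the low regularity of $\Gamma_{\tr}$ (only Lipschitz), which forces the truncated-resolvent a priori bound and the elliptic regularity step to be done via variational/$H^1$ arguments rather than microlocally; this is routine but must be handled with care so that no hidden dependence on $\Omega_{\tr}$ creeps into $C$ and $k_0$.
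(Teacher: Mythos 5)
Your high-level strategy is sound and close to the paper's: identify $u$ with the untruncated scaled resolvent $R_{P_\theta}(k)f$ on $B(0,R_1)$, prove that solutions decay exponentially across the PML via a Carleman/WKB-type estimate with weight built from $\Phi_\theta$, transfer the cutoff-resolvent bound to the scaled and truncated resolvents, and close the argument. You also correctly identify the two headline features of the exponent: the factor $2-\eta$ and the $3\Lambda(P,J)$ penalty. However, the mechanism that produces the factor $2-\eta$ is asserted rather than derived, and as written the argument would fail at that step.

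The gap is in the sentence ``the remainder is estimated at $R_{\tr}$ while the final norm is taken at $R_1$, effectively doubling the gain.'' This doubling is not automatic. After constructing the commutator remainder $g:=[\Delta_\theta,\chi_{\tr}]w$, which is indeed of size $\exp(-k(1-\eta)\int_{R_1}^{R_{\tr}}\Phi_\theta)\,e^{\Lambda k}\|\widetilde f\|$, you propose to apply the truncated resolvent $(P^D_\theta-k^2)^{-1}$ and invoke its $\mc H\to\mc H$ bound $\lesssim g(k)\sim e^{\Lambda k}$. But that bound is spatially blind; it controls $\|(P^D_\theta-k^2)^{-1}g\|_{L^2(\Omega_{\tr})}$ and says nothing about the solution being smaller near $R_1$ than near $\Gamma_{\tr}$. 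So this step only yields a \emph{single} copy of $\int_{R_1}^{R_{\tr}}\Phi_\theta$ in the exponent, giving $(1-\eta)$ rather than $(2-\eta)$. What is missing is a \emph{two-sided} Carleman estimate (Lemma~\ref{l:carleman2} in the paper): solutions of $(-\hbar^2\Delta_\theta-1)y=g$ with $g$ supported near $\Gamma_{\tr}$ are controlled at an intermediate radius $R_1+s$ by their values near $R_1$ damped by $\exp(-k(1-\eta)\int_{R_1}^{R_1+s}\Phi_\theta)$ \emph{and} their values near $\Gamma_{\tr}$ (including $g$) damped by $\exp(-k(1-\eta)\int_{R_1+s}^{R_{\tr}}\Phi_\theta)$. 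You only invoke the ``forward'' decay (from $R_1$ outward); the backward propagation estimate, which is what actually gives the second copy of $\int\Phi_\theta$, is a separate statement that must be proved with a different choice of Carleman weight (decreasing on one side of $R_1+s$ and increasing on the other).

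A related bookkeeping error: you attribute the $3\Lambda$ to ``three resolvent factors (one to produce $w$, one in the decay estimate losing constants near $R_{\tr}$, one to invert the truncated problem).'' The Carleman/decay estimate does not cost a factor of $g(k)$: its constants are uniform in $k$ and $R_{\tr}$. In the paper's argument there are only two resolvent applications, contributing $2\Lambda$; the third $\Lambda$ arises from the absorption step, because the intermediate radius $R_1+s$ in the two-sided Carleman estimate must satisfy $(1-\eta)\int_{R_1}^{R_1+s}\Phi_\theta>\Lambda(P,J)$ in order to absorb the self-referencing $\|u-v\|_{H^1_{\hbar}(R_1,R_1+\widetilde{\eta})_r}$ term on the right-hand side into the left. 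This constraint shaves off roughly $\Lambda$ from the $(2-\eta)\int_{R_1}^{R_{\tr}}\Phi_\theta$ gain, which is exactly the third copy of $\Lambda$. If you want to carry through your quasimode gluing (rather than the paper's direct application of the two-sided estimate to $\chi_1(u-v)$, which avoids the extra $\chi_{\tr}$ cutoff manipulations), the same two-sided Carleman estimate and absorption argument must be supplied to make the doubling and the $3\Lambda$ rigorous.
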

One ingredient of the proof of Theorem~\ref{t:blackBoxErr} is the following resolvent estimate for~\eqref{e:PMLBox}.
\begin{theorem}
\label{t:blackBoxResolve}
Let $J\subset \mathbb{R}$, $P$ be a black-box Hamiltonian with $\Lambda(P,J)<\infty$, $\chi \in C_c^\infty(B(0,R_1))$ with $\chi \equiv 1$ in a neighbourhood of $B(0,R_0)$, and $\e>0$. Then there are $C,k_0>0$ 
 (independent of $R_{\tr}$ and $R_1$) such that 
the following holds. For all $R_{\tr}>R_1+\e$, $B(0,R_{\tr})\subset  \Omega_{\tr}\Subset \mathbb{R}^d$ with Lipschitz boundary,  $\theta_0(P,J,R_{\tr})+\e<\theta<\pi/2-\e$, all $\rhs\in \mc{H}$ with $\supp \rhs\subset \Omega_{\tr}$, all $k>k_0$, and $k\in J$, the solution $v$ to \eqref{e:PMLBox} exists, is unique, and satisfies
\beq\label{e:blackBoxResolve}
\|v\|_{\mc{H}(\Omega_{\tr})}+k^{-2}\|v\|_{\mc{D}(\Omega_{\tr})}\leq C\|\chi R_P(k)\chi\|_{\mc{H}\to \mc{H}}\|\rhs\|_{\mc{H}},
\eeq
where $\mc{H}(\Omega_{\tr})$ and $\mc{D}(\Omega_{\tr})$ are defined in \eqref{e:defPMLa}.
\end{theorem}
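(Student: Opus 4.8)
The plan is to prove Theorem~\ref{t:blackBoxResolve} by a perturbation/contradiction argument that reduces the PML resolvent estimate to the known exponential bound $\|\chi R_P(k)\chi\|\leq e^{k\Lambda(P,J)+o(k)}$ on the true resolvent, via a complex-scaling comparison. The starting point is the standard identification of the complex-scaled operator $P_\theta$ with an analytic family: outside $B(0,R_1)$ the operator $P_\theta$ is the Laplacian in the scaled radial variable $r+if_\theta(r)$, and one has a well-developed calculus (see \cite[\S4.5]{DyZw:19}) relating $P_\theta-k^2$ to $P-k^2$. Concretely, I would first establish a \emph{semiclassical elliptic estimate in the scaling region}: on $R_1<|x|<R_{\tr}$ the operator $P_\theta-k^2$ is, after conjugation by the exponential weight $e^{k\int_{R_1}^{r}\Phi_\theta}$, elliptic up to an $O(k^2)$-bounded error supported near $r=R_1$; this is exactly where the functions $\Phi_\theta$ and the threshold $\theta_0(P,J,R_{\tr})$ from \eqref{e:weightDer}--\eqref{eq:theta0} enter, since $\Phi_\theta$ measures the imaginary part of the scaled symbol $(1+if_\theta')\sqrt{1-t/(r+if_\theta)^2}$, i.e. the rate at which the conjugated operator gains ellipticity. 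The outcome of this step is a Carleman-type bound controlling $v$ in $R_1+\e<|x|<R_{\tr}$ (including the Dirichlet datum on $\Gamma_{\tr}$) by $e^{-k\int_{R_1}^{R_{\tr}}\Phi_\theta}$ times $\|v\|$ on a slightly larger set together with $\|f\|$.

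Second, I would relate $v$ on the inner region $\{|x|<R_1\}$ to the true resolvent. The key algebraic fact is that on $B(0,R_1)$ the scaled operator $P_\theta$ coincides with $P$ (the scaling function $f_\theta$ vanishes there by \eqref{e:fProp}/\eqref{e:fPropA}), so $(P-k^2)v = f + [\text{commutator terms localised in the scaling region}]$. Writing $v = \chi_1 v + (1-\chi_1)v$ for a cutoff $\chi_1$ that is $1$ on $B(0,R_1-\e/2)$ and supported in $B(0,R_1)$, applying $R_P(k)$, and using that $\chi R_P(k)\chi$ is the quantity appearing in $\Lambda(P,J)$, gives
\begin{equation*}
\|v\|_{\mc{H}(B(0,R_1))}\leq C\|\chi R_P(k)\chi\|_{\mc{H}\to\mc{H}}\Big(\|f\|_{\mc{H}} + k^2\|v\|_{\mc{H}(R_1-\e<|x|<R_1)}\Big) + (\text{free-resolvent terms on }|x|>R_1).
\end{equation*}
The annular term on the right is then estimated by the Carleman bound from the first step, which produces the gain $e^{-k\int_{R_1}^{R_{\tr}}\Phi_\theta}$; since $\theta>\theta_0(P,J,R_{\tr})+\e$ this gain beats $e^{k\Lambda(P,J)}$ by a definite exponential rate, so the annular contribution can be absorbed. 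Combining the inner and outer estimates and a standard elliptic upgrade from $\mc{H}$ to $\mc{D}$ (costing the $k^{-2}\|v\|_{\mc D}$ term, which is why it appears with that weight) yields \eqref{e:blackBoxResolve}.

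Third, to make the absorption rigorous I would run the whole argument by contradiction in the high-frequency limit: suppose $v_j$ solve \eqref{e:PMLBox} with $k_j\to\infty$, $k_j\in J$, $\|f_j\|=1$, but $\|v_j\|_{\mc H}+k_j^{-2}\|v_j\|_{\mc D} \geq j\,\|\chi R_P(k_j)\chi\|$. Normalise; the Carleman estimate forces the normalised solution to be exponentially small in the scaling region, hence (after the inner estimate) its mass concentrates where $\chi\equiv1$ and it behaves like $\chi R_P(k_j)\chi$ applied to something of bounded norm — contradicting the normalisation for $j$ large. The advantage of the contradiction framing is that it lets me replace $\|\chi R_P(k)\chi\|$ by $e^{k\Lambda(P,J)+o_J(k)}$ cleanly and avoid tracking constants.

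The main obstacle I expect is the first step — the Carleman/exponential-weight estimate in the scaling region with the \emph{sharp} weight $\int_{R_1}^{R_{\tr}}\Phi_\theta$ and only $C^3$ regularity of $f_\theta$. One has to conjugate $P_\theta-k^2$ by $e^{k\varphi}$ with $\varphi'=\Phi_\theta$, check that the conjugated semiclassical principal symbol is elliptic away from a neighbourhood of $r=R_1$ (this is precisely the content of the definition of $\Phi_\theta$ as an infimum over the "angular frequency" parameter $t$, together with Lemma~\ref{lem:Phi}), handle the glancing/tangential regime $t\approx (r+if_\theta)^2$ where the square root degenerates, and control the boundary terms at $\Gamma_{\tr}$ produced by the Dirichlet condition — all while the coefficients are merely $C^3$, so one is limited to a first-order (not full pseudodifferential) commutator argument and must be careful that the error terms are genuinely $O(k^2)$ and supported near $r=R_1$, not spread through the layer. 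A secondary technical point is that $\Omega_{\tr}$ is a general Lipschitz domain containing $B(0,R_{\tr})$, not a ball, so the exponential-smallness in the layer must be proved in the ball $B(0,R_{\tr})$ first and then propagated to $\Omega_{\tr}\setminus B(0,R_{\tr})$ by a (scaled, hence lossless since $f_\theta\equiv r\tan\theta$ there) Caccioppoli/energy argument up to the Dirichlet boundary.
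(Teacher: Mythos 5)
Your Step 2 has a genuine gap that I do not think can be repaired as stated. You propose to cut $v$ off by $\chi_1$ with $\chi_1\equiv 1$ on $B(0,R_1-\e/2)$ and $\supp\chi_1\subset B(0,R_1)$, and then to write $\chi_1 v = R_P(k)\big(\chi_1 f + [-\Delta,\chi_1]v\big)$. The commutator $[-\Delta,\chi_1]v$ is then supported in the annulus $R_1-\e/2<|x|<R_1$, which lies entirely in the \emph{unscaled} region $\{|x|<R_1\}$ where $f_\theta\equiv 0$. The Carleman estimate from your Step 1 produces exponential smallness only as one moves \emph{into} the scaling region $\{|x|>R_1\}$; it says nothing about $v$ on $R_1-\e/2<|x|<R_1$. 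So the ``annular term'' in your displayed inequality cannot acquire the factor $e^{-k\int_{R_1}^{R_\tr}\Phi_\theta}$ you need, and since it is multiplied by $Ck^2\|\chi R_P(k)\chi\|\gtrsim k$, it cannot be absorbed into the left-hand side. Pushing $\chi_1$ outward so that the commutator lands in the scaling region does not help either: once $\supp\chi_1\not\subset B(0,R_1)$ one has $P_\theta\neq P$ on $\supp\partial\chi_1$, so $(P-k^2)\chi_1 v$ is no longer equal to the commutator expression and applying $R_P(k)$ is not justified.

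The missing ingredient is precisely Theorem~\ref{t:scaledResolve} (equivalently Lemma~\ref{l:truncToScaled}): a bound on the \emph{full-space scaled resolvent} $\|(P_\theta-k^2)^{-1}\|_{\mc H\to\mc H}$ by $C\|\chi R_P(k)\chi\|_{\mc H\to\mc H}$. With this in hand, the paper's proof cuts off with $\chi_1\in C_c^\infty(B(0,R_\tr))$, $\chi_1\equiv 1$ on $B(0,R_\tr-2\widetilde\eta)$ --- i.e.\ near $\Gamma_\tr$, where the commutator $[-\Delta_\theta,\chi_1]v$ lives precisely where the Carleman estimate (combined with Lemma~\ref{l:nearBoundary}, which handles the Lipschitz $\Gamma_\tr$ by integration by parts) gives the gain $e^{-k(1-\eta)\int_{R_1}^{R_\tr}\Phi_\theta}$. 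One then applies $(P_\theta-k^2)^{-1}$, not $R_P(k)$: the scaled operator $P_\theta-k^2$ is invertible on all of $\mc H$ for large real $k$ (by the Fredholm theory of \S\ref{s:scaling} and Appendix~\ref{a:scale}), and its inverse satisfies the same polynomial-in-$k$ bound as $\|\chi R_P(k)\chi\|$, which $R_P(k)$ itself does \emph{not} (for real $k$ the unscaled resolvent is only meromorphic $\mc H_{\comp}\to\mc D_{\loc}$). Without Theorem~\ref{t:scaledResolve} this chain of estimates does not close, and establishing it requires a separate argument (the parametrix in the proof of Lemma~\ref{l:truncToScaled}, using the nontrapping scaled free resolvent of Theorem~\ref{t:nontrappingScale}). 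The contradiction/normalisation device in your Step~3 is a matter of presentation and does not affect these issues; the paper runs the same absorption directly.
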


Another ingredient of the proof of Theorem~\ref{t:blackBoxErr} that may be of independent interest is that a bound on the cutoff resolvent $\chi R_P\chi$ implies the same bound on the scaled resolvent. 
\begin{theorem}
\label{t:scaledResolve}
Suppose $\chi \in C_c^\infty(B(0,R_1))$ with $\chi \equiv 1$ in a neighbourhood of $B(0,R_0)$. Then, there are $C,k_0>0$ such that for $k>k_0$, $(P_\theta-k^2)^{-1}: \mc{H}\to \mc{D}$ exists and satisfies
$$
\|(P_\theta-k^2)^{-1}\|_{\mc{H}\to \mc{H}}+k^{-2}\|(P_\theta-k^2)^{-1}\|_{\mc{H}\to \mc{D}}\leq C\|\chi R_P(k)\chi\|_{\mc{H}\to \mc{H}}.
$$
\end{theorem}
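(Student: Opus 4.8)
The plan is to deduce Theorem~\ref{t:scaledResolve} from Theorem~\ref{t:blackBoxResolve} by taking $\Omega_{\tr}$ to be a large ball and exploiting that, for a \emph{fixed} choice of scaling data, the hypothesis $\theta > \theta_0(P,J,R_{\tr})+\e$ can always be arranged. More precisely, I would first observe that in Theorem~\ref{t:scaledResolve} the operator $P_\theta$ is defined on all of $\mc{H}$ (complex-scaled near infinity, as in~\eqref{e:GammaTheta}), rather than on a truncated domain $\Omega_{\tr}$; so the statement is really about the \emph{non-truncated} complex-scaled resolvent. The natural route is therefore: (i) prove the estimate for a truncated operator on a large ball $B(0,R_{\tr})$ via Theorem~\ref{t:blackBoxResolve}, and (ii) pass to the limit $R_{\tr}\to\infty$, or alternatively run the same commutator/parametrix argument that underlies Theorem~\ref{t:blackBoxResolve} directly on the infinite-width layer, where the far-field PML region plays the role of an absorbing boundary at infinity. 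Since the problem allows me to assume everything stated earlier, I would phrase the argument so it uses Theorem~\ref{t:blackBoxResolve} as a black box together with a uniform-in-$R_{\tr}$ control of the solution.

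The key steps, in order. \textbf{Step 1.} Fix $\chi$ as in the statement and fix once and for all a scaling function $f$ satisfying~\eqref{e:fProp} and a scaling angle $\theta\in(0,\pi/2)$. Note $P_\theta$ equals $-\Delta$ (unscaled) on $B(0,R_1)$, so $\supp\chi$ lies in the unscaled region and $\chi$ makes sense as a multiplier on $\mc{H}$ and $\mc{D}$. \textbf{Step 2.} Choose $R_{\tr}$ large enough that $\int_{R_1}^{R_{\tr}}\Phi_\theta(r)\,dr > \Lambda(P,J)$, which forces $\theta_0(P,J,R_{\tr})<\theta$; since $f\equiv r$ for $r\geq R_2$, Lemma~\ref{lem:Phi}\eqref{i:evLinear} gives $\Phi_\theta(r)=f_\theta'(r)=\tan\theta$ there, so the integral grows linearly and such an $R_{\tr}$ exists. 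Then Theorem~\ref{t:blackBoxResolve} applies on $\Omega_{\tr}=B(0,R_{\tr})$ and yields
\[
\|v\|_{\mc{H}(\Omega_{\tr})}+k^{-2}\|v\|_{\mc{D}(\Omega_{\tr})}\leq C\|\chi R_P(k)\chi\|_{\mc{H}\to\mc{H}}\|f\|_{\mc{H}}
\]
for $v$ solving the truncated problem~\eqref{e:PMLBox}. \textbf{Step 3.} Compare the truncated solution $v=v_{R_{\tr}}$ with the solution $w$ of $(P_\theta-k^2)w=f$ on the whole scaled space. The difference $e:=w-v_{R_{\tr}}$ solves $(P_\theta-k^2)e=0$ on $\Omega_{\tr,+}$ with $e|_{\Gamma_{\tr}}=w|_{\Gamma_{\tr}}$; using the exponential decay of scaled solutions in the PML region (which is exactly the mechanism quantified by~\eqref{e:blackBoxErr} / the weight $\Phi_\theta$), the boundary data $w|_{\Gamma_{\tr}}$ is exponentially small, hence $\|\chi e\|$, $\|(1-\chi)e\|_{H^2(B(0,R_1))}$ are negligible for $k$ large. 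Therefore the bound in Step 2 transfers to $w$. \textbf{Step 4.} Finally, combine: for arbitrary $g\in\mc{H}$, set $f=g$ and read off $\|(P_\theta-k^2)^{-1}g\|_{\mc{H}}+k^{-2}\|(P_\theta-k^2)^{-1}g\|_{\mc{D}}\leq C\|\chi R_P(k)\chi\|_{\mc{H}\to\mc{H}}\|g\|_{\mc{H}}$, which is the claim — note that since $R_{\tr}$, $\theta$, $f$ are now fixed, $C$ and $k_0$ depend only on $\chi$, as required. One must also check that $P_\theta-k^2$ is invertible for large $k$ in the first place; this follows because the scaled resolvent is meromorphic and the a priori estimate just derived rules out a kernel, so invertibility and the bound come together by the usual Fredholm argument.

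The main obstacle I anticipate is Step 3: making rigorous the claim that the whole-space scaled solution $w$ has exponentially small restriction to $\Gamma_{\tr}$, uniformly in the relevant parameters, and that the correction $e$ is controlled in the norms appearing on the left-hand side. This is essentially a Phragmén–Lindelöf / Carleman-type statement about $P_\theta$ in the far PML region $r\geq R_2$, where $P_\theta$ has the explicit absorptive form coming from the substitution $r\mapsto r(1+i\tan\theta)$; the decay rate is governed by $\int\Phi_\theta$, exactly as in Theorems~\ref{t:expEst}–\ref{t:blackBoxErr}. If instead one wants to avoid the truncation-to-infinity comparison entirely, the alternative — and probably cleaner — route is to run the commutator argument behind Theorem~\ref{t:blackBoxResolve} directly for $P_\theta$ on $\mc{H}$: localize with a cutoff equal to $1$ on $B(0,R_2)$, use ellipticity of $P_\theta-k^2$ in the scaled region to absorb the region $r\geq R_2$, and patch with the cutoff resolvent bound $\|\chi R_P(k)\chi\|$ on the unscaled core via the free resolvent's mapping properties (Proposition~\ref{p:cutoff}). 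Either way the analytic heart is the same exponential-weight estimate in the PML region; the rest is bookkeeping with cutoffs and elliptic regularity to recover the $\mc{D}$-norm with the stated $k^{-2}$ weight.
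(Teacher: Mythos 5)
Your primary route (Steps 1--4) is circular in the context of this paper. Theorem~\ref{t:scaledResolve} is proved in \S\ref{sec:32} as an immediate consequence of Lemma~\ref{l:truncToScaled}, and Lemma~\ref{l:truncToScaled} is then a key ingredient in the proof of Theorem~\ref{t:blackBoxResolve} in \S\ref{s:proofs1} (it is invoked explicitly, with $g(k)=\|\chi R_P(k)\chi\|_{\mc{H}\to\mc{H}}$, to bound $\|\chi_1 v\|_{\mc{H}}$ and $\|\chi_1 v\|_{\mc{D}}$). So deducing Theorem~\ref{t:scaledResolve} from Theorem~\ref{t:blackBoxResolve} inverts the logical dependence. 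Step~3 has the same problem at a finer level: to show that the whole-space scaled solution $w=(P_\theta-k^2)^{-1}f$ is exponentially small near $\Gamma_{\tr}$ you already need a bound on $w$ in terms of $f$, which is precisely what Theorem~\ref{t:scaledResolve} asserts; and controlling the Dirichlet correction $e$ would again require the truncated resolvent estimate you are trying to use.

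A second, independent issue is your claim that ``the analytic heart is the same exponential-weight estimate in the PML region.'' It isn't, for this theorem. The paper's proof of Theorem~\ref{t:scaledResolve} (Lemma~\ref{l:truncToScaled}) involves no Carleman or exponential-weight estimates at all: it is a cutoff/parametrix gluing argument. One writes $(P_\theta-k^2)^{-1}$ approximately as the free scaled resolvent $(-\Delta_\theta-k^2)^{-1}$ away from the black box (controlled with order-$k^{-1}$ loss by the nontrapping estimate, Theorem~\ref{t:nontrappingScale}) and as $R_P(k)$ near the black box, using the hypothesis $\|\chi R_P(k)\chi\|_{\mc{H}\to\mc{H}}\leq g(k)$ together with the identity $\chi(P_\theta-k^2)^{-1}\chi=\chi R_P(k)\chi$ from Proposition~\ref{p:scaledFredholmA}; the $\mc{D}$-norm with the $k^{-2}$ weight then follows from elliptic regularity. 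The exponential-weight (Carleman) machinery of \S\ref{s:carleman} only enters later, in Theorems~\ref{t:blackBoxResolve} and~\ref{t:blackBoxErr}, to control what happens near $\Gamma_{\tr}$. Your final ``alternative, probably cleaner route'' --- localize with a cutoff equal to $1$ near the black box, use ellipticity of $-\Delta_\theta-k^2$ in the scaled region, and patch with the cutoff resolvent bound on the unscaled core --- is in fact not an alternative but the correct and essentially the paper's argument; it should have been the primary route, and it does not require any truncation-to-infinity comparison.
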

We also point out that, although it follows the same ideas as the smooth case, complex scaling with $C^{2,\alpha}$ scaling functions as described in Appendix~\ref{a:scale} is new. While the assumption that the scaling function is $C^{2,\alpha}$ is essential for the analysis in Appendix~\ref{a:scale}, and the assumption that it is $C^3$ is used to prove resolvent bounds for the free problem via 
 defect measures, other methods of complex scaling exist, see e.g.~\cite{AgCo:71, Si:78,Si:79}, and apply to, e.g., piecewise linear scaling functions.

\begin{rem}
In numerical analysis, piecewise linear scaling functions of the form $f_\theta (r)=(r-R_1)_+ \tan \theta$ are often used (see \S\ref{s:numerical}). Although our theorems do not apply to this case, we now sketch the key ingredients needed to extend our estimates to this type of scaling function. First, define a modified scaling function $\widetilde{f}_\theta(r)$ satisfying (i) $\widetilde{f}_\theta(r)=f_\theta(r)$ on $r\leq R_{\tr}$, (ii) for some $R_3>R_{\tr}$, $\theta_1>\theta$, $\widetilde{f}_{\theta}(r)\equiv r\tan \theta_1 $ for $r>R_3$, (iii) $\widetilde{f}_\theta(r)$ satisfies~\eqref{e:fProp} on $\{r>R_1\}$, and (iv) $\widetilde{f}_\theta\in C^\infty(\{r>R_1\})$. 
We would then need two results: first, the nontrapping resolvent estimate for the free problem (i.e., the analogue of Theorem~\ref{t:nontrappingScale}) and second, agreement of the scaled resolvent and unscaled resolvent away from scaling (see Proposition~\ref{p:scaledFredholmA}). Provided one has these two results, the bounds in Theorems \ref{t:blackBoxErr} and \ref{t:blackBoxResolve} follow.
 \end{rem}

\subsection{Ideas and method of proof}
PML can be understood as an adaptation (used in numerical analysis) of the method of complex scaling, which originated with~\cite{AgCo:71,BaCo:71} and was developed in its modern form for black-box scatterers in~\cite{SjZw:91} (see \S\ref{s:scaling} or~\cite[\S 4.5]{DyZw:19} for an introductory treatment of the subject). In complex scaling, $\mathbb{R}^d$ is deformed to a submanifold, $\Gamma_\theta\subset \mathbb{C}^d$ in such a way that the radiating solutions of~\eqref{e:helmholtz} deform to $L^2$ bounded solutions, $u_\theta^S$,  of the deformed problem on $\Gamma_\theta:=\{x+if_\theta(|x|)\frac{x}{|x|}\}$:
\begin{equation}
\label{e:complexScale1}
\begin{cases}(-\Delta_{\Gamma_\theta}-k^2)u^S_\theta=0&\text{on }\Gamma_\theta\setminus \overline{\Omega_-}\\
Bu^S_\theta=-B\exp(ikx\cdot a)&x\in \Gamma_-.\\
\end{cases}
\end{equation}
Moreover this deformation has the property that $u^S_\theta|_{B(0,R_1)\setminus \overline{\Omega_-}}\equiv u^S|_{B(0,R_1)\setminus\overline{\Omega_-}}$. The PML equation~\eqref{e:PML} is then the Dirichlet truncation of~\eqref{e:complexScale1}.

Because $u^S_\theta$ and $u^S$ agree on $B(0,R_1)\setminus \overline{\Omega}_-$, we are able to prove Theorem~\ref{t:expEst} by comparing $u_\theta^S$ and $v^S$. The crucial fact (see \S\ref{s:carleman}) that leads to exponentially good estimates on the error between $u_\theta^S$ and $v^S$ is that  both $u_\theta^S$ and $v^S$ are \emph{exponentially decaying} in $R>R_1$ (both in $|x|$ and $k$). Thus, the boundary values for $u_\theta ^S$ on $\Gamma_{\tr}$ are exponentially small and one can expect that $u_\theta^S$ and $v^S$ are exponentially close. Combining these exponential estimates together with a basic elliptic estimate for $v^S$ near $\Gamma_{\tr}$ and bounds on the cutoff resolvent for~\eqref{e:complexScale1}, we can complete the proof of Theorem~\ref{t:expEst}. Naively, this argument leads to an exponential improvement $\approx k \int_{R_1}^{R_{\tr}}\Phi_\theta(r)\,dr $. To obtain the rate $\approx 2k\int_{R_1}^R\Phi_\theta(r)\,dr$, one must then use that errors near the truncation boundary only propagate with exponential damping toward $R_1$. This leads to the second factor in our bound; {see the discussion in the caption of Figure \ref{fig:idea}.}

To understand the appearance of the function $\Phi_\theta(r)$, we recall that the semiclassical principal symbol of $-\hbar^2\Delta_\theta-1$ (where $\hbar:=1/k$) is
$$
p(r,\xi_r,\omega,\xi_\omega):=\Big(\frac{\xi_r}{1+if_\theta'(r)}\Big)^2+\frac{|\xi_\omega|_{S^{d-1}}^2}{(r+if_\theta(r))^2}-1.
$$
Replacing $\xi_r$ by the corresponding operator $\hbar D_r$, $(D_r:=-i\partial_r)$, one obtains a family of ODEs in $r$ depending on $|\xi_\omega|_{S^{d-1}}^2$. The infinitesimal growth/decay of the two possible solutions to this ODE at a point $r$ is then given by the imaginary part of the roots, $s_+$ and $s_-$, of the polynomial $\xi_r\mapsto p(r,\xi_r,\omega,\xi_\omega)$. The function $\Phi_\theta(r)$ is then given by
\beq\nonumber
\Phi_\theta(r)= \inf_{|\xi_\omega|\geq 0} \min \big\{
|\Im s_+|, |\Im s_-|
\big\};
\eeq
thus it is the smallest possible decay obtained in this way (see Lemma \ref{l:carleman1} for more details).

 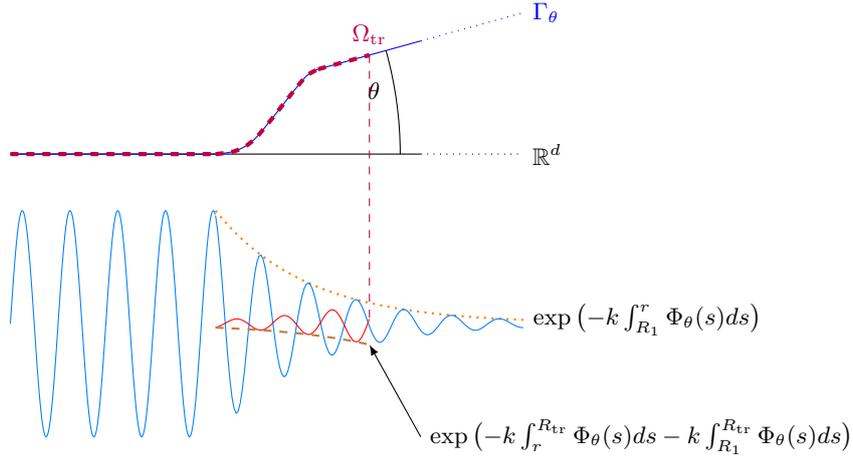
\begin{figure}
 \begin{tikzpicture}
\def \w{1.8};
\def \h{1};
\def \rate{1.25}
\begin{scope}[scale=1.5]
\draw [dotted]({2*\w},0)--({2.5*\w},0)node[right]{$\mathbb{R}^d$};
\draw (0,0)--({2*\w},0);
\draw[blue] (0,0)--({1*\w},0);
\draw [blue]plot[smooth]coordinates{({1*\w},0)({1.1*\w},.03)({1.2*\w},.15)({1.4*\w},.6)({1.45*\w},.7)  ({1.5*\w},.75)};
\draw [blue,dotted]({2*\w},1)--({2.5*\w},1.25)node[right]{$\Gamma_\theta$};
\draw [blue]({1.5*\w},.75)--({2*\w},1);
\draw ({1.9*\w},0) arc(0:atan(1/(2*\w)):{1.9*\w});
\node at (10:{1.8*\w}){$\theta$};
\draw[purple, ultra thick,dashed] (0,0)--({1*\w},0);
\draw [purple, ultra thick,dashed]plot[smooth]coordinates{({1*\w},0)({1.1*\w},.03)({1.2*\w},.15)({1.4*\w},.6)({1.45*\w},.7)  ({1.5*\w},.75)};
\draw [purple, ultra thick,dashed]({1.5*\w},.75)--({1.75*\w},.875)node[above]{$\Omega_{\tr}$};
{\draw [purple, dashed] ({1.75*\w},.875)--({1.75*\w},-1.5);}
{\draw[color=orange,dotted, thick, domain=1*\w:2.5*\w,samples=50]   plot (\x,{exp(-\rate*(\x-\w))-1.5})   node[right] {\textcolor{black}{$\exp\big({-k\int_{R_1}^r{\Phi_{\theta}}(s)ds}\big)$}};}
{\draw[color=azul, domain=0:1*\w,samples=200]   plot (\x,{\h*sin( (15 *\x)  r )-1.5})   node[right] {};}
{\draw[color=azul, domain=1*\w:2.5*\w,samples=150]   plot (\x,{(\h*sin( (15 *\x)  r ))*exp(-\rate*(\x-\w))-1.5})   node[right] {};}
{\draw[color=red, domain=1*\w:1.75*\w,samples=150]   plot (\x,{(\h*sin( (15 *(\x+1.75*\w))  r ))*exp(-\rate*.75*\w)*exp(\rate*(\x-1.75*\w))-1.5})   node[right] {};}
{\draw[color=brown, thick, dashed, domain=1*\w:1.75*\w,samples=50]   plot (\x,{(-1*exp(-\rate*.75*\w)*exp(\rate*(\x-1.75*\w))-1.5})   node[below] {};}
{\draw[->] ({2*\w},-2.5) node[right] {\textcolor{black}{$\exp\big({-k\int_{r}^{R_{\tr}}{\Phi_{\theta}}(s)ds-k\int_{R_1}^{R_{\tr}}{\Phi_\theta}(s)ds}\big)$}}--({1.75*\w},{-1*exp(-\rate*.75*\w)-1.5});}
\end{scope}
\end{tikzpicture}
 \caption{The figure shows a wave $u_\theta^S$ (in blue) propagating toward $\Gamma_{\tr}$ from near the obstacle $\Omega_-$.  The wave $u^S_\theta$ decays exponentially as it enters the scaling region (where $\Gamma_\theta\neq \mathbb{R}^d$); this exponential decay is shown in the orange dotted line. The wave $v_\theta$ then reflects off $\Gamma_{\tr}$. There are two possible solutions: one exponentially growing towards the interior and one exponentially decaying towards the interior. Fortunately, the solution exponentially growing towards the interior corresponds to the exponentially decaying (away from the interior) $u^S_\theta$ and this solution does not produce an error. The exponentially decaying (towards the interior) part of $v_\theta$, however, does produce an error in the interior. This solution is again exponentially damped as it travels toward the interior; this solution is shown in red and the decay rate is shown by the brown dashed line. }\label{fig:idea}
 \end{figure}

\subsection{Immediate implications for the numerical analysis of the finite-element method with PML truncation}
\label{s:numerical}

There have been two recent papers on the $k$-explicit analysis of the $h$-version of the finite-element method (FEM) applied to the Helmholtz equation with PML truncation (recall that in the $h$-version of the FEM, convergence is achieved by decreasing the meshwidth $h$ whilst keeping the polynomial degree $p$ constant).
The paper \cite{LiWu:19} considers the Helmholtz equation in free space (i.e., with no scatterer) and 
$f_\theta(r)=\sigma_0 (r-R_1)_+$ (where $x_+= x$ for $x\geq 0$ and $=0$ for $x<0$). \cite{ChGaNiTo:18} considers the Helmholtz equation posed in the exterior  of a smooth, starshaped Dirichlet obstacle with $f_\theta(r) = r\widetilde{\sigma}/k$ with $\widetilde{\sigma}\in C^1$ (and independent of $k$).

For the $h$-FEM applied to the Helmholtz equation, a fundamental question is:~how must $h$ decrease with $k$ to maintain accuracy of the Galerkin solution as $k$ increases?
Both \cite{LiWu:19} and \cite{ChGaNiTo:18} prove that, for the PML problems they consider, the answer is the same as for the respective Helmholtz problems truncated with the exact outgoing Dirichlet-to-Neumann map.

Indeed, \cite[Theorem 4.4]{LiWu:19} proves 
that if the approximation spaces consist of piecewise linear polynomials and $hk^{3/2}$ is sufficiently small, then
the Galerkin approximation, $v_h$, to $v$ satisfying \eqref{e:PMLBox} (with $P_\theta=-\Delta_\theta$) exists, is unique, and satisfies
\beq\label{eq:FEM}
\|\nabla(v-v_h)\|_{L^2} + k \|v-v_h\|_{L^2} \leq C hk^{3/2} \|\rhs\|_{L^2}
\eeq
(cf.~the results in \cite{LaSpWu:19a} for the Helmholtz problem with the exact outgoing Dirichlet-to-Neumann map). Furthermore, with piecewise polynomial of degree $p$, if $h^p k^{p+1}$ is sufficiently small, then~\cite[Theorem 5.4]{ChGaNiTo:18} proves that, for the exterior Dirichlet problem with starshaped $\Omega_-$, the Galerkin solution exists, is unique, and satisfies a quasioptimal error estimate with quasioptimality constant independent of $k$  (cf.~the results in \cite{MeSa:10, MeSa:11} for the Helmholtz exterior Dirichlet problem truncated with the exact outgoing Dirichlet-to-Neumann map).
 
Combining the results in the present paper with the FEM analysis in \cite{LiWu:19}, we immediately have that the results of \cite{LiWu:19} (i.e., existence, uniqueness, and the error bound \eqref{eq:FEM} for the Galerkin solution when  $hk^{3/2}$ is sufficiently small) extend to the FEM solution of 
any of the Helmholtz problems in \S\ref{s:examples}, provided that (i) $f_\theta(r)$ satisfies the assumptions in \S\ref{s:1.2}, (ii)
\beqs
\|\chi R_P(k)\chi\|_{\mc{H}\to \mc{H}} \leq C/k \quad \tfa k\geq k_0
\eeqs
(which occurs, for example, when the problem is nontrapping) and (iii) the domain of the PML problem $\mc{D}(\Omega_\tr)$, defined by \eqref{e:defPMLa}, equals $H^2(\Omega_\tr)$. 
Indeed, Theorem \ref{t:blackBoxResolve} is a generalisation (modulo the differences in scaling functions) of \cite[Theorem 3.1]{LiWu:19} 
and Theorem \ref{t:blackBoxErr} is a generalisation of \cite[Theorem 3.7]{LiWu:19}.

The results in \cite{ChGaNiTo:18}, however, rely crucially on the fact that $f_\theta(r) \sim 1/k$ (e.g., the comparison with the sponge layer in \cite[\S5]{ChGaNiTo:18} fails if $f_\theta(r) \gg 1/k$); therefore, the results of the present paper cannot be combined with those in \cite{ChGaNiTo:18}.
  We expect the error in the PML solution when $f_\theta(r)\sim1/k$ to be only $O(1)$ as $k\tendi$. This is in contrast to the exponentially small error when $f_\theta(r)\sim 1$ (as shown in Theorem~\ref{t:blackBoxErr}).
  
Finally, we highlight that the results of the present paper are used in the follow-up paper \cite{GLSW1} to obtain results about the $hp$-FEM applied to the Helmholtz PML problem; these results are analogous to those obtained in \cite{LaSpWu:21} about the Helmholtz problem truncated with the exact outgoing Dirichlet-to-Neumann map.
 
\subsection{Outline of the paper} 
\S\ref{s:blackBox} recaps the framework of black-box scattering. 
\S\ref{s:scaling} recaps the method of complex scaling and proves 
Theorem \ref{t:scaledResolve}. 
\S\ref{s:elliptic} proves elliptic estimates in the scaling region.
\S\ref{s:proofs1} proves Theorems  \ref{t:blackBoxErr} and \ref{t:blackBoxResolve} (i.e., the main results in the black-box framework).
\S\ref{s:relError} proves the bound on the relative error in Theorem \ref{t:expEst} for the plane-wave scattering problem.
\S\ref{s:rough_res} proves the nontrapping estimate on the free resolvent for the scaled problem with $C^3$ scaling function.
\S\ref{a:scale} proves results about complex scaling with $C^{2,\alpha}$ scaling function.
\S\ref{app:SCA} recalls results from semiclassical analysis.
\S\ref{app:Phi} proves Lemma \ref{lem:Phi} (i.e., properties of $\Phi_\theta(r)$).

\subsection*{Acknowledgements} The authors thank Maciej Zworski for several helpful conversations and the anonymous referees for their constructive comments. JG was supported by EPSRC grant EP/V001760/1, and 
DL and EAS were supported by EPSRC grant EP/R005591/1.

\section{Black-box Hamiltonians}
\label{s:blackBox}

Throughout this paper we work in the setting of black-box Hamiltonians (see~\cite[\S 4.1]{DyZw:19}); we now review this notion.

 Let $\mc{H}$ be a complex Hilbert space with the orthogonal decomposition
\begin{equation}
\label{e:hilbert}
\mc{H}=\mc{H}_{R_0}\oplus L^2(\mathbb{R}^d\setminus B(0,R_0)),
\end{equation}
where $\mc{H}_{R_0}$ is an arbitrary Hilbert space.
We take the standard convention that if $\chi\in L^\infty(\mathbb{R}^d)$ with $\chi\equiv c_0\in \mathbb{C}$ on $B(0,R_0)$, then for $u\in \mc{H}$ with $u=u|_{B(0,R_0)}+u|_{\mathbb{R}^d\setminus B(0,R_0)}$ , $u|_{R_0}\in \mc{H}_{R_0}$, and $u|_{\mathbb{R}^d\setminus B(0,R_0)}\in L^2(\mathbb{R}^d\setminus B(0,R_0))$,
$$
\chi u=c_0\big(u|_{B(0,R_0)}\big)+(\chi|_{\mathbb{R}^d\setminus B(0,R_0)})\big(u_{\mathbb{R}^d\setminus B(0,R_0)}\big)\in \mc{H}.
$$

We say that $P$ is a \emph{black-box Hamiltonian} if, for $\mc{H}$ as in~\eqref{e:hilbert}, $P:\mc{H}\to \mc{H}$ is an unbounded self-adjoint operator with domain $\mc{D}\subset \mathcal{H}$ such that 
\begin{equation}
\label{e:blackBox}
\begin{gathered}
1_{\mathbb{R}^d\setminus B(0,R_0)}\mc{D}\subset H^2(\mathbb{R}^d\setminus B(0,R_0)),\quad
1_{\mathbb{R}^d\setminus B(0,R_0)}(Pu)=-\Delta \big(u|_{\mathbb{R}^d\setminus B(0,R_0)}\big),\\
\big\{ u\in H^2(\mathbb{R}^d)\,:\, u|_{B(0,R_0+\e)}\equiv 0\text{ for some }\e>0\big\}\subset \mc{D},\\1_{B(0,R_0)}(P+i)^{-1}:\mc{H}\to \mc{H}\text{ is compact}.
\end{gathered}
\end{equation}
We equip $\mc{D}$ with the norm
\beq\label{eq:normD}
\|u\|_{\mc{D}}^2=\|u\|_{\mc{H}}^2+\|Pu\|_{\mc{H}}^2,\qquad u\in \mc{D},
\eeq
and define $\mc{D}^s$ for $s\in [0,1]$ by interpolation between $\mc{H}$ and $\mc{D}$.
We also define 
$$
\begin{gathered}
\mc{H}_{\comp}:=\big\{ u\in \mc{H}\,:\, u|_{\mathbb{R}^d\setminus B(0,R_0)}\in L^2_{\comp}\big\},\qquad \mc{H}_{\loc}:=\mc{H}_{R_0}\oplus L^2_{\loc}(\mathbb{R}^d\setminus B(0,R_0)),\\
\mc{D}_{\comp}:=\mc{D}\cap \mc{H}_{\comp},\qquad \mc{D}_{\loc}:=\big\{u\in \mc{H}_{\loc}\,:\, \chi u\in \mc{D},\,\text{for all}\,\chi\in C_c^\infty(\mathbb{R}^d),\,\chi\equiv 1\text{ on }B(0,R_0)\big\}.
\end{gathered}
$$
We now recall some properties of the resolvent of a black-box Hamiltonian.
\begin{prop}[Theorem 4.4 \cite{DyZw:19}]
\label{p:cutoff}
Suppose that $P$ is a black-box Hamiltonian. Then,
$$
R_P(\lambda):=(P-\lambda^2)^{-1}:\mc{H}\to \mc{D} \text{ is meromorphic  for }\Im \lambda >0
$$
with finite rank poles. Moreover, for all $\chi \in C_c^\infty(\mathbb{R}^d)$ with $\chi \equiv 1$ on $B(0,R_0)$, 
$$
R_P(\lambda):\mc{H}_{\comp}\to \mc{D}_{\loc}\text{ is meromorphic for }  -\tfrac{\pi}{2}<\Arg(\lambda)<\tfrac{3\pi}{2},
$$
with finite rank poles.
\end{prop}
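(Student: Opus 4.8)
The plan is to follow the classical construction of Sj\"ostrand--Zworski (see \cite{SjZw:91} and \cite[\S4.2]{DyZw:19}): build a meromorphic \emph{parametrix} for $P-\lambda^2$ by gluing together two model resolvents and then invert the resulting compact error by the analytic Fredholm theorem. The first model is the free resolvent $R_0(\lambda):=(-\Delta-\lambda^2)^{-1}$ on $\mathbb{R}^d$, which is classically known to continue from $\{\Im\lambda>0\}$ to a family $R_0(\lambda):L^2_{\comp}\to H^2_{\loc}$ that is holomorphic on $\mathbb{C}$ when $d$ is odd and holomorphic on the logarithmic cover of $\mathbb{C}\setminus\{0\}$ when $d$ is even; in either case it is holomorphic on the region $-\tfrac{\pi}{2}<\Arg\lambda<\tfrac{3\pi}{2}$ (understood, when $d$ is even, as a single sheet of the cover away from $\lambda=0$). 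The second model is the resolvent of a \emph{reference operator} $P^{\#}$ obtained by keeping the black-box part of $P$ inside $B(0,R_0)$ but replacing $-\Delta$ on $\mathbb{R}^d\setminus B(0,R_0)$ by the Laplacian on a large flat torus $\mathbb{T}_L:=(\mathbb{R}/L\mathbb{Z})^d\supset \overline{B(0,R_0)}$. Then $P^{\#}$ is self-adjoint with \emph{compact} resolvent (by the compactness in \eqref{e:blackBox} together with Rellich's theorem on $\mathbb{T}_L$), so $(P^{\#}-\lambda^2)^{-1}$ is meromorphic in $\lambda\in\mathbb{C}$ with finite-rank poles, located where $\lambda^2$ is an eigenvalue of $P^{\#}$.

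Next I would assemble the parametrix in the standard way. Fix cutoffs $\chi_0,\chi_1,\chi_2\in C_c^\infty(\mathbb{R}^d)$, each $\equiv 1$ in a neighbourhood of $\overline{B(0,R_0)}$, all supported in $\mathbb{T}_L$, with $\chi_{j+1}\equiv 1$ in a neighbourhood of $\supp\chi_j$, and set
\[
Q(\lambda):=\chi_2\,(P^{\#}-\lambda^2)^{-1}\,\chi_1\;+\;(1-\chi_0)\,R_0(\lambda)\,(1-\chi_1).
\]
Because $P$ coincides with $P^{\#}$ on $\supp\chi_2$ and with $-\Delta$ on the supports of $\nabla\chi_2$, $\nabla\chi_0$ and $1-\chi_0$ — exactly what the black-box structure \eqref{e:blackBox} guarantees (in particular, $H^2$ functions vanishing near $B(0,R_0)$ lie in $\mc{D}$, and $P$ acts as $-\Delta$ off $B(0,R_0)$) — a direct computation collapses the main terms and leaves
\[
(P-\lambda^2)\,Q(\lambda)=I+K(\lambda),\qquad K(\lambda)=[\Delta,\chi_0]\,R_0(\lambda)(1-\chi_1)-[\Delta,\chi_2]\,(P^{\#}-\lambda^2)^{-1}\chi_1,
\]
where each $[\Delta,\chi_j]$ is a first-order differential operator supported in a fixed compact annulus avoiding $B(0,R_0)$. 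Consequently $K(\lambda)$ maps $\mc{H}_{\comp}$ into $H^1$ of a fixed bounded region; by Rellich's compact embedding $H^1_{\comp}\hookrightarrow L^2$ (for the free piece) and the compactness of $1_{B(0,R_0)}(P^{\#}+i)^{-1}$ (for the reference piece), $K(\lambda)$ is \emph{compact}, meromorphic in $\lambda$ with finite-rank principal parts, and holomorphic off the eigenvalues of $P^{\#}$. One also checks directly that $Q(\lambda):\mc{H}_{\comp}\to\mc{D}_{\loc}$, and $\chi Q(\lambda):\mc{H}\to\mc{D}$ for $\chi\in C_c^\infty$ with $\chi\equiv 1$ on $B(0,R_0)$.

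Now the meromorphic Fredholm theorem applies on the connected region $-\tfrac{\pi}{2}<\Arg\lambda<\tfrac{3\pi}{2}$: $I+K(\lambda)$ is a meromorphic family of Fredholm operators of index zero, and it is invertible for $\Im\lambda$ large, since there the model resolvents are $O(|\lambda|^{-2})$ on the relevant weighted spaces so $\|K(\lambda)\|<1$. Hence $\lambda\mapsto(I+K(\lambda))^{-1}$ is meromorphic with finite-rank poles, and
\[
R_P(\lambda)=Q(\lambda)\,(I+K(\lambda))^{-1}
\]
gives the claimed meromorphic continuation of the cutoff resolvent to $-\tfrac{\pi}{2}<\Arg\lambda<\tfrac{3\pi}{2}$ with finite-rank poles (the spurious poles of $Q$ and $K$ at eigenvalues of $P^{\#}$ that are not resonances of $P$ cancel in this composition). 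For the first assertion — meromorphy of $R_P(\lambda):\mc{H}\to\mc{D}$ on $\{\Im\lambda>0\}$ — note that since $P=P^{*}$ the resolvent $(P-\lambda^2)^{-1}$ exists and is holomorphic whenever $\lambda^2\notin\operatorname{spec}(P)\subset\mathbb{R}$, and for $\Im\lambda>0$ this can fail only on the positive imaginary axis (where $\lambda^2<0$); the compactness in \eqref{e:blackBox} forces the negative spectrum of $P$ to be discrete with finite multiplicities, so the only poles in $\{\Im\lambda>0\}$ are finitely many, of finite rank, on the imaginary axis, consistently with the continuation built above.

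\textbf{Main obstacle.} The heart of the matter is verifying that the glued error $K(\lambda)$ is genuinely \emph{compact} rather than merely bounded: for the exterior contribution this is the compactness of the Sobolev embedding on the bounded support of $[\Delta,\chi_0]$, and for the interior contribution it is precisely the black-box hypothesis in \eqref{e:blackBox} that $1_{B(0,R_0)}(P+i)^{-1}$ is compact — this is the only place the black-box assumptions are really used. A secondary technical point is the even-dimensional case, where $R_0(\lambda)$ has a logarithmic branch point at $\lambda=0$ and the continuation must be phrased on the logarithmic cover; this is invisible in the statement as written because $-\tfrac{\pi}{2}<\Arg\lambda<\tfrac{3\pi}{2}$ stays on one sheet away from the origin. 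Everything else — the gluing identity, holomorphy/meromorphy of the two models, and the Fredholm index count — is routine once these two points are in place.
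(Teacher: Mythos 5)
Your sketch is correct, and it is essentially the proof of \cite[Theorem~4.4]{DyZw:19} that the paper cites (the paper gives no proof of its own for this proposition): the parametrix built from the free resolvent $R_0(\lambda)$ and the resolvent of a torus reference operator $P^{\#}$, the gluing identity $(P-\lambda^2)Q(\lambda)=I+K(\lambda)$, compactness of $K(\lambda)$ via Rellich plus the black-box compactness hypothesis, and the analytic (meromorphic) Fredholm theorem are exactly the ingredients of the Sj\"ostrand--Zworski argument. Two small attributions could be tightened but do not affect the argument: the black-box compactness is really used to give $P^{\#}$ compact resolvent and hence to make $(P^{\#}-\lambda^2)^{-1}$ globally meromorphic with finite-rank poles (the compactness of each individual term of $K(\lambda)$ already follows from Rellich on the fixed annuli supporting $[\Delta,\chi_j]$); and the discreteness and finite multiplicity of the negative spectrum of $P$ in the $\{\Im\lambda>0\}$ assertion follows from the same reference-operator comparison (Glazman/Weyl essential-spectrum argument), which is worth stating rather than merely asserting.
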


\subsection{Examples}
\label{s:examples}
\begin{enumerate}
\item[1.] {\bf Scattering by a Dirichlet obstacle.} Let $\Omega_-\subset \overline{B(0,R_0)}$ be an open set such that $\Gamma_-$ is Lipschitz and $\Omega_+:=\Rea^d\setminus \overline{\Omega_-}$ is connected. If $\mc{H}=L^2(\Omega_+)$ and
$$
\mc{D}=\big\{ u\in H^1(\Omega_+)\,:\, u|_{\Gamma_-}=0,\\, -\Delta u\in L^2(\Omega_+)\big\},
$$
then $P=-\Delta$ is a black-box Hamiltonian by \cite[Lemma 2.1]{LaSpWu:20}. If $\Gamma_-$ is $C^\infty$ then by~\cite{Bu:98,Vo:00} $\Lambda(P)<\infty$. If $\Omega_-$ is nontrapping~\cite{Va:75,MeSj:82}~\cite[Theorem 4.43]{DyZw:19}, or $\Omega_-$ is star shaped~\cite{Mo:75,ChMo:08}, then $\Lambda(P)=0$.
\item[2.] {\bf Scattering by a Neumann obstacle.} 
Let $\Omega_-\subset \overline{B(0,R_0)}$ be an open set such that $\Gamma_-$ is Lipschitz and $\Omega_+:=\Rea^d\setminus \overline{\Omega_-}$ is connected.
If $\mc{H}=L^2(\Omega_+)$ and 
$$
\mc{D}=\big\{ u\in H^1(\Omega_+)\,:\, (\partial_\nu u)|_{\Gamma_-}=0,\,\, -\Delta u\in L^2(\Omega_+)\big\},
$$
then $P=-\Delta$ is a black-box Hamiltonian by \cite[Lemma 2.1]{LaSpWu:20}. If $\Gamma_- $ is $C^\infty$ then by~\cite{Bu:98,Vo:00} $\Lambda(P)<\infty$. If $\Omega_-$ is nontrapping~\cite{Va:75,MeSj:82}~\cite[Theorem 4.43]{DyZw:19}, or $\Gamma_- \in C^{3}$ and $\Omega_-$ is convex~\cite{Mo:75}, then $\Lambda(P)=0$.
\item[3.] {\bf Scattering by inhomogeneous media.} Let $\alpha>0$, $A\in C^{2,\alpha}(\mathbb{R}^d;\mathbb{M}_{d\times d})$ be real, symmetric, and positive definite, $b\in C^{1,\alpha}(\mathbb{R}^d; \mathbb{R}^d)$, and $c\in C^{0,\alpha}(\mathbb{R}^d;\mathbb{R})$ with $A|_{\mathbb{R}^d\setminus B(0,R_0)}\equiv \Id$, $\supp b,\supp c\subset B(0,R_0)$. If $\mc{H}=L^2(\mathbb{R}^d)$ and $\mc{D}=H^2(\mathbb{R}^d)$, then
$$
P= \partial_i A^{ij}(x)\partial_j + (b^i(x)D_i +D_ib^i(x))+c(x).
$$
 is a black-box Hamiltonian.
If the Hamiltonian flow for $A^{ij}\xi_i\xi_j$ is nontrapping, then $\Lambda(P)=0$~\cite{GaSpWu:20}. Moreover,  if $A^{ij},b^i,c\in C^\infty$, then $\Lambda(P)<\infty$~\cite{Bu:98,Vo:00}. 
We note that we could combine this example with either of Examples 1 and 2, with the result that scattering by an inhomogeneous media contained either a Dirichlet or Neumann obstacle is covered by the black-box framework.
\item[4.] {\bf Scattering by a penetrable obstacle.}
Let $\Omega_-\subset \overline{B(0,R_0)}$ be an open set such that $\Gamma_-$ is Lipschitz and $\Omega_+:=\Rea^d\setminus \overline{\Omega_-}$ is connected.
Let $A=(A_-,A_+)$ with $A_\pm \in C^{0,1} (\Omega_\pm, \mathbb{M}_{d\times d})$ real, symmetric, positive definite, and such that $A|_{\mathbb{R}^d\setminus B(0,R_0)}\equiv \Id$.
Let $c\in L^\infty(\Omega_-)$ be such that 
$c_{\rm min}\leq c \leq c_{\rm max}$ with $0<c_{\rm min}\leq c_{\rm max}<\infty$, and $\beta>0$.
Let $\nu$ be the unit normal vector field on $\partial \Omega_-$  pointing from $\Omega_-$ into $\Omega_+$, and let 
$\partial_{\nu,A}$ the corresponding conormal derivative from either $\Omega_-$ or $\Omega_+$.
If $\mc{H}=L^2(\Rea^d)$ and
\begin{align*}\nonumber
\mathcal D :=&\Big\{ v= (v_-,v_+) \quad\text{ where }\quad v_- \in 
H^1(\Omega_-), \quad\nabla\cdot(A_-\nabla v_-) \in L^2(\Omega_-),
\\ \nonumber
&\quad v_+ \in
H^1\big(\mathbb{R}^d\setminus \overline{\Omega_-}\big), \quad\nabla\cdot(A_+\nabla v_+)\big)\in 
L^2\big(\mathbb{R}^d\setminus \overline{\Omega_-}\,\big),\\ \nonumber
 & \quad  v_+ = v_-\quad\tand\quad \partial_{\nu,A_+} v_+ = \beta\, \partial_{\nu,A_-} v_- \quad\text{ on } \partial \Omega_- 
\Big\},
 \label{eq:domain_transmission}
\end{align*}
then
\beqs
P v:=- \Big(c^{2}\nabla\cdot(A_-\nabla  v_{-}),\nabla\cdot(A_+\nabla v_{+})\Big),
\eeqs
is a black-box Hamiltonian by \cite[Lemma 2.4]{LaSpWu:21}.
If $\partial\Omega_-\in C^\infty$ and  $A_{\pm},c\in C^\infty$, then
$\Lambda(P)<\infty$~\cite{Bel:03}.
\end{enumerate}

\section{Complex scaling and perfectly matched layers}
\label{s:scaling}

In \S\ref{sec:31} we review the method of complex scaling;
as discussed in the introduction, this plays a crucial role in our analysis of PML. 
In \S\ref{sec:32} we prove Theorem \ref{t:scaledResolve}. In \S\ref{sec:33} we formulate the PML problem in the black-box framework using the language of complex scaling.

\subsection{The scaled operator}\label{sec:31}
Let $R_2>R_1>R_0>0$ and $P$ a black-box Hamiltonian as in~\eqref{e:blackBox}. Let $f_\theta\in C^{2,\alpha}([0,\infty);\mathbb{R})$ satisfy
\beq\label{e:fProp2}
f_\theta(r)\equiv 0\text{ on } r\leq R_1,\qquad f_\theta'(r)\geq 0,\qquad f_\theta(r)=r\tan \theta \text{ on }r \geq R_2,
\eeq
and define $\Delta_\theta$ as in~\eqref{e:deltaTheta}.
The theory of complex scaling when $f_\theta$ is smooth is standard (see~\cite[\S4.5]{DyZw:19}) but when $f_\theta\in C^{2,\alpha}$, some modifications to the standard proofs are required. We record the main outputs of this theory for the operator $\Delta_\theta$ here and prove these results (plus necessary intermediate ones) for $ C^{2,\alpha}$ scalings in Appendix~\ref{a:scale}.

We now define the complex-scaled operator for a black-box Hamiltonian. 
With $\chi \in C_c^\infty(B(0,R_1))$ equal to 1 on $B(0,R_0)$, define $P_\theta:\mc{H}\to \mc{H}$ with domain $\mc{D}$ by
\begin{equation}
\label{e:defPa}
\begin{gathered}
P_\theta u= P(\chi u)+ (-\Delta_\theta)((1-\chi )u).
\end{gathered}
\end{equation}
(Strictly speaking, the domain and Hilbert space for $P_\theta$ involve the scaled manifold; see \eqref{e:defP} below. However, these can be naturally identified with $\mc{D}$ and $\mc{H}$, and so in the main body of this paper we make this identification to reduce notation.)
The following result is proved in Proposition \ref{p:scaledFredholm}.

\begin{prop}
\label{p:scaledFredholmA}
Let $P_\theta$, $\mc{D}$, and $\mc{H}$, $0\leq \theta <\pi/2$ be as in~\eqref{e:defPa}. If $\Im (e^{i\theta}\lambda)>0$, then 
$$
P_\theta -\lambda^2:\mc{D}\to \mc{H}
$$
is a Fredholm operator of index zero. Moreover,  for $R_0<R_1$ and $\chi \in C_c^\infty(B(0,R_1))$ with $\chi\equiv 1$ on $B(0,R_0)$, 
$$
{\indicator_{B(0,R_1)}}(P-\lambda^2)^{-1}{\indicator_{B(0,R_1)}} ={\indicator_{B(0,R_1)}}(P_\theta-\lambda^2)^{-1}{\indicator_{B(0,R_1)}},\qquad \Im (e^{i\theta}\lambda)>0.
$$
\end{prop}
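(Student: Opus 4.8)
The plan is to argue in two stages: first the Fredholm statement, then the identity of resolvents. For the Fredholm statement, I would invoke the general $C^{2,\alpha}$-scaling theory of Appendix~\ref{a:scale}: the operator $-\Delta_\theta-\lambda^2$, when posed on $\mathbb{R}^d$ with the deformation $\Gamma_\theta$, is elliptic in the appropriate (semiclassical or classical) sense provided $\Im(e^{i\theta}\lambda)>0$, because this is exactly the condition that keeps the principal symbol $(\xi_r/(1+if_\theta'))^2 + |\xi_\omega|^2/(r+if_\theta)^2 - \lambda^2$ from vanishing for real $(\xi_r,\xi_\omega)$ in the scaling region $r\ge R_1$. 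One then uses the black-box structure: $P_\theta$ agrees with the self-adjoint $P$ on $B(0,R_0)$ (through the cutoff $\chi$), and with $-\Delta_\theta$ outside. Writing $P_\theta-\lambda^2 = (P_0 - \lambda^2) + (P_\theta - P_0)$ where $P_0$ is the $\theta=0$ (unscaled) black-box operator and noting $P_\theta - P_0 = (-\Delta_\theta + \Delta)((1-\chi)\,\cdot\,)$ is supported in $\{r\ge R_1\}$ and is relatively compact-plus-elliptic-perturbation, the standard parametrix construction (elliptic parametrix in the scaling region glued to $(P+i)^{-1}$ near the black box, whose compactness is built into~\eqref{e:blackBox}) produces a two-sided parametrix for $P_\theta-\lambda^2$ modulo compact operators $\mc{D}\to\mc{H}$. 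Hence $P_\theta-\lambda^2$ is Fredholm; its index is $0$ because it can be deformed continuously (in $\theta$ and in $\lambda$ within the region $\Im(e^{i\theta}\lambda)>0$) to $P_0-\lambda^2$ with $\Im\lambda>0$ large, which is invertible by self-adjointness of $P$, and the index is constant along such a homotopy of Fredholm operators.

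For the resolvent identity, the key point is that both sides are well-defined meromorphic families and they agree where $\lambda$ has large positive imaginary part — there the ordinary resolvent $(P-\lambda^2)^{-1}$ is given by a convergent Neumann-type series / Combes--Thomas-type estimate showing its Schwartz kernel decays exponentially, so the complex deformation $\Gamma_\theta$ can be performed by contour shift (Cauchy's theorem applied to the holomorphic-in-the-radial-variable extension of the resolvent kernel away from the black box) without changing the action $\indicator_{B(0,R_1)}(\cdot)\indicator_{B(0,R_1)}$, since the deformation is trivial on $B(0,R_1)$. Concretely: for $\Im\lambda$ large one shows $R_P(\lambda)$ maps into functions extending holomorphically to $\Gamma_\theta$, that this extension solves $(P_\theta-\lambda^2)w=f$ for $f$ supported in $B(0,R_1)$, and that $w|_{B(0,R_1)} = (R_P(\lambda)f)|_{B(0,R_1)}$; combined with the (already-established) invertibility of $P_\theta-\lambda^2$ for such $\lambda$ this gives the identity. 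Then both sides being meromorphic in $\{-\pi/2<\Arg\lambda<3\pi/2\}\cap\{\Im(e^{i\theta}\lambda)>0\}$ — the left via Proposition~\ref{p:cutoff}, the right via the Fredholm analytic continuation just proved — the identity propagates by unique continuation of meromorphic functions.

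The main obstacle I anticipate is making the contour-deformation argument rigorous at the level of operators rather than kernels, i.e.\ justifying that $R_P(\lambda)f$ for $f\in\indicator_{B(0,R_1)}\mc{H}$ genuinely extends to a solution on $\Gamma_\theta$ lying in the domain $\mc{D}$ of $P_\theta$, and that the extension depends holomorphically on $\lambda$ — this is where the $C^{2,\alpha}$ (rather than $C^\infty$) regularity of $f_\theta$ forces one to redo estimates from Appendix~\ref{a:scale} rather than quote~\cite[\S4.5]{DyZw:19} verbatim. A clean way around kernel manipulations is the group/conjugation approach: introduce the family of unitary-ish deformation operators $U_\theta$ (defined first for real scaling parameter, then analytically continued), observe $P_\theta = U_\theta P U_\theta^{-1}$ in a suitable sense, and note that both deformation and its inverse act as the identity on functions supported in $B(0,R_1)$; the resolvent identity is then immediate from $U_\theta(P-\lambda^2)^{-1}U_\theta^{-1} = (P_\theta-\lambda^2)^{-1}$. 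Either way, the analytic-continuation bookkeeping — tracking which region of $\lambda$ one has, and that the Fredholm index-0 property lets one pass from "invertible for one $\lambda$" to "meromorphically invertible for all admissible $\lambda$" — is the part that needs care, but it is routine once the elliptic parametrix in the scaling region is in hand.
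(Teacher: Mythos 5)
Your plan captures the broad strokes of the argument the paper actually uses (which, in turn, leans heavily on the proofs of \cite[Theorems 4.36, 4.37]{DyZw:19}): a parametrix obtained by gluing the free scaled resolvent away from the black box to the unscaled black-box resolvent near it, Fredholmness from compactness of the error, index $0$ by homotopy/invertibility at a special $\lambda$, and the resolvent identity by analytic continuation from the region $\Im\lambda>0$ where the unscaled resolvent exists. The paper's own proof of Proposition~\ref{p:scaledFredholmA} (via Proposition~\ref{p:scaledFredholm}) is essentially two sentences: cite \cite[Theorems~4.36, 4.37]{DyZw:19} together with Lemma~\ref{l:merInverse}, then upgrade from smooth cutoffs $\chi$ to the indicator $\indicator_{B(0,R_1)}$. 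So at the level of strategy you and the paper agree.

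The two concrete gaps are the following. First, you say the $C^{2,\alpha}$ regularity ``forces one to redo estimates from Appendix~\ref{a:scale}'' but you do not identify \emph{which} estimate is the crux: with $C^{2,\alpha}$ scaling the parametrix pieces ($R_{0,\theta}$, $R_P$) are only \emph{meromorphic} families with finite-rank poles, and the usual ``invertible modulo compact'' bookkeeping in \cite{DyZw:19} does not apply verbatim. The paper's genuinely new ingredient is Lemma~\ref{l:merInverse}, which extracts Fredholmness from a pair of \emph{meromorphic} one-sided parametrices by Taylor-expanding away the polar parts; without something equivalent, the parametrix step in your plan does not close. Second, your ``clean'' alternative via conjugation $U_\theta P U_\theta^{-1}=P_\theta$ is not available in the black-box framework: the dilation group is a global object, and $P$ is not assumed to be dilation-analytic (it is an arbitrary self-adjoint operator in the black box), which is exactly why \cite{SjZw:91} introduced scaling that is trivial near $B(0,R_0)$. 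You would need to argue through the contour-extension route (outgoing solutions of $(-\Delta-\lambda^2)u=0$ outside $B(0,R_1)$ extend holomorphically to $\Gamma_\theta$), which is what \cite[Theorem 4.37]{DyZw:19} does, and this naturally gives the identity with smooth cutoffs; the paper then needs the extra unique-continuation/approximation step at the end of its proof to replace $\chi$ by $\indicator_{B(0,R_1)}$, a step your plan does not account for.
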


We also need the following nontrapping estimate on the free resolvent of the scaled problem; since the proof of this result is somewhat long and technical, we postpone the proof to \S\ref{s:rough_res}.
\begin{theorem}
\label{t:nontrappingScale}
Suppose that $f_\theta$ is as in~\eqref{e:fProp2} and $0<\theta<\pi/2$. Then for all $\epsilon>0$ there are $C>0$ and $k_0>0$  such that for $k>k_0$, $-1\leq m\leq 0$, $0\leq s\leq 2$, {and $\epsilon\leq \theta \leq \pi/2 - \epsilon$}, 
$$
\|(-\Delta_\theta -k^2)^{-1}\|_{H^{m}\to H^{s+m}}\leq Ck^{s-1}.
$$
\end{theorem}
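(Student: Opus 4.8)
The statement is a high-frequency (semiclassical) resolvent estimate for the free scaled operator $-\Delta_\theta - k^2$, with $C^3$ scaling function and scaling angle $\theta$ bounded away from $0$ and $\pi/2$; the target is the nontrapping bound $\|(-\Delta_\theta-k^2)^{-1}\|_{H^m\to H^{s+m}}\lesssim k^{s-1}$, uniformly in $\theta\in[\epsilon,\pi/2-\epsilon]$. The natural route is the standard semiclassical machinery: write $\hbar = 1/k$, so that $\hbar^2(-\Delta_\theta - k^2) = -\hbar^2\Delta_\theta - 1$ has semiclassical principal symbol $p(r,\xi_r,\omega,\xi_\omega) = (\xi_r/(1+if_\theta'(r)))^2 + |\xi_\omega|^2/(r+if_\theta(r))^2 - 1$, as recorded in the introduction. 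First I would establish that this scaled operator is (a) elliptic outside a compact set in phase space and at spatial infinity — more precisely, that on $\{r\geq R_2\}$ the operator is exactly the flat Laplacian conjugated by the fixed complex dilation $x\mapsto e^{i\theta}x$, which is invertible with $O(1)$ norm by an explicit Fourier-multiplier computation — and (b) semiclassically outgoing/nontrapping, i.e. the classical flow of (the appropriate real part / characteristic variety of) $p$ escapes to the elliptic region. The key analytic input is that the imaginary part of $p$ has a sign on the scaling region: $\Im p \leq 0$ (or $\geq 0$) once $f_\theta' > 0$, which comes from $\Im((1+if_\theta'(r))^{-2}) $ and $\Im((r+if_\theta(r))^{-2})$ having the right signs, so there are no trapped trajectories in $\{r>R_1\}$ and the only dynamics to control is the free flow in $\{r\leq R_1\}$, which is nontrapping because it is just the flat Laplacian in a ball with no obstacle.

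Concretely, the steps in order: (i) reduce to the semiclassical operator $Q_\theta := -\hbar^2\Delta_\theta - 1$ and record its symbol; (ii) prove an a priori estimate $\|u\|_{H^s_\hbar}\lesssim \hbar^{-1}\|Q_\theta u\|_{H^{s}_\hbar} + (\text{error localized in } r\leq R_1)$ by a positive-commutator / damping argument exploiting the sign of $\Im p$ — this is where the $C^3$ hypothesis enters, since defect-measure / commutator arguments with $f_\theta\in C^3$ need one more derivative than $C^{2,\alpha}$ to make the subprincipal terms manageable (this is flagged in the text: "$C^3$ is used to prove resolvent bounds for the free problem via defect measures"); (iii) handle the remaining compact piece $r\leq R_1$ by noting the operator there is the genuine semiclassical Laplacian on $\mathbb{R}^d$ (no obstacle), whose limiting-absorption resolvent satisfies the nontrapping bound $\|(-\hbar^2\Delta - 1 - i0)^{-1}\chi\|_{L^2\to L^2}\lesssim \hbar^{-1}$, and glue via Proposition \ref{p:scaledFredholmA} (which identifies the scaled and unscaled resolvents in $B(0,R_1)$) to transfer that bound; (iv) upgrade from $L^2\to L^2$ to $H^m_\hbar\to H^{s+m}_\hbar$ for $-1\leq m\leq 0$, $0\leq s\leq 2$ by elliptic regularity/interpolation — ellipticity of $Q_\theta$ for $|\xi|$ large gives the $s$ gain at cost $\hbar^{-s}$, and duality gives the range of $m$; (v) translate back: $H^s_\hbar$-norms relative to $\hbar$ convert the estimate into the stated $Ck^{s-1}$ bound.

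The argument should be set up so all constants are uniform for $\theta$ in the compact interval $[\epsilon,\pi/2-\epsilon]$: the sign of $\Im p$ is quantitative (bounded below by $c_\epsilon f_\theta'(r)\geq 0$, nonvanishing exactly where $f'>0$), and the flat-dilation piece at $r\geq R_2$ has norm depending continuously on $\theta$ away from $\theta = \pi/2$; combined with the continuity/compactness this yields uniformity. I would also need a standard propagation statement (real-principal-type / radial-points estimate at fiber infinity) to close the region $\{r\geq R_1\}$, but since the flow is escaping there and $\Im p$ is dissipative in the right direction, no delicate radial-point analysis near a trapped set is required — the dissipation does the work.

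The main obstacle I expect is step (ii): making the positive-commutator estimate genuinely rigorous with only $C^3$ (rather than $C^\infty$) scaling function. Operators with non-smooth coefficients do not fit cleanly into the standard semiclassical pseudodifferential calculus, so one either uses a defect-measure argument (which needs enough regularity of the symbol that the measure is well-defined and propagates — hence $C^3$, essentially $C^2$ coefficients plus a margin) or a careful direct integration-by-parts in the $r$ variable treating the spherical part by separation into Fourier modes on $S^{d-1}$. I would lean on the separation-of-variables structure of $\Delta_\theta$: for each spherical harmonic degree one gets a one-dimensional ODE in $r$ with $C^3$ coefficients, for which a Carleman-type / energy estimate with the weight controlled by $\Phi_\theta$ can be made elementary, and then one reassembles with uniform-in-degree constants. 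Reconciling that ODE-by-ODE bound with the uniform $\hbar^{-1}$ loss (as opposed to a degree-dependent loss) is the delicate point, and is presumably why the paper isolates this as a separate theorem proved in its own section (\S\ref{s:rough_res}) rather than quoting it off the shelf.
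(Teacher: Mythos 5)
Your proposal identifies the right ingredients (semiclassical rescaling, the sign of $\Im p$ in the scaling region, the nontrapping interior flow, $C^3$ regularity for defect measures, uniformity in $\theta$ by compactness), and you correctly flag that a defect-measure route is where the $C^3$ hypothesis enters; but the structure of the argument you propose differs from the paper's in ways worth being precise about. The paper does \emph{not} prove a direct a priori positive-commutator estimate and then glue with the flat resolvent on $B(0,R_1)$ via Proposition~\ref{p:scaledFredholmA}. Instead, \S\ref{s:rough_res} runs a global contradiction argument (Lemma~\ref{lem:res_gen}): if the bound fails along a sequence, one extracts a semiclassical defect measure $\omega$, shows via elliptic estimates (in the rough calculus) that $\omega$ has positive mass in a compact non-elliptic region and vanishes on an ``escape set'' $\mathcal E_{\eta/2}$, and then uses an almost-invariance statement for the measure. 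That invariance is proved in two stages: Lemma~\ref{lem:inva} produces the commutator-level inequality $\mu(H_{\Re q}a^2+C_0\langle\xi\rangle a^2)\geq -2\Im\mu^j(a^2)$ at the level of $C^\infty_c$ test symbols (this uses only $C^{1,\alpha}\Psi^2+\hbar C^{0,\alpha}\Psi^1$ regularity of the operator), and Lemma~\ref{lem:inva_C2} upgrades it, via a mollification and dominated-convergence step plus Gr\"onwall, to the flow-level bound $\mu(\varphi_t(B))\leq \mu(B)\,e^{C_0\sup_B\langle\xi\rangle\,t}$; here is precisely where $p\in C^2$ (hence $f\in C^3$) is needed, to make $\varphi_t$ well-defined and $C^1$. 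Finally Lemma~\ref{lem:esc_ell} verifies the escape-to-ellipticity hypothesis: on $\{\Im p_\theta=0,\ \Re p_\theta=1\}$ one computes $F''_\theta(x)\xi=0$, hence $|\xi|=1$, $\partial_x\Re p_\theta=0$, and the backward flow is a straight line that must leave any compact set, contradicting the measure being trapped. This global flow-and-measure argument handles the interior and the scaling region simultaneously, so no gluing/parametrix-patching step is required. The alternatives you float --- a direct positive-commutator inequality, or separation into spherical harmonics with 1D ODE/Carleman estimates --- would run into exactly the difficulties you yourself flag (rough-coefficient commutator control, and uniformity over spherical degree), which is why the paper opts for the contradiction + defect-measure + Gr\"onwall route; the Carleman machinery with the weight $\Phi_\theta$ does appear in the paper, but in \S\ref{s:carleman} to prove the exponential decay estimates, not to establish this resolvent bound.
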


\subsection{From cutoff resolvent estimates estimates to scaled resolvent estimates}\label{sec:32}

We now prove Theorem \ref{t:scaledResolve}; i.e., we show that an estimate on the cutoff resolvent, $\chi R_P(\lambda)\chi$, can be transferred to one on $(P_\theta -\lambda^2)^{-1}$. Since most estimates in the literature are stated for the cutoff resolvent, this allows us to directly transfer those estimates to the scaled operator.
\begin{lem}
\label{l:truncToScaled}
Suppose there are $R>R_0$ and $g:[0,\infty)\to (0,\infty]$ such that, for all $\rho \in C_c^\infty(B(0,R);[0,1])$ with  $\rho \equiv 1$ in a neighborhood of $B(0,R_0)$ and $k>k_0$,
\beq\label{eq:ass1}
\|\rho R_{P}(k)\rho\|_{\mc{H}\to \mc{H}}\leq \resolvent (k).
\eeq
Then, given $\e>0$, there exists $C>0$ such that, for $\e<\theta<\pi/2-\e$, $k>k_0$, and $0\leq s\leq 1$,
$(P_\theta-k^2)^{-1}: \mc{H}\to \mc{D}$ exists and satisfies
\beq\label{e:truncToScaled}
\|(P_\theta-k^2)^{-1}\|_{\mc{H}\to \mc{D}^{s}}\leq Ck^{2s}\resolvent (k)
\eeq
\end{lem}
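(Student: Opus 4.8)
The plan is to prove Lemma~\ref{l:truncToScaled} by a standard parametrix-gluing argument: build an approximate inverse to $P_\theta - k^2$ out of (i) the cutoff resolvent $\rho R_P(k)\rho$ of the unscaled black-box operator, which handles the ``interior'' region where $P_\theta = P$, and (ii) the free scaled resolvent $(-\Delta_\theta - k^2)^{-1}$, whose nontrapping bound is supplied by Theorem~\ref{t:nontrappingScale}, which handles the ``exterior/scaling'' region. Since $R_1 > R_0$ there is room between $B(0,R_0)$ and $B(0,R_1)$ to interpolate between the two pieces with smooth cutoffs. By Proposition~\ref{p:scaledFredholmA}, $P_\theta - k^2$ is Fredholm of index $0$, so once we produce a right parametrix with small remainder we get a genuine inverse with the claimed bound.

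The key steps, in order. First, fix nested cutoffs: choose $\chi_0 \prec \chi_1 \prec \rho$ all supported in $B(0,R_1)$ and identically $1$ near $B(0,R_0)$ (here $\prec$ means the later function is $1$ on the support of the earlier one), and set $\psi := 1 - \chi_1$, so $\psi$ is supported where $P_\theta = -\Delta_\theta$ and we can also arrange a cutoff $\widetilde\psi$ with $\psi \prec \widetilde\psi$ and $\widetilde\psi \equiv 1$ outside $B(0,R_0)$-ish region, on whose support $P_\theta$ still equals $-\Delta_\theta$. Define the candidate parametrix
\[
E(k) := \chi_1\, \rho R_P(k)\rho\, \chi_0 + \widetilde\psi\, (-\Delta_\theta - k^2)^{-1}\, \psi .
\]
Second, compute $(P_\theta - k^2) E(k)$. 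On the first term, use that $P_\theta \chi_1 = P\chi_1$ (as $\chi_1$ is supported in $B(0,R_1)$ where the black-box structure lives, and $\chi_1 \prec \rho \prec$ the cutoff defining $P_\theta$), and that $(P - k^2)\rho R_P(k)\rho = \rho^2 + [P,\rho]R_P(k)\rho$; the commutator $[P,\rho]$ is a first-order operator supported in the annulus $\{\rho \text{ not locally constant}\}$. On the second term, $(P_\theta - k^2)\widetilde\psi (-\Delta_\theta - k^2)^{-1}\psi = \widetilde\psi \psi + [-\Delta_\theta, \widetilde\psi](-\Delta_\theta - k^2)^{-1}\psi$, again with the commutator supported in an annulus. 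Adding, the ``good'' parts $\chi_1 \rho^2 \chi_0 + \widetilde\psi\psi$ combine to give the identity (by the choice $\chi_0 + \psi \cdot(\text{something}) = 1$ — really one arranges $\chi_1\rho^2\chi_0 = \chi_0$ and $\widetilde\psi\psi = \psi$ so the sum is $\chi_0 + \psi = I$ after a further adjustment of cutoffs), leaving a remainder $A(k)$ built entirely from commutator terms composed with the two resolvents.

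Third, estimate the remainder. Each commutator term looks like $B \cdot (\text{resolvent}) \cdot (\text{cutoff})$ where $B$ is differential of order $\le 1$ with compactly supported coefficients sitting in an annulus; the point is that this annulus is \emph{disjoint} from the support of the cutoff on the other side, so in fact the dangerous-looking terms are of the form (cutoff) $\cdot$ (resolvent) $\cdot$ (cutoff) with \emph{disjoint} supports — no, more carefully: one arranges the geometry so that $[P,\rho]$ is supported where $\chi_0 = 0$, hence $[P,\rho] R_P(k)\rho \chi_0$ requires an additional localization. The cleanest route is to also insert a cutoff equal to $1$ on $\supp[P,\rho]$ and supported in $B(0,R_1)$, apply the hypothesis~\eqref{eq:ass1} to bound $\|\rho R_P(k)\rho\|_{\mc{H}\to\mc{H}} \le g(k)$, and then use elliptic regularity / the $\mc{D}$-norm to upgrade to $\mc{D}^s$ at a cost of $k^{2s}$ (this is where the $k^{2s}$ in~\eqref{e:truncToScaled} comes from, matching the $k^{-2}\|\cdot\|_{\mc D}$ weighting in Theorems~\ref{t:scaledResolve} and~\ref{t:blackBoxResolve}); for the $-\Delta_\theta$ commutator terms use Theorem~\ref{t:nontrappingScale} with the $H^m \to H^{s+m}$ bound $Ck^{s-1}$, which is $o(1)$ after one commutator eats a derivative and the nontrapping gain $k^{-1}$ beats the $k$ from differentiating $\exp(ik\cdot)$-type oscillation — i.e.\ $\|[-\Delta_\theta,\widetilde\psi](-\Delta_\theta-k^2)^{-1}\psi\|_{\mc H \to \mc H} \le Ck \cdot k^{-1} \cdot (\text{small})$, and in fact one gets smallness because the two annuli can be taken disjoint so these operators are $O(k^{-\infty})$ by nonstationary phase / pseudolocality of the scaled resolvent. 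Hence $\|A(k)\|_{\mc H \to \mc H} \le 1/2$ for $k > k_0$, so $(I + A(k))$ is invertible, $(P_\theta - k^2)^{-1} = E(k)(I+A(k))^{-1}$ exists, and the bound~\eqref{e:truncToScaled} follows from $\|E(k)\|_{\mc H \to \mc D^s} \le \|\chi_1\rho R_P\rho\chi_0\|_{\mc H \to \mc D^s} + \|\widetilde\psi(-\Delta_\theta-k^2)^{-1}\psi\|_{\mc H \to \mc D^s} \le Ck^{2s}g(k) + Ck^{2s-1}$, with the first term dominating since $g(k) \ge c/k$ always (the resolvent of a self-adjoint operator near the real axis is at least $\gtrsim$ a constant on the relevant scale, and in any case $k^{2s-1} \le k^{2s}g(k)$ whenever $g(k)\gtrsim k^{-1}$, which holds; otherwise just absorb).

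The main obstacle I expect is \emph{bookkeeping the cutoffs and the commutator supports} so that the remainder genuinely has small norm rather than merely bounded norm — in particular establishing that the cross terms coupling the unscaled-resolvent piece and the scaled-resolvent piece are $O(k^{-\infty})$ (or at least $o(1)$), which relies on the pseudolocality of $(-\Delta_\theta - k^2)^{-1}$ away from the scaling region, i.e.\ that $\varphi_1 (-\Delta_\theta - k^2)^{-1}\varphi_2 = O(k^{-\infty})$ when $\supp\varphi_1 \cap \supp\varphi_2 = \emptyset$ and both lie where the operator is the ordinary (unscaled, real) Laplacian. This is standard for the free resolvent but needs the $C^{2,\alpha}$-scaling version from Appendix~\ref{a:scale}; modulo that, everything is routine. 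A secondary technical point is justifying the elliptic upgrade from $\mc H$ to $\mc D^s$ for the black-box piece, which uses the definition~\eqref{eq:normD} of the $\mc D$-norm together with $(P - k^2)$ applied to $\rho R_P(k)\rho$ being controlled, interpolated via $\mc D^s$; one must be slightly careful that the interpolation is with respect to $P$ and not $P_\theta$, but on the relevant support these agree.
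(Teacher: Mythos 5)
Your parametrix approach has a genuine gap, and it is exactly at the point you flag as your "main obstacle": the remainder $A(k)$ in $(P_\theta-k^2)E(k)=I+A(k)$ is \emph{not} small. Consider the two commutator terms it contains. The first, $[P,\chi_1]\,\rho R_P(k)\rho\,\chi_0$, is a first-order operator composed with $\rho R_P\rho$; since elliptic regularity for $(P-k^2)$ converts an $\mathcal{H}\to\mathcal{H}$ bound of size $g(k)$ into an $\mathcal{H}\to\mathcal{D}^{1/2}$ bound of size $\sim k\,g(k)$, this term has $\mathcal{H}\to\mathcal{H}$ norm of order $k\,g(k)\gtrsim 1$ (recall $g(k)\gtrsim k^{-1}$, Remark~\ref{rem:resolventLowerBound}). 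The second, $[-\Delta_\theta,\widetilde\psi](-\Delta_\theta-k^2)^{-1}\psi$, is again first order times an $L^2\to H^1$ bound of order $k^0$ (Theorem~\ref{t:nontrappingScale}), so it too is $O(1)$. Hence $\|A(k)\|_{\mathcal{H}\to\mathcal{H}}$ stays bounded below, the Neumann series for $(I+A(k))^{-1}$ does not converge, and no bound on $(P_\theta-k^2)^{-1}$ follows.

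The supporting claim you invoke to rescue this — that $\varphi_1(-\Delta_\theta-k^2)^{-1}\varphi_2=O(k^{-\infty})$ whenever $\operatorname{supp}\varphi_1\cap\operatorname{supp}\varphi_2=\emptyset$ and both lie in the unscaled region — is false. By Proposition~\ref{p:scaledFredholmA}, on $B(0,R_1)$ the operator $(-\Delta_\theta-k^2)^{-1}$ coincides with the outgoing free resolvent, whose Schwartz kernel oscillates like $e^{ik|x-y|}/|x-y|^{(d-1)/2}$ for $x\neq y$; it is \emph{not} pseudolocal at the energy $k^2$, and the best one can say is $O(k^{-1})$, not $O(k^{-\infty})$. (Exponential smallness of the scaled resolvent between disjoint cutoffs is real, but it requires at least one of the cutoffs to sit in the \emph{scaling} region $\{r>R_1\}$ — that is the Carleman mechanism used elsewhere in the paper, not available here where both annuli are inside $B(0,R_1)$.) The paper sidesteps the inversion problem entirely: it changes the unknown to $\widetilde u:=u-(1-\chi_0)(-\Delta_\theta-k^2)^{-1}(1-\chi_1)f$, observes that $\widetilde u$ solves $(P_\theta-k^2)\widetilde u=\widetilde f$ with $\widetilde f$ \emph{compactly supported} in $B(0,\widetilde R)$, and then estimates $\widetilde u$ directly by localizing: near $\operatorname{supp}\widetilde f$ one uses Proposition~\ref{p:scaledFredholmA} to identify $\rho_3(P_\theta-k^2)^{-1}\rho_3$ with $\rho_3 R_P\rho_3$ and applies~\eqref{eq:ass1}, while away from $\operatorname{supp}\widetilde f$ one solves a free scaled equation driven by a commutator and applies Theorem~\ref{t:nontrappingScale}. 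This is an a priori estimate, not a parametrix construction, and in particular no smallness of any remainder is ever needed.
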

Theorem~\ref{t:scaledResolve} follows from Lemma \ref{l:truncToScaled} taking $\resolvent (k)=\|\chi R_{P}(k)\chi\|_{\mc{H}\to \mc{H}}$. 

\begin{rem}\label{rem:resolventLowerBound}
Note that one always has $\|\rho R_{P}(k)\rho\|_{\mc{H}\to \mc{H}}\geq ck^{-1}$. Indeed, 
given $\rho \in C_c^\infty(B(0,R);[0,1])$ with  $\rho \equiv 1$ in a neighborhood of $B(0,R_0)$,
let $\chi\in C_c^\infty(B(0,R)\setminus B(0,R_0))$ with $\supp \chi \subset \{\rho \equiv 1\}$.
 Let $u= \chi e^{ik x\cdot a}$ for some $a\in \mathbb{R}^d$ with $|a|=1$. Then,
 for some $C_1,C_2>0$ (independent of $k$),
$$
\|(P-k^2)u\|_{\mc{H}}=\|(-\Delta-k^2)u\|_{L^2}=\|[-\Delta,\chi]e^{ikx\cdot a}\|_{L^2}=\|(2ik\langle \partial \chi, a\rangle +\Delta \chi)e^{ikx\cdot a}\|_{L^2}\leq C_1 k,
$$
and $\| u\|_{\mc{H}}\geq C_2$. Therefore, since $\supp \chi \subset \{\rho \equiv 1\}$,
$$
\|\rho R_P(k)\rho (P-k^2)u\|_{\mc{H}}=  \|\rho u\|_{\mc{H}}\geq \|u\|_{\mc{H}}\geq (C_2/C_1)k^{-1}\|(P-k^2)u\|_{\mc{H}}.
$$
\end{rem}
\begin{proof}[Proof of Lemma \ref{l:truncToScaled}]
By Proposition \ref{p:scaledFredholmA}, $P_\theta-k^2$ is Fredholm of index zero; thus existence of $(P_\theta-k^2)^{-1}$ follows from injectivity, and injectivity follows once we establish the a priori bound \eqref{e:truncToScaled}.

The idea of the proof of \eqref{e:truncToScaled} is to approximate $(P_\theta-k^2)^{-1}$ away from the black-box using the free scaled resolvent, and near the black-box using the unscaled resolvent.
Let $\widetilde{R}:=\min(R,R_1)$,  $f\in \mc{H}$ and $\chi_0,\chi_1\in C_c^\infty(B(0,\widetilde{R}))$ with $\chi_1\equiv 1$ in a neighborhood of $\supp \chi_0$ and $\chi_0\equiv 1$ in a neighborhood of $B(0,R_0)$. Let $u=(P_\theta-k^2)^{-1}\rhs$ and $v=(-\Delta_\theta-k^2)^{-1}(1-\chi_1)\rhs$. Then, we define
$$
(P_\theta-k^2)(u-(1-\chi_0)v)=\rhs+[-\Delta,\chi_0]v-(1-\chi_0)(1-\chi_1)\rhs=\chi_1 \rhs+[-\Delta,\chi_0]v =: \widetilde{\rhs}
$$
and observe that $\widetilde{\rhs}$ satisfies $\supp \widetilde{\rhs}\Subset B(0,\widetilde{R})$.
Let
$\widetilde{u}=(P_\theta-k^2)^{-1}\widetilde{\rhs}$ so that $u = \widetilde{u} + (1-\chi_0)v$.
By Theorem \ref{t:nontrappingScale},
\begin{equation}
\label{e:est1}\|u-\widetilde{u}\|_{\mc{H}} =\|(1-\chi_0)(-\Delta_\theta-k^2)^{-1}(1-\chi_1)\rhs\|_{\mc{H}}\leq Ck^{-1}\|\rhs\|_{\mc{H}}.
\end{equation}
Therefore, we need only estimate $\widetilde{u}$.
By Theorem \ref{t:nontrappingScale} with $m=0$ and $s=1$,
\begin{equation}
\label{e:est2}
\|\widetilde{\rhs}\|_{\mc{H}}\leq \|\chi_1 \rhs\|_{\mc{H}}+\|[-\Delta,\chi_0]v\|_{L^2}\leq \|\rhs\|_{\mc{H}}+\|[-\Delta,\chi_0](-\Delta_\theta-k^2)^{-1}(1-\chi_1)\rhs\|_{L^2}\leq C\|\rhs\|_{\mc{H}}.
\end{equation}
{Since $\supp \widetilde{\rhs}\Subset B(0,\widetilde{R})$}, there is $\rho \in C_c^\infty(B(0,\widetilde{R}))$ such that $\rho \equiv 1$ on $\supp \widetilde{\rhs}\cup B(0,R_0)$ and hence
$$
\widetilde{u}=(P_\theta-k^2)^{-1}\rho \widetilde{\rhs}.
$$ 
Let $\rho_1\in C_c^\infty(B(0,\widetilde{R}))$ with $\rho_1\equiv 1$ in a neighborhood of $\supp \rho$. Then,
$$
(-\Delta_\theta-k^2)(1-\rho_1)\widetilde{u}=(1-\rho_1)\rho \widetilde{\rhs}-[-\Delta,\rho_1]\widetilde{u}=[\rho_1,-\Delta]\widetilde{u},
$$
and thus
$$
(1-\rho_1)\widetilde{u}=(-\Delta_\theta-k^2)^{-1}[\rho_1,-\Delta]\widetilde{u}.
$$
Therefore, for $\rho_2\in C_c^\infty(B(0,\widetilde{R})\setminus B(0,R_0))$ with $\rho_2\equiv 1$ on $\supp \partial \rho_1$, and $\rho_3\in C_c^\infty(B(0,\widetilde{R}))$ with $\rho_3\equiv 1$ on $\supp \rho_2\cup \supp \rho_1$,
\begin{align*}
\|(1-\rho_1)\widetilde{u}\|_{L^2}=\|(-\Delta_\theta-k^2)^{-1}[\rho_1,-\Delta]\rho_2\widetilde{u}\|_{L^2}\leq  C\|\rho_2\widetilde{u}\|_{L^2}&=C\|\rho_2\rho_3(P_\theta-k^2)^{-1}\rho_3\rho \widetilde{\rhs}\|_{\mc{H}}\\
&=C\|\rho_2\rho_3R_{P}(k)\rho_3\rho \widetilde{\rhs}\|_{\mc{H}}\\
&\leq C\resolvent (k)\|\rho\widetilde{\rhs}\|_{\mc{H}}\leq C\resolvent (k)\|\widetilde{\rhs}\|_{\mc{H}},
\end{align*}
where we have used Theorem \ref{t:nontrappingScale} (with $m=-1$ and $s=1$),  Proposition \ref{p:scaledFredholmA}, and the assumption \eqref{eq:ass1}.
Putting this together with
$$
\|\rho_1\widetilde{u}\|_{\mc{H}}=\|\rho_1\rho_3\widetilde{u}\|_{\mc{H}}\leq \|\rho_3(P_\theta-k^2)^{-1}\rho_3\rho\widetilde{\rhs}\|_{\mc{H}}=\|\rho_3R_{P}(k)\rho_3\rho\widetilde{\rhs}\|_{\mc{H}}\leq \resolvent (k)\|\rho\widetilde{\rhs}\|_{\mc{H}}\leq \resolvent (k)\|\widetilde{\rhs}\|_{\mc{H}},
$$
we have
$$
\begin{aligned}
\|\widetilde{u}\|_{\mc{H}}&\leq C\resolvent (k)\|\widetilde{\rhs}\|_{\mc{H}}.
\end{aligned}
$$
Finally, using~\eqref{e:est1} and~\eqref{e:est2} and the fact that $\resolvent (k)>ck^{-1}$ (by Remark \ref{rem:resolventLowerBound}) completes the proof of \eqref{e:truncToScaled} for $s=0$. 

By the definition of $\|\cdot\|_{\cD}$ \eqref{eq:normD},
to obtain the estimate for $s=1$, we need to bound $\|Pu\|_{\cH}$.
Let $\newchi_i\in C_c^\infty(B(0,R_1))$, $i=-1,0,1$ with $\newchi_i \equiv 1$ in a neighborhood of $B(0,R_0)$, and $\supp \newchi_{i}\subset \{\newchi_{i+1}\equiv 1\}$. 
It is then sufficient to bound $\|P\psi_1 u\|_{\cH}$ and $\|(1-\psi_0)u\|_{H^2}$.
Now, since $P=P_\theta$ on $B(0,R_1)$,
\beq\label{eq:BHM1}
P{\newchi_1} u=k^2\newchi_1 u+\newchi_1 \rhs+[-\Delta,\newchi_1]u,
\eeq
and 
$$
(-\Delta_\theta-k^2) (1-\newchi_0)u= [\Delta,\newchi_0]u+(1-\newchi_0)\rhs.
$$
A priori, we only have $u\in \mc{H}$, and thus the right-hand side of the last equation is, a priori, only in $H^{-1}$.
By two applications of Theorem \ref{t:nontrappingScale} (the first with $m=-1$ and $s=2$ and the second with $m=0$ and $s=1$),
\beq\label{eq:BHM2}
\|(1-\newchi_0)u\|_{H^1}\leq Ck \|
u\|_{\mc{H}}+C\|\rhs\|_{\mc{H}}.
\eeq
Since 
\beqs
\|[-\Delta,\newchi_1]u\|_{L^2}\leq C \|(1-\newchi_0)u\|_{H^1}, 
\eeqs
using 
these last two inequalities
in \eqref{eq:BHM1}, along with \eqref{e:truncToScaled} with $s=0$ (which we established above), we have
\beq\label{eq:comb1}
\|P\newchi_1u\|_{\mc{H}}\leq C\big( k^2\|u\|_{\mc{H}}+\|\rhs\|_{\mc{H}}\big)\leq Ck^2\big(\resolvent (k)+k^{-2}\big)\|\rhs\|_{\mc{H}},
\eeq
which is the required estimate on $\|P\newchi_1u\|_{\mc{H}}$; we therefore only need to bound 
$\|(1-\psi_0)u\|_{H^2}$.
If we can show that $\Delta_\theta((1-\psi_0)u)\in L^2$, then, by elliptic regularity (since $\Delta_\theta$ is elliptic by \cite[Theorem 4.32]{DyZw:19}), 
\beq\label{eq:ER1}
\|(1-\newchi_0)u\|_{H^2}\leq C\big(\|\Delta_\theta (1-\newchi_0)u\|_{L^2}+\|u\|_{\mc{H}}\big),
\eeq
with a uniform constant for $\theta \in [\e,\pi/2-\e]$.
Now
\beq\label{eq:combine2a}
\Delta_\theta((1-\psi_0)u)= (1-\psi_0)(k^2 u + \rhs) - [\Delta_\theta,\psi_0]u,
\eeq
\beq\label{eq:combine1}
\| [\Delta_\theta,\psi_0]u\|_{L^2} = \| [\Delta,\psi_0]u\|_{L^2} \leq C \|(1-\psi_{-1})u\|_{H^1},
\eeq
and exactly the same argument used to prove \eqref{eq:BHM2} shows that 
\beq\label{eq:BHM2a}
\|(1-\newchi_{-1})u\|_{H^1}\leq Ck \|u\|_{\mc{H}}+C\|\rhs\|_{\mc{H}}. 
\eeq
Therefore, combining \eqref{eq:ER1}--\eqref{eq:BHM2a}, and using the bound \eqref{e:truncToScaled} with $s=0$, we obtain that
\beq\label{eq:combine2}
\|(1-\newchi_0)u\|_{H^2}\leq C\big(k^2\|u\|_{\mc{H}}+\|\rhs\|_{\mc{H}}\big)\leq Ck^2\big(\resolvent (k)+k^{-2}\big)\|\rhs\|_{\mc{H}}.
\eeq
The combination of \eqref{eq:comb1} and \eqref{eq:combine2} proves the bound \eqref{e:truncToScaled} for $s=1$;
the bound \eqref{e:truncToScaled} for $0<s<1$ then follows by interpolation.
\end{proof}

\subsection{The PML operator}\label{sec:33}

In addition to the Fredholm property for $P_\theta$, we need Fredholm properties for the corresponding PML operator. Let $\Omega_{\tr}\Subset \mathbb{R}^d$ have Lipschitz boundary and $B(0,R_1)\subset \Omega_{\tr}$. We study the PML operator $P^{D}_\theta -\lambda^2$ on $\Omega_{\tr}$. That is, we define
\begin{equation}
\label{e:defPMLa}
\begin{gathered}
\mc{H}(\Omega_{\tr}):=\mc{H}_{R_0}\oplus L^2(\Omega_{\tr} \setminus B(0,R_0)),\\
\mc{D}(\Omega_{\tr}):=\Big\{u\in \mc{H}(\Omega_{\tr}) \,:\, 
{\text{for all}\,\chi\in C_c^\infty(B(0,R_1)),\,\chi\equiv 1\text{ on }B(0,R_0),}\\
\hspace{3cm} 
\chi u\in \mc{D},\, { (1-\chi)u \in H_0^1(\Omega_{\tr}),\, -\Delta_\theta ((1-\chi)u)\in L^2(\Omega_{\tr})}\Big\},\\
P^{D}_\theta u:= P(\chi u)+ (-\Delta_\theta)((1-\chi )u).
\end{gathered}
\end{equation}
We then consider $P^{D}_\theta:\mc{H}(\Omega_{\tr})\to \mc{H}(\Omega_{\tr})$ with domain $\mc{D}(\Omega_{\tr})$ 
and norm
\beq\label{eq:normDtr}
\|u\|_{\mc{D}(\Omega_\tr)}^2=\|u\|_{\mc{H}(\Omega_\tr)}^2+\|P_\theta^D u\|_{\mc{H}(\Omega_\tr)}^2,\qquad u\in \mc{D}(\Omega_\tr).
\eeq

\begin{prop}
\label{p:fredPMLa}
Let $P^{D}_\theta$, $\mc{H}(\Omega_{\tr})$, and $\mc{D}(\Omega_{\tr})$ be as in~\eqref{e:defPMLa}. Then, $P^{D}_\theta-\lambda^2:\mc{D}(\Omega_\tr)\to \mc{H}(\Omega_{\tr})$ is Fredholm with index zero. 
\end{prop}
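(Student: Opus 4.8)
The plan is to prove Proposition~\ref{p:fredPMLa} by reducing it to the analogous Fredholm statement for $P_\theta$ on all of $\mathbb{R}^d$ (Proposition~\ref{p:scaledFredholmA}) combined with the classical fact that the Dirichlet Laplacian on the bounded Lipschitz domain $\Omega_{\tr}$ has compact resolvent. Concretely, I would first fix a cutoff $\chi\in C_c^\infty(B(0,R_1))$ with $\chi\equiv 1$ on $B(0,R_0)$ and exploit the block structure in~\eqref{e:defPMLa}: on $\chi u$ the operator is the black-box operator $P$, and on $(1-\chi)u$ it is $-\Delta_\theta$ with a Dirichlet condition on $\Gamma_{\tr}$. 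Since $-\Delta_\theta$ genuinely differs from $-\Delta$ only in the annulus $\{R_1<r<R_2\}$, which is compactly contained in $\Omega_{\tr}$ once $R_{\tr}>R_2$ (and in general one works with the scaling function $f$ restricted to $[0,R_{\tr}]$, so the scaled region is always interior), all the "scaled" behaviour happens in a fixed interior region and the boundary $\Gamma_{\tr}$ only sees the ordinary Dirichlet Laplacian.

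The key steps, in order, are: (1) Show $P^D_\theta-\lambda^2$ is bounded $\mc{D}(\Omega_\tr)\to\mc{H}(\Omega_\tr)$ and that $\mc{D}(\Omega_\tr)$ with the graph norm~\eqref{eq:normDtr} is a Hilbert space — this is routine from the definitions and the closedness of $P$ and of the Dirichlet realisation of $-\Delta_\theta$. (2) Construct a parametrix by gluing: let $\rho_0,\rho_1\in C_c^\infty(B(0,R_1))$ be nested cutoffs equal to $1$ near $B(0,R_0)$, and for $\Im(e^{i\theta}\lambda)>0$ (where $\lambda$ may first be chosen with large imaginary part so both inverses exist with small norm) set $E:= \rho_1 (P_\theta-\lambda^2)^{-1}\rho_0 + (1-\rho_1) R^D_{-\Delta_\theta}(\lambda^2)(1-\rho_0)$, where $R^D_{-\Delta_\theta}$ is the resolvent of the Dirichlet realisation of $-\Delta_\theta$ on $\Omega_\tr$ — this exists and is compact away from its discrete spectrum because $-\Delta_\theta$ is an elliptic operator (with $\theta$-dependent, but uniformly elliptic on compact sets, coefficients) on a bounded Lipschitz domain, so one can invoke standard Lax–Milgram/Gårding plus Rellich compactness. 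Then $(P^D_\theta-\lambda^2)E = I + K$ with $K$ built from commutators $[\Delta,\rho_i]$ composed with these resolvents, hence compact $\mc{H}(\Omega_\tr)\to\mc{H}(\Omega_\tr)$; similarly $E(P^D_\theta-\lambda^2)=I+K'$ with $K'$ compact on $\mc{D}(\Omega_\tr)$. (3) Conclude $P^D_\theta-\lambda^2$ is Fredholm for that $\lambda$, with index $0$ because $K,K'$ are compact (the left and right parametrices agreeing modulo compacts give a two-sided parametrix, hence Fredholm of index $0$ by the standard argument). (4) Upgrade to all $\lambda$: the map $\lambda\mapsto P^D_\theta-\lambda^2$ is a holomorphic (indeed polynomial) family of Fredholm operators on the connected set $\{\Im(e^{i\theta}\lambda)>0\}$, so by analytic Fredholm theory / stability of the index it is Fredholm of index $0$ for every $\lambda$ in that region; since the statement of the proposition is just "Fredholm of index $0$" with $\lambda^2$ a parameter, this suffices, and in fact constancy of the index under the bounded perturbation $(\lambda_1^2-\lambda_2^2)I$ gives it for all $\lambda\in\mathbb{C}$ directly.

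I expect the main obstacle to be step (2): setting up the Dirichlet realisation of $-\Delta_\theta$ on the bounded Lipschitz domain $\Omega_\tr$ and verifying it has compact resolvent with bounds uniform in $\theta\in[\e,\pi/2-\e]$. Unlike the free case, here one must check that the sesquilinear form associated with $-\Delta_\theta$ (in the divergence-form coordinates of~\eqref{e:deltaTheta}) is coercive modulo a compact perturbation on $H_0^1(\Omega_\tr)$ — the coefficients are complex and only $C^2$ (or $C^{2,\alpha}$, per Appendix~\ref{a:scale}), but they are smooth perturbations of the identity outside the fixed annulus and uniformly elliptic there, so a Gårding inequality holds; the Lipschitz-boundary hypothesis is exactly what makes $H_0^1(\Omega_\tr)\hookrightarrow L^2(\Omega_\tr)$ compact (Rellich). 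A secondary subtlety is checking that the definition of $\mc{D}(\Omega_\tr)$ in~\eqref{e:defPMLa} is independent of the choice of $\chi$ and that the glued parametrix $E$ actually maps into $\mc{D}(\Omega_\tr)$ — i.e., that $(1-\rho_1)R^D_{-\Delta_\theta}(\lambda^2)(1-\rho_0) g$ lies in $H_0^1(\Omega_\tr)$ with $-\Delta_\theta$ of it in $L^2$, which is immediate from the definition of the Dirichlet realisation, and that $\rho_1(P_\theta-\lambda^2)^{-1}\rho_0 g$ lies in $\mc{D}$ near the black box, which is Proposition~\ref{p:scaledFredholmA}. Everything else — the commutator computations, the index-zero conclusion, the analytic continuation in $\lambda$ — is standard once these two points are in place.
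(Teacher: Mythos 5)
Your overall plan coincides with the paper's proof of Proposition~\ref{p:fredPML} in Appendix~\ref{a:scale}: glue the scaled black-box resolvent near the obstacle with a Dirichlet realization of $-\Delta_\theta$ on $\Omega_\tr$ away from it, show the error operators are compact, conclude Fredholm, and pin down the index by finding a $\lambda_0$ where $P^D_\theta-\lambda_0^2$ is invertible. However, the gluing formula as you wrote it does not yield an approximate inverse. With two nested cutoffs $\rho_0\subset\rho_1$ (meaning $\rho_1\equiv1$ on $\supp\rho_0$), the non-commutator part of $(P^D_\theta-\lambda^2)E$ for your $E=\rho_1(P_\theta-\lambda^2)^{-1}\rho_0+(1-\rho_1)R^D(1-\rho_0)$ is $\rho_1\rho_0+(1-\rho_1)(1-\rho_0)$. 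The first product collapses to $\rho_0$ because $\rho_1\equiv1$ on $\supp\rho_0$, but the second collapses to $1-\rho_1$, \emph{not} $1-\rho_0$: for that you would need $\rho_1\equiv0$ on $\supp(1-\rho_0)$, which is incompatible with $\rho_1\equiv1$ on $\supp\rho_0$ for smooth $\rho_i$. So $(P^D_\theta-\lambda^2)E=1-(\rho_1-\rho_0)+\text{commutators}$, and the multiplication operator $\rho_1-\rho_0$ is not compact, so you do not get $I$ plus compact. The paper avoids this with \emph{three} nested cutoffs $\chi_0\subset\chi_1\subset\chi_2$ and $E=\chi_2(P_\theta-\lambda^2)^{-1}\chi_1+(1-\chi_0)R^D_{0,\theta}(1-\chi_1)$, so the two leading terms collapse to $\chi_1$ and to $1-\chi_1$, summing to $1$.

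A second slip: a two-sided parametrix modulo compacts implies Fredholm but \emph{not} index $0$ (the unilateral shift $S$ on $\ell^2$ has $S^*$ as a two-sided parametrix modulo compacts, yet $\operatorname{ind}S=-1$). The index-$0$ conclusion must come from exhibiting invertibility at some $\lambda$; you correctly anticipate this by remarking that one can take $\lambda$ with large imaginary part, and the paper makes this precise by taking $\lambda=e^{i\pi/4}t$ with $t$ large and using the bounds from Lemma~\ref{l:freePMLFredholm} and Proposition~\ref{p:scaledFredholm} to show the error operators have norm $O(t^{-1})$, so $I+K$ and $I+L_\theta$ are invertible, hence so is $P^D_\theta-\lambda^2$, giving index $0$. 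The final conclusion should rest on that invertibility, not on "compact $K,K'$ hence index $0$." Relatedly, in step (4) propagating the index via the "bounded perturbation $(\lambda_1^2-\lambda_2^2)I$" requires that perturbation to be \emph{compact} as a map $\mc{D}(\Omega_\tr)\to\mc{H}(\Omega_\tr)$, not merely bounded; this does hold here (Rellich on the bounded Lipschitz domain plus the compactness built into the black-box axioms), but it needs to be stated.
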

Proposition~\ref{p:fredPMLa} is proved in Appendix~\ref{a:scale}; see Proposition~\ref{p:fredPML}.

\section{Elliptic estimates}\label{s:elliptic}
In this section, we prove the necessary bounds on the solutions to $(P_\theta-k^2)u=f$ and $(P^D_\theta -k^2)v=f$ for $k\in \mathbb{R}$, $k\gg 1$. The Carleman estimates in \S\ref{s:carleman} describe how both $u$ and $v$ propagate in the scaling region. The bound in \S\ref{s:boundaryEst} (obtained essentially by integration by parts) describes the behaviour of $v$ in a neighbourhood of $\Gamma_\tr$.

It is convenient to use the semiclassical rescaling $\hbar=k^{-1}$ 
\footnote{The semiclassical parameter is often denoted by $h$, but we use $\hbar$ to avoid a notational clash with the meshwidth of the FEM appearing in \S\ref{s:numerical}.}
and write these equations as 
$$
(\hbar^2P_\theta-1)u=\hbar^2\rhs,\qquad (\hbar^2P_\theta^D-1)v=\hbar^2\rhs,
$$
and we do so throughout the rest of the paper.
We use the semiclassically-scaled Sobolev norms for $\ell\in \mathbb{N}$ defined by
$$
\|u\|_{H_{\hbar}^\ell}^2:=\sum_{|\alpha|\leq \ell}\|(\hbar D)^\alpha u\|_{L^2}^2,
$$
{where $D:= -i \partial$.}
Then, for $\ell\in \mathbb{N}$, $H_{\hbar}^{-\ell}=(H_{\hbar}^\ell)^*$ and the norms for $s\in \mathbb{R}$ are defined by interpolation. 
With $\langle\cdot\rangle:= (1+ |\cdot|^2)^{1/2}$,
these norms satisfy
$$
\|u\|_{H_{\hbar}^s}\sim \|\langle \hbar D\rangle^s u\|_{L^2}.
$$
\subsection{Carleman estimates}
\label{s:carleman}

We start by proving an exponential estimate for solutions to 
$$
(-\hbar^2\Delta_\theta -1)u=\rhs,
$$
for $u$ supported in $r>R_1$. Our estimates are proved using Carleman estimates with weight $\psi=\psi(r)$. 
To this end, for $\psi \in C^\infty([0,\infty))$, we define
\begin{equation}
\label{e:pPsi}
P_\psi:=e^{\psi/\hbar}(-\hbar^2\Delta_\theta-1)e^{-\psi/\hbar},
\end{equation}
with {semiclassical principal} symbol 
$$
p_\psi(r,\omega,\xi_r,\xi_\omega):=\Big(\frac{\xi_r+i\psi'}{1+if_\theta'(r)}\Big)^2+\frac{|\xi_\omega|^2_{S^{d-1}}}{(r+if_\theta(r))^2}-1.
$$

\begin{lem}
\label{l:carleman1}
Let $\e>0$ and $\Phi_\theta$ be as in~\eqref{e:weightDer}. Then there is $c_{\e, f}>0$ such that for $r>R_1+\e$ and $\e\leq \theta\leq\pi/2-\e$, $\Phi_\theta(r)>c_{\e,f}$. Moreover, given $0\leq a<1$, there is $c>0$ such that for all $\e\leq \theta\leq \pi/2-\e$ and $r\geq R_1 + \e$ such that 
\beq\label{eq:carleman1}
|\psi'(r)|< a\Phi_\theta(r),
\eeq
$P_\psi$ is uniformly semiclassically elliptic in $r\geq R_1+\e$; i.e.,
$$
|p_\psi|\geq c\langle \xi\rangle^2,\qquad r\geq R_1+\e.
$$
\end{lem}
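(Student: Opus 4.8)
\textbf{Proof plan for Lemma~\ref{l:carleman1}.}

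The plan is to analyze the zeros in $\xi_r$ of the polynomial $p_\psi$ and show they are bounded away from the real axis, then leverage the fact that $p_\psi$ is a (degree-two) polynomial in $\xi_r$ whose leading coefficient is bounded below to conclude ellipticity. First I would record that $p_\psi$ factors as
\beqs
p_\psi(r,\xi_r,\xi_\phi)=\frac{1}{(1+if_\theta'(r))^2}\Big((\xi_r+i\psi')^2-(1+if_\theta'(r))^2\Big(1-\tfrac{|\xi_\phi|^2}{(r+if_\theta(r))^2}\Big)\Big),
\eeqs
so that, writing $w:=(1+if_\theta'(r))\sqrt{1-|\xi_\phi|^2/(r+if_\theta(r))^2}$ with the branch of the square root continuous in $|\xi_\phi|^2\geq 0$ (and agreeing with the positive root at $|\xi_\phi|=0$), the zeros in $\xi_r$ of $p_\psi$ are exactly $\xi_r=-i\psi'\pm w$. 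Since $\psi'$ is real, these have imaginary parts $-\psi'\pm\Im w$. The definition~\eqref{e:weightDer} of $\Phi_\theta(r)$ is precisely $\inf_{t\geq0}|\Im w|$ over $t=|\xi_\phi|^2$, so for every $\xi_\phi$ one has $|\Im w|\geq \Phi_\theta(r)$, and hence under~\eqref{eq:carleman1} both roots have imaginary part bounded below in modulus by $(1-a)\Phi_\theta(r)$.

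Next I would establish the two quantitative lower bounds. For the first claim ($\Phi_\theta(r)>c_{\e,f}$ on $r>R_1+\e$, $\e\leq\theta\leq\pi/2-\e$): by Lemma~\ref{lem:Phi}\eqref{i:thetaDep}, $\Phi_\theta(r)>c_\delta\tan\theta$ for $r>R_1+\delta$, $\theta>\delta$; taking $\delta=\e$ and using $\tan\theta\geq\tan\e$ on $\theta\geq\e$ gives $c_{\e,f}=c_\e\tan\e$. (Alternatively this can be proved directly from continuity~\eqref{i:continuous} and positivity of $\Phi_\theta$ on the compact set $\{R_1+\e\leq r\leq R_2,\ \e\leq\theta\leq\pi/2-\e\}$ together with the explicit formula $\Phi_\theta=f_\theta'=\tan\theta$ for $r\geq R_2$ via~\eqref{i:evLinear}; I would pick whichever is cleanest and cite~\S\ref{app:Phi}.) For ellipticity: the leading coefficient of $\xi_r\mapsto p_\psi$ is $(1+if_\theta'(r))^{-2}$, whose modulus is $\geq (1+(f_\theta')^2)^{-1}$, bounded below uniformly since $f_\theta'$ is bounded on the relevant range (it equals $\tan\theta\leq\tan(\pi/2-\e)$ for $r\geq R_2$ and is continuous in between; on a bounded $r$-interval and compact $\theta$-interval it is bounded). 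Then for $|\xi_r|$ large, $|p_\psi|\sim c|\xi_r|^2$, while for $|\xi_r|$ in a bounded set the distance from $\xi_r$ (real) to the two roots $-i\psi'\pm w$ is at least $|\Im(-i\psi'\pm w)|\geq(1-a)\Phi_\theta(r)\geq(1-a)c_{\e,f}>0$, so $|p_\psi|=|(1+if_\theta')^{-2}||\xi_r-(-i\psi'+w)||\xi_r-(-i\psi'-w)|$ is bounded below; combining the two regimes (and incorporating the $\xi_\phi$ part of $\langle\xi\rangle^2$, which is controlled because $\Im w$ grows like $|\xi_\phi|\tan\theta$ for $|\xi_\phi|$ large so that $|p_\psi|\gtrsim|\xi_\phi|^2$ there too) yields $|p_\psi|\geq c\langle\xi\rangle^2$ uniformly.

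The main obstacle I anticipate is making the branch-of-the-square-root bookkeeping rigorous and uniform: one must check that $\Im w$ does not vanish for any $t=|\xi_\phi|^2\geq0$ at a given $r>R_1+\e$ (this is exactly what $\Phi_\theta(r)>0$ encodes, but one should verify the infimum in~\eqref{e:weightDer} is attained or at least bounded away from zero, including the behavior as $t\to\infty$ where $\sqrt{1-t/(r+if_\theta)^2}\sim \sqrt{t}/(r+if_\theta)$ so $\Im w\sim\sqrt t\,\Im\big((1+if_\theta')/(r+if_\theta)\big)$, nonzero when $f_\theta,f_\theta'>0$), and that all the lower bounds can be taken uniform over the compact parameter ranges in $(r,\theta)$. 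This is where I would lean on the properties of $\Phi_\theta$ collected in Lemma~\ref{lem:Phi} (continuity, strict positivity, and the large-$r$ identification with $f_\theta'$) rather than re-deriving everything by hand; the rest is the routine "polynomial with leading coefficient bounded below and roots bounded away from $\mathbb{R}$ is elliptic" argument.
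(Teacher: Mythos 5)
Your overall strategy for the ellipticity part---factor $p_\psi$ into $(1+if_\theta')^{-2}(\xi_r-\text{root}_1)(\xi_r-\text{root}_2)$, argue the roots have imaginary parts bounded below by $(1-a)\Phi_\theta(r)$, and combine with the leading coefficient being bounded below and the large-$|\xi|$ regime---is essentially equivalent to what the paper does (the paper works with $\widetilde p(s):=p_\psi$ in the substituted variable $s=\xi_r+i\psi'$ and controls $|\widetilde p|$ via derivative bounds near the roots rather than literally factoring, but the mechanism is the same).

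However, your route to the first claim ($\Phi_\theta(r)>c_{\e,f}$) has a genuine logical problem. Your primary option is to invoke Lemma~\ref{lem:Phi}\eqref{i:thetaDep}; but the paper's proof of that point (in~\S\ref{app:Phi}) explicitly cites Lemma~\ref{l:carleman1} for exactly the bound $\Phi_\theta(r)>c_\delta$ on the range where the explicit formula $\Phi_\theta=f_\theta'$ may not hold. So invoking \eqref{i:thetaDep} here is circular. The alternative you offer (continuity + pointwise positivity + compactness) could in principle be made to work, but as sketched it contains an error: you claim $\Im w\sim\sqrt{t}\,\Im\bigl((1+if_\theta')/(r+if_\theta)\bigr)$ is nonzero as $t\to\infty$, but
$\Im\bigl((1+if_\theta')/(r+if_\theta)\bigr)=(rf_\theta'-f_\theta)/(r^2+f_\theta^2)$,
which vanishes whenever $rf_\theta'=f_\theta$ --- in particular for \emph{all} $r\geq R_2$ (where $f_\theta=r\tan\theta$, $f_\theta'=\tan\theta$). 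So the claimed linear-in-$\sqrt t$ growth of $\Im w$ fails on exactly the range where the complex-scaled operator is eventually constant-coefficient, and the "infimum stays away from zero" step is not established. The paper avoids both pitfalls by proving the lower bound on $\Phi_\theta$ directly inside the proof of this lemma: it shows $|\widetilde p(s)|\geq c_{f,\e}(|\Re s|^2+|\xi_\phi|^2/r^2+1)$ for $|\Im s|\leq c_{f,\e}$ by taking real and imaginary parts of $\widetilde p$ (both $\Im\tfrac{s^2}{(1+if_\theta')^2}$ and $\Im\tfrac{|\xi_\phi|^2}{(r+if_\theta)^2}$ are $\leq 0$ and control $s^2$ and $|\xi_\phi|^2/r^2$ respectively, while $\Re\widetilde p\approx -1$ when those are small). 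This forces the roots to satisfy $|\Im s_\pm|>c_{f,\e}$, giving $\Phi_\theta>c_{f,\e}$ as a byproduct, with no reliance on Lemma~\ref{lem:Phi} and no delicate $t\to\infty$ analysis. You should replace your appeal to Lemma~\ref{lem:Phi} with this self-contained real/imaginary-part estimate.
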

\begin{proof}
In the following arguments, $c_{f,\e}, C_{f,\e}>0$ are constants depending on $f$ and $\e$ whose values may change from line to line. Throughout the proof, $r>R_1+\e$ and $\theta\in [\e,\pi/2-\e]$.

The solutions, $s_{\pm}$ to 
$$
\widetilde{p}(s):=\Big(\frac{s}{1+if_\theta'(r)}\Big)^2+\frac{|\xi_\omega|^2}{(r+if_\theta(r))^2}-1=0
$$
are given by
$$
s_{\pm}= \pm (1+if_\theta'(r))\sqrt{1-\frac{|\xi_\omega|^2}{(r+if_\theta(r))^2}}.
$$
The definition of $\Phi_\theta(r)$ \eqref{e:weightDer} then implies that
\beq\label{eq:Phispm}
\Phi_\theta(r)= \inf_{|\xi_\omega|\geq 0} \min \big\{
|\Im s_+|, |\Im s_-|
\big\}
=\inf_{|\xi_\omega|\geq 0 
|\Im s_+|,}
\eeq
since $s_+ =- s_-$.

By considering the real and imaginary parts of $\widetilde{p}(s)$, we find that 
$$
|\widetilde{p}(s)|\geq 
c_{f,\e}\big(|\Re s|^2+|\xi_\omega|^2{/r^2}+1\big),\qquad \Im s=0,
$$
where we have used  the particular form of $f_\theta(r)$, i.e., $f_\theta(r) = f(r)\tan \theta$ and the fact that $\theta\in [\e,\pi/2-\e]$ to get uniformity in $\theta$.
Therefore, since there exists $c_{f,\e}>0$ such that,
$$
|\partial_s\widetilde{p}|\leq c_{f,\e}|s|, 
$$
there is $c_{f,\e}>0$ such that
\begin{equation}
\label{e:PhiIsntSmall}
|\widetilde{p}(s)|\geq c_{f,\e}\big(|\Re s|^2+|\xi_\omega|^2{/r^2}+1\big),\qquad |\Im s|\leq c_{f,\e},
\end{equation}
and thus $|\Im s_{\pm}|>c_{f,\e}$. Therefore, by \eqref{eq:Phispm}, 
$\Phi_\theta(r)>c_{f,\e}$.
Hence, if $|\Im s|<a\Phi_\theta(r)$, then
$$
\min_{\pm} |s -s_{\pm}|>c_{f,\e}(1-a).
$$

In particular, since 
$$
|\partial_s\widetilde{p}(s_{\pm})|=\Big|\frac{2s_{\pm}}{(1+if_\theta'(r))^2}\Big|\geq c_{f,\e},
$$
and 
$$
|\partial_s^2\widetilde{p}(s)|=\frac{1}{|1+if_\theta'|^2}\leq C_{f,\e},
$$
there is $c_{a,f,\e}>0$ such that 
$$
|\widetilde{p}(s)|\geq c_{a,f,\e}
$$
if $|\Im s|<a\Phi_\theta(r)$.
Finally, by considering the real and imaginary parts of $\widetilde{p}(s)$ again (this time with $s$ not necessarily real),
we see that there exists $C_{f,\e}>0$ such that, for $|\Re s|^2+|\xi_\omega|^2/r^2\geq C_{f,\e}$, 
$$
|\widetilde{p}(s)|\geq C_{f,\e}^{-1}\big(|\Re s|^2+|\xi_\omega|^2/r^2+1\big).
$$
Together, we have shown  that for $|\Im s|<a\Phi_\theta(r)$, 
$$
|\widetilde{p}(s)|\geq c_{a,f,\e}\big(|\Re s|^2+|\xi_\omega|^2/r^2+1\big)
$$
and the claim follows.
\end{proof}

In the rest of the paper we use the notation that $(a,b)_r := B(0,b)\setminus B(0,a)$.
\begin{lem}
\label{l:carleman}
Let $\e>0$, $\eta>0$. Then there are $C>0$, $\hbar_0>0$, and $0<\widetilde\eta<\e/6$ such that for all $\e\leq \theta \leq \pi/2-\e$, $\delta>\e$, $u\in L^2$, $0<\hbar<\hbar_0$,
\begin{multline}
\label{e:rightFromLeft}
\|u\|_{H_\hbar^2(R_1+\delta-2\widetilde\eta,R_1+\delta-\widetilde\eta)_r}\leq C\|(-\hbar^2\Delta_\theta-1)u\|_{L^2(R_1,R_1+\delta)_r}\\+ C\exp\Big(-\frac{(1-\eta)}{\hbar}\int_{R_1}^{R_1+\delta}\Phi_\theta(s)\,ds\Big)\hbar\|u\|_{H_{\hbar}^1(R_1, R_1+\widetilde\eta)_r }
+C\hbar\|u\|_{H_{\hbar}^1(R_1+\delta-\widetilde{\eta}, R_1+\delta)_r },
\end{multline}
and
\begin{multline}
\label{e:rightFromLeftNoRightBoundary}
\|u\|_{H_\hbar^2(R_1+\delta-2\widetilde\eta,\infty)_r}\leq C\|(-\hbar^2\Delta_\theta-1)u\|_{L^2(R_1,\infty)_r}\\
+C\exp\Big(-\frac{(1-\eta)}{\hbar}\int_{R_1}^{R_1+\delta}\Phi_\theta(s)\,ds\Big)
\hbar\|u\|_{H_{\hbar}^1(R_1, R_1+\widetilde\eta)_r } .
\end{multline}
\end{lem}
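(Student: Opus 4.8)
The plan is to run a standard semiclassical Carleman-estimate argument with a radial weight, using the ellipticity established in Lemma~\ref{l:carleman1}. First I would fix $\eta>0$ and $\e>0$, and pick $a=1-\eta/2 \in (0,1)$; Lemma~\ref{l:carleman1} then guarantees that $P_\psi = e^{\psi/\hbar}(-\hbar^2\Delta_\theta - 1)e^{-\psi/\hbar}$ is uniformly elliptic on $\{r\geq R_1+\e'\}$ (for a suitable $\e'\sim\e$) for any $\psi=\psi(r)$ with $|\psi'(r)|<a\,\Phi_\theta(r)$. The weight I would use is built from the antiderivative of $\Phi_\theta$: essentially $\psi(r)=(1-\eta)\int_{R_1}^{r}\Phi_\theta(s)\,ds$, suitably cut off so that $\psi'$ is supported in $(R_1,R_1+\delta)$ and vanishes for $r\leq R_1$ and $r\geq R_1+\delta$ (so that $e^{\psi/\hbar}$ is harmless outside the scaling window). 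The subtlety here is that $\psi'$ must stay strictly below $a\Phi_\theta$ everywhere it is nonzero, which forces the small loss $(1-\eta)$ vs.\ the ideal $1$; since $\Phi_\theta>c_{\e,f}>0$ on $r>R_1+\e$ one can arrange a smooth cutoff doing this with an $O(\widetilde\eta)$-width transition region near $R_1$ and near $R_1+\delta$, at the cost of the stated loss. The continuity of $(r,\theta)\mapsto\Phi_\theta(r)$ (Lemma~\ref{lem:Phi}\eqref{i:continuous}) plus the uniform lower bound give all constants uniform in $\theta\in[\e,\pi/2-\e]$ and in $\delta>\e$.

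Next, for $u\in L^2$ with $(-\hbar^2\Delta_\theta-1)u=:f$, I would localize. Let $\chi$ be a radial cutoff equal to $1$ on $(R_1+\widetilde\eta,\infty)_r$ and supported in $(R_1,\infty)_r$, and set $w=e^{\psi/\hbar}\chi u$; then $P_\psi w = e^{\psi/\hbar}\chi f + e^{\psi/\hbar}[-\hbar^2\Delta_\theta,\chi]u$. The elliptic estimate for $P_\psi$ (semiclassical elliptic regularity, as recalled in \S\ref{app:SCA}) on the region $r\geq R_1+\e'$ gives
\[
\|w\|_{H_\hbar^2(R_1+\delta-2\widetilde\eta,\,R_1+\delta-\widetilde\eta)_r}\leq C\|P_\psi w\|_{L^2} + C\hbar^N\|w\|,
\]
or more precisely one controls $\|\chi' w\|$ where $\chi'$ is a slightly larger cutoff, and iterates/absorbs. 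I would then undo the conjugation: on the interior evaluation region $(R_1+\delta-2\widetilde\eta, R_1+\delta-\widetilde\eta)_r$ one has $\psi=(1-\eta)\int_{R_1}^{R_1+\delta}\Phi_\theta$ (the cutoff has already turned off, so $\psi$ has reached its maximum value), hence $e^{-\psi/\hbar}$ on the left produces exactly the exponential prefactor claimed. On the right-hand side, $e^{\psi/\hbar}\chi f$ is bounded by $e^{\max\psi/\hbar}\|f\|_{L^2(R_1,R_1+\delta)_r}$ — but wait, we want no exponential factor on the $\|f\|$ term. The fix is that after multiplying the whole inequality through by $e^{-\max\psi/\hbar}$, the $f$-term keeps only $\|f\|_{L^2}$ (the $e^{\psi}$ and $e^{-\max\psi}$ combine to something $\le 1$ since $\psi\le\max\psi$), and the commutator term $[-\hbar^2\Delta_\theta,\chi]u$ is supported where $\partial\chi\neq 0$, i.e.\ in $(R_1,R_1+\widetilde\eta)_r$, where $\psi$ is still essentially $0$ (the cutoff making $\psi'$ turn on lives there), so after multiplying through we get $\hbar\,e^{-\max\psi/\hbar}\|u\|_{H^1_\hbar(R_1,R_1+\widetilde\eta)_r}$, matching the second term; the commutator from the cutoff localizing near $R_1+\delta$ supplies the last term $C\hbar\|u\|_{H^1_\hbar(R_1+\delta-\widetilde\eta,R_1+\delta)_r}$ (here $e^{-\max\psi/\hbar}\le 1$ is simply discarded).

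For \eqref{e:rightFromLeftNoRightBoundary} I would repeat the argument with a weight $\psi$ that, instead of being cut back to a constant near $R_1+\delta$, is extended to be \emph{constant} equal to its maximal value $(1-\eta)\int_{R_1}^{R_1+\delta}\Phi_\theta$ for all $r\ge R_1+\delta$; since $\psi'=0$ there, ellipticity of $P_\psi$ persists on all of $\{r\ge R_1+\e'\}$ (no hypothesis $|\psi'|<a\Phi_\theta$ is violated), so there is no right boundary commutator and no third term. One must check $u\in L^2(R_1,\infty)_r$ makes $w=e^{\psi/\hbar}\chi u\in L^2$, which holds because $\psi$ is bounded. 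The main obstacle in making all of this rigorous is the bookkeeping of the cutoff-induced transition regions: one needs the $\widetilde\eta$-windows near $R_1$ and near $R_1+\delta$ to be simultaneously (i) wide enough that a smooth $\psi$ with $|\psi'|<a\Phi_\theta$ can ramp up from $0$ to its plateau, using only the lower bound $\Phi_\theta\ge c_{\e,f}$, and (ii) placed so that the evaluation region $(R_1+\delta-2\widetilde\eta, R_1+\delta-\widetilde\eta)_r$ sits strictly inside $\{\psi=\max\psi\}\cap\{r\ge R_1+\e'\}$ and strictly away from the commutator supports; choosing $\widetilde\eta<\e/6$ gives exactly the room needed. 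A secondary technical point is promoting the a priori regularity: $u$ is only assumed $L^2$, so the elliptic estimate for $P_\psi$ must be applied in the form that bootstraps $L^2$ control of $u$ in the scaling region to $H_\hbar^2$ control — this is routine semiclassical elliptic regularity (\S\ref{app:SCA}) once ellipticity is in hand, and the uniform-in-$\theta$ constants come from the uniform ellipticity constant in Lemma~\ref{l:carleman1}.
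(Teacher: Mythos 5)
Your proposal is correct and follows essentially the same route as the paper: a Carleman estimate using a radial weight whose derivative stays strictly below $(1-\widetilde\eta)\Phi_\theta$, the ellipticity of $P_\psi$ from Lemma~\ref{l:carleman1}, the rough-symbol elliptic parametrix, and cutoffs creating two $\widetilde\eta$-wide transition bands near $R_1$ and $R_1+\delta$ whose commutators produce the last two terms. The only cosmetic difference is normalization of the weight — the paper takes $\psi\le 0$ vanishing on the evaluation region and negative near $R_1$, while you take $\psi\ge 0$ vanishing near $R_1$ and maximal on the evaluation region, then divide through by $e^{\max\psi/\hbar}$ — but since the two weights differ by a constant they give the identical conjugated operator $P_\psi$ and the identical estimate.
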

\begin{proof}
Let $P_\psi$ be as in~\eqref{e:pPsi}. To prove the lemma, we construct a $\psi$ {satisfying \eqref{eq:carleman1} with $a=1-\widetilde{\eta}$ for some, not yet specified, $\widetilde{\eta}$.}
Let $\psi_0\in C_c^\infty((2\widetilde{\eta},\delta-2\widetilde{\eta});[0,1])$ with $\psi_0\equiv 1$ on $(3\widetilde{\eta},\delta-3\widetilde{\eta})$. Then, let $0\leq \widetilde{\Phi}_{\theta}(r)\in C^\infty$ with 
$(1-{2}\widetilde{\eta})\Phi_\theta(r)\leq\widetilde{\Phi}_{\theta}(r)\leq  {(1-\widetilde{\eta})}\Phi_\theta(r)$ on $[R_1+\widetilde{\eta},\infty)$ and  $\supp \widetilde\Phi_\theta \subset (R_1+\widetilde{\eta}/2,\infty)$. Then define
\beq\label{eq:psi}
\psi(t)=-\int_{t}^\infty\widetilde{\Phi}_{\theta}(s)\psi_0(s-R_1)\,ds,
\eeq
and choose  $0<\widetilde{\eta}<\e/6$ small enough such that 
\beq\label{eq:carlemanproof1}
-\int_{-\infty}^\infty \widetilde{\Phi}_{\theta}(s){\psi_0}(s-R_1)\, d s\leq -(1-\eta)\int_{R_1}^{R_1+\delta}\Phi_\theta(s)\,ds;
\eeq
note that this choice can be made uniformly in $\delta>\e$.
By \eqref{eq:psi} and the support properties of $\psi_0$,
$$
|\psi'(t)|=\widetilde{\Phi}_{\theta}(t)|{\psi_0}(t-R_1)|\leq (1-\widetilde{\eta})\Phi_\theta(t),
$$
so that, by Lemma~\ref{l:carleman1}, $|p_\psi|\geq c\langle \xi\rangle^2$, {for all $t$.}
In addition
\beq\label{eq:carlemanproof2}
\psi(t)= -\int_{-\infty}^\infty\widetilde{\Phi}_{\theta}(s){\psi_0}(s-R_1)\,ds,\,\, t-R_1\leq 2\widetilde{\eta},\quad \tand\quad\psi(t)=0,\,\, t-R_1\geq \delta-2\widetilde{\eta},
\eeq
{see Figure \ref{fig:cutoff1},}
and
$$
|\partial^\alpha \psi(t)|\leq C_{\alpha\widetilde{\eta}\e}\quad{\tfa t}.
$$

To prove~\eqref{e:rightFromLeft}, let $\chi_1,\chi_2 \in C_c^\infty(R,R+\delta)$ with $\chi_1\equiv 1$ in a neighborhood of $[R_1+\widetilde{\eta},R_1+\delta-\widetilde{\eta}]$, $\chi_2\equiv 1$ on $\supp \chi_1$.
Let $\chi_-\in C_c^\infty(R_1,R_1+\widetilde{\eta})$, $\chi_ +\in C_c^\infty(R_1+\delta-\widetilde{\eta},R+\delta)$ with $\chi_-+\chi_+\equiv 1$ on $\supp (\chi_2-\chi_1)$; {see Figure \ref{fig:cutoff1}.}

\begin{figure}
\begin{center}
    \includegraphics[scale=0.9]{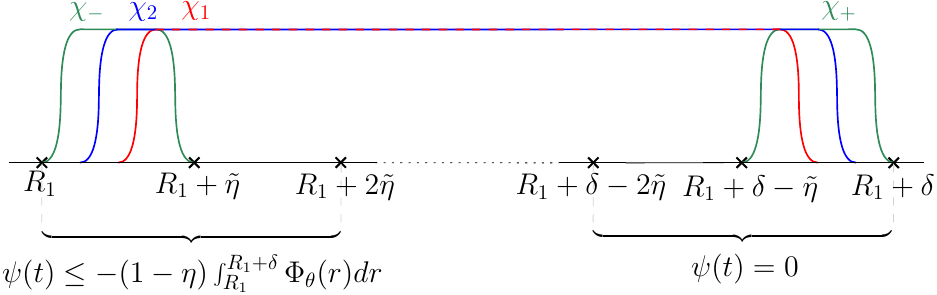}
  \end{center} 
      \caption{The cut-off functions and behaviour of the function $\psi(t)$ in the proof of Lemma \ref{l:carleman}.
      Although $\widetilde{\eta}$ appears large here (for readability), we emphasise that $\widetilde{\eta}\ll \delta$.
      }
\label{fig:cutoff1}
\end{figure}

Now, 
$$
P_\psi = \Op_{\hbar}(p_{0,\psi})+\hbar \Op_{\hbar}(p_{1,\psi}).
$$
with $p_{0,\psi}\in C^{1,\alpha}S^2$, $p_{1,\psi}\in C^{0,\alpha}S^1$, and 
$$
|p_{0,\psi}|\geq c_{\e \widetilde{\eta}}\langle \xi\rangle^2.
$$
Let $\rho =\frac{1}{1+\alpha}$. Then, by Lemmas~\ref{l:roughNorm} and~\ref{l:roughApprox}, there is $p_{\hbar,\psi}$ satisfying
$$|\partial_x^\gamma \partial_\xi^\beta p_{\hbar,\psi}(x,\xi)|\leq C_{\e\widetilde{\eta}\gamma \beta} \hbar^{-\rho\gamma }\langle \xi\rangle ^{m-|\beta|+\rho \gamma},$$
and 
$$
\|\Op_{\hbar}(p_{\hbar,\psi})-P_\psi\|_{H_{\hbar}^1\to L^2}\leq C_{\e \widetilde{\eta}}\hbar.
$$

By a standard elliptic-parametrix construction for $p_{\hbar}$ in an exotic symbol class (see Theorem \ref{t:ellip} for the standard elliptic-parametrix construction and~\cite[\S7.3-7.4]{Ta:96} for the construction in exotic calculi),  there is $E:L^2\to H_{\hbar}^2$, such that 
$$
\chi_1=E\Op_{\hbar}(p_{\hbar,\psi})+O(\hbar^\infty)_{\Psi^{-\infty}}.
$$
Moreover, both $E$ and the error are uniform over $\delta\geq \e$, $\e\leq \theta \leq \pi/2-\e$.
Therefore, 
\begin{equation}
\label{e:parametrix}
\begin{aligned}
\chi_1 e^{\psi/\hbar}u&=\chi_1 \chi_2e^{\psi/\hbar} u=E\Op_{\hbar}(p_{\hbar,\psi}) \chi_2e^{\psi/\hbar} u+ O_{\widetilde{\eta}\e}(\hbar^\infty)_{\Psi^{-\infty}}\chi_2 e^{\psi/\hbar} u\\
&=E(P_\psi +(\Op_{\hbar}(p_{\hbar,\psi})-P_\psi))\chi_2 e^{\psi/\hbar} u+ O_{\widetilde{\eta}\e}(\hbar^\infty)_{\Psi^{-\infty}}\chi_2 e^{\psi/\hbar} u\\
&=E \chi_2e^{\psi/\hbar}\rhs-Ee^{\psi/\hbar}[\hbar^2\Delta_\theta,\chi_2]u+ O_{\widetilde{\eta}\e}(\hbar)_{H_{\hbar}^1\to H_{\hbar}^2}\chi_2  e^{\psi/\hbar}u,
\end{aligned}
\end{equation}
where $f:=(-\hbar^2 \Delta_\theta-1) u.$
Therefore, since $\partial \chi_2$ is supported where $\chi_+ +\chi_-=1$,
\begin{equation}\label{eq:carlemanproof3}
\|\chi_1e^{\psi/\hbar}u\|_{H_{\hbar}^2}
\leq C\|\chi_2e^{\psi/\hbar}\rhs\|_{L^2}+C\hbar\|\chi_-e^{\psi/\hbar}u\|_{H_{\hbar}^1} +C\hbar\|\chi_+e^{\psi/\hbar}u\|_{H_{\hbar}^1} +C\hbar\|\chi_2 e^{\psi/\hbar}u\|_{H_{\hbar}^1}.
\eeq
Since $\chi_2 = (\chi_2-\chi_1)+ \chi_1$ and $\chi_+ + \chi_-=1$ on $\supp(\chi_2-\chi_1)$, 
\beq\label{eq:carlemanproof4}
\|\chi_2 e^{\psi/\hbar} u \|_{H^1_\hbar} \leq 
C\|\chi_- e^{\psi/\hbar} u \|_{H^1_\hbar} 
+ C\|\chi_+ e^{\psi/\hbar} u \|_{H^1_\hbar} 
+ \|\chi_1 e^{\psi/\hbar} u \|_{H^1_\hbar}.
\eeq
Combining \eqref{eq:carlemanproof3} and \eqref{eq:carlemanproof4} and taking $\hbar$ sufficiently small  (depending only on $\eta$ and $\e$), we have
\beq\label{eq:carlemanproof5}
\|\chi_1 e^{\psi/\hbar} u \|_{H^2_\hbar} \leq 
C\|\chi_2e^{\psi/\hbar}\rhs\|_{L^2}+C\hbar\|\chi_-e^{\psi/\hbar}u\|_{H_{\hbar}^1} +C\hbar\|\chi_+e^{\psi/\hbar}u\|_{H_{\hbar}^1}.
\eeq
Then,  since $\psi\leq -(1-\eta)\int_{R_1}^{R_1+\delta}\Phi_\theta(s)ds$ on $\supp  \chi_-$ {(by \eqref{eq:carlemanproof1} and \eqref{eq:carlemanproof2}; see Figure \ref{fig:cutoff1})}, and $\psi\leq 0$ everywhere (and thus, in particular, on $\supp \chi_+$),
$$
\|\chi_1e^{\psi/\hbar}u\|_{H_{\hbar}^2}\leq C\|\chi_2e^{\psi/\hbar}\rhs\|_{L^2}+C\exp\Big(-\frac{(1-\eta)}{\hbar}\int_{R_1}^{R_1+\delta}\Phi_\theta(s)ds\Big)
\hbar\|\chi_-u\|_{H_{\hbar}^1}+C\hbar\|\chi_+u\|_{H_{\hbar}^1}.
$$ 
The lemma now follows since $\chi_1\equiv 1$ and $\psi\equiv 0$ on $(R_1+\delta-2\widetilde{\eta}, R_1+\delta-\widetilde{\eta})$,  $\supp \chi_-\subset (R_1,R_1+\widetilde{\eta})$, and $\supp \chi_+\subset (R_1+\delta-\widetilde{\eta},R_1+\delta)$,  and $\psi\leq 0$ everywhere.

To prove~\eqref{e:rightFromLeftNoRightBoundary}, we make the same argument as above except that $\chi_1,\chi_2\in C^\infty(R_1,\infty)$ with $\chi_1\equiv 1$ in a neighborhood of $[R_1+\widetilde{\eta},\infty)$, $\chi_2\equiv 1$ on $\supp \chi_1$, and $\chi_+=0$.
\end{proof}

Next, we need an elliptic estimate away from the support of the right hand side.
\begin{lem}
\label{l:carleman2}
Let $\e,\eta>0$. Then there are $C>0$, $\hbar_0>0$, and $0<\widetilde\eta<\e/6$ such that for all $\e\leq \theta \leq \pi/2-\e$, $\e<s<\delta-\e$, $\delta>2\e$ and all $u\in L^2$ satisfying 
$$
(-\hbar^2\Delta_\theta-1)u=\rhs
$$
with $\supp \rhs \cap (R_1,R_1+\delta)_r\subset (R_1+\delta-\widetilde{\eta},R_1+\delta)_r,$ and all $0<\hbar<\hbar_0$,
\begin{multline}
\label{e:leftFromRight}
\|u\|_{H_\hbar^2(R_1+s-2\widetilde{\eta},R_1+ s+2\widetilde{\eta})_r}
\leq C 
\exp\Big(-\frac{(1-\eta)}{\hbar}\int_{R_1}^{{R_1+}s} \Phi_\theta(r)\, dr\Big)
\hbar \|u\|_{H_{\hbar}^1(R_1, R_1+\widetilde{\eta})_r }\\
+C \exp\Big(- \frac{(1-\eta)}{\hbar}\int_{{R_1+}s}^{{R_1}+\delta} \Phi_\theta(r)\, dr\Big)
\Big(\|\rhs\|_{L^2}+ \hbar\|u\|_{H_{\hbar}^1(R_1+\delta-\widetilde{\eta}, R_1+\delta)_r}\Big).
\end{multline}
\end{lem}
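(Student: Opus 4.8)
The plan is to run the same Carleman-estimate machinery as in the proof of Lemma~\ref{l:carleman}, but with a \emph{tent-shaped} weight $\psi$ whose maximum sits at $r=R_1+s$, rather than the monotone weight used there. The point of the tent is that the boundary contributions picked up near $R_1$ and near $R_1+\delta$, together with the forcing $f$ (which by hypothesis meets the scaling annulus only in $(R_1+\delta-\widetilde\eta,R_1+\delta)_r$), are all weighted by $e^{\psi/\hbar}$ at points \emph{below} the peak, hence with exponential gain relative to $\psi(R_1+s)$; this is exactly what produces the two exponential prefactors in \eqref{e:leftFromRight}.

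Concretely, fix $\widetilde\eta<\e/6$, choose $\psi_0^L\in C_c^\infty((2\widetilde\eta,s-2\widetilde\eta);[0,1])$ with $\psi_0^L\equiv1$ on $(3\widetilde\eta,s-3\widetilde\eta)$ and $\psi_0^R\in C_c^\infty((s+2\widetilde\eta,\delta-2\widetilde\eta);[0,1])$ with $\psi_0^R\equiv1$ on $(s+3\widetilde\eta,\delta-3\widetilde\eta)$ (these intervals are nonempty since $s>\e>6\widetilde\eta$ and $\delta-s>\e>6\widetilde\eta$), take $\widetilde\Phi_\theta\in C^\infty$ as in the proof of Lemma~\ref{l:carleman} (so $(1-2\widetilde\eta)\Phi_\theta\le\widetilde\Phi_\theta\le(1-\widetilde\eta)\Phi_\theta$ on $[R_1+\widetilde\eta,\infty)$ and $\supp\widetilde\Phi_\theta\subset(R_1+\widetilde\eta/2,\infty)$), and set
\[
\psi(t):=
\begin{cases}
\displaystyle-\int_t^{R_1+s}\widetilde\Phi_\theta(r)\,\psi_0^L(r-R_1)\,dr,&t\le R_1+s,\\[2mm]
\displaystyle-\int_{R_1+s}^{t}\widetilde\Phi_\theta(r)\,\psi_0^R(r-R_1)\,dr,&t\ge R_1+s.
\end{cases}
\]
Because $\psi_0^L,\psi_0^R$ vanish in a neighbourhood of $s$, $\psi\in C^\infty$, $\psi\le0$, and $\psi\equiv0$ for $r\in[R_1+s-2\widetilde\eta,R_1+s+2\widetilde\eta]$; moreover $|\psi'|=\widetilde\Phi_\theta\,\psi_0^{L/R}\le(1-\widetilde\eta)\Phi_\theta$, so Lemma~\ref{l:carleman1} (with $a=1-\widetilde\eta$) shows that $P_\psi=e^{\psi/\hbar}(-\hbar^2\Delta_\theta-1)e^{-\psi/\hbar}$ is uniformly elliptic for $r\ge R_1+\e$. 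Finally, since $\Phi_\theta\le f_\theta'\le C_f\tan\theta$ while $\int_{R_1}^{R_1+s}\Phi_\theta$ and $\int_{R_1+s}^{R_1+\delta}\Phi_\theta$ are bounded below by some $c_0>0$ depending only on $\e,f$ (using $\Phi_\theta>c_{\e,f}$ on $r>R_1+\e$ from Lemma~\ref{l:carleman1} and $s>\e$, $\delta-s>\e$), one can shrink $\widetilde\eta$, uniformly in $\theta\in[\e,\pi/2-\e]$, $s$ and $\delta$, so that $\psi\le-(1-\eta)\int_{R_1}^{R_1+s}\Phi_\theta$ on $(R_1,R_1+2\widetilde\eta)_r$ and $\psi\le-(1-\eta)\int_{R_1+s}^{R_1+\delta}\Phi_\theta$ on $(R_1+\delta-2\widetilde\eta,R_1+\delta)_r$.

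With this $\psi$ the argument is line-for-line as in the proof of Lemma~\ref{l:carleman}: pick $\chi_1,\chi_2\in C_c^\infty((R_1,R_1+\delta))$ with $\chi_1\equiv1$ near $[R_1+\widetilde\eta,R_1+\delta-\widetilde\eta]$ and $\chi_2\equiv1$ on $\supp\chi_1$, pick $\chi_-\in C_c^\infty((R_1,R_1+\widetilde\eta))$ and $\chi_+\in C_c^\infty((R_1+\delta-\widetilde\eta,R_1+\delta))$ with $\chi_-+\chi_+\equiv1$ on $\supp(\chi_2-\chi_1)$, build the exotic-class elliptic parametrix $E$ for $P_\psi$ (via Lemmas~\ref{l:roughNorm}, \ref{l:roughApprox} and Theorem~\ref{t:ellip}), absorb the commutator terms as in \eqref{e:rightFromLeft}, and deduce, for $\hbar$ small,
\[
\|\chi_1 e^{\psi/\hbar}u\|_{H^2_\hbar}\le C\|\chi_2 e^{\psi/\hbar}f\|_{L^2}+C\hbar\|\chi_-e^{\psi/\hbar}u\|_{H^1_\hbar}+C\hbar\|\chi_+e^{\psi/\hbar}u\|_{H^1_\hbar}.
\]
Since $\chi_1\equiv1$ and $\psi\equiv0$ on $[R_1+s-2\widetilde\eta,R_1+s+2\widetilde\eta]$, the left-hand side dominates $\|u\|_{H^2_\hbar(R_1+s-2\widetilde\eta,R_1+s+2\widetilde\eta)_r}$. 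On the right-hand side, $\chi_2 f$ is supported in $\supp\chi_2\cap\supp f\subset(R_1+\delta-\widetilde\eta,R_1+\delta)_r$ (this is the only place the support hypothesis on $f$ is used), where $\psi\le-(1-\eta)\int_{R_1+s}^{R_1+\delta}\Phi_\theta$; on $\supp\chi_-\subset(R_1,R_1+\widetilde\eta)_r$ one has $\psi\le-(1-\eta)\int_{R_1}^{R_1+s}\Phi_\theta$; and on $\supp\chi_+\subset(R_1+\delta-\widetilde\eta,R_1+\delta)_r$ one has $\psi\le-(1-\eta)\int_{R_1+s}^{R_1+\delta}\Phi_\theta$. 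Substituting these and bounding $\|\chi_\pm e^{\psi/\hbar}u\|_{H^1_\hbar}$ by $\|u\|_{H^1_\hbar}$ on the respective annuli, times the corresponding exponential factor, yields \eqref{e:leftFromRight}.

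The main obstacle is the construction of $\psi$ together with the uniformity of the constants: one must simultaneously make $\psi$ a $C^\infty$ function with a genuine flat maximum at $R_1+s$, keep $|\psi'|$ strictly below $\Phi_\theta$ so that Lemma~\ref{l:carleman1} applies with a $\theta$-uniform ellipticity constant, and arrange the drop of $\psi$ on each side of the peak to exceed $(1-\eta)$ times the corresponding integral of $\Phi_\theta$ — all with $\widetilde\eta$ chosen independently of $s$ and $\delta$. This is where the hypotheses $s>\e$ and $\delta-s>\e$ are essential, through the uniform lower bounds they give on $\int_{R_1}^{R_1+s}\Phi_\theta$ and $\int_{R_1+s}^{R_1+\delta}\Phi_\theta$. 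Once $\psi$ is in hand, the remaining steps — the parametrix construction, the absorption of commutator terms, and the treatment of the region near $R_1$ where $\psi'\equiv0$ — are exactly as in the proof of Lemma~\ref{l:carleman} and introduce nothing new.
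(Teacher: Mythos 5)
Your proof is correct and follows essentially the same strategy as the paper's. In fact, your tent-shaped weight, written piecewise around $t=R_1+s$, is identical (by inspection of supports) to the weight the paper writes as the single integral $\psi(t)=\int_{R_1+s}^t(\psi_-(r-R_1)-\psi_+(r-R_1))\widetilde\Phi_\theta(r)\,dr$; the remaining steps (cutoffs $\chi_1,\chi_2,\chi_\pm$, the exotic parametrix, absorption, and the substitution of the $\psi$-values on each support) match the paper line for line. The one place where your justification is slightly imprecise is in the discussion of uniformity of $\widetilde\eta$: the integrals $\int_{R_1}^{R_1+s}\Phi_\theta$ and $\int_{R_1+s}^{R_1+\delta}\Phi_\theta$ are not bounded below by a constant independent of $\theta$ (they scale like $\tan\theta$); what saves the argument is that the errors introduced by trimming the tails of the weight also scale like $\widetilde\eta\tan\theta$, so the $\tan\theta$ dependence cancels and $\widetilde\eta$ can indeed be chosen depending only on $\e,\eta,f$ — the paper asserts this without spelling it out either, so your treatment is at the same level of rigor.
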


{
\noindent(Note that, since $\widetilde{\eta}<\epsilon/6$ and $s<\delta-\e$, $R_1 + s+ 2\widetilde{\eta}< R_1 + \delta -\widetilde{\eta}$, the norm on the left-hand side of \eqref{e:leftFromRight} is indeed away from $\supp \rhs$.)
}

\begin{proof}
As in the proof of Lemma \ref{l:carleman}, we use a Carleman estimate with $P_\psi$ as in~\eqref{e:pPsi}. Let $\psi_-\in C_c^\infty( (2\widetilde{\eta},s-2\widetilde{\eta});[0,1])$ with $\psi_-\equiv  1$ on $(3\widetilde{\eta},s-3\widetilde{\eta})$, and $\psi_+\in C_c^\infty( (s+2\widetilde{\eta},\delta-2\widetilde{\eta};[0,1]))$ with $\psi_+ \equiv  1$ on $(s+3\widetilde{\eta},\delta-3\widetilde{\eta})$. Then, 
{exactly as in the proof of Lemma \ref{l:carleman}, }
let $0\leq \widetilde{\Phi}_{\theta}(r)\in C^\infty$ with 
$(1-{2}\widetilde{\eta})\Phi_\theta(r)\leq \widetilde{\Phi}_{\theta}(r)\leq {(1-\widetilde{\eta})}\Phi_\theta(r)$ on $[R_1+\widetilde{\eta},\infty)$ and  $\supp \widetilde\Phi_\theta \subset (R_1+\frac{\widetilde{\eta}}{2},\infty)$, {for some, not yet specified, $\widetilde{\eta}$.}
Let
\beq\label{eq:psi2}
\psi(t)=\int_{{R_1+}s}^t( \psi_-(r-R_1)-\psi_+(r-R_1))\widetilde{\Phi}_{\theta}(r)\,dr,
\eeq
and choose $0<\widetilde{\eta}<\e/6$ such that 
\beq\label{eq:carlemanproof1a}
-\int_{{R_1+}s}^\infty \psi_+(r-R_1)\widetilde{\Phi}_{\theta}(r)\,dr\leq- (1-\eta)\int_{{R_1+}s}^{{R_1}+\delta} \Phi_\theta(r)\,dr,
\eeq
and
\beq\label{eq:carlemanproof1b}
-\int_{-\infty}^{{R_1+}s} \psi_-(r-R_1)\widetilde{\Phi}_{\theta}(r)\,dr\leq -(1-\eta)\int_{R_1}^{{R_1+}s} \Phi_\theta(r)\,dr;
\eeq
note that this choice can be made uniformly in $\delta>2\e$ and $\e<s<\delta-\e$. 
By \eqref{eq:psi2} and the support properties of $\psi_-$ and $\psi_+$,
$$
|\psi'(t)|\leq \big(|\psi_-(t-R_1)|+|\psi_+(t-R_1)|\big)\widetilde{\Phi}_{\theta}(t)\leq (1-\widetilde{\eta})\Phi_\theta(t),
$$
and 
\begin{align}\label{eq:carlemanproof2a}
\psi(t)\equiv -\int_{{R_1+}s}^\infty \psi_+(r-R_1)\widetilde{\Phi}_{\theta}(r)\,dr,\quad t-R_1\geq \delta-2\widetilde{\eta},
\\ \psi(t)\equiv 0,\qquad s-2\widetilde{\eta}\leq t-R_1\leq s+2\widetilde{\eta},
\nonumber
\\
\psi(t)\equiv -\int_{-\infty}^{{R_1+}s}\psi_-(r-R_1)\widetilde{\Phi}_{\theta}(r)\,dr,\quad t-R_1\leq 2\widetilde{\eta};
\label{eq:carlemanproof2b}
\end{align}
see Figure \ref{fig:cutoff2}.

\begin{figure}
\begin{center}
    \includegraphics[scale=0.9]{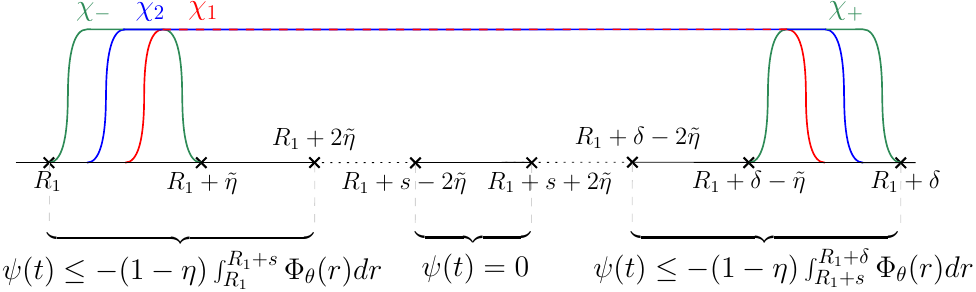}
  \end{center} 
      \caption{The cut-off functions and behaviour of the function $\psi(t)$ in the proof of Lemma \ref{l:carleman2}.
            Although $\widetilde{\eta}$ appears large here (for readability), we emphasise that $\widetilde{\eta}\ll \delta$.
      }
\label{fig:cutoff2}
\end{figure}

To prove the lemma, let $\chi_1,\chi_2, \chi_-, \chi_+$ be as in the proof of Lemma \ref{l:carleman}, i.e., $\chi_1,\chi_2 \in C_c^\infty(R_1,R_1+\delta)$ with $\chi_1\equiv 1$ in a neighborhood of $[R_1+\widetilde{\eta},R_1+\delta-\widetilde{\eta}]$, $\chi_2\equiv 1$ on $\supp \chi_1$, and $\chi_-\in C_c^\infty(R_1,R_1+\widetilde{\eta})$, $\chi_ +\in C_c^\infty(R_1+\delta-\widetilde{\eta},R_1+\delta))$ with $\chi_-+\chi_+\equiv 1$ on $\supp (\chi_2-\chi_1)$. Applying the same argument as in the proof of Lemma~\ref{l:carleman}, we obtain
\begin{align*}
\chi_1 e^{\psi/\hbar}u &=E \chi_2e^{\psi/\hbar}f -Ee^{\psi/\hbar}[\hbar^2\gamma_\theta^2\Delta,\chi_2]u+ O_{\widetilde{\eta}\e}(\hbar)_{H_{\hbar}^1\to H_{\hbar}^2}e^{\psi/\hbar}\chi_2 u
\end{align*}
(see \eqref{e:parametrix}). Arguing exactly as before, we obtain \eqref{eq:carlemanproof5}.
Therefore,  since 
$\psi\leq - (1-\eta)\int_{{R_1}}^{{R_1+}s} \Phi_\theta(r)\,dr$ on $\supp \chi_-$ 
{(by \eqref{eq:carlemanproof1b} and \eqref{eq:carlemanproof2b})} 
and
$\psi\leq -(1-\eta)\int_{{R_1+s}}^{{R_1+\delta}} \Phi_\theta(r)\,dr$ on $\supp  \chi_+\cup \supp f$
{(by \eqref{eq:carlemanproof1a} and \eqref{eq:carlemanproof2a})},
\begin{align*}
&\|\chi_1e^{\psi/\hbar}u\|_{H_{\hbar}^2}
\leq C\exp\Big(- \frac{(1-\eta)}{\hbar}\int_{{R_1}}^{{R_1+}s} \Phi_\theta(r)\, dr\Big)
\hbar\|\chi_-u\|_{H_{\hbar}^1}\\
&\hspace{3cm}
+C
\exp\Big(-\frac{(1-\eta)}{\hbar}\int_{{R_1+}s}^{{R_1+\delta}} \Phi_\theta(r)\, dr\Big)
\big(\|\rhs\|_{L^2}+\hbar\|\chi_+u\|_{H_{\hbar}^1}\big).
\end{align*}
The bound \eqref{e:leftFromRight} now follows using  the support properties of $\chi_{\pm}$ and the facts that $\chi_1\equiv 1$ and $\psi\equiv 0$ on $(R_1+s-2\widetilde{\eta},R+ s+2 \widetilde{\eta})$.
\end{proof}

\subsection{Estimate on the PML solution near the boundary}
\label{s:boundaryEst}

\begin{lem}
\label{l:nearBoundary}
For any $\epsilon>0$, there exists $\hbar_0>0$ and $C(\epsilon)>0$ so that for any $\e<\theta<\pi/2-\e$, $R_{\tr}>R_1+\e$, $B(0,R_1)\Subset\Omega_{\tr}\subset \mathbb{R}^d$ with Lipschitz boundary, if $v\in L^2$ is supported in 
$\Omega_{\tr}\setminus B(0,R_1+\e)$ and $v=0$ on $\partial \Omega_{\tr}$, then, for all $0<\hbar \leq \hbar_0$,
\begin{align}\label{eq:boundary1}
 &\Vert v \Vert_{H^1_\hbar(\Omega_{\tr})} \leq C( \epsilon) \Vert  (\hbar^2P^D_\theta -1)v \Vert_{L^2(\Omega_{\tr})}.
\end{align}
\end{lem}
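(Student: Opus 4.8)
The plan is to derive \eqref{eq:boundary1} from a Green's identity for the complex-scaled operator together with a Gårding-type coercivity estimate on the scaling region.

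Since $v$ is supported in $\Omega_{\tr}\setminus B(0,R_1+\e)$ while the cut-off $\chi$ used to define $P^D_\theta$ in~\eqref{e:defPMLa} is supported in $B(0,R_1)$, we have $\chi v=0$, so $v\in H^1_0(\Omega_{\tr})$, $P^D_\theta v=-\Delta_\theta v$, and $(\hbar^2P^D_\theta-1)v=(-\hbar^2\Delta_\theta-1)v=:g\in L^2(\Omega_{\tr})$ (if $P^D_\theta v\notin L^2$ the right-hand side of \eqref{eq:boundary1} is infinite and there is nothing to prove). Writing $\Delta_\theta$ in its radial/angular divergence form (cf.~\eqref{e:deltaTheta}), integrating by parts separately in $r$ and in $\omega$ — legitimate after approximating $v$ by smooth functions compactly supported in $\Omega_{\tr}\setminus\overline{B(0,R_1+\e/2)}$, with all boundary contributions vanishing because $v|_{\partial\Omega_{\tr}}=0$ and $v\equiv0$ near $r=R_1+\e$ — and collecting the commutators of $\partial_r$ with the Lebesgue density $r^{d-1}$ and with the coefficients of $\Delta_\theta$, one obtains
\begin{equation}
\label{eq:Green-near-bdry}
\langle g,v\rangle_{L^2(\Omega_{\tr})}
=\hbar^2\int_{\Omega_{\tr}}\Big(\frac{|\partial_r v|^2}{(1+if_\theta'(r))^2}+\frac{|\nabla_\omega v|^2}{(r+if_\theta(r))^2}\Big)\,dx
+\hbar^2\mathcal R[v]-\|v\|_{L^2(\Omega_{\tr})}^2 ,
\end{equation}
where $\mathcal R[v]$ is first order in $v$ with coefficients bounded uniformly in $\e\leq\theta\leq\pi/2-\e$ and in $R_{\tr}$ (here one uses $r\geq R_1+\e$ on $\supp v$ and $f\in C^2$), so $|\mathcal R[v]|\leq C(\e)\|\nabla v\|_{L^2}\|v\|_{L^2}$.

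The key point is that for $r\geq R_1+\e$ and $\e\leq\theta\leq\pi/2-\e$ one has $f_\theta'(r)=f'(r)\tan\theta$ and $f_\theta(r)/r=f(r)\tan\theta/r$ with $f'(r)$ and $f(r)/r$ bounded above and below by positive constants, so $\arg(1+if_\theta'(r))$ and $\arg(r+if_\theta(r))$ lie in a fixed compact subinterval of $(0,\pi/2)$, uniformly in $\theta$ and $R_{\tr}$. Consequently there is a constant $\gamma=\gamma(\e)\in(\pi/2,3\pi/2)$ — any value slightly above $\pi/2$ — and $c=c(\e)>0$ with
\[
-\Re(e^{i\gamma})\geq c,\qquad
\Re\!\big(e^{i\gamma}(1+if_\theta'(r))^{-2}\big)\geq c,\qquad
\Re\!\big(e^{i\gamma}(r+if_\theta(r))^{-2}\big)\geq c\langle r\rangle^{-2}
\]
for all such $r,\theta$: the set of admissible rotations for the mass term and for each of the two complex gradient coefficients is an open interval of length $\pi$, and these overlap precisely because the two arguments above stay away from $0$. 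Applying $\Re(e^{i\gamma}\,\cdot\,)$ to~\eqref{eq:Green-near-bdry}, using $|\nabla v|^2\lesssim|\partial_r v|^2+\langle r\rangle^{-2}|\nabla_\omega v|^2$, and absorbing $\hbar^2|\mathcal R[v]|$ by Young's inequality for $\hbar\leq\hbar_0(\e)$, we get $\|v\|_{H^1_\hbar(\Omega_{\tr})}^2\lesssim\Re\big(e^{i\gamma}\langle g,v\rangle_{L^2}\big)\leq\|g\|_{L^2}\|v\|_{L^2}\leq\|g\|_{L^2}\|v\|_{H^1_\hbar(\Omega_{\tr})}$, and dividing gives \eqref{eq:boundary1}.

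The main obstacle is the coercivity step, and in particular obtaining a \emph{single constant} rotation $e^{i\gamma}$ that works uniformly in $\theta$ and in the unbounded range of $R_{\tr}$. This is what forces \eqref{eq:Green-near-bdry} to be taken against the real Lebesgue measure — so that the two gradient coefficients are exactly $(1+if_\theta')^{-2}$ and $(r+if_\theta)^{-2}$, with arguments in $(-\pi,0)$ — rather than against the complex volume form $(r+if_\theta)^{d-1}(1+if_\theta')\,dr\,d\omega$ natural to complex scaling, which for $d\geq3$ would insert an extra $(r+if_\theta)^{d-1}$ factor and spoil the argument bookkeeping; the price, the lower-order term $\mathcal R[v]$, is harmless exactly because $\supp v$ is bounded away from $r=R_1$.
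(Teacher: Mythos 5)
The proposal is correct, and it establishes the lemma by a genuinely different bookkeeping than the paper's. The paper works in Cartesian coordinates with $\Delta_\theta u = ((I+iF_\theta'')^{-1}\partial_x)\cdot((I+iF_\theta'')^{-1}\partial_x u)$, pairs against $\bar v$, and then estimates the imaginary part and the real part of the resulting quadratic-form identity \emph{separately}: the imaginary part together with $F_\theta''(x)\geq\delta(\e)I>0$ on $\supp v$ controls $\|\hbar\partial_x v\|^2$, and the real part (using that gradient control) then controls $\|v\|^2$. Your proof instead works in the spherical form of $\Delta_\theta$, keeps the Green's identity complex, and applies a \emph{single} rotation $\Re(e^{i\gamma}\,\cdot\,)$ with $\gamma$ slightly above $\pi/2$, chosen so that $-\Re(e^{i\gamma})>0$ and simultaneously $\Re(e^{i\gamma}(1+if_\theta')^{-2})$, $\Re(e^{i\gamma}(r+if_\theta)^{-2})$ are positive; the non-emptiness of the admissible $\gamma$-window, uniformly in $\theta\in[\e,\pi/2-\e]$, $r\ge R_1+\e$, and $R_\tr$, is exactly the observation that the phases of those two coefficients are bounded strictly inside $(-\pi,0)$ — which is the spherical-coordinate incarnation of the paper's $F_\theta''\geq\delta(\e)>0$. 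Both arguments use the real Lebesgue $L^2$ pairing (not the complex volume form), and both absorb the first-order remainder for small $\hbar$. Your one-rotation sectoriality device is a bit slicker; the paper's separate $\Im/\Re$ treatment is more modular and lets it reuse the Cartesian machinery of Lemma~\ref{l:intByParts} already developed in Appendix~\ref{a:scale}, and avoids the spherical-coordinate divergence/commutator bookkeeping you compress into $\mathcal R[v]$.
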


\begin{proof}
We use results from Appendix \ref{a:scale}, and use that, by Lemma~\ref{l:tiger}, $F''_\theta(x) \geq \delta(\e) > 0$ in the sense of quadratic forms
for $x \in \operatorname{supp}v$.
Since $v$ is zero in a neighbourhood of $B(0,R_0)$,
\begin{equation} \label{eq:deep:Pstar}
\langle (\hbar^2P^D_\theta-1)v,  v \rangle_{L^2(\Omega_{\tr})} = \big\langle (-\hbar^2 \Delta_\theta - 1 )v ,v \big\rangle_{L^2(\Omega_{\tr})}.
\end{equation}
However, {by \eqref{eq:A41} and \eqref{e:absVal}, }
\begin{equation} \label{eq:deep:IPP}
\langle (-\hbar^2 \Delta_\theta - 1) v, v\rangle_{L^2} = \Vert w \Vert_{L^2}^2 - \Vert F''_\theta(x) w \Vert_{L^2}^2  - 2i \langle  F''_\theta(x) w, w \rangle+ \hbar \langle  A_\theta (x)\hbar\partial_x v,v\rangle - \Vert v \Vert^2_{L^2},
\end{equation}
where $A_\theta(x)\in C^{0,\alpha}$ and $w := (I+F''_\theta(x)^2)^{-1} \hbar \partial_x v$. Taking the imaginary part of (\ref{eq:deep:IPP}) and using the fact that $F''_\theta(x) \geq \delta(\e) > 0$
for $x \in \operatorname{supp}v$, and then using (\ref{eq:deep:Pstar}), we obtain that
\begin{align}  \label{eq:deep:im}
\Vert \hbar \partial_x v \Vert^2_{L^2} &\leq C \Vert w \Vert^2_{L^2} \leq C \langle  F''_\theta(x) w, w \rangle
 \leq C\Big| \operatorname{Im} \langle (-\hbar^2 \Delta_\theta - 1) v, v\rangle \Big| + C\Big| \operatorname{Im} \hbar \langle  A_\theta(x)\hbar\partial_x v,v\rangle \Big| \nonumber \\
& \leq C \Vert (\hbar^2P^D_\theta-1)v \Vert_{L^2} \Vert v \Vert_{L^2} + C\hbar \Vert \hbar \partial_x v \Vert_{L^2} \Vert v \Vert_{L^2},
\end{align}
where $C$ depends a-priori on $\theta$.
Now taking the real part of (\ref{eq:deep:IPP}), we get
\begin{equation} \label{eq:deep:real}
\Vert v \Vert^2_{L^2} \leq C \Vert \hbar \partial_x v \Vert^2_{L^2} +  C\Vert (\hbar^2P^D_\theta-1)v \Vert_{L^2} \Vert v \Vert_{L^2} + C\hbar \Vert \hbar \partial_x v \Vert_{L^2} \Vert v \Vert_{L^2}.
\end{equation}
Thus, combining (\ref{eq:deep:im}) and (\ref{eq:deep:real}), we have
\begin{align*}
\Vert v \Vert^2_{H^1_\hbar} \leq C  \Vert (\hbar^2P^D_\theta-1)v \Vert_{L^2} \Vert v \Vert_{L^2} + C \hbar \Vert \hbar \partial_x v \Vert_{L^2} \Vert v \Vert_{L^2}.
\end{align*}
{With $F_\theta$ and $f_\theta$ related by \eqref{e:basicF},} and $f_\theta(r) = f(r) \tan \theta$ satisfying \eqref{e:fProp2},  all the implicit constants appearing above depend continuously on $\tan \theta$. Hence, for $\epsilon<\theta<\pi/2 - \epsilon$, there is $C(\epsilon)>0$, depending only on $\epsilon$, such that
\begin{align*}
\Vert v \Vert^2_{H^1_\hbar} \leq  C(\epsilon) \Big[ \Vert (\hbar^2P^D_\theta-1)v \Vert_{L^2} \Vert v \Vert_{L^2} + \hbar \Vert \hbar \partial_x v \Vert_{L^2} \Vert v \Vert_{L^2}\Big];
\end{align*}
the  bound \eqref{eq:boundary1} then follows by taking $\hbar>0$ small enough depending only on $\epsilon$.
\end{proof}

\section{Proof of Theorems  \ref{t:blackBoxErr} and \ref{t:blackBoxResolve} (the main results in the black-box setting)}\label{s:proofs1}

\begin{proof}[Proof of Theorem~\ref{t:blackBoxResolve}]
By Proposition \ref{p:fredPMLa}, $P_\theta^D-k^2$ is Fredholm of index zero; thus existence and uniqueness of $v$ follow once we establish the a priori bound \eqref{e:blackBoxResolve}.

The overall idea to prove \eqref{e:blackBoxResolve} is to use the elliptic estimates in \S\ref{s:elliptic} to bound $v$ near $\Gamma_\tr$ in terms of $v$ away from $\Gamma_\tr$ and the data $\rhs$, and then use 
Lemma~\ref{l:truncToScaled} to bound $v$ away from $\Gamma_\tr$.
Given $\e>0$, let $\eta>0$ (to be fixed in terms of $\e$ later). Then, by \eqref{e:rightFromLeft} 
with $\delta =R_{\tr}-R_1$, there is $0<\widetilde{\eta}<\e/6$ and $C>0$ such that 
\begin{equation}
\label{e:innerScaledAnnulus}
\begin{aligned}
\|v\|_{H_{\hbar}^2(R_{\tr}-2\widetilde{\eta},R_{\tr}-\widetilde{\eta})_r}
&\leq C\hbar^2\|\rhs\|_{L^2(R_1,R_{\tr})_r}
+C \exp\Big(-\frac{(1-\eta)}{\hbar}\int_{R_1}^{R_{\tr}}\Phi_\theta(r)\, dr\Big)\hbar\|v\|_{H_{\hbar}^1(R_1,R_1+\widetilde{\eta})_r}\\
&\qquad+C\hbar\|v\|_{L^2(R_{\tr}-\widetilde{\eta},R_{\tr})_r}.
\end{aligned}
\end{equation}
Let $\chi \in C_c^\infty(\mathbb{R}^d\setminus \overline{B(0,R_{\tr}-2\widetilde{\eta})})$ with $\chi\equiv 1$ on $\Omega_{\tr}\setminus B(0,R_{\tr}-\widetilde{\eta})$. Then, by Lemma~\ref{l:nearBoundary}
\begin{equation}
\label{e:outerScaledAnnulus}
\begin{aligned}
\|v\|_{H_{\hbar}^1(\Omega_{\tr}\setminus B(0,R_{\tr}-\widetilde{\eta}))}\leq \|\chi v\|_{H_{\hbar}^1(\Omega_{\tr})}&\leq C\hbar^2\|(P_\theta -k^2)\chi v\|_{L^2(\Omega_\tr)}\\
&\leq C(\hbar^2\|\chi \rhs\|_{L^2(\Omega_{\tr})}+\|[-\hbar^2\Delta_\theta,\chi]v\|_{L^2(\Omega_{\tr})}).
\end{aligned}
\end{equation}

Combining~\eqref{e:innerScaledAnnulus} and~\eqref{e:outerScaledAnnulus}, using that the derivatives of $\chi$ are uniform in $R_{\tr}\geq R_2+\e$, and $\supp \partial\chi \subset B(0,R_{\tr}-\widetilde{\eta})\setminus B(0,R_{\tr}-2\widetilde{\eta})$, and shrinking $\hbar_0$ if necessary, we have
\begin{equation}
\label{e:scalingRegion}
\|v\|_{H_{\hbar}^1(\Omega_{\tr}\setminus B(0,R_{\tr}-2\widetilde{\eta}))}\leq C\hbar^2\|\rhs\|_{L^2(\Omega_{\tr}\setminus B(0,R_1))} +C\exp\Big(-\frac{(1-\eta)}{\hbar}\int_{R_1}^{R_{\tr}}\Phi_\theta(r)\, dr\Big)\hbar
\|v\|_{H_{\hbar}^1(R_1,R_1+\widetilde{\eta})_r}.
\end{equation}
Next, let $\chi_1\in C_c^\infty(B(0,R_{\tr}))$ with $\chi_1\equiv 1$ on $B(0,R_{\tr}-2\widetilde{\eta})$. Then,
$$
(\hbar^2P_\theta-1) \chi_1v= {\hbar^2}\chi_1 \rhs+[\hbar^2P_\theta,\chi_1 ]v= {\hbar^2}\chi_1\rhs+[-\hbar^2\Delta_\theta,\chi_1]v.
$$
Now, by \eqref{e:scalingRegion},
\begin{align*}
\|[\chi_1,-\hbar^2\Delta_\theta]v\|_{\mc{H}}
&\leq C\hbar \|v\|_{H_{\hbar}^1(R_{\tr}-2\widetilde{\eta},R_{\tr})_r}\\
&\leq C\hbar^3\|\rhs\|_{L^2(\Omega_{\tr}\setminus B(0,R_1))}+ C
\exp\Big(-\frac{(1-\eta)}{\hbar}\int_{R_1}^{R_{\tr}}\Phi_\theta(r)\, dr\Big)
\hbar^2 \|v\|_{H_{\hbar}^1(R_1,R_1+\widetilde{\eta})_r}.
\end{align*}
Therefore, by Lemma~\ref{l:truncToScaled} with $\resolvent (k)=\|\chi R_{P}(k)\chi\|_{\mc{H}\to \mc{H}}$ and $s=1$,
\begin{align*}
\|\chi_1v\|_{\mc{D}}&=\|(P_\theta-k^2)^{-1}(\chi_1 \rhs+[\chi_1,-\Delta_\theta]v)\|_{\mc{D}}\\
&\leq Ck^{2}\resolvent (k)\Big(\|\rhs\|_{\mc{H}}+
\exp\Big(-k(1-\eta)\int_{R_1}^{R_{\tr}}\Phi_\theta(r)\, dr \Big)
\|v\|_{H_{\hbar}^1(R_1,R_1+\widetilde{\eta})_r}\Big).
\end{align*}
Similarly, 
\begin{align*}
\|\chi_1v\|_{\mc{H}}&=\|(P_\theta-k^2)^{-1}(\chi_1 g+[\chi_1,-\Delta_\theta]v)\|_{\mc{H}}\\
&\leq 
C\resolvent (k)\Big(\|\rhs\|_{\mc{H}}+
\exp\Big(-k(1-\eta)\int_{R_1}^{R_{\tr}}\Phi_\theta(r)\, dr \Big)
\|v\|_{H_{\hbar}^1(R_1,R_1+\widetilde{\eta})_r}\Big).
\end{align*}
By the definition of $\theta_0(P,J,R_{\tr})$ \eqref{eq:theta0}, 
\beq\label{eq:ce}
c_\e:= -\Lambda(P,J) +  \inf_{\theta\in [\theta_0+\e, \pi/2-\e]}\int_{R_1}^{R_\tr}\Phi_{\theta}(r)\, dr >0.
\eeq
Now choose $\eta$ (as a function of $\e$) such that
\beq\label{eq:eta_ineq1}
0<\eta< \min \left\{1\,,\, \frac{c_\e}{2\inf_{\theta\in [\theta_0+\e, \pi/2-\e]}\int_{R_1}^{R_\tr}\Phi_{\theta}(r)\, dr} \right\}.
\eeq
This choice implies that, for all $\theta\in [\theta_0+\e,\pi/2-\e]$,
\beq\label{eq:ce2}
-\Lambda(P,J)+(1-\eta) \int_{R_1}^{R_\tr} \Phi_\theta(r)\, dr \geq \frac{c_\e}{2}.
\eeq
Then, using the definition of $\Lambda(P,J)$ \eqref{eq:LambdaPJ}, and
choosing $k$ large enough, depending only on $\e$ and $\eta$ (and hence only on $\e$), we have
\beq\label{eq:WPM1}
\|\chi_1v\|_{\mc{H}}+k^{-2}\|\chi_1v\|_{\mc{D}}\leq C\resolvent (k)\|\rhs\|_{\mc{H}}.
\eeq
The definition of $\chi_1$ and interpolation imply that
\beqs
\|v\|_{H^1_\hbar(R_1,R_1+\widetilde{\eta})_r} \leq C \big( \|\chi_1v\|_{\mc{H}}+k^{-2}\|\chi_1v\|_{\mc{D}}\big),
\eeqs
and thus combining this, \eqref{eq:WPM1}, and \eqref{e:scalingRegion}, we obtain that
\beqs
\|v\|_{\mc{H}(\Omega_\tr)}
\leq C\resolvent (k)\|\rhs\|_{\mc{H}} \quad \tfa k\geq k_0.
\eeqs
Since $\|P_\theta^D v\|_{\mc{H}(\Omega_\tr)} = k^2 \|v\|_{\mc{H}(\Omega_\tr)} + \|\rhs\|_{\mc{H}(\Omega_\tr)}$, the bound \eqref{e:blackBoxResolve} then follows from the definition of $\|v\|_{\mc{D}(\Omega_\tr)}$ \eqref{eq:normDtr}. 
\end{proof}

\begin{proof}[Proof of Theorem~\ref{t:blackBoxErr}]
To avoid writing $\|\chi R_P(k)\chi\|_{\mc{H}\to \mc{H}}$ repeatedly, we let $\resolvent (k):[0,\infty)\to (0,\infty]$ be such that, for all $\chi\in C_c^\infty(B(0,R_1);[0,1])$, 
$$
\|\chi R_P(k)\chi\|_{\mc{H}\to \mc{H}}\leq   \resolvent (k)\qquad \tfa k \geq k_1.
$$ 
Given $\epsilon>0, \eta>0$, let $\widetilde{\eta}$ equal the minimum of the $\widetilde{\eta}$s from Lemmas \ref{l:carleman} and \ref{l:carleman2}.
Observe that the bound \eqref{e:blackBoxErr} gets stronger when $\eta$ decreases. In the course of the proof, therefore, we can reduce $\eta$ (in an $\e$-dependent way) without loss of generality.

By \eqref{e:PMLBox} and \eqref{e:PBox}, $v=(P_\theta^D-k^2)^{-1}\rhs$ and $u= (P-k^2)^{-1}\rhs= R_P(k)\rhs$, where  $\rhs = \chi \widetilde{\rhs}$.
Since $\supp \chi \subset B(0,R_1)$, $\rhs = \indicator_{B(0,R_1)} \rhs$, and thus 
\beqs
\indicator_{B(0,R_1)} u =\indicator_{B(0,R_1)} R_P(k)\indicator_{B(0,R_1)}\rhs = \indicator_{B(0,R_1)}(P_\theta-k^2)^{-1}\indicator_{B(0,R_1)}\rhs
\eeqs
by Proposition~\ref{p:scaledFredholmA}.
Since the bound \eqref{e:blackBoxErr} only involves $\indicator_{B(0,R_1)} u$, without loss of generality we abuse notation slightly and let $u=(P_\theta-k^2)^{-1}g$ for the rest of the proof.

By~\eqref{e:scalingRegion}, together with the fact that $\rhs$ is supported in $B(0,R_1)$,
\begin{equation}
\label{e:PMLAnnulus}
\|v\|_{H_{\hbar}^1(\Omega_{\tr}\setminus B(0,R_{\tr}-2\widetilde{\eta}))}\leq C
\exp\Big(-\frac{(1-\eta)}{\hbar}\int_{R_1}^{{R_{\tr}}}\Phi_\theta(r)\, dr\Big)
\hbar\|v\|_{H_{\hbar}^1(R_1,R_1+\widetilde{\eta})_r}.
\end{equation}
Moreover, using~\eqref{e:rightFromLeftNoRightBoundary},
\begin{equation}
\label{e:scaledAnnulus}
\begin{aligned}
\|u\|_{H_{\hbar}^1(\Omega_{\tr}\setminus B(0,R_{\tr}-2\widetilde{\eta}))}\leq C
\exp\Big(-\frac{(1-\eta)}{\hbar}\int_{R_1}^{{R_{\tr}}}\Phi_\theta(r)\, dr\Big)
\hbar\|u\|_{H_{\hbar}^1(R_1,R_1+\widetilde{\eta})_r}.
\end{aligned}
\end{equation}
Therefore, by Theorem \ref{t:blackBoxResolve} and Lemma~\ref{l:truncToScaled},
\begin{equation}
\label{e:farEstimate}
\|v\|_{H_{\hbar}^1(\Omega_{\tr}\setminus B(0, R_{\tr}-2\widetilde{\eta}))}+\|u\|_{H_{\hbar}^1(\Omega_{\tr}\setminus B(0, R_{\tr}-2\widetilde{\eta})}\leq C
\exp\Big(-\frac{(1-\eta)}{\hbar}\int_{R_1}^{{R_{\tr}}}\Phi_\theta(r)\, dr\Big)
\hbar  \resolvent(\hbar^{-1}) \|\rhs\|_{\mc{H}}.
\end{equation}

Let $\delta=R_{\tr}-R_1$, let $0<\e'<\e$ (to be fixed later),
let $\e'<s<\delta-\e'$  (to be fixed later), and let $\chi_1\in C_c^\infty(B(0,R_{\tr});[0,1])$ with $\chi_1\equiv 1$ on $B(0,R_{\tr}-\widetilde{\eta})$.
Since  ${(-\hbar^2\Delta_\theta-1)}(\chi_1(u-v))=[-\hbar^2\Delta_\theta,\chi_1](u-v)$ and 
$$
\supp [-\hbar^2\Delta_\theta,\chi_1](u-v)\subset B(0,R_{\tr})\setminus B(0,R_{\tr}-\widetilde{\eta}),
$$
we can apply Lemma~\ref{l:carleman2} to $\chi_1(u-v)$, with $\e$ in this lemma 
replaced by $\e'$, and obtain
\begin{multline}\label{eq:WAM1}
\|u-v\|_{H_\hbar^2(R_1+s-2\widetilde{\eta},R_1+ s+2\widetilde{\eta})_r}\leq 
C \exp\Big(-\frac{(1-\eta)}{\hbar}\int_{R_1}^{{R_1+s}} \Phi_\theta(r)\, dr\Big)
\hbar
\|u-v\|_{H_{\hbar}^1(R_1, R_1+\widetilde{\eta})_r }\\
+
C \exp\Big(-\frac{(1-\eta)}{\hbar}\int_{{R_1+s}}^{{R_{\tr}}} \Phi_\theta(r)\, dr\Big)
\hbar\|u-v\|_{H_{\hbar}^1(R_{\tr} - \widetilde{\eta}, R_{\tr})_r},
\end{multline}
where $C$ depends on $\eta$ and $\e'$.
Let $\chi_2\equiv 1$ on $B(0,R_1+s-2\widetilde{\eta})$ with $\supp \chi_2\subset B(0,R_1+s+2\widetilde{\eta})$. Then
\begin{equation}
\label{e:cutCut}
(-\hbar^2\Delta_\theta-1)(\chi_2(u-v))=[-\hbar^2\Delta_\theta ,\chi_2](u-v).
\end{equation}
Hence, by Lemma~\ref{l:truncToScaled}, \eqref{eq:WAM1}, and~\eqref{e:farEstimate}
\begin{align}\nonumber
&\|\chi_2(u-v)\|_{\mc{H}}+\hbar^{2}\|\chi_2(u-v)\|_{\mc{D}}\\ \nonumber
 &\leq C\resolvent(\hbar^{-1}){\hbar}\|u-v\|_{H_\hbar^1(R_1+s-2\widetilde{\eta},R_1+s+2\widetilde{\eta})_r}\\ \nonumber
&\leq C\resolvent(\hbar^{-1})\hbar^{2}\bigg(C
\exp\Big(-\frac{(1-\eta)}{\hbar}\int_{R_1}^{{R_1+s}} \Phi_\theta(r)\, dr\Big)
 \|u-v\|_{H_{\hbar}^1(R_1, R_1+\widetilde{\eta})_r }\\ \nonumber
&\hspace{2.5cm}+C
\exp\Big(-\frac{(1-\eta)}{\hbar}\int_{{R_1+s}}^{R_{\tr}} \Phi_\theta(r)\, dr\Big)
\|u-v\|_{H_{\hbar}^1(R_{\tr}-\widetilde{\eta},R_{\tr})_r}
\bigg)\\ \nonumber
&\leq C\resolvent(\hbar^{-1})\hbar^2 \bigg(
\exp\Big(-\frac{(1-\eta)}{\hbar}\int_{R_1}^{{R_1+s}} \Phi_\theta(r)\,dr\Big)
\|u-v\|_{H_{\hbar}^1(R_1, R_1+\widetilde{\eta})_r }\\
&\hspace{2.5cm} + C \hbar \resolvent(\hbar^{-1})
\exp\Big(- \frac{(1-\eta)}{\hbar}\bigg[\int_{{R_1+s}}^{R_{\tr}} \Phi_\theta(r)\,dr+\int_{R_1}^{R_{\tr}}\Phi_\theta(r)\,dr\bigg]\Big)
\|\rhs\|_{\mc{H}}
\bigg)\label{eq:WAM2}
\end{align}
As at the end of the proof of Theorem \ref{t:blackBoxResolve}, $c_\e$ defined by \eqref{eq:ce} is $>0$ 
by the definition of $\theta_0$ \eqref{eq:theta0}.
Without loss of generality, let 
\beq\label{eq:eta_ineq2}
0<\eta< \min \left\{\frac12\,,\, \frac{c_\e}{2\inf_{\theta\in [\theta_0+\e, \pi/2-\e]}\int_{R_1}^{R_\tr}\Phi_{\theta}(r)\, dr} \right\},
\eeq
so that, for all $\theta\in [\theta_0+\e,\pi/2-\e]$, \eqref{eq:ce2} holds.
(We impose the condition \eqref{eq:eta_ineq2}, in contrast to \eqref{eq:eta_ineq1}, since we later require that $1-2\eta>0$.)
Let 
\beq\label{eq:BTP1}
\e_0:=\min\left\{\frac{c_{\e}}{4}\,,\, 2\eta\inf_{\theta\in [\theta_0+\e, \pi/2-\e]}\int_{R_1}^{R_{\tr}} \Phi_\theta(r)\,dr\right\}>0
\eeq
and choose $\e'$  such that
\beqs
\Lambda(P,J) -(1-\eta) \inf_{\theta\in [\theta_0+\e, \pi/2-\e]}\int_{R_1}^{R_1+\delta-\e'}\Phi_{\theta}(r)\, dr \leq -\frac{c_\e}{4}
\eeqs
and
\beqs
\Lambda(P,J) -(1-\eta) \sup_{\theta\in [\theta_0+\e, \pi/2-\e]}\int_{R_1}^{R_1+\e'}\Phi_{\theta}(r)\, dr \geq -\frac{\e_0}{2};
\eeqs
the latter is possible since $\Lambda(P,J)\geq 0$. Note that $\e'$ depends only on $\e$.
By the intermediate value theorem, there exists $\e'<s<\delta-\e'$, depending on $\theta$, such that
\beq\label{eq:BTP2}
\Lambda(P,J) -(1-\eta) \int_{R_1}^{R_1+ s} \Phi_\theta(r)\, dr = -\e_0.
\eeq
Then, using the definition of $\Lambda(P,J)$ \eqref{eq:LambdaPJ}, and
choosing $k$ large enough, depending only on $\e'$, $\eta$, and $\e_0$, and hence only on $\e$, we can absorb the term involving $\|u-v\|_{H^1_\hbar(R_1,R_1+\widetilde{\eta})_r}$ on the right-hand side of \eqref{eq:WAM2} into the left-hand side to obtain 
\begin{align*}
&\|\chi_2(u-v)\|_{\mc{H}}+\hbar^{2}\|\chi_2(u-v)\|_{\mc{D}}\\
&\hspace{3cm}\leq  C \hbar \resolvent(\hbar^{-1})
\exp\Big(- \frac{(1-\eta)}{\hbar}\bigg[\int_{{R_1+s}}^{R_{\tr}} \Phi_\theta(r)\,dr+\int_{R_1}^{R_{\tr}}\Phi_\theta(r)\,dr\bigg]\Big)
\|\rhs\|_{\mc{H}}.
\end{align*}
Now, by \eqref{eq:BTP2} and \eqref{eq:BTP1},
\begin{align*}
- \frac{(1-\eta)}{\hbar}\bigg[\int_{{R_1+s}}^{R_{\tr}} \Phi_\theta(r)\,dr+\int_{R_1}^{R_{\tr}}\Phi_\theta(r)\,dr\bigg]
&=- \frac{(1-\eta)}{\hbar}\bigg[2\int_{R_1}^{R_{\tr}}\Phi_\theta(r)\,dr - \int_{R_1}^{R_1+s} \Phi_\theta(r)\,dr\bigg]\\
&=- \frac{2(1-\eta)}{\hbar}\int_{R_1}^{R_{\tr}}\Phi_\theta(r)\,dr + \frac{1}{\hbar}\Big( \e_0 + \Lambda(P,J)\Big)\\
&\leq- \frac{2(1-2\eta)}{\hbar}\int_{R_1}^{R_{\tr}}\Phi_\theta(r)\,dr + \frac{1}{\hbar}\Lambda(P,J).
\end{align*}
We then use again the definition of $\Lambda(P,J)$ \eqref{eq:LambdaPJ} to obtain 
\beqs
\|\chi_2(u-v)\|_{\mc{H}}+\hbar^{2}\|\chi_2(u-v)\|_{\mc{D}}
\leq C \hbar 
\exp\bigg(- \frac{1}{\hbar}\Big(
2(1-2\eta)\int_{R_1}^{R_{\tr}}\Phi_\theta(r)\,dr
-2\Lambda(P,J)
\Big)\bigg)
\|\rhs\|_{\mc{H}}.
\eeqs
This obtains the bound \eqref{e:blackBoxErr} with $2-\eta$ replaced by $2- 4\eta$. Repeating the proof with $\eta$ replaced by $\eta/4$, the result follows.
\end{proof}

\section{Proof of Theorem \ref{t:expEst} (relative-error estimate for scattering by a plane wave)}
\label{s:relError}
Recall that $\Omega_{-}\subset \mathbb{R}^d$ is bounded and open with connected open complement, $\Omega_{\tr,+}=\Omega_{\tr}\setminus \overline{\Omega_-}$ is such that $B(0,R_{\tr})\subset \Omega_{\tr}$ for some $R_1<R_{\tr}$. 
Let $u^S$ and $v^S$ be the solutions to \eqref{e:helmholtz} and \eqref{e:PML}, respectively, and let
$
u^I(x) := \exp(ix\cdot a/\hbar).
$ 

The key ingredient for the proof of Theorem \ref{t:expEst}, on top of the result of Theorem \ref{t:blackBoxErr}, is the following lemma.

\begin{lem} \label{lem:contruI}
Let $R_0>0$ be such that $\Omega_-\Subset B(0,R_0)$. Given $R> R_0$ there is $C>0$ and $\hbar_0$ such that, for $0<\hbar<\hbar_0$,
$$
\Vert u^I \Vert_{L^2(B(0,R))} \leq C \Vert u^I  + u^S\Vert_{L^2(B(0,R) \backslash \Omega_-)}.
$$
\end{lem}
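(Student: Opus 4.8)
The plan is to argue by contradiction, exploiting that $|u^I|\equiv 1$, so that the left-hand side equals the fixed constant $|B(0,R)|^{1/2}$ and it suffices to bound $\|u^I+u^S\|_{L^2(B(0,R)\setminus\Omega_-)}$ from below uniformly in $\hbar=1/k\to 0$ and in the direction $a$ (which we may take with $|a|=1$, since otherwise $(-\hbar^2\Delta-1)u^I\neq 0$). If this failed there would be $\hbar_n\downarrow 0$ and unit vectors $a_n$ with $u_n:=u^I_n+u^S_n\to 0$ in $L^2(B(0,R)\setminus\Omega_-)$, where $u^I_n(x)=e^{ix\cdot a_n/\hbar_n}$ and $u^S_n$ is the associated scattered field; passing to a subsequence, $a_n\to a_\infty\in S^{d-1}$. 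Since $u^S_n=u_n-u^I_n$ and $\|u^I_n\|_{L^2(B(0,R)\setminus\Omega_-)}=|B(0,R)\setminus\Omega_-|^{1/2}$, the family $u^S_n$ is automatically bounded in $L^2(B(0,R)\setminus\Omega_-)$ under this hypothesis.

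Next I would work on the annulus $A:=\{R_0<|x|<R\}$, which lies in $B(0,R)\setminus\Omega_-$ because $\Omega_-\Subset B(0,R_0)$, and on which $u^I_n$ and $u^S_n$ both solve $(-\hbar_n^2\Delta-1)w=0$. By interior semiclassical elliptic regularity for this equation (the symbol $|\xi|^2-1$ is elliptic outside the bounded set $\Sigma:=\{|\xi|=1\}$; equivalently write $(-\hbar_n^2\Delta+1)u_n=2u_n$), $u_n\to 0$ in $H^1_{\hbar_n}$ on every compact subset of $A$, so $u^S_n=-u^I_n+o_{L^2}(1)$ there. Hence, for any $\psi\in C_c^\infty(A)$, the bounded family $\psi u^S_n$ has the same semiclassical defect measure as $\psi u^I_n$, namely $|\psi(x)|^2\,dx\otimes\delta(\xi-a_\infty)$ (the defect measure of a plane wave of frequency $a_n\to a_\infty$). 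In particular, if $\psi(x_0)\neq 0$ then the defect measure of $u^S_n$ carries positive mass on every neighbourhood of $(x_0,a_\infty)$.

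On the other hand, $u^S_n$ is outgoing in $\{|x|>R_0\}$: with $\chi\in C_c^\infty(B(0,R_0))$ equal to $1$ near $\overline{\Omega_-}$ one has $u^S_n=R_P(k_n)g_n-\chi u^I_n$, where $g_n:=[-\Delta,\chi]u^I_n$ is supported in $\supp\nabla\chi\subset B(0,R_0)$ and $R_P(k_n)$ is the outgoing resolvent. I would choose $x_0:=-\rho a_\infty$ with $R_0<\rho<R$, so that $\chi(x_0)=0$ and the backward bicharacteristic through $(x_0,a_\infty)$, namely $s\mapsto(-(\rho+s)a_\infty,\,a_\infty)$, stays in $\{|x|>R_0\}$ for all $s\geq 0$ and therefore never reaches $\supp g_n$. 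By propagation of singularities for the outgoing resolvent (equivalently, the radiation condition forbids incoming singularities), the defect measure of $R_P(k_n)g_n$—which agrees with that of $u^S_n$ near $x_0$ since $\chi$ vanishes there—is supported in the forward bicharacteristic flow-out of $\supp\nabla\chi\times\Sigma$, hence carries no mass near $(x_0,a_\infty)$. Taking $\psi\equiv 1$ near $x_0$, this contradicts the conclusion of the previous paragraph, and the lemma follows.

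The step I expect to be the crux is the second half of the third paragraph: correctly invoking the fact that an outgoing solution (equivalently, the outgoing resolvent applied to a compactly supported source) carries no defect-measure mass at ``incoming'' phase-space points—i.e.\ the precise propagation statement in the black-box setting of \S\ref{s:blackBox}. The remaining ingredients—the elementary choice $x_0=-\rho a_\infty$, the defect-measure computation for a plane wave, compactness of the set of directions, the interior elliptic estimate, and the automatic $L^2$-boundedness of $u^S_n$ under the contradiction hypothesis—are routine.
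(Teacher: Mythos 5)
Your proposal is correct and rests on exactly the same key fact as the paper's argument: the scattered field has no semiclassical wavefront set at ``incoming'' phase-space points outside $B(0,R_0)$, i.e.\ $\WFh(u^S)\cap\{\langle x,\xi\rangle<0,\ |x|>R_0\}=\emptyset$ (this is precisely \cite[Lemma~3.4]{GaLaSp:21}, which the paper cites). You have correctly flagged this as the crux. The difference is in the packaging: the paper avoids a contradiction/compactness argument entirely by computing $\langle\Op_\hbar(\psi)u^I,u^I\rangle=\int\psi(x,a)\,dx$ exactly (Fourier inversion), obtaining a quantitative lower bound $\|u^I\|_{L^2(B(0,R))}\leq C_{R,\psi}\|\Op_\hbar(\psi)u^I\|_{L^2}$, and then killing $\Op_\hbar(\psi)u^S$ to $O(\hbar)$ by choosing $\supp\psi$ in the incoming region near a point $x_0\in\partial B(0,(R_0+R)/2)$; no subsequences or defect measures appear. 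Your contradiction--plus--defect--measure route reaches the same non-quantitative constant and also handles $a$-uniformity cleanly via compactness of $S^{d-1}$; the paper's direct argument is a bit leaner and would in principle also let one track how $C$ depends on $a$.

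Two small points worth tightening. First, the ``interior semiclassical elliptic regularity'' step upgrading $u_n\to 0$ from $L^2$ to $H^1_{\hbar_n}$ on compacts of $A$ is superfluous: the statement you actually use, $u^S_n=-u^I_n+o_{L^2}(1)$ on $A$, follows immediately from the hypothesis $u_n\to 0$ in $L^2(B(0,R)\setminus\Omega_-)$, with no PDE input. Second, the assertion that the defect measure of $R_P(k_n)g_n$ is ``supported in the forward bicharacteristic flow-out of $\supp\nabla\chi\times\Sigma$'' is imprecise as stated (after the forward flow reaches the black box, what happens depends on the black box and is not simply free flow-out). What is both true and sufficient is the weaker statement that $\WFh(u^S)$ is empty at any $(x,\xi)$ with $|x|>R_0$ and $\langle x,\xi\rangle<0$; your choice $x_0=-\rho a_\infty$ lies in exactly that region, and this is what \cite[Lemma~3.4]{GaLaSp:21} supplies. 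Replacing the informal ``propagation of singularities for the outgoing resolvent'' with that reference closes the argument.
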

\begin{proof}
First observe that if 
$$
\|u^S\|_{L^2(B(0,R)\setminus \Omega_-)}\geq 2 \|u^I\|_{L^2(B(0,R))},
$$
then the claim follows from the triangle inequality.  Therefore,  without loss of generality, we can assume that $\|u^S\|_{L^2(B(0,R)\setminus \Omega_-)}\leq C<\infty$. 
Under this assumption, the argument involving the free resolvent in~\cite[Proof of Lemma 3.2]{GaSpWu:20} shows that, for any compact set $K\subset \mathbb{R}^d$, 
$$
\|u^S\|_{L^2(K\setminus \Omega_-)}\leq C_K.
$$

We now show that, for any $r>0$ and $\varphi \in C^\infty_c(\mathbb R^d \times \mathbb R^d;\mathbb{R})$ satisfying $\int_{\mathbb R^d} \varphi^2(x, a)  \,dx > 0$, there exists $C_{R, \varphi} >0$ such that,
for $\hbar>0$ sufficiently small,
\begin{equation} \label{eq:contrinc}
\Vert u^I \Vert_{L^2(B(0,R))} \leq C_{R, \varphi} \Vert \Op_{\hbar}(\varphi) u^I \Vert_{L^2}.
\end{equation}
Observe that, by the Fourier inversion fromula, for any $\psi \in C^\infty_c(\mathbb R^d \times \mathbb R^d)$, 
\begin{equation} \label{eq:defectplane}
\langle \Op_{\hbar}(\psi) u^I, u^I \rangle = \int_{\mathbb R^d} \psi(x, a) \, dx.
\end{equation}
Now, let $\phi \in C^\infty_c(\mathbb R^d)$ be such that $0\leq \phi \leq 1$, $\phi = 1$ in $B(0,R)$ and is supported in $B(0,2R)$. Using \eqref{eq:defectplane}, we obtain
$$
\Vert u^I\Vert^2_{L^2(B(0,R))} \leq \Vert \phi u^I\Vert^2_{L^2} = \int_{\mathbb R^d} \phi^2(x) dx \leq |B(0,2R)|.
$$
On the other hand, again using the Fourier inversion formula,
$$
\Vert \Op_{\hbar}(\varphi) u^I \Vert_{L^2}^2 = \int_{\mathbb{R}^d}\varphi^2(x, a) \, dx,
$$
and thus  (\ref{eq:contrinc}) follows with 
$$
C^2_{R,\varphi} := 2 |B(0,2R)| \Big( \frac 12 \int_{\mathbb{R}^d} \varphi^2(x, a) \, dx \Big)^{-1}.
$$

Let $x_0 \in \partial B(0, \frac{R_0+R}{2})$ and $V \subset T^* \mathbb R^d$ be such that 
$$
(x_0,a) \in V, \hspace{0.3cm} V \subset \big\{ (x,\xi)\,:\,\langle x,\xi\rangle<0\big\} \cap T^* \big( B(0, r) \backslash B(0,R_0) \big);
$$
{i.e., $a$ is an ``incoming'' direction at $x_0$.}
We take $\varphi \in C^\infty_c(\mathbb R_\xi^d )$, $\chi \in C^\infty_c(\mathbb R_x^d) $ so that $\operatorname{supp} \chi \subset B(0, R) \backslash B(0,R_0 )$, $\operatorname{supp}\varphi(\xi)\chi(x) \subset V$, and $\varphi = 1$ near $a$, $\chi = 1$ near $x_0$. Letting $\psi(x,\xi) := \varphi(\xi)\chi(x)$ and using (\ref{eq:contrinc}) we get
\begin{equation} \label{eq:uI1}
\Vert u^I \Vert_{L^2(B(0,R))}  \leq C_{R, \psi} \Vert \Op_{\hbar}(\psi)u^I \Vert_{L^2}.
\end{equation} 
We now write
\begin{equation} \label{eq:uI2}
\Op_{\hbar}(\psi)u^I = \Op_{\hbar}(\psi) (u^I + u^S) - \Op_{\hbar}(\psi) u^S.
\end{equation} 
By, e.g., \cite[Lemma 3.4]{GaLaSp:21}, 
 $\operatorname{WF}_{\hbar} (u^S) \cap \{(x,\xi)\,:\, \langle x,\xi\rangle <0,\,|x|>R_0\} = \emptyset $. Therefore, by, e.g., \cite[Proposition E.38]{DyZw:19},
  $$
\operatorname{WF}_h (\Op_{\hbar}(\psi)u^S) \subset \supp \psi \cap \operatorname{WF}_{\hbar}( u^S)  = \emptyset.
$$
By the definition of $\operatorname{WF}_h$ (see \S\ref{app:SCA}) and the fact that $u^S$ is uniformly bounded in $L^2_{\loc}$,
there is $C>0$ such that, {for $\hbar$ sufficiently small,}
$$
\Vert \Op_{\hbar}( \psi)u^S \Vert_{L^2} \leq C\hbar.
$$
Now, by (\ref{eq:defectplane}),
\beqs
\Vert u^I \Vert_{L^2(B(0,R))}  \geq \frac 12 |B(0,R)|.
\eeqs 
Therefore, with $C' := C \big( \frac 12 |B(0,R) | \big)^{-1}$, for $\hbar$ sufficiently small,
$$
\Vert \Op_{\hbar}(\psi) u^S \Vert_{L^2} \leq C'\hbar\Vert u^I \Vert_{L^2(B(0,R))}.
$$
Combining this last inequality with (\ref{eq:uI1}) and (\ref{eq:uI2}) and then using the fact that $\varphi(hD_x) \in \Psi^\infty$ together with the support properties of $\chi$, we obtain that, for $\hbar$ sufficiently small,
$$
\Vert u^I \Vert_{L^2(B(0,R))} \leq C \Vert \Op_{\hbar}(\psi) (u^I + u^S) \Vert_{L^2} \leq C\Vert u^I + u^S\Vert_{L^2(B(0,R) \backslash B(0, R_0 ))},
$$
and the proof is complete.
\end{proof}

\begin{rem}
The proof below shows that for any $0<R\leq R_1$ such that $\Omega_-\Subset B(0,R)$ we can replace the relative error
$$
\frac{\|u^S-v^S\|_{H^1(B(0,R_1)\setminus\Omega_-)}}{\|u^S+e^{ikx\cdot a}\|_{L^2(B(0,R_1)\setminus\Omega_-)}} \qquad \text{by}\qquad\frac{\|u^S-v^S\|_{H^1(B(0,R_1)\setminus\Omega_-)}}{\|u^S+e^{ikx\cdot a}\|_{L^2(B(0,R)\setminus\Omega_-)}}
$$
in Theorem~\ref{t:expEst}.
\end{rem}

\begin{proof}[Proof of Theorem \ref{t:expEst}]
Let   $0<R_0<R\leq R_1$ be such that $\Omega_-\Subset B(0,R_0)$ and let $\chi \in C^\infty_c(\mathbb R^d)$ be such that $\chi = 1$ near $B(0,R_0)$ and $\supp\chi \Subset B(0,R)$.
Observe that $u^S + \chi u^I$ and  $v^S + \chi u^I$ satisfy, respectively,
\begin{equation}
\begin{cases}
(-\hbar^2\Delta-1)(u^S + \chi u^I) = [-\hbar^2\Delta, \chi]u^I & \text{ in } \mathbb R^d \backslash \Omega_-, \\
B(u^S + \chi u^I )= 0 & \text{ on } \Gamma_-, \\
u^S +  \chi u^I \text{ is outgoing,}
\end{cases}
\end{equation}
and
\begin{equation}
\begin{cases}
(-\hbar^2\Delta_\theta-1)(v^S + \chi u^I) = [-\hbar^2\Delta_\theta, \chi]u^I & \text{ in } \mathbb R^d \backslash \Omega_-, \\
B(v^S + \chi u^I )= 0 & \text{ on } \Gamma_-, \\
v^S + \chi u^I = 0 & \text{ on } \Gamma_{\tr}.
\end{cases}
\end{equation}
Hence, by Theorem \ref{t:blackBoxErr}, there are $C,h_0>0$ such that, {with $\theta_0$ given by \eqref{eq:theta0}}, for $\theta_0+\e\leq \theta<\pi/2-\e$ and any $0<\hbar<\hbar_0$,
\begin{align}\nonumber
&\Vert (u^S + \chi u^I)- (v^S + \chi u^I ) \Vert_{H_{\hbar}^1(\Omega_{\tr} \backslash \Omega_-)} \\
&\hspace{1.5cm} \leq C\exp\bigg(-k\Big((2-\eta)\int_{R_1}^{R_\tr}\Phi_\theta(r)\,dr-2\Lambda(P,J)\Big)\bigg)
\Vert [-\hbar^2\Delta, \chi]u^I \Vert_{L^2(\Omega_{\tr} \backslash \Omega_-)}.
 \label{eq:plane1}
\end{align}
Since $\hbar \nabla u^I = ia u^I$,
\begin{equation} \label{eq:plane2}
\Vert [-\hbar^2\Delta, \chi]u^I \Vert_{L^2(\Omega_{\tr} \backslash \Omega_-)} \leq C\hbar \Vert u^I \Vert_{H^1_{\hbar}(B(0,R))}
\leq C \hbar \Vert u^I \Vert_{L^2(B(0,R))}.
\end{equation}
We now apply Lemma \ref{lem:contruI}.
 We obtain, reducing $\hbar_0$ again if necessary, that for $0<\hbar<\hbar_0$,
\begin{equation} \label{eq:plane3}
\Vert u^I \Vert_{L^2(B(0,R))} \leq C\Vert u^I  + u^S\Vert_{L^2(B(0,R) \backslash \Omega_-)}
\end{equation}
The result \eqref{e:relError1} then follows by combining (\ref{eq:plane1}), (\ref{eq:plane2}), and (\ref{eq:plane3}).
\end{proof}

\section{Nontrapping estimate on the free resolvent with rough scaling}
\label{s:rough_res}

The goal of this section is to prove Theorem \ref{t:nontrappingScale}. This section uses notions of rough semiclassical pseudo-differential operators  recapped in \S\ref{sec:rough}.
We first prove a propagation result.

\begin{lem} \label{lem:inva}
Assume that $Q \in C^{1,\alpha}\Psi^2 + \hbar C^{0,\alpha} \Psi^1$ is such that, for any $w \in H_\hbar^{2}$, 
\begin{equation} \label{eq:hypcom1}
 \langle \operatorname{Im} Q w, w\rangle \leq C_0 \hbar \Vert w\Vert_{H_\hbar^{\frac{1}{2}}}^{2},
\end{equation} 
and that $\sigma_{\hbar}(Q)\to q$ with $q$ satisfying
\begin{equation}
\label{e:classicalElliptic}
|q(x,\xi)|\geq c|\xi|^2,\qquad |\xi|\geq C.
\end{equation}
Given $\rhs\in L^2$, with $\|\rhs\|_{L^2}\leq C'$ with $C'$ independent of $\hbar$, let $u$ satisfy $Q u = \hbar \rhs$. Let 
$u$ have defect measure $\mu$ as $\hbar \to 0$ (in the sense of \eqref{eq:defectmeasure}) and let $u$ and $\rhs$ have joint measure $\mu^j$ (in the sense of \eqref{eq:jointmeasure}).

Then, (i) the measure $\mu$ is supported 
in $\{ q = 0\}$, (ii) for $b \in S^1$ and $\chi \in C_c^\infty$, as $\hbar\to0$,
\begin{equation} \label{eq:inva1}
 \Vert \operatorname{Op}_\hbar(b) \chi u \Vert^2_{L^2} \rightarrow \mu(|b|^2\chi^2),
\end{equation}
and (iii)  for any real-valued
$a\in C^\infty_c(T^*\mathbb R^d)$,
\begin{equation} \label{eq:inva2}
\mu(H_{\operatorname{Re} q}a^2 + C_0\langle \xi \rangle a^2) \geq -2 \operatorname{Im}\mu^j(a^2).
\end{equation}
\end{lem}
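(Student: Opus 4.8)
The plan is to prove the three assertions of Lemma~\ref{lem:inva} by the standard positive-commutator / defect-measure machinery, adapted to the rough symbol classes of \S\ref{sec:rough}. First I would establish (i). Since $\sigma_\hbar(Q)\to q$ and $q$ is elliptic for large $|\xi|$ by \eqref{e:classicalElliptic}, the equation $Qu=\hbar f$ with $\|f\|_{L^2}=O(1)$ forces, by the rough elliptic parametrix (Theorem~\ref{t:ellip} and the exotic-calculus construction used in \S\ref{s:carleman}), that $u$ is $\hbar$-tempered and microlocally $O(\hbar)$ (hence negligible for the defect measure) on $\{|\xi|\geq C\}$; so $\mu$ is supported in a compact subset of $T^*\mathbb{R}^d$. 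Then for $a\in C_c^\infty$ real-valued, test the equation against $\operatorname{Op}_\hbar(a)u$: $\langle Q u,\operatorname{Op}_\hbar(a)u\rangle=\hbar\langle f,\operatorname{Op}_\hbar(a)u\rangle=O(\hbar)$. Taking real parts and using $\operatorname{Re}\langle \operatorname{Op}_\hbar(\operatorname{Re} q)u,\operatorname{Op}_\hbar(a)u\rangle\to\mu(a\operatorname{Re} q)$ (up to $o(1)$ errors controlled by the rough symbol calculus and \eqref{eq:hypcom1} absorbing the imaginary part) shows $\mu(a\operatorname{Re} q)=0$ for all such $a$, and similarly with $\operatorname{Im} q$, whence $\operatorname{supp}\mu\subset\{q=0\}$.

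For (ii), the statement \eqref{eq:inva1} is the standard fact that the defect measure computes limits of quadratic forms of compactly-supported, microlocally-cutoff pieces of $u$. I would write $\|\operatorname{Op}_\hbar(b)\chi u\|_{L^2}^2=\langle \operatorname{Op}_\hbar(b)^*\operatorname{Op}_\hbar(b)\chi^2 u,u\rangle + o(1)$, using the (rough) symbol calculus to commute $\chi$ past $\operatorname{Op}_\hbar(b)$ and to identify $\operatorname{Op}_\hbar(b)^*\operatorname{Op}_\hbar(b)=\operatorname{Op}_\hbar(|b|^2)+\hbar\operatorname{Op}_\hbar(r)$ modulo negligible terms; here one must note that, although $b\in S^1$ is unbounded, the support restriction of $\mu$ to $\{|\xi|\le C\}$ from (i) together with the temperedness of $u$ makes $|b|^2\chi^2$ effectively a compactly-supported symbol, so $\langle\operatorname{Op}_\hbar(|b|^2\chi^2)u,u\rangle\to\mu(|b|^2\chi^2)$ by the definition \eqref{eq:defectmeasure}. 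The $\hbar$-error term is $O(\hbar)$ times bounded quantities and drops out.

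For (iii), the positive-commutator step: fix real-valued $a\in C_c^\infty(T^*\mathbb{R}^d)$ and test the equation against $\operatorname{Op}_\hbar(a^2)u$ (or, to keep things self-adjoint, against $\frac12(\operatorname{Op}_\hbar(a^2)+\operatorname{Op}_\hbar(a^2)^*)u$). Then
\[
2\operatorname{Im}\langle Qu,\operatorname{Op}_\hbar(a^2)u\rangle = 2\hbar\operatorname{Im}\langle f,\operatorname{Op}_\hbar(a^2)u\rangle \longrightarrow 2\hbar\operatorname{Im}\mu^j(a^2)+o(\hbar)
\]
by the definition \eqref{eq:jointmeasure} of the joint measure. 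On the other side, $2\operatorname{Im}\langle Qu,\operatorname{Op}_\hbar(a^2)u\rangle = \frac{1}{i}\langle([\operatorname{Op}_\hbar(\operatorname{Re} q),\operatorname{Op}_\hbar(a^2)] + 2i\operatorname{Op}_\hbar(\operatorname{Im} q)\operatorname{Op}_\hbar(a^2))u,u\rangle + \text{errors}$; the sharp Gårding / commutator estimate in the rough calculus gives $\frac{1}{i}[\operatorname{Op}_\hbar(\operatorname{Re} q),\operatorname{Op}_\hbar(a^2)]=\hbar\operatorname{Op}_\hbar(H_{\operatorname{Re} q}a^2)+o(\hbar)$, and the $\operatorname{Im} q$ term together with hypothesis \eqref{eq:hypcom1} contributes at worst $-C_0\hbar\mu(\langle\xi\rangle a^2)+o(\hbar)$ (the sign is what makes the estimate an inequality rather than an equality). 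Dividing by $\hbar$ and passing to the limit yields \eqref{eq:inva2}. The main obstacle I anticipate is bookkeeping the error terms in the rough symbol classes $C^{1,\alpha}\Psi^2+\hbar C^{0,\alpha}\Psi^1$: one does not have an exact symbol calculus, only the approximate composition/commutator formulas of \S\ref{sec:rough} with $O(\hbar^\rho)$-type remainders, so I would need to check that every remainder that appears is genuinely $o(\hbar)$ after testing against $u$ (using $\|u\|_{H^{1}_\hbar}=O(1)$, which itself follows from the elliptic estimate applied to $Qu=\hbar f$), and in particular that the low-regularity of the coefficients does not spoil the $\hbar$-expansion of the commutator with the smooth compactly-supported $a^2$ — here one gains because $a$ is smooth, so the problematic derivatives fall on $a$, not on the rough coefficients of $q$.
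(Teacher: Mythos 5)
Your proposal is correct and follows essentially the same positive-commutator scheme the paper uses for part~(iii): pair $Qu=\hbar f$ against $A^*Au$ with $A=\operatorname{Op}_\hbar(a)$ (your $\operatorname{Op}_\hbar(a^2)$ is the same modulo lower-order errors), split into a $\hbar^{-1}[A^*A,\operatorname{Re}Q]$ piece producing $\mu(H_{\operatorname{Re}q}a^2)$, an $\operatorname{Im}Q$ pairing controlled by \eqref{eq:hypcom1} that produces $C_0\mu(\langle\xi\rangle a^2)$ via $\|Au\|_{H^{1/2}_\hbar}^2\to\mu(\langle\xi\rangle a^2)$, and rough-calculus remainders that are $o(1)$ after dividing by $\hbar$ (in the paper these limits are given by Lemma~\ref{l:commutator}), then pass to defect measures. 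Two small points of comparison: the paper delegates (i) and (ii) outright to \cite[Proof of Lemma~3.6]{GaSpWu:20} rather than re-deriving them as you sketch; and the paper makes one further reduction explicit that your sketch leaves implicit --- when converting $\langle A^*A\operatorname{Im}Q\,u,u\rangle$ into the form $\langle \operatorname{Im}Q\,Au,Au\rangle$ needed to apply \eqref{eq:hypcom1}, a cross term $2\hbar^{-1}\operatorname{Re}\langle A^*[A,\operatorname{Im}Q]u,u\rangle$ appears, whose vanishing in the limit relies on $a$ being real (so that $\mu(aH_{\operatorname{Im}q}a)$ is real and its imaginary part drops out); this is not merely a remainder of the rough calculus since $\operatorname{Im}Q$ may have a nontrivial order-$2$ principal part, so it is worth spelling out.
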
 

\begin{proof}
The fact that $\operatorname{supp}\mu \subset \{ q = 0 \}$ and (\ref{eq:inva1}) are shown in \cite[Proof of Lemma 3.6]{GaSpWu:20}, where the only assumptions used are that (a) the
operator associated to the equation is in $C^{1,\alpha}\Psi^2 + \hbar C^{0,\alpha} \Psi^1$ and (b) the principal symbol satisfies the bound~\eqref{e:classicalElliptic}. We therefore only have to show (\ref{eq:inva2}).

Let $A := \operatorname{Op}_\hbar(a)$. 
Following the calculations in~\cite[Equation 2.38]{GaMaSp:21}, we have
\begin{align}\nonumber
-2\hbar^{-1} \operatorname{Im} \langle A^* A u, Qu \rangle &= \hbar^{-1} \operatorname{Im} \big\langle (A^*A \operatorname{Re}Q - \operatorname{Re}Q A^*A)u, u\big\rangle + 2\hbar^{-1}\operatorname{Re} \big\langle A^*A \operatorname{Im}Qu, u\big\rangle \nonumber \\ \nonumber
&= \hbar^{-1} \operatorname{Im} \big\langle [A^*A, \operatorname{Re}Q] u, u\big\rangle + 2\hbar^{-1}\operatorname{Re} \big\langle \operatorname{Im}QAu, Au\big\rangle\\ \nonumber
&\hspace{5cm} + 2\hbar^{-1}\operatorname{Re} \big\langle A^*[A,\operatorname{Im}Q]u , u\big\rangle.\\ \nonumber
&\leq  \hbar^{-1} \operatorname{Im} \big\langle [A^*A, \operatorname{Re}Q] u, u\big\rangle  
+2C_0 \Vert Au\Vert_{H_\hbar^{\frac{1}{2}}}^2\\
&\hspace{5cm} + 2\hbar^{-1}\operatorname{Re} \big\langle A^*[A,\operatorname{Im}Q]u , u\big\rangle.
\label{eq:Sat2}
\end{align}
by \eqref{eq:hypcom1}.
We now examine each of the terms in \eqref{eq:Sat2}, starting with the term on the left-hand side. By \eqref{eq:symbol} and the fact that $a$ is real, $\sigma_\hbar(A^*A) = a^2$; 
using this and the fact that $\rhs$ is bounded in $L^2$ uniformly in $\hbar$, we have
$$
\big| 2\hbar^{-1} \operatorname{Im} \langle \big(A^* A - \operatorname{Op}_\hbar(a^2) \big) u, Qu \rangle \big| \leq  2 \Vert A^* A - \operatorname{Op}_\hbar(a^2) \Vert_{L^2} \Vert \rhs\Vert_{L^2} \rightarrow 0;
$$
hence, by the definition of $\mu^j$ \eqref{eq:jointmeasure},  as $\hbar\to 0$,
\begin{equation} \label{eq:com2}
-2\hbar^{-1} \operatorname{Im} \langle A^* A u, Qu \rangle  = -2 \operatorname{Im} \langle \operatorname{Op}_\hbar(a^2) u, f \rangle + o(1)  \rightarrow -2 \operatorname{Im} \mu^j(a^2).
\end{equation}
For the first term on the right-hand side of \eqref{eq:Sat1},
by Lemma~\ref{l:commutator}, as $\hbar\to 0$,
\begin{equation}\label{eq:Sat3}
\hbar^{-1} \operatorname{Im} \langle [A^*A, \operatorname{Re}Q] u, u \rangle \rightarrow \mu(H_{\operatorname{Re} q} a^2),
\end{equation}
By the definition of $\mu$ \eqref{eq:defectmeasure} and of the semi-classical Sobolev norms, as $\hbar\to 0$,
\beq\label{eq:Sat4}
\Vert Au \Vert^2_{H_\hbar^{\frac{1}{2}}} \rightarrow \mu(\langle \xi \rangle a^2)
\eeq
 By Lemma~\ref{l:commutator} and the fact that $a$ is real, as $\hbar\to 0$,
\beq\label{eq:Sat1}
\hbar^{-1}\langle \Re  A^*[A,\Im Q]u,u\rangle\to 0.
\eeq
The result \eqref{eq:inva2} then follows from using  in \eqref{eq:Sat2}  the limits \eqref{eq:com2}, \eqref{eq:Sat3},  \eqref{eq:Sat4}, and \eqref{eq:Sat1}.
\end{proof}

We now show that when $\operatorname{Re} q$ is sufficiently regular, invariance statements of type (\ref{eq:inva2}) can be translated to
 invariance statements at the level of the Hamiltonian flow. In this lemma, the assumption $p\in C^2$ ensures that the Hamiltonian flow is well defined; this is where the assumption $f \in C^3$ in our main results originates.

\begin{lem} \label{lem:inva_C2}
Let $\mu$ be a Radon measure on $T^* \mathbb R^d$ such that for any real-valued
$a\in C^\infty_c(T^*\mathbb R^d)$ and $p \in C^2$,
\begin{equation} \label{eq:inva3}
\mu(H_{p}a^2 + C_0\langle \xi \rangle a^2) \geq 0.
\end{equation}
Let $\varphi_t$ be the Hamiltonian flow associated to $p$.
Then, for any measurable $B$, and for all $t\geq 0$,
\begin{equation*}
\mu(\varphi_t(B)) \leq \mu(B) +  C_0 \sup_{(x,\xi) \in B}\langle \xi \rangle \int_{0}^{t} \mu(\varphi_{s}(B)) \, ds.
\end{equation*}
\end{lem}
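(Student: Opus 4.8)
The plan is to prove the estimate as a Gr\"onwall inequality for the mass that $\mu$ assigns to the flow-out of $B$. Fix the $C^2$ symbol $p$, write $H_p$ for its Hamiltonian vector field and $\varphi_t$ for its flow (defined on the sets and times under consideration), and recall the elementary fact that, since $\varphi_r\circ\varphi_{-t}=\varphi_{-t}\circ\varphi_r$, the operator $H_p$ commutes with pullback along its own flow: $H_p(u\circ\varphi_{-t})=(H_pu)\circ\varphi_{-t}$. For a real-valued $a\in C^\infty_c(T^*\mathbb R^d)$ set
\[
g_a(t):=\mu\big(a^2\circ\varphi_{-t}\big)=\mu\big((a\circ\varphi_{-t})^2\big).
\]
Since $p\in C^2$ the map $t\mapsto\varphi_{-t}(z)$ is $C^1$ (in fact $C^2$), and on any compact $t$-interval the integrand $a^2\circ\varphi_{-t}$ and its $t$-derivative are supported in a fixed compact set and bounded there; as $\mu$ is Radon we may differentiate under the integral to obtain
\[
g_a'(t)=-\mu\big((H_p(a^2))\circ\varphi_{-t}\big)=-\mu\big(H_p(a^2\circ\varphi_{-t})\big),
\]
where the second equality is the commutation fact above.

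Next I would apply hypothesis \eqref{eq:inva3}, but with $a$ replaced by $a\circ\varphi_{-t}$, which is only $C^1$ (not $C^\infty$) because $\varphi_{-t}$ is merely $C^1$. So one first extends \eqref{eq:inva3} from $C^\infty_c$ to $C^1_c$ test functions: mollify $a\circ\varphi_{-t}$ within a fixed compact set and use that $b\mapsto\mu(H_pb^2)$ and $b\mapsto\mu(\langle\xi\rangle b^2)$ are continuous with respect to $C^1$-convergence of functions supported in that set ($\mu$ finite on compacts). The extended hypothesis applied to $a\circ\varphi_{-t}$ reads $-\mu\big(H_p(a^2\circ\varphi_{-t})\big)\le C_0\,\mu\big(\langle\xi\rangle\,a^2\circ\varphi_{-t}\big)$, i.e. $g_a'(t)\le C_0\,\mu\big(\langle\xi\rangle\,a^2\circ\varphi_{-t}\big)$. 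Integrating over $[0,t]$,
\[
\mu\big(a^2\circ\varphi_{-t}\big)\le\mu(a^2)+C_0\int_0^t\mu\big(\langle\xi\rangle\,a^2\circ\varphi_{-s}\big)\,ds.
\]

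Finally I would pass from $a^2$ to $\mathbf 1_B$. For $B$ open, pick $a_n\in C^\infty_c$ with $0\le a_n\le1$, $a_n$ increasing, and $a_n^2\uparrow\mathbf 1_B$ pointwise (exhaust $B$ by compacts); since $\varphi_{-t}$ is a bijection, $a_n^2\circ\varphi_{-t}\uparrow\mathbf 1_{\varphi_t(B)}$, and two applications of monotone convergence (the second also in the $ds$-integral, via Tonelli) turn the last display into
\[
\mu(\varphi_t(B))\le\mu(B)+C_0\int_0^t\Big(\int_{\varphi_s(B)}\langle\xi\rangle\,d\mu\Big)\,ds ;
\]
bounding $\langle\xi\rangle\le\sup_{(x,\xi)\in B}\langle\xi\rangle$ on $\varphi_s(B)$ — which uses that $\langle\xi\rangle$ is constant along $\varphi_s$ on $\operatorname{supp}\mu$ for the symbols occurring in this paper — gives the claimed inequality for open $B$. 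The general measurable case follows because $\varphi_t$ is a homeomorphism, so $\mu(\varphi_t(B))=\inf\{\mu(\varphi_t(U)):U\supset B\text{ open}\}$, and one takes the infimum in the open-set inequality along a sequence $U_n\downarrow B$ realizing the outer regularity of $\mu$.

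The main obstacle is not the Gr\"onwall mechanism, which is straightforward, but the two regularity/approximation points: (i) justifying that \eqref{eq:inva3}, proved for $C^\infty$ symbols, may be used for the only-$C^1$ function $a\circ\varphi_{-t}$ — this is exactly where the hypothesis $p\in C^2$ (equivalently, the $C^3$ scaling function of the main theorems) enters, as the surrounding text already emphasizes — and (ii) the measure-theoretic limit $a^2\to\mathbf 1_B$: because a homeomorphism need not preserve $\mu$-null sets, one cannot pass to a general measurable $B$ by an almost-everywhere approximation, and the argument should instead go through open sets and outer regularity of $\mu$ as indicated above.
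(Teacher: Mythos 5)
Your argument is correct and follows essentially the same route as the paper's proof: extend \eqref{eq:inva3} from $C^\infty_c$ to $C^1_c$ test functions by mollification (this is exactly where $p\in C^2$ enters), differentiate $t\mapsto\mu(a^2\circ\varphi_{-t})$ using $\partial_t(a^2\circ\varphi_{-t})=-H_p(a^2\circ\varphi_{-t})$ and dominated convergence, integrate, and finally approximate $\mathbf{1}_B$ by squares of smooth compactly supported functions. You are in fact more explicit than the paper on the last step (open sets, monotone convergence, outer regularity) and on passing from $\int_{\varphi_s(B)}\langle\xi\rangle\,d\mu$ to $\sup_{B}\langle\xi\rangle\,\mu(\varphi_s(B))$ — a point the paper's one-line conclusion leaves equally implicit and which, as you note, is harmless in the setting where the lemma is applied.
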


\begin{proof}
We first show that (\ref{eq:inva3}) remains valid for $a \in C^1_c$. 
To do so, let $a \in C^1_c$. Let $\phi \in C^\infty_c$ be such that $\phi \geq0 $, $\operatorname{supp} \phi \subset B(0,1)$,
and $\int \phi = 1$. For $\epsilon >0$, let $\phi_\epsilon := \epsilon^{-d} \phi(\cdot/\epsilon)$, and define $a_\epsilon := a * \phi_\epsilon \in C^\infty_c$. Since $H_p a$ is continuous, $H_p a_\epsilon = (H_p a)* \phi_\epsilon \rightarrow H_p a$ pointwise. Similarly, $a_\epsilon \rightarrow a$ pointwise. Hence $H_p a^2_\epsilon = 2 a_\epsilon H_ p a_\epsilon \rightarrow 2 a H_p a = H_p a^2$ pointwise. In addition, since the derivatives of $p$ are bounded on $\operatorname{supp} a$, for $0<\epsilon \leq 1$,
$$
|H_p a_\epsilon(\rho)| = \Big| \int H_p a(\rho - \epsilon \zeta) \phi(\zeta) \, d\zeta \Big| \leq C \indicator_{\rho \in \operatorname{supp}a + B(0,1)}.
$$
Similarly $|a_\epsilon(\rho)| \leq C' \indicator_{\rho \in \operatorname{supp}a + B(0,1)}$. Hence $|H_p a_\epsilon^2(\rho)| \leq 2CC'  \indicator_{\rho \in \operatorname{supp}a + B(0,1)}$ and thus, by dominated convergence, $\mu(H_p a_\epsilon^2) \rightarrow \mu (H_p a^2)$. In a similar way, $\mu(\langle \xi \rangle a_\epsilon^2) \rightarrow \mu (\langle \xi \rangle a^2)$; hence
$$
\mu(H_p a_\epsilon^2 + C_0 \langle \xi \rangle a_\epsilon^2) \rightarrow \mu (H_p a^2 + C_0 \langle \xi \rangle a^2).
$$
By (\ref{eq:inva3}), the left-hand side is non-negative; since $a_\epsilon \in C^\infty_c$, so is the right-hand side, and hence  (\ref{eq:inva3}) remains true for $a \in C^1_c$.

Now let $a \in C^\infty_c$.
Since the derivatives of $p$ are bounded on $\operatorname{supp} a$, by Hamilton's equations $\partial_s \varphi_s$ is
bounded on $\big\{ \varphi_s \in \operatorname{supp} a \big\}$ independently of time, and hence
 $$
 |\partial_s (a^2 \circ \varphi_s)| \leq C \indicator_X, \quad  \tfa (s, (x,\xi)) \in [-t, 0] \times T^*\mathbb R^d,
 $$
 where
  $$
  X:= \bigcup_{s \in [0,t]} \varphi_s(\operatorname{supp} a).
 $$
By the dominated convergence theorem, interchanging the derivative and integral, we have
$$
\mu(a^2 \circ \varphi_{-t}) - \mu(a^2) = - \int_{-t}^0 \partial_s \Big( \int a^2 \circ \varphi_s\, d\mu \Big) \, ds = - \int_{-t}^0 \int \partial_s(a^2 \circ \varphi_s)\, d\mu \, ds.
$$
Since $p \in C^2$ and $\varphi_s \in C^1_c$ for any $s$, $a^2 \circ \varphi_s \in C^1_c$ for any $s$. Therefore, using (\ref{eq:inva3}),
$$
\mu(a^2) - \mu(a^2 \circ \varphi_{-t})  = \int_{-t}^{0} \int H_p a^2 \circ \varphi_s \, d\mu \, ds \geq - C_0  \int_{-t}^{0} \int \langle \xi\rangle \; a^2 \circ \varphi_s\, d\mu \, ds.
$$
The result follows by approximating $\mathbf{1}_B$ by squares of smooth, compactly-supported symbols.
\end{proof}

As a consequence, we obtain the following resolvent estimate.
\begin{lem} \label{lem:res_gen}
Let $(Q_{\theta})_{\theta \in \Theta}$ be a family of (rough) semiclassical pseudo-differential operators with $\Theta \subset \mathbb R$ compact.
We assume that  $Q_\theta \in C^{1,\alpha}\Psi^2 + \hbar C^{0,\alpha} \Psi^1$ uniformly in $\theta \in \Theta$. We assume further that (i) there exists $C_0>0$ such that for any $\theta \in \Theta$ and any $w \in H_\hbar^{2}$, 
\begin{equation} \label{eq:hypcom}
\langle \operatorname{Im} Q_\theta w, w\rangle \leq C_0 \hbar \Vert w\Vert^2_{H_{\hbar}^{\frac{1}{2}}},
\end{equation} 
(ii) $\sigma_{\hbar}(Q_\theta) \rightarrow q_\theta$ where $q_\theta \in C^2$ and depends smoothly on $\theta \in \Theta$ together with its derivatives, (iii) $q_\theta$ satisfies~\eqref{e:classicalElliptic} uniformly in $\theta \in \Theta$, and (iv)
\begin{multline}\label{eq:esc_ell}
\exists \eta>0, \; \forall \theta_0 \in \Theta, \; \forall (x_0, \xi_0) \in \big\{ q_{\theta_0} = 0 \big\}, \; \exists \tau_{\theta_0}^*(x_0,\xi_0)>0, \\
 \varphi^{\theta_0}_{-\tau_{\theta_0}^*(x_0,\xi_0)}(x_0, \xi_0) \in  \bigcap_{\theta \in \Theta} \Big\{ \langle \xi \rangle^{-2} |q_\theta(x,\xi)| \geq \eta \Big\},
\end{multline}
where $\varphi^\theta_t$ is the Hamiltonian flow associated with $\operatorname{Re} q_\theta$.

Then, there exists $C>0$ and $\hbar_0>0$ such that, for any $\theta \in \Theta$,
if $u \in L^2$ is a solution of
$$
Q_\theta u = \hbar \rhs,
$$
with $\rhs \in L^2$,
then, for $0<\hbar\leq h_0$, $1\leq s \leq 2$,
$$
\Vert u \Vert_{H_{\hbar}^{s}} \leq C\Vert \rhs \Vert_{H_{\hbar}^{s-2}}.
$$
\end{lem}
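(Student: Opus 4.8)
The plan is to prove the estimate by contradiction, first reducing to the single case $s=1$ and then transporting the equation $Q_\theta u=\hbar f$ to the level of a semiclassical defect measure, on which Lemmas~\ref{lem:inva} and~\ref{lem:inva_C2} together with the escape hypothesis~\eqref{eq:esc_ell} force the measure to vanish. For the reduction: since $q_\theta$ satisfies~\eqref{e:classicalElliptic} uniformly in $\theta\in\Theta$ and $Q_\theta\in C^{1,\alpha}\Psi^2+\hbar C^{0,\alpha}\Psi^1$ uniformly in $\theta$, the rough elliptic-parametrix construction (Theorem~\ref{t:ellip}, together with the symbol-smoothing of \S\ref{sec:rough} used in the proof of Lemma~\ref{l:carleman}) produces a frequency cut-off $\chi\in C_c^\infty(\mathbb{R}^d)$, $\chi\equiv1$ near $0$, with $\supp\chi'$ contained in the region of ellipticity of $q_\theta$ (uniform in $\theta$), such that
\[
\|(1-\chi(\hbar D))u\|_{H_\hbar^s}\le C\|Q_\theta u\|_{H_\hbar^{s-2}}+o(1)\,\|u\|_{H_\hbar^s}\qquad(1\le s\le2),
\]
uniformly in $\theta$ as $\hbar\to0$. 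Absorbing the $o(1)$ term and using that $\chi(\hbar D)u$ is compactly microlocalised in frequency (so all of its semiclassical Sobolev norms are comparable), it suffices to prove the single bound $\|\chi(\hbar D)u\|_{L^2}\le C\|f\|_{H_\hbar^{-1}}$; feeding this back into the elliptic estimate then yields the stated bound for all $1\le s\le2$.

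Suppose this $s=1$ bound fails. Then there are $\hbar_n\to0$, $\theta_n\in\Theta$ and $u_n$ with $Q_{\theta_n}u_n=\hbar_n f_n$, $\|u_n\|_{H_{\hbar_n}^1}=1$ and $\|f_n\|_{H_{\hbar_n}^{-1}}\to0$; by compactness of $\Theta$ we may assume $\theta_n\to\theta_\infty\in\Theta$. The reduction gives $(1-\chi(\hbar_n D))u_n\to0$ in $H_{\hbar_n}^1$, so $v_n:=\chi(\hbar_n D)u_n$ is bounded in $L^2$, compactly microlocalised, and $\|v_n\|_{L^2}\ge c>0$. Since $\chi(\xi)\langle\xi\rangle$ is bounded, $\|\chi(\hbar_n D)f_n\|_{L^2}\le C\|f_n\|_{H_{\hbar_n}^{-1}}\to0$, and in the rough calculus $[Q_{\theta_n},\chi(\hbar_n D)]=\hbar_n R_n$ with $R_n$ bounded on $L^2$ uniformly and microlocalised on $\supp\chi'$, so
\[
Q_{\theta_n}v_n=\hbar_n\widetilde f_n,\qquad \widetilde f_n:=\chi(\hbar_n D)f_n+R_nu_n,
\]
with $\widetilde f_n$ bounded in $L^2$. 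Passing to a subsequence, $v_n$ has a defect measure $\mu$ (in the sense of~\eqref{eq:defectmeasure}), $(v_n,\widetilde f_n)$ a joint measure $\mu^j$ (in the sense of~\eqref{eq:jointmeasure}), and $\sigma_\hbar(Q_{\theta_n})\to q_{\theta_\infty}\in C^2$. Because the complex-scaled symbols $q_\theta$ are elliptic of order $2$ outside a fixed compact set, there is no escape of mass, so $\mu$ is a nonzero finite positive Radon measure; by Lemma~\ref{lem:inva}(i), $\supp\mu\subset\{q_{\theta_\infty}=0\}$. Moreover $\mu^j=0$: for $a\in C_c^\infty$ supported where $\mu=0$ one has $\Op_\hbar(a^2)v_n\to0$ in $L^2$, while for $a$ supported near $\{q_{\theta_\infty}=0\}$ the function $\Op_\hbar(a^2)v_n$ is microlocalised in $\{q_{\theta_\infty}=0\}$ and $\widetilde f_n$ equals $R_nu_n$ up to an $L^2$-null term, $R_nu_n$ being microlocalised on the disjoint set $\supp\chi'$; in either case $\langle\Op_\hbar(a^2)v_n,\widetilde f_n\rangle\to0$. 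Applying Lemma~\ref{lem:inva} with $Q=Q_{\theta_n}$ (so that~\eqref{eq:hypcom} supplies~\eqref{eq:hypcom1}) and $\mu^j=0$ gives that~\eqref{eq:inva3} holds with $p=\Re q_{\theta_\infty}$.

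Now apply Lemma~\ref{lem:inva_C2} with $p=\Re q_{\theta_\infty}\in C^2$ and flow $\varphi^{\theta_\infty}_t$. Fix $(x_0,\xi_0)\in\{q_{\theta_\infty}=0\}$; by~\eqref{eq:esc_ell} there are $\tau_0=\tau^*_{\theta_\infty}(x_0,\xi_0)>0$ and an open neighbourhood $\mathcal U_0$ of $\varphi^{\theta_\infty}_{-\tau_0}(x_0,\xi_0)$ on which $q_\theta$ is elliptic of order $2$ for every $\theta$; in particular $\mathcal U_0\cap\{q_{\theta_\infty}=0\}=\emptyset$, so $\mu(\mathcal U_0)=0$. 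Lemma~\ref{lem:inva_C2} then gives $\mu(\varphi^{\theta_\infty}_t(\mathcal U_0))\le C_0\sup_{\mathcal U_0}\langle\xi\rangle\int_0^t\mu(\varphi^{\theta_\infty}_s(\mathcal U_0))\,ds$ for $t\ge0$, and Gr\"onwall's inequality forces $\mu(\varphi^{\theta_\infty}_s(\mathcal U_0))=0$ for $0\le s\le\tau_0$; thus $\mu$ vanishes on the open neighbourhood $\varphi^{\theta_\infty}_{\tau_0}(\mathcal U_0)$ of $(x_0,\xi_0)$. Since $\{q_{\theta_\infty}=0\}$ is compact and contains $\supp\mu$, a finite subcover yields $\mu\equiv0$, contradicting $\mu\ne0$. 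This proves the $s=1$ bound, hence, via the reduction, the lemma; uniformity of $C$ and $\hbar_0$ over $\theta\in\Theta$ is automatic from the contradiction argument.

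I expect the substantive difficulties to all live in the rough ($C^{1,\alpha}$) calculus: making precise the elliptic-parametrix reduction and the commutator identity $[Q_\theta,\chi(\hbar D)]=\hbar R$ with the correct $L^2$-mapping and microlocalisation of $R$, and ruling out escape of mass (so that $\mu\ne0$) from the ellipticity of $q_\theta$ outside a compact set. Relatedly, one needs $q_\theta\in C^2$ so that $\varphi^\theta_t$ is a genuine $C^1$ flow and Lemma~\ref{lem:inva_C2} applies — this is where the $C^3$ regularity of the scaling function ultimately enters.
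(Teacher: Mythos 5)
Your proposal follows essentially the same route as the paper: argue by contradiction, extract a semiclassical defect measure from a normalised sequence, use the elliptic estimates in the rough calculus to localise that measure to the characteristic variety, invoke Lemma~\ref{lem:inva} (i.e., \eqref{eq:inva3}) and Lemma~\ref{lem:inva_C2} to obtain a Gr\"onwall-type propagation estimate for the measure along the Hamiltonian flow of $\Re q_{\bar\theta}$, and finally use the escape-to-ellipticity hypothesis~\eqref{eq:esc_ell} to propagate $\mu\equiv 0$ from the elliptic region into all of $\{q_{\bar\theta}=0\}$, contradicting the non-vanishing of the defect measure. The one genuine divergence from the paper is in the localisation: you use a pure frequency cutoff $\chi(\hbar D)$ (and first reduce to the single index $s=1$), whereas the paper uses a combined frequency and phase-space cutoff $\tilde\psi(K^{-1}|\hbar D|)(1-\tilde b(x,\hbar D))$ with $b$ supported in the uniformly elliptic set $\mathcal E_{\eta/4}$. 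The paper's choice pays off when it comes to showing that the localised right-hand side $\tilde f_n$ is $o(1)$ in $L^2$ (and hence that the joint measure vanishes), since $[\!Q,\tilde\psi(1-\tilde b)]$ is microlocalised entirely on the set where the elliptic estimates \eqref{eq:res_ell_conc}--\eqref{e:res_ell_freq} make $w_n$ small. With your frequency-only cutoff, the commutator term $R_n u_n$ is microlocalised on $\supp\chi'$, but to conclude $\mu^j=0$ you cannot simply note that $\supp\chi'$ is disjoint from $\{q_{\theta_\infty}=0\}$: you need to observe that $\supp\chi'$ lies inside $\{\chi\neq 1\}$, insert a nested frequency cutoff $\rho$ with $\rho\equiv1$ on $\supp\chi'$ and $\rho(1-\chi)=\rho$, and then use your own high-frequency elliptic estimate to get $\|\rho(\hbar D)u_n\|_{H^1_\hbar}=o(1)$, hence $\|R_n u_n\|_{L^2}=o(1)$. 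This is an elementary step but it is the one substantive thing your sketch skips, and without it the case distinction on $a$ (``supported near $\{q_{\theta_\infty}=0\}$'' vs.\ ``supported where $\mu=0$'') does not cover a general $a\in C_c^\infty$. Finally, your appeal to ``ellipticity of the complex-scaled symbol outside a compact set'' to rule out escape of mass is an assumption about the application rather than a consequence of the stated hypotheses of the abstract lemma (which only give ellipticity for $|\xi|$ large via~\eqref{e:classicalElliptic}); the paper's proof shares the same implicit spatial compactness assumption, so this is not a defect relative to the paper, but it should be flagged if you want the argument to stand on its own.
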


\begin{proof} 
For $\delta>0$, let 
$$
\mathcal E_\delta := \bigcap_{\theta \in \Theta} \Big\{ \langle \xi \rangle^{-2} |q_\theta(x,\xi)| \geq \delta \Big\}.
$$

We begin by showing two elliptic estimates (\eqref{eq:res_ell_conc} and \eqref{e:res_ell_freq} below). Let $b \in S^0(T^*\mathbb R^d)$ be such that $b = 1$ on $\mathcal E_{\eta/2}$ and $\operatorname{supp}b \subset \mathcal E_{\eta / 4}$.
We write $Q_\theta= \operatorname{Op}_{\hbar} q^\theta_0 + \hbar \operatorname{Op}_{\hbar} q^\theta_1$
with $q^\theta_0 \in C^{1,\alpha} S^2$ and $q^\theta_1 \in C^{0,\alpha} S^1$ uniformly in $\hbar \rightarrow 0$. Let $\psi \in C^\infty_c(\mathbb R)$ be such that $\psi = 1$ on $[-2, 2]$, and for $\epsilon >0$ we define
$q^\theta_{0, \epsilon}(x, \xi) := (\psi(\epsilon |D_x|) q^\theta_0)(x, \xi)$. Then $q^\theta_{0,\epsilon} \in S^{2}$ and by Littlewood-Paley
(see, e.g., \cite[\S7.5.2]{Zworski_semi}),
\begin{equation} \label{eq:reg_LP}
\sup_{\theta \in \Theta} \Vert D^\beta_\xi (q^\theta_{0,\epsilon}(\cdot, \xi) - q^\theta_0(\cdot, \xi)) \Vert_{C^{0,\alpha}} \leq C\epsilon \langle \xi \rangle^{2-|\beta|},
\end{equation}
where $C$ is independent of $\epsilon$ and the uniformity in $\theta$ comes from the fact that all the involved quantities depend continuously on $\theta$ and $\Theta$ is compact.
In particular, by (\ref{eq:reg_LP}), for $\epsilon>0$ and $0<h<h_0$ small enough, $q^\theta_{0,\epsilon}$ is elliptic on  $\operatorname{supp} b$, uniformly in $\epsilon>0$ and $\theta \in \Theta$. Therefore, by the elliptic parametrix (Theorem \ref{t:ellip}),
there exists $S_{\epsilon, \theta} \in  \Psi^{s-2}$, bounded uniformly from $H^m_\hbar$ to $H^{m-s+2}_\hbar$ in $\epsilon>0$ and $\theta \in \Theta$, and such that
$$
\langle \hbar D\rangle^s b(x, \hbar D_x) = S_{\epsilon, \theta} \operatorname{Op}_h(q^\theta_{0,\epsilon})  + O(\hbar^\infty)_{\Psi^{-\infty}},
$$
and thus
\begin{equation} \label{eq:res_ell_1}
\langle \hbar D\rangle^{s} b(x, \hbar D_x)  = S_{\epsilon, \theta}  Q - S_{\epsilon, \theta}  \hbar \operatorname{Op}_\hbar(q^\theta_{1}) + S_\epsilon (\operatorname{Op}_\hbar(q^\theta_{0, \epsilon}) - \operatorname{Op}_\hbar(q^\theta_{0})) + O(\hbar^\infty)_{\Psi^{-\infty}}.
\end{equation}
But, by (\ref{eq:reg_LP}) together with Lemma \ref{lem:rough_calc},
\begin{equation} \label{eq:res_reg1}
\sup_{\theta \in \Theta}\Vert \operatorname{Op}_\hbar (q^\theta_{0, \epsilon}) - \operatorname{Op}_\hbar(q^\theta_{0}) \Vert_{H^2_\hbar \rightarrow L^2} \leq C \epsilon,
\end{equation}
where $C$ is independent of $\e$ and $\hbar$. In addition, by Lemma  \ref{lem:rough_calc} again, $\operatorname{Op}_\hbar(q^\theta_{1})  \in \mathcal L(H^1_\hbar, L^2)$ uniformly in $\hbar$ and $\theta \in \Theta$.
Thus, using the fact that $S_{\epsilon,\theta} \in  \Psi^{0}$ uniformly in $\epsilon>0$ small and $\theta \in \Theta$, (\ref{eq:res_ell_1}), and (\ref{eq:res_reg1}), we find that
$$
\langle \hbar D\rangle^s b(x, \hbar D_x)  = S_{\epsilon, \theta} Q + O(\hbar )_{H_\hbar ^1 \rightarrow H_\hbar ^{2-s}} + O(\epsilon)_{H_\hbar ^2\rightarrow H_\hbar ^{2-s}}.
$$
Evaluating in $w \in H^1_\hbar$ and letting $\epsilon \rightarrow 0$, we conclude that there exists $C>0$ such that for $\hbar$ small enough and any $\theta \in \Theta$
\begin{equation} \label{eq:res_ell_conc}
\Vert b(x, \hbar D_x)  w\Vert_{H_\hbar ^{s}} \leq C \Vert Q_\theta w \Vert_{H_\hbar ^{s-2}} + C \hbar  \Vert w \Vert_{H_\hbar ^1}, 
\quad \tfa w \in H^1_\hbar.
\end{equation}
A near-identical argument, using \eqref{e:classicalElliptic},  shows that for $\psi\in C_c^\infty([-2,2])$ with $\psi\equiv 1$ in $[-1,1]$, and $K$ large enough,
for any $\theta \in \Theta$
\begin{equation}
\label{e:res_ell_freq}
\|(1-\psi(K^{-1}|\hbar D_x|))(1-b)(x, \hbar D_x)  w\|_{H_\hbar^{s}}\leq C'\Vert Q_\theta w \Vert_{H_\hbar ^{s-2}} + C'\hbar \Vert w \Vert_{H_\hbar^1} \quad \tfa w \in H^1_\hbar.
\end{equation}

Now, if the conclusion of the Lemma fails, there exists $w_n$, $\rhs_n$, $\theta_n \in \Theta$ and $\hbar_n \rightarrow 0$ such that
$$
Q_{\theta_n}(\hbar_n)w_n = \hbar_n \rhs_n, \hspace{0.5cm}\Vert  w_n \Vert_{H_{\hbar_n}^s} > n  \Vert  \rhs_n \Vert_{H_{\hbar_n}^{s-2}}.
$$
Normalising, we can assume that
\begin{equation} \label{eq:res_norma}
\Vert  w_n \Vert_{H_{\hbar_n}^s} = 1, \hspace{0.5cm}
\Vert  \rhs_n \Vert_{H_{\hbar_n}^{s-2}} = o(1).
\end{equation}
Therefore, extracting subsequences, we can assume that $w_n$ has defect measure $\omega$. In addition, as $\Theta$ is compact, we can assume that
$\theta_n \rightarrow \bar \theta \in \Theta$.

Now, by~\eqref{eq:res_ell_conc} and~\eqref{e:res_ell_freq}, 
\begin{multline*}
\|(1-\psi(K^{-1}|\hbar_nD_x|))(1-b(x, \hbar D_x)) w_n\|_{H_{\hbar_n}^{s}}+\|b(x, \hbar D_x)w_n\|_{H_{\hbar_n}^s} \\ 
\leq \hbar_n(\|\rhs_n\|_{H_{\hbar_n}^{s-2}}+\|w_n\|_{H_{\hbar_n}^s})=O(\hbar_n),
\end{multline*}
and in particular
\begin{multline*}
1+O(\hbar_n)=\| \psi(K^{-1}|\hbar_nD_x|)  (1-b(x, \hbar D_x)) w_n\|_{H_{\hbar_n}^s} \\ 
\leq C_K\|
 \psi(K^{-1}|\hbar_nD_x|)(1 - \tilde b(x, \hbar D_x)) w_n\|_{L^2}\leq C_K.
\end{multline*}
Thus, by the support properties of $b$ and $\psi$
\begin{equation}
\label{e:contradictMe}
\begin{gathered}
\omega \big(\mathcal E_{\eta/4} ^c \cap \{ |\xi|\leq 2K \} \big)>c_K,\qquad \omega(\mathcal E_{\eta/2})=0, \qquad \omega(|\xi| \geq 2K) = 0.
\end{gathered}
\end{equation}

 Next, observe that letting $u_n :=\tilde\psi(K^{-1}|\hbar_n D_x|)(1-\tilde b(x, \hbar D_x)) w_n$, with $\tilde{\psi}\in C_c^\infty(\mathbb{R})$, $\tilde{\psi}\equiv 1$ on $[-2,2]$ and $\tilde{b}\in S^0(\mathbb{R}^d)$ with $b = 1$ on $ \operatorname{supp} b$, we have
$$
\begin{gathered}
Q_{\theta_n}u_n=\tilde\psi(K^{-1}|\hbar_nD|)(1 - \tilde b(x, \hbar D_x)) \hbar_n\rhs_n+[Q,\tilde \psi(K^{-1}|\hbar D_n|)(1 - \tilde b(x, \hbar D_x)])w_n=:\hbar_n\widetilde{\rhs}_n.
\end{gathered}
$$
and $u_n$ has defect measure $\mu:=\tilde{\psi}^2(K^{-1}|\xi|)(1 - \tilde b)^2(x,\hbar D_x)\omega$. 
Now, by Lemma~\ref{l:commutator} 
\begin{multline*}
\hbar_n^{-1}\Big\langle \big[Q_{\theta_n},\tilde\psi(K^{-1}|\hbar D_n|)(1 - \tilde b(x, \hbar D_x) \big]w_n, \; \tilde\psi(K^{-1}|\hbar_nD|)(1 - \tilde b(x, \hbar D_x)w_n\Big\rangle \\
\to \omega(\overline{\psi(K^{-1}|\xi|)(1 - \tilde b(x, \hbar D_x))}H_{q}\psi(K^{-1}|\xi|)(1 - \tilde b(x, \hbar D_x))=0,
\end{multline*}
since $\supp H_{q_0}\psi(K^{-1}|\xi|)(1-b(x, \hbar D_x))\cap \{|\xi|\leq 2K\}=\emptyset$ and $\omega(|\xi|\geq 2K)=0$.

In particular, this implies
$$
\|\widetilde{\rhs}_n\|_{L^2}\leq C\|\rhs_n\|_{H_{\hbar_n}^{s-2}}+o(1)=o(1).
$$
Therefore, $u_n$ and $\widetilde{\rhs}_n$ have joint defect measure equal to 0, and hence, by Lemma \ref{lem:inva} applied with $q:=q_{\bar \theta}$, together with Lemma \ref{lem:inva_C2}, 
for any measurable $B$, denoting $\varphi := \varphi^{\bar \theta}$
\begin{equation*}
\mu(\varphi_t(B)) \leq \mu(B) +  C_0 \sup_{(x,\xi) \in B}\langle \xi \rangle \int_{0}^{t} \mu(\varphi_{s}(B)) \, ds, \quad\tfa t>0,
\end{equation*}
and thus, by a Gr\"onwall inequality
\begin{equation} \label{eq:res_mes1}
\mu(\varphi_t(B)) \leq \mu(B) \times \exp\big({C_0 \sup_{(x,\xi) \in B} \langle \xi \rangle \; t} \big) \quad\tfa t>0.
\end{equation}
But, by~\eqref{e:contradictMe}, 
$\mu(\mathcal E_{\eta/2}) = 0$.
Together with (\ref{eq:res_mes1}), this implies that $\mu$ is identically zero. Indeed, let  $(x,\xi) \in \{ q = 0\}$ be arbitrary. By (\ref{eq:esc_ell}), if $B = \mathcal V(x,\xi) \cap \{ q=0 \}$ where $\mathcal V(x,\xi)$ is a sufficiently small neighbourhood of $(x,\xi)$, there exists $\tau^* = \tau^*(B)>0$ such that $\varphi_{-\tau^*}(B) \subset \mathcal E_{\eta/2}$ , and hence $\mu (\varphi_{-\tau^*}(B))= 0$, from which 
$$
\mu(B) = \mu(\varphi_{\tau^*} (\varphi_{- \tau^*} (B))) \leq \mu(\varphi_{-\tau^*}(B)) \times \exp\big({C_0 \sup_{\{ q_{\bar \theta} = 0\}}\langle \xi \rangle \; \tau^*} \big) = 0,
$$
where we used the fact that $\sup_{\{ q_{\bar \theta} = 0\}} |\xi|  < \infty$. This is a contradiction with the fact that, by \eqref{e:contradictMe}, $\mu( \mathcal E^c_{\eta/4}) \geq c>0$.
\end{proof}

We now show that the scaled operator satisfies the uniform escape-to-ellipticity condition (\ref{eq:esc_ell}) under
suitable uniformity assumptions for $F_\theta$.

\begin{lem} \label{lem:esc_ell} 
Let $-\Delta_\theta$ be as in \S\ref{sec:A2}, and
let $p_\theta$ be its principal symbol. 
Let 
$\Theta \Subset (0, \pi/2)$. Assume that $F_\theta \in C^{4}$ uniformly in $\theta \in \Theta$. 
Then, there exists $\nu = \nu(\Theta)>0$ such that, for any $\theta_0 \in \Theta$ and any $(x_0, \xi_0) \in \{ \operatorname{Re} p_{\theta _0}= 1 \} $, there exists $\tau_{\theta_0}^*(x_0,\xi_0)>0$ so that the trajectory $\varphi^{\theta_0}_t(x_0,\xi_0)$ of the Hamiltonian flow associated to $\operatorname{Re} p_{\theta_0}$ and starting from $(x_0, \xi_0)$ satisfies
$$
\varphi^{\theta_0}_{ - \tau_{\theta_0}^*(x_0, \xi_0)}(x_0, \xi_0) \in \bigcap_{\theta \in \Theta} \Big\{ \langle \xi \rangle^{-2} |p_\theta(x,\xi) - 1| \geq \nu \Big\}.
$$
\end{lem}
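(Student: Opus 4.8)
The plan is to produce, depending only on $\Theta$, a small $\epsilon_1>0$ and a ``deep elliptic shell''
\[
\mathcal S_{\epsilon_1}:=\big\{(x,\xi):\,|x|\geq R_1+\epsilon_1\big\}\subseteq \bigcap_{\theta\in\Theta}\big\{\langle\xi\rangle^{-2}|p_\theta(x,\xi)-1|\geq \nu\big\}
\]
for some $\nu=\nu(\Theta)>0$, and then to show that the backward Hamilton flow of $\operatorname{Re}p_{\theta_0}$ starting at any point of $\{\operatorname{Re}p_{\theta_0}=1\}$ either starts in $\mathcal S_{\epsilon_1}$ or reaches it in finite positive time; taking $\tau^*_{\theta_0}(x_0,\xi_0)$ to be that hitting time then gives the statement (recall $\{\operatorname{Re}p_{\theta_0}=1\}$ is preserved by this flow).

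\emph{Step 1: the elliptic shell.} In the spherical coordinates of \eqref{e:deltaTheta} a direct computation gives $|p_\theta|\leq \xi_r^2+|\xi_\omega|^2_{S^{d-1}}/r^2=|\xi|^2$ everywhere, and
\[
\operatorname{Im}p_\theta=-\frac{2f_\theta'(r)}{(1+f_\theta'(r)^2)^2}\,\xi_r^2-\frac{2rf_\theta(r)}{(r^2+f_\theta(r)^2)^2}\,|\xi_\omega|^2_{S^{d-1}}\leq 0 ,
\]
with both coefficients strictly positive where $f'(r)>0$, i.e.\ on $\{r>R_1\}$, since $f_\theta=f\tan\theta$. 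Because $\Theta\Subset(0,\pi/2)$, $F_\theta\in C^4$ uniformly in $\theta\in\Theta$ (so $p_\theta\in C^2$), and $f_\theta'(r)\equiv\tan\theta$, $f_\theta(r)/r\equiv\tan\theta$ for $r\geq R_2$, the quantities $f_\theta'(r)$ and $f_\theta(r)/r$ lie in a compact subset of $(0,\infty)$ for $r\geq R_1+\epsilon_1$, $\theta\in\Theta$; hence $|\operatorname{Im}p_\theta|\geq c_{\epsilon_1}|\xi|^2$ there. Combining this (which controls $\langle\xi\rangle^{-2}|p_\theta-1|$ for $|\xi|$ bounded below) with $|\operatorname{Re}(p_\theta-1)|\geq 1-|\xi|^2\geq1/2$ when $|\xi|^2\leq 1/2$ yields the shell inclusion, uniformly in $\theta\in\Theta$.

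\emph{Step 2: escape.} If $|x_0|\geq R_1+\epsilon_1$, continuity of $(x,\xi)\mapsto\langle\xi\rangle^{-2}\min_{\theta\in\Theta}|p_\theta-1|$ lets me take $\tau^*>0$ small (after halving $\nu$). Otherwise write $(x(\tau),\xi(\tau)):=\varphi^{\theta_0}_{-\tau}(x_0,\xi_0)$, $r(\tau):=|x(\tau)|$, and recall $\operatorname{Re}p_{\theta_0}$ is conserved and equal to $1$. On $\{r\leq R_1\}$ one has $\operatorname{Re}p_{\theta_0}=|\xi|^2$, so there the trajectory is a Euclidean geodesic on $\{|\xi|=1\}$, along which $r(\tau)$ is convex; since straight lines leave $B(0,R_1)$, there is a first $\tau_1\geq0$ with $r(\tau_1)=R_1$ after which $r>R_1$, and $r$ is increasing just after $\tau_1$. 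Now fix $\epsilon_1$ small enough that, uniformly in $\theta_0\in\Theta$ and for $r\in[R_1,R_1+\epsilon_1]$, one has $f_{\theta_0}'(r)<1$ and $\partial_r\operatorname{Re}\big((r+if_{\theta_0}(r))^{-2}\big)<0$ — the latter because this derivative equals $-2/R_1^3$ at $r=R_1$, independently of $\theta_0$ (as $f_{\theta_0}(R_1)=f_{\theta_0}'(R_1)=0$). Since $|\xi_\omega|^2_{S^{d-1}}$ is conserved and $\operatorname{Re}p_{\theta_0}=1$ with $\xi_r=0$ forces $|\xi_\omega|^2_{S^{d-1}}>0$, at any radial turning point $r_*\in(R_1,R_1+\epsilon_1)$ one gets $\xi_r=0$ and
\[
\ddot r=\big(\partial_{\xi_r}^2\operatorname{Re}p_{\theta_0}\big)\big(-\partial_r\operatorname{Re}p_{\theta_0}\big)=\frac{2(1-f_{\theta_0}'(r_*)^2)}{(1+f_{\theta_0}'(r_*)^2)^2}\Big(-|\xi_\omega|^2_{S^{d-1}}\,\partial_r\operatorname{Re}\big((r+if_{\theta_0})^{-2}\big)\big|_{r=r_*}\Big)>0 .
\]
Hence $r(\tau)$ has no interior local maximum and no equilibrium in the annulus (an equilibrium would also require $\dot\xi_r=-\partial_r\operatorname{Re}p_{\theta_0}=0$, excluded by the same sign), so after $\tau_1$ the radial coordinate is eventually strictly increasing and reaches $R_1+\epsilon_1$ in finite time; re-entries into $\{r\leq R_1\}$ and the boundary case $|x_0|=R_1$ are handled by the same convexity/geodesic-escape dichotomy.

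The hard part is Step 2 inside the thin scaling annulus $\{R_1<r<R_1+\epsilon_1\}$: one has to exclude trapped radial dynamics of $\operatorname{Re}p_{\theta_0}$ there, uniformly in $\theta_0\in\Theta$ and in the initial data. The convexity inequality $\ddot r>0$ at radial turning points — which is available only because $\epsilon_1$ is chosen after $\Theta$ and because $f'$ vanishes at $R_1$ — is precisely what rules this out; the remaining work is bookkeeping to ensure $\epsilon_1$ and $\nu$ depend on $\Theta$ alone, which is where the uniform $C^4$ bound on $F_\theta$ and compactness of $\Theta$ in $(0,\pi/2)$ enter.
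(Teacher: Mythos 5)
Your proof takes a genuinely different route from the paper's. The paper argues by contradiction: if the conclusion fails, compactness of $\Theta$ and the ellipticity bounds \eqref{e:eventualEllipticity} produce a limit trajectory $\varphi_{-t}^\theta(x_0,\xi_0)$ that lies entirely in the complex characteristic set $\{p_\theta=1\}$. The key structural observation is that $\Im p_\theta=0$ together with $F_\theta''\geq 0$ forces $F_\theta''\xi=0$, after which a short computation shows $\partial_x\Re p_\theta=0$, $\partial_\xi\Re p_\theta=2\xi$ and $|\xi|=1$ on $\{p_\theta=1\}$; so the restricted flow is the Euclidean free flow $(x_0-t\xi_0,\xi_0)$, which escapes to spatial infinity and contradicts eventual ellipticity in $|x|$. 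You instead give a direct escape argument: you exhibit an explicit shell $\{|x|\geq R_1+\epsilon_1\}$ inside $\bigcap_\theta\{\langle\xi\rangle^{-2}|p_\theta-1|\geq\nu\}$ (using the spherical form of $\Im p_\theta$), and then track the reduced radial Hamiltonian dynamics and show $\ddot r>0$ at radial turning points in the thin annulus, so $r$ cannot turn around and must reach the shell. Both approaches are correct; yours has the virtue of being quantitative (it produces, in principle, a bound on $\tau^*$), whereas the paper's contradiction argument does not.

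Two caveats. First, the paper's proof works for the general $F_\theta$ of \S\ref{sec:A2}, where $F_\theta$ need only be radial at infinity (i.e.\ $F_\theta''=\tan\theta\,I$ for $|x|\geq R_2$), while your argument leans throughout on the fully radial form \eqref{e:deltaTheta} — you write $\Im p_\theta$ in spherical coordinates, reduce to a one-dimensional effective radial Hamiltonian via conservation of $|\xi_\omega|^2$, and compute $\ddot r$ at turning points. This is enough for the application in Theorem~\ref{t:nontrappingScale}, where $F_\theta$ is radial by Lemma~\ref{l:tiger}, but it does not prove the lemma as stated. Second, as you yourself flag, the bookkeeping in Step 2 (tangential incidence at $\partial B(0,R_1)$, possible re-entries into the black-box region, and the uniformity of the various constants in $\theta\in\Theta$) is genuinely needed and not entirely routine; in particular one must also rule out $r(\tau)$ increasing but stalling strictly below $R_1+\epsilon_1$, which requires noting that conservation of $\Re p_{\theta_0}=1$ pins $|\xi_\omega|^2$ away from zero whenever $\xi_r\to 0$, so $\ddot r$ stays bounded below. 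These are fillable gaps rather than errors, but they deserve to be recorded explicitly.
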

\begin{proof}
Suppose the conclusion fails. Then there are $\{\theta_n\}_{n=1}^\infty$ and $\{(x_n,\xi_n)\}_{n=1}^\infty$ such that 
$$
\varphi_{-t}^{\theta_n}(x_n,\xi_n)\subset \big\{\langle \xi\rangle^{-2}|p_{\theta_n}-1|\leq n^{-1}\big\} \tfa t\geq 0.
$$
Since $\Theta $ is compact, we can assume $\theta_n\to \theta\in \Theta$. Moreover, since there exist $c,C>0$ such that, for all $\theta\in \Theta$,
\begin{equation}
\label{e:eventualEllipticity}
|p_\theta(x,\xi)-1|\geq c\langle \xi\rangle^2-C \tfa (x,\xi),\qquad \text{and}\qquad|p_\theta(x,\xi)-1|\geq c \tfa |x|\geq C,
\end{equation}
we can assume that $(x_n,\xi_n)\to (x_0,\xi_0)$. Now, for any fixed $t\geq 0$, $\varphi_{-t}(x_n,\xi_n)\to \varphi_{-t}(x_0,\xi_0)$. Therefore, 
$$
\varphi_{-t}(x_0,\xi_0)\subset \{|p_\theta-1|=0\}\qquad \tfa t\geq 0.
$$

Now, by~\eqref{e:absVal}
$$
\operatorname{Im} p_{\theta} (x,\xi) = -2 \big\langle F''_\theta(x) (I+F''_\theta(x)^2)^{-1}\xi,  (I+F''_\theta(x)^2)^{-1}\xi \big\rangle.
$$
Therefore, when $\Im p_\theta(x,\xi)=0$, since $F_\theta''(x)\geq 0$, this implies $F_\theta''(x)(I+F''_\theta(x)^2)^{-1}\xi=0$ and hence,  
$$
\xi =(I+(F''_\theta(x))^2)(I+(F''_\theta(x))^2)^{-1}\xi=(I+(F''_\theta(x))^2)^{-1}\xi.$$
Now, again by~\eqref{e:absVal}
$$
\Re p_\theta(x,\xi)= \langle (I+(F_\theta''(x))^2)^{-1}\xi,(I+(F_\theta''(x))^2)^{-1}\xi\rangle.
$$
Therefore, when $p_\theta(x,\xi)=1$,
$$
\begin{gathered}
\Re p_\theta(x,\xi)=|\xi|^2=1,\qquad \partial_\xi \Re p_\theta= 2(I+(F_\theta''(x))^2)^{-1}(I+(F_\theta''(x))^2)^{-1}\xi=2\xi,
\end{gathered}
$$
and, since $F_\theta''(x)$ is symmetric, and $F_\theta''(x)\xi=0$,
$$
\begin{aligned}
\partial_{x_i}\Re p_\theta& = -2\langle (I+(F''_\theta(x))^2)^{-1}( \partial_{x_i}F_\theta'' F''_\theta(x)+F_\theta'' \partial_{x_i}F''_\theta(x))(I+(F''_\theta(x))^2)^{-1}\xi,(I+(F''_\theta(x))^2)^{-1}\xi\rangle\\
&= -2\langle ( \partial_{x_i}F_\theta'' F_\theta(x)''+F_\theta'' \partial_{x_i}F_\theta(x)'')\xi,\xi\rangle=0.
\end{aligned}
$$
In particular, $H_{\Re p_\theta}=2\langle \xi,\partial_x\rangle$ and $|\xi|=1$ on $\{p_\theta-1=0\}$. Thus, we have 
$$
\varphi_{-t}(x_0,\xi_0)=(x_0-t\xi_0,\xi_0)\subset \{p_\theta=1\}\qquad \tfa t\geq0,
$$
which contradicts~\eqref{e:eventualEllipticity}.
\end{proof}

\begin{proof}[Proof of Theorem \ref{t:nontrappingScale}]
 We let $Q_\theta := - \hbar^2\Delta_\theta - 1$ and check that $Q_\theta$ satisfies the assumptions of Lemma \ref{lem:res_gen} with $\Theta := [\epsilon, \pi/2 - \epsilon]$. Lemma \ref{lem:esc_ell} shows that the escape-to-ellipticity condition \eqref{eq:esc_ell} is satisfied, where $F_\theta \in C^3$ uniformly in $\theta \in [\epsilon, \pi/2 -\epsilon]$ since $f_\theta(r) = \tan \theta f(r)$ with $f$ satisfying \eqref{e:fProp} and the functions $F_\theta$ and $f_\theta$ are related by Lemma~\ref{l:tiger}.
Moreover, since for such a scaling function $\sup_{\epsilon\leq\theta\leq \pi/2 - \epsilon} \|(I+iF_\theta''(x))\|\leq C$,
$$
\inf_{\epsilon\leq\theta\leq \pi/2 - \epsilon} |\xi|^{-2}|\sigma(-\hbar^2\Delta_\theta-1)| >0,\qquad |\xi|\geq C,
$$
and hence~\eqref{e:classicalElliptic} holds uniformly in $\theta \in [\epsilon, \pi/2 - \epsilon]$. 
Finally,~\eqref{eq:hypcom} follows from~\eqref{eq:A41} and~\eqref{e:absVal}; indeed, for $u\in H_\hbar^2$ 
\begin{equation}
\label{e:Hhalf}
\Im \langle -\hbar^2\Delta_\theta u,u\rangle \leq  \Im \langle \hbar^2 A(x)\partial_x u,u\rangle \leq C\hbar\|u\|_{H_\hbar^{1/2}}^2,
\end{equation}
where $C>0$ can be taken uniform in $\theta$ thanks again to the particular form of the scaling function.
To see the last inequality in~\eqref{e:Hhalf}, observe that
$$
\langle A(x)\hbar \partial_x u,u\rangle=\langle \langle\hbar D\rangle^{-1/2} A(x)\langle \hbar D\rangle^{1/2} \langle \hbar D\rangle^{-1/2}\hbar \partial_x u,\langle \hbar D\rangle^{1/2}u\rangle,
$$
and thus it suffices to observe that, since $A(x)\in C^1$, $A:H_{\hbar}^{-1/2}\to H_{\hbar}^{-1/2}$ is bounded by Lemma~\ref{lem:rough_calc}.

Therefore, Lemma \ref{lem:res_gen} applies to $Q_{\theta} := - \hbar^2\Delta_\theta - 1$, $\Theta := [\epsilon, \pi/2 - \epsilon]$. Let $-1\leq s\leq 0$, and $\lambda>\hbar_0^{-1}$ where $\hbar_0$ is given by  Lemma \ref{lem:res_gen}.  Then Lemma \ref{lem:res_gen}  implies 
$$
\|u\|_{H_\hbar^{s+2}}\leq C\hbar^{-1}\|(-\hbar^2\Delta_{\theta}-1)u\|_{H_\hbar^s},
$$
which implies that
$$
\|u\|_{H^s}+k^{-2}\|u\|_{H^{s+2}}\leq Ck^{-1}\|(-\Delta_\theta-k^2)u\|_{H^{s}}.
$$
In particular, $(-\Delta_\theta-k^2)^{-1}$ has no poles in $k>h_0^{-1}$ and the required estimates hold.
\end{proof}

\appendix
\section{Complex scaling for rough scaling functions}
\label{a:scale}

We follow the treatment of complex scaling in~\cite[Chapter 4]{DyZw:19}, making the necessary changes to allow for $C^{2,\alpha}$ scaling functions. 

\subsection{The scaled manifold and operator}\label{sec:A1}
For $0\leq \theta<\pi$, let $\Gamma_\theta\subset \mathbb{C}^d$ be a deformation of $\mathbb{R}^d$ satisfying the following properties
\begin{equation}
\label{e:GammaTheta}
\begin{gathered}
\Gamma_\theta\cap B_{\mathbb{C}^d}(0,R_1)=B_{\mathbb{R}^d}(0,R_1),\qquad \Gamma_\theta \cap (\mathbb{C}^d\setminus B_{\mathbb{C}^d}(0,R_2))=e^{i\theta}\mathbb{R}^d\cap (\mathbb{C}^d\setminus B_{\mathbb{C}^d}(0,R_2)), \\
\Gamma_\theta=\widetilde{\rhs}_\theta(\mathbb{R}^d),\quad \widetilde{\rhs}_\theta:\mathbb{R}^d\to \mathbb{C}^d,\text{ is injective}.
\end{gathered}
\end{equation}

Recall that for $\ell\geq 1$, a $C^{\ell,t}$ manifold $M\subset \mathbb{C}^d$ is called \emph{totally real} if for all $m\in M$,
$$
T_mM\cap iT_m M=\{0\}.
$$
(Note that we identify $T_mM$ with a subspace of $\mathbb{R}^{2d}\cong \mathbb{C}^d$ in this definition). 

Furthermore, if $u\in C^{\ell,t}(M)$, we call $\widetilde{u}\in C^{\ell,t}(\mathbb{C}^d)$ a $(\ell,t)$-\emph{almost analytic extension of $u$} if 
$$
\bar{\partial}_{z_j}\widetilde{u}(z)=O_s(d(z,M)^{\ell-1+s}),\qquad s<t
$$
where, if $z_j=x_j+iy_j$, 
$$
\partial_{z_j}:=\frac{1}{2}(\partial_{x_j} -i\partial_{y_j}),\qquad \bar{\partial}_{z_j}:=\frac{1}{2}(\partial_{x_j}+i\partial_{y_j}).
$$
Recall that a $C^1$ function, $u$, on $\Omega\subset \mathbb{C}^d$ is holomorphic if and only if $\bar{\partial}_{z_j}u=0$ for all $j=1,\dots, d$.

We next need the analog of~\cite[Lemma 4.30]{DyZw:19} for $C^{\ell,t}$ manifolds. To do this, we first need a lemma which gives $(\ell,t)$-almost analytic extensions of functions in $C_c^{\ell,t}(\mathbb{R}^d)$ functions. For this, we need to use the $C_*^{s,t}$ norm:
$$
\|u\|_{C_*^{\ell,t}}:=\sup_{k}2^{k(\ell+t)}\|\varphi^2_k(|D|)u\|_{L^\infty}
$$
where $\varphi_0\in C_c^\infty(-1,1)$, $\varphi_1\in C_c^\infty( (\frac{1}{2},2))$, $\varphi_k(x)=\varphi_1(2^{1-k}x)$,  $k\geq 1$, and $\sum_k\varphi^2_k=1$. We also recall that  for all $s, t$, 
$$
C^{\ell,t}\subset C_*^{\ell,t}
$$
and for $0<t<1$, $C^{\ell,t}=C_*^{\ell,t}.$ 
\begin{lem}
Let $\ell\in \mathbb{Z}_+$, $0<t<1$ and suppose that $u\in C_{c}^{\ell,t}(\mathbb{R}^d)$. Then, there is $\widetilde{u}\in C_{c,*}^{\ell,t}(\mathbb{C}^n)$ such that $\widetilde{u}|_{\mathbb{R}^d}=u$ and for all $s<t$,
$$
\bar{\partial}_z\widetilde{u}=O_s(|\Im z|^{\ell+s-1}).
$$
\end{lem}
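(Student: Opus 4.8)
The plan is to build $\widetilde u$ by a Littlewood--Paley decomposition of $u$, extending each dyadic piece holomorphically on a strip whose half-width is proportional to the reciprocal of that piece's frequency, and then gluing the extensions together. Write $u=\sum_{j\ge 0}u_j$ with $u_j:=\varphi_j^2(|D|)u$, so that $\widehat{u_j}$ is supported in $\{|\xi|\le 2^j\}$ (in an annulus of size $2^j$ for $j\ge1$). Each $u_j$ is band-limited, hence by Paley--Wiener it extends to an entire function on $\mathbb{C}^d$, still written $u_j$, with the Bernstein-type bound $|\bar{\partial}_z^\alpha u_j(x+iy)|+|\partial_z^\alpha u_j(x+iy)|\le C_\alpha 2^{j|\alpha|}e^{C2^j|y|}\|u_j\|_{L^\infty}$ (of course $\bar{\partial}_zu_j=0$); and by the very definition of the $C_*^{\ell,t}$-norm, $\|u_j\|_{L^\infty}\le 2^{-j(\ell+t)}\|u\|_{C_*^{\ell,t}}$. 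Fix $\chi\in C_c^\infty(\mathbb{R}^d)$ with $\chi\equiv1$ near $0$ and $\supp\chi\subset B(0,1)$, and $\psi\in C_c^\infty(\mathbb{R}^d)$ with $\psi\equiv1$ on a neighbourhood of $\supp u$. I would then set
\[
\widetilde u(x+iy):=\psi(x)\sum_{j\ge 0}\chi(2^j\,\Im z)\,u_j(x+iy),\qquad z=x+iy .
\]
Since $\chi(0)=1$ and $\psi\equiv1$ on $\supp u$, restricting to $y=0$ gives $\widetilde u|_{\mathbb{R}^d}=\psi\sum_ju_j=\psi u=u$; and since $\chi(2^j\Im z)=0$ once $2^j|y|\ge1$ (in particular once $|y|\ge1$), $\widetilde u$ is supported in $\supp\psi\times\{|y|\le1\}$, hence compactly supported.

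For the $\bar{\partial}$-estimate, using holomorphy of the $u_j$,
\[
\bar{\partial}_z\widetilde u=\psi(x)\sum_{j\ge0}\big(\bar{\partial}_z\chi(2^j\Im z)\big)u_j(x+iy)+\big(\bar{\partial}_z\psi(x)\big)\sum_{j\ge0}\chi(2^j\Im z)u_j(x+iy).
\]
In the first sum $\bar{\partial}_{z_m}\chi(2^j\Im z)=\tfrac i2 2^j(\partial_m\chi)(2^j\Im z)$ is supported where $|y|\sim 2^{-j}$, and there $e^{C2^j|y|}\le C$, so each term is $\le C2^j\|u_j\|_{L^\infty}\le C 2^{-j(\ell+t-1)}\|u\|_{C_*^{\ell,t}}\le C|\Im z|^{\ell+t-1}\|u\|_{C_*^{\ell,t}}$; only $O(1)$ values of $j$ overlap at a given $y$, so the first sum is $O(|\Im z|^{\ell+t-1})$. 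For the second term, $\bar{\partial}_z\psi$ depends only on $x$ and is supported where $u\equiv0$; there, since $\nabla^\beta\chi(0)=0$ for $|\beta|\ge1$ and each $u_j$ is holomorphic, every $y$-derivative at $y=0$ of $v(x+iy):=\sum_j\chi(2^j\Im z)u_j(x+iy)$ equals $i^{|\alpha|}\partial_x^\alpha u(x)=0$; and the series for $v$ converges in $C^\infty$ away from $\supp u$ by the rapid decay of $u_j=K_j*u$ there, so Taylor's theorem yields $v=O(|\Im z|^\infty)$ on $\supp(\bar{\partial}_z\psi)$. Combining, $\bar{\partial}_z\widetilde u=O(|\Im z|^{\ell+t-1})$, which is $O_s(|\Im z|^{\ell+s-1})$ for all $s<t$.

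Finally, to get $\widetilde u\in C_{c,*}^{\ell,t}(\mathbb{C}^d)$ I would estimate $\|\widetilde u\|_{C_*^{\ell,t}(\mathbb{R}^{2d})}=\sup_k 2^{k(\ell+t)}\|\varphi^2_k(|D_{x,y}|)\widetilde u\|_{L^\infty}$ blockwise: the $j$-th summand $\psi(x)\chi(2^j\Im z)u_j(x+iy)$ has $L^\infty$-norm $\le C2^{-j(\ell+t)}\|u\|_{C_*^{\ell,t}}$ and is frequency-localized in $(x,y)$ to $\{|(\xi,\eta)|\lesssim 2^j\}$ with rapidly decaying tails (Bernstein in $x$, the analytic localization $|u_j(x+iy)|\le Ce^{C2^j|y|}\|u_j\|_{L^\infty}$ cut to $|y|\lesssim2^{-j}$ in $y$), whence $\|\varphi^2_k(|D_{x,y}|)[\,j\text{-th summand}\,]\|_{L^\infty}\le C_N2^{-N|j-k|}2^{-j(\ell+t)}\|u\|_{C_*^{\ell,t}}$; summing in $j$ gives $\|\varphi^2_k(|D_{x,y}|)\widetilde u\|_{L^\infty}\le C2^{-k(\ell+t)}\|u\|_{C_*^{\ell,t}}\le C2^{-k(\ell+t)}\|u\|_{C^{\ell,t}}$, using $C^{\ell,t}=C_*^{\ell,t}$ for $0<t<1$.

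I expect the main obstacle to be the interaction of the $x$-cutoff $\psi$ (forced on us by the required compact support) with the almost-analyticity bound: a priori the error $(\bar{\partial}_z\psi)v$ is only bounded, not small, and the crux is the somewhat delicate observation that $v$ vanishes to infinite order in $\Im z$ wherever $u$ does, which comes from the holomorphy of the dyadic pieces via the Cauchy--Riemann equations. The remaining ingredients---the Paley--Wiener/Bernstein bounds for the holomorphic extensions, the rapid decay of $u_j$ off $\supp u$, and the almost-orthogonality estimate yielding $\widetilde u\in C_*^{\ell,t}$---are routine harmonic analysis, but must be carried out with all constants uniform in $j$ (and, in the later application, uniform in the parameter $\theta$).
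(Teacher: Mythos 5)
Your construction is the dyadic Littlewood--Paley discretization of the paper's single Fourier-integral construction. The paper sets $\widetilde{u}(x+iy)=\psi(x)\,(2\pi)^{-d}\int e^{i\langle x+iy,\xi\rangle}\chi(\langle\xi\rangle y)\hat u(\xi)\,d\xi$, restricting each frequency $\xi$ to a strip $|y|\lesssim 1/\langle\xi\rangle$; your $\psi(x)\sum_j\chi(2^j y)u_j(x+iy)$ is the same object with the smooth cutoff $\chi(\langle\xi\rangle y)$ replaced by its dyadic counterpart $\sum_j\chi(2^j y)\varphi_j^2(\xi)$. The routes then diverge in execution. The paper deduces $\widetilde u\in C_{c,*}^{\ell,t}$ by citing a symbol-class regularity theorem of Taylor, while you estimate $\varphi_k^2(|D_{x,y}|)\widetilde u$ blockwise by almost-orthogonality; and the paper disposes of the $\bar\partial_z\psi$-term by non-stationary-phase integration by parts in $\xi$ inside the oscillatory integral (legal since $|x-x'|>0$ on the integrand's support), whereas you invoke rapid decay of $u_j=K_j*u$ off $\supp u$ together with a $C^\infty$-convergence-plus-Taylor argument to get $v=O(|\Im z|^\infty)$ there. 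For the cutoff-derivative term both proofs exploit that $|\Im z|\sim1/\langle\xi\rangle$ (resp.\ $\sim 2^{-j}$) on the support; your dyadic bookkeeping (each term $\lesssim 2^{-j(\ell+t-1)}$, with only $O(1)$ scales overlapping at a given $y$) in fact gives the marginally sharper $O(|\Im z|^{\ell+t-1})$, where the paper stops at $O_s(|\Im z|^{\ell+s-1})$, $s<t$. The paper's formulation is more compact and leans on existing symbol calculus; yours is more self-contained and elementary, but the two steps you call ``routine'' do need care: the $O(|\Im z|^\infty)$ vanishing of $v$ on $\supp\bar\partial_z\psi$ requires a quantitative Paley--Wiener decay estimate for the analytic extension of $u_j$ at a positive distance from $\supp u$ on the strip $|\Im z|\lesssim 2^{-j}$, plus uniform bounds on $\partial_y^\alpha v$ there so that Taylor's theorem with remainder applies; and the almost-orthogonality estimate needs the $y$-frequency localization of $\chi(2^jy)u_j(x+iy)$ and not merely the $x$-frequency localization of $u_j$.
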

\begin{proof}
Let $\chi \in C_c^\infty(B(0,2))$ with $\chi\equiv 1$ on $B(0,1)$ and $\psi\in C_c^\infty(\mathbb{R}^d)$ with $\psi \equiv 1$ on $\supp u$. Define
$$
\widetilde{u}(x+iy)= \frac{\psi(x)}{(2\pi)^d}\int e^{i\langle x-x'+iy,\xi\rangle} \chi(\langle \xi\rangle y)u(x')dx'd\xi. 
$$
Note that when $y=0$, $\widetilde{u}(x)=u(x)$ by the Fourier inversion formula and the support property of $\psi$.
Next, observe that for $0<t\leq 1$
$$
\sup_{y,y'}\frac{|\partial_\xi^\alpha \partial_y^\beta e^{-\langle y,\xi\rangle}\chi(y\langle \xi\rangle)- \partial_{\xi}^\beta\partial_y^\beta e^{-\langle y',\xi\rangle}\chi(y'\langle \xi\rangle)|}{|y-y'|^\gamma}\leq C_{\alpha\beta \gamma} \langle \xi\rangle^{|\beta|+\gamma-|\alpha|}.
$$
and 
$$
y\mapsto \partial_y^\beta e^{-\langle y,\xi\rangle}\chi(y\langle \xi\rangle)\in S^{|\beta|}
$$
is continuous.
Therefore, by~\cite[Theorem 13.8.3]{Ta:11}, $\widetilde{u}\in \bigcap_{s\leq \ell+t}C_{c}^{\ell+t-s}(\mathbb{R}_y^d;C_{c,*}^{s}(\mathbb{R}^d_x))$ and is compactly supported. In particular, $\widetilde{u}\in C_c^{\ell,t}.$

Finally, we compute 

$$
\begin{aligned}
\bar{\partial}_{z}\widetilde{u}(x+iy)&= \frac{1}{2}\frac{\partial \psi(x)}{(2\pi)^d}\int e^{i\langle x-x'+iy,\xi\rangle} \chi(\langle \xi\rangle y)u(x')dx'd\xi\\
&\qquad +\frac{i}{2}\frac{\psi(x)}{(2\pi)^d}\int e^{i\langle x-x'+iy,\xi\rangle} \langle \xi\rangle\partial\chi(\langle \xi\rangle y)u(x')dx'd\xi
&=:I+II
\end{aligned}
$$
Now, to estimate $I$, we observe that $|x-x'|>0$ on the support of the integrand, and hence we can integrate by parts in $\xi$. In particular, 
$$
I=\frac{1}{2}\frac{\partial \psi(x)}{(2\pi)^d}\int e^{i\langle x-x'+iy,\xi\rangle} \Big(\frac{\langle x-x',D_\xi\rangle}{|x-x'|^2}\Big)^N\chi(\langle \xi\rangle y)u(x')dx'd\xi=O(|y|^N).
$$
On the other hand, to estimate $II$, observe that $|\langle\xi\rangle y|>1$ on $\supp \partial \chi(\langle\xi\rangle y)$. Therefore,
$$
II=\frac{i|y|^{\ell-1+s}}{2}\frac{\psi(x)}{(2\pi)^d}\int e^{i\langle x-x'+iy,\xi\rangle}\langle \xi\rangle^{\ell+s}\frac{\partial\chi(\langle \xi\rangle y)}{(|y|\langle\xi\rangle)^{\ell-1+s}}u(x')dx'd\xi,
$$
and since
$$
\sup_y\Big|\partial_\xi^\beta e^{-\langle y,\xi\rangle}\langle \xi\rangle^{\ell+s}\frac{\partial\chi(\langle \xi\rangle y)}{(|y|\langle\xi\rangle)^{\ell-1+s}}\Big|\leq \langle \xi\rangle^{\ell+s-|\beta|},
$$
 for all $s<t$,
$$
|II|\leq C|y|^{\ell-1+s}.
$$
\end{proof}

We now give the analog of~\cite[Lemma 4.30]{DyZw:19}.
\begin{lem}
\label{l:holExtend}
Let $0<t<1$ and suppose $M\subset\mathbb{C}^d$ is a $C^{k,t}$ totally real submanifold. Then every $u\in C^{\ell,t}(M)$ has a $(\ell+t)$-almost analytic extension, $\widetilde{u}$, to $\mathbb{C}^d$. If $\widetilde{P}=\sum_{|\alpha|\leq k}a_\alpha \partial_z^\alpha $ is a holomorphic differential operator near $M$ , then $\widetilde{P}$ defines a unique differential operator $P_M$ whose action on $C^{\ell,t}(M)$ is given by 
$$
P_Mu=\big(\widetilde{P}(\widetilde{u})\big)|_M
$$
\end{lem}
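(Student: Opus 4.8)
The plan is to reduce everything to the flat model $M=\mathbb R^d\subset\mathbb C^d$ (of real dimension $d$, as in our application), where the preceding lemma manufactures almost analytic extensions, then to transport this to $M$ by a graph straightening, and finally to globalise with a partition of unity. Throughout I would write $m:=\min(k,\ell)$ (in the situation of interest $k=\ell$, so the loss is immaterial), and read $C^{\ell,t}(M)$ through the $C^{k,t}$ atlas of $M$.

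The engine of the argument is the following rigidity statement, proved in flat coordinates $z=x+iy$: if $w\in C^{m,t}(\mathbb C^d)$ has $w|_{\mathbb R^d}=0$ and $\bar\partial_z w=O_s(|y|^{m+s-1})$ for all $s<t$, then $\partial_x^\beta\partial_y^\gamma w|_{\mathbb R^d}=0$ for all $\beta$ and all $|\gamma|\le m$, and hence $w=O_s(|y|^{m+s})$ near $\mathbb R^d$. I would prove this by induction on $|\gamma|$: the case $\gamma=0$ is that tangential derivatives of the identically-zero function $w|_{\mathbb R^d}$ vanish; for the inductive step write $\partial_{y_j}=i\partial_{x_j}-2i\bar\partial_{z_j}$, so $\partial_y^\gamma w$ is a combination of $\partial_{x_j}\partial_y^{\gamma'}w$ (which vanishes on $\mathbb R^d$ by the inductive hypothesis, since $\partial_{x_j}$ is tangential) and $\partial_y^{\gamma'}\bar\partial_{z_j}w=O_s(|y|^{m+s-1-|\gamma'|})$, which vanishes on $\mathbb R^d$ as soon as $|\gamma'|\le m-1$; the final bound is Taylor's theorem, using that the order-$m$ derivatives of $w$ lie in $C^{0,t}$ and vanish on $\mathbb R^d$.

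For local existence near $m_0\in M$: after a $\mathbb C$-linear change of coordinates assume $T_{m_0}M=\mathbb R^d$; since $M$ is totally real of maximal dimension, $z\mapsto\operatorname{Re}z$ is a $C^{k,t}$ diffeomorphism of a neighbourhood of $m_0$ in $M$ onto a ball $U\subset\mathbb R^d$, whence $M=\{x+ig(x):x\in U\}$ with $g\in C^{k,t}(U;\mathbb R^d)$, $g(x_0)=0$, $dg(x_0)=0$. Let $\widetilde g$ be a componentwise almost analytic extension of a cutoff of $g$ from the preceding lemma, so $\bar\partial_z\widetilde g=O_s(|y|^{k+s-1})$ with $d\widetilde g$ small near $m_0$; then $\Phi(z):=z+i\widetilde g(z)$ is a $C^{k,t}$ diffeomorphism onto a neighbourhood of $m_0$ with $\Phi(\mathbb R^d\cap U)=M$ locally, $\bar\partial_z\Phi=O_s(|y|^{k+s-1})$, and $|\operatorname{Im}\Phi^{-1}(z)|\sim d(z,M)$. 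Given $u\in C^{\ell,t}(M)$, put $v:=u\circ\Phi|_{\mathbb R^d}\in C^{m,t}$, extend a compactly supported modification of $v$ to $\widetilde v$ by the preceding lemma, and define the local extension $\widetilde u_{m_0}:=\widetilde v\circ\Phi^{-1}$; the chain rule together with $\bar\partial_z\widetilde v=O_s(|y|^{m+s-1})$, the identity expressing $\bar\partial_z\Phi^{-1}$ through $\bar\partial_z\Phi$, and $|\operatorname{Im}\Phi^{-1}(z)|\sim d(z,M)$ gives $\widetilde u_{m_0}|_M=u$ and $\bar\partial_z\widetilde u_{m_0}=O_s(d(z,M)^{m+s-1})$. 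To globalise, take a locally finite cover of a neighbourhood of $M$ by such charts with local extensions $\widetilde u_j$ and a subordinate partition of unity $\{\chi_j\}\subset C_c^\infty(\mathbb C^d)$ with $\sum_j\chi_j\equiv1$ near $M$, and set $\widetilde u:=\sum_j\chi_j\widetilde u_j$. Then $\widetilde u|_M=u$ and $\bar\partial_z\widetilde u=\sum_j\chi_j\bar\partial_z\widetilde u_j+\sum_j(\bar\partial_z\chi_j)\widetilde u_j$; the first sum is $O_s(d(z,M)^{m+s-1})$, and in the second, near a point $z$ with $z\in\operatorname{supp}\chi_{j_0}$ we rewrite it as $\sum_j(\bar\partial_z\chi_j)(\widetilde u_j-\widetilde u_{j_0})$ because $\sum_j\bar\partial_z\chi_j\equiv0$ near $M$, and the rigidity statement (transported to each chart by $\Phi$) gives $\widetilde u_j-\widetilde u_{j_0}=O_s(d(z,M)^{m+s})$ on overlaps, so the second sum is also negligible; hence $\widetilde u$ is an $(\ell+t)$-almost analytic extension.

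For the operator part: if $\widetilde u_1,\widetilde u_2$ are two $(\ell+t)$-almost analytic extensions of $u$, then $w=\widetilde u_1-\widetilde u_2$ satisfies the rigidity hypotheses, so $\partial_x^\beta\partial_y^\gamma w|_M=0$ for $|\gamma|\le m$; since each $\partial_z^\alpha$ with $|\alpha|\le k\le m$ is a linear combination of such $\partial_x^\beta\partial_y^\gamma$ with $|\gamma|\le k$ and the $a_\alpha$ are holomorphic, hence bounded, near $M$, we get $(\widetilde P w)|_M=0$, so $P_M u:=(\widetilde P\widetilde u)|_M$ is well defined. That $P_M$ is a differential operator of order $\le k$ follows by expanding $\widetilde P(\widetilde v\circ\Phi^{-1})$ in a graph chart: modulo terms vanishing on $M$ it equals an explicit order-$\le k$ differential operator in $v=u\circ\Phi$ with coefficients assembled from the $a_\alpha$ and the derivatives of $\Phi$, and uniqueness of $P_M$ is immediate since its action on every $u$ is prescribed by this formula. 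The main obstacle is the second sum in the gluing step: when $M$ and $u$ are smooth a difference of extensions vanishes to infinite order and that term is automatically negligible, but at finite regularity one must quantify the order of vanishing of such a difference on $M$ --- precisely the rigidity statement --- and track the two regularity indices carefully, so that all $k$-th order holomorphic derivatives of a difference of extensions still vanish on $M$.
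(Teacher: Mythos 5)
Your proposal is correct and takes essentially the same route as the paper, whose proof of Lemma~\ref{l:holExtend} consists of the single sentence that one follows \cite[Lemma~4.30]{DyZw:19} with ``almost analytic'' replaced by ``$(\ell+t)$-almost analytic.'' You have filled in that citation: the graph straightening via the preceding extension lemma, the partition-of-unity gluing, and in particular the quantitative rigidity statement controlling the order of vanishing of a difference of extensions are exactly the ingredients needed to run the Dyatlov--Zworski argument at finite regularity, and your identification of the cross terms in the gluing as the place where this rigidity is indispensable is the right point to flag.
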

\begin{proof}
The proof follows that of~\cite[Lemma 4.30]{DyZw:19} where we replace references to almost analytic by $(\ell+t)$-almost analytic.
\end{proof}

We now recall~\cite[Lemma 4.29]{DyZw:19}.
\begin{lem}
Let $\Gamma_\theta$ be as in~\eqref{e:GammaTheta}. Then $\Gamma_\theta$ is totally real if and only if 
$$
\det (\partial_x \widetilde{\rhs}_\theta)\neq 0.
$$
In particular, if $0\leq \theta<\pi/2$, and 
\begin{equation}
\label{e:basicF}
\widetilde{\rhs}_\theta(x)=x+i\partial_xF_\theta(x):\mathbb{R}^d\to \mathbb{C}^d,
\end{equation}
where $F_\theta:\mathbb{R}^d\to \mathbb{R}$ is convex, then $\Gamma_\theta$ is totally real.
\end{lem}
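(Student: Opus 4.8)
The plan is to reduce the totally-real condition to a pointwise linear-algebra statement and then apply a standard determinant identity. Since $\Gamma_\theta=\tilde f_\theta(\mathbb R^d)$ with $\tilde f_\theta$ injective, I would first note that at a point $m=\tilde f_\theta(x)$ the tangent space is the image of the real differential $d_x\tilde f_\theta\colon\mathbb R^d\to\mathbb C^d\cong\mathbb R^{2d}$. Writing $\tilde f_\theta=g+ih$ with $g=\Re\tilde f_\theta$, $h=\Im\tilde f_\theta$ real, and identifying $z=u+iv\in\mathbb C^d$ with $(u,v)\in\mathbb R^{2d}$ so that multiplication by $i$ becomes $J=\begin{pmatrix}0&-I\\ I&0\end{pmatrix}$, we get $T_m\Gamma_\theta=\operatorname{range}\begin{pmatrix}\partial_xg\\ \partial_xh\end{pmatrix}$ and $i\,T_m\Gamma_\theta=\operatorname{range}\begin{pmatrix}-\partial_xh\\ \partial_xg\end{pmatrix}$. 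Hence $T_m\Gamma_\theta\cap i\,T_m\Gamma_\theta=\{0\}$ precisely when the $2d\times 2d$ real matrix $M_\theta(x):=\begin{pmatrix}\partial_xg&-\partial_xh\\ \partial_xh&\partial_xg\end{pmatrix}$, whose columns span $T_m\Gamma_\theta+i\,T_m\Gamma_\theta$, is invertible. Along the way I would record that $\det\partial_x\tilde f_\theta(x)\neq 0$ forces $d_x\tilde f_\theta$ to be injective (if $\partial_xg\,w=\partial_xh\,w=0$ then $(\partial_x\tilde f_\theta)w=0$), so together with injectivity of $\tilde f_\theta$ this makes $\Gamma_\theta$ a genuine $d$-dimensional submanifold and legitimises the dimension count $\dim T_m\Gamma_\theta=\dim i\,T_m\Gamma_\theta=d$ used above.

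The key point is that $M_\theta(x)$ is exactly the realification of the complex $d\times d$ matrix $\partial_x\tilde f_\theta(x)=\partial_xg(x)+i\,\partial_xh(x)$, and that for any complex matrix $A$ its realification $A_{\mathbb R}$ satisfies $\det(A_{\mathbb R})=|\det A|^2$. I would prove this by complexifying: $A_{\mathbb R}$ is $\mathbb R$-linear and commutes with $J$ (this is just $\mathbb C$-linearity), so on $\mathbb C^{2d}$ it preserves the two $\pm i$-eigenspaces of $J$, acting as $A$ on one and as $\overline A$ on the other; hence $A_{\mathbb R}$ is conjugate to $\operatorname{diag}(A,\overline A)$ and $\det(A_{\mathbb R})=(\det A)(\det\overline A)=|\det A|^2$. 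Therefore $M_\theta(x)$ is invertible iff $\det\partial_x\tilde f_\theta(x)\neq 0$, which gives the asserted equivalence at each $m\in\Gamma_\theta$. The only real work here is keeping the realification and multiplication-by-$i$ bookkeeping straight; there is no genuine obstacle, and this is the ``main'' (modest) difficulty of the lemma.

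For the ``in particular'' clause, I would simply compute $\partial_x\tilde f_\theta(x)=I+iH_\theta(x)$, where $H_\theta(x)=\big(\partial_{x_i}\partial_{x_j}F_\theta(x)\big)_{i,j}$ is the Hessian of $F_\theta$; it is real symmetric, and positive semidefinite by convexity of $F_\theta$. Diagonalising $H_\theta(x)$ in an orthonormal basis gives eigenvalues $\lambda_1,\dots,\lambda_d\geq 0$, so $I+iH_\theta(x)$ has eigenvalues $1+i\lambda_j$, all nonzero, whence $\det\partial_x\tilde f_\theta(x)=\prod_j(1+i\lambda_j)\neq 0$ for every $x$. The first part of the lemma then yields that $\Gamma_\theta$ is totally real. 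This is exactly the input needed for the scaling $\tilde f_\theta(x)=x+i\partial_xF_\theta(x)$ appearing in \eqref{e:GammaTheta}--\eqref{e:basicF} to feed into the almost-analytic-extension construction of Lemma~\ref{l:holExtend}.
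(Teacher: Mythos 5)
Your proof is correct. A point worth noting: the paper itself does not prove this lemma but simply cites \cite[Lemma 4.29]{DyZw:19}; your argument is essentially the standard one from that reference. The reduction of the totally-real condition to the invertibility of the realified matrix $M_\theta(x)$ and the identity $\det(A_{\mathbb R})=|\det A|^2$ is exactly the usual route, and your diagonalisation-of-$J$ proof of the determinant identity is a clean (and correct) way to get it; the ``in particular'' computation $\partial_x\tilde f_\theta=I+iH_\theta$ with $H_\theta\geq 0$ giving eigenvalues $1+i\lambda_j\neq0$ is exactly as expected.

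One small presentational caveat: the sentence ``$T_m\Gamma_\theta\cap iT_m\Gamma_\theta=\{0\}$ precisely when $M_\theta(x)$ is invertible'' as literally stated only holds once one knows $\dim T_m\Gamma_\theta=d$. You do flag this in the next sentence, but the logic reads a bit backwards: for the direction ``totally real $\Rightarrow$ $\det\neq 0$'' one should first observe that the totally-real condition is a statement about a bona fide $d$-dimensional submanifold whose tangent space is $\operatorname{im}(d_x\tilde f_\theta)$, which already forces $d_x\tilde f_\theta$ to have real rank $d$; then from $\dim T_m\Gamma_\theta=\dim iT_m\Gamma_\theta=d$ and the intersection being trivial the sum is all of $\mathbb R^{2d}$, so $M_\theta$ is invertible. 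For the other direction you already argue that $\det_{\mathbb C}\partial_x\tilde f_\theta\neq 0$ forces the real differential to be injective, which, together with the fact that $\tilde f_\theta$ in \eqref{e:GammaTheta} is proper (it is the identity near the origin and $e^{i\theta}$ times the identity far away), makes $\Gamma_\theta$ a genuine submanifold before one can even speak of $T_m\Gamma_\theta$. Tightening that ordering removes the only wobble; the substance is right.
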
 

Throughout the paper we work in the case~\eqref{e:basicF} as shown in the following lemma.
\begin{lem}
\label{l:tiger}
Let $\widetilde{\rhs}_\theta(x):=x+if_\theta(|x|)\frac{x}{|x|}$ with $f_\theta$ as described in~\eqref{e:fProp}. Then there is $F(x)$
satisfying
$$F''(x)\geq 0,\qquad F''(x)>0 \text{ on }|x|>R_1$$
such that $\widetilde{\rhs}_\theta(x)$ is given by~\eqref{e:basicF} with $F_\theta(x)=\tan \theta F(x)$.
\end{lem}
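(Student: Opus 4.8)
The plan is to take $F$ to be the radial antiderivative of $f$. Concretely, I would set
\begin{equation*}
F(x):=\int_0^{|x|}f(s)\,ds ,
\end{equation*}
which is well defined for all $x\in\mathbb{R}^d$, and which, since $f\equiv 0$ on $[0,R_1]$ by~\eqref{e:fProp}, satisfies $F\equiv 0$ on $B(0,R_1)$; in particular $F$ is as regular as $f$ allows ($F$ is the composition of $|x|$ with a function that is constant near $0$, so no issue arises from the non-smoothness of $|x|$ at the origin, nor from the case $d=1$). First I would verify the form~\eqref{e:basicF}: for $x\neq 0$ one computes $\partial_x F(x)=f(|x|)\tfrac{x}{|x|}$, so with $F_\theta:=\tan\theta\,F$ we get $\partial_x F_\theta(x)=\tan\theta\, f(|x|)\tfrac{x}{|x|}=f_\theta(|x|)\tfrac{x}{|x|}$, whence $\tilde f_\theta(x)=x+i\partial_x F_\theta(x)$ as required (both sides vanish at $x=0$).

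Next I would compute the Hessian of the radial function $F$. Writing $r=|x|$ and $\hat x=x/|x|$, a direct computation gives, for $x\neq 0$,
\begin{equation*}
F''(x)=f'(r)\,\hat x\hat x^{\mathsf T}+\frac{f(r)}{r}\bigl(I-\hat x\hat x^{\mathsf T}\bigr),
\end{equation*}
while $F''\equiv 0$ on $B(0,R_1)$. Thus $F''(x)$ is symmetric with eigenvalue $f'(r)$ along the radial direction $\hat x$ and eigenvalue $f(r)/r$ of multiplicity $d-1$ on the orthogonal complement of $\hat x$ (in the case $d=1$ only the radial eigenvalue is present).

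Finally I would read off the sign conditions from~\eqref{e:fProp}. Since $f'\geq 0$ and $f\geq 0$ everywhere, both eigenvalues are nonnegative, so $F''(x)\geq 0$ for all $x$; equivalently $F$ is convex, being the composition of the convex nondecreasing function $r\mapsto\int_0^r f$ with $|x|$. For $|x|>R_1$, the identity $\{f=0\}=\{f'=0\}=\{r\leq R_1\}$ forces $f'(|x|)>0$ and $f(|x|)>0$, hence $f(|x|)/|x|>0$, so both eigenvalues are strictly positive and $F''(x)>0$ in the sense of quadratic forms. This gives the claim. There is no serious obstacle here; the only points that deserve a line of care are the behaviour of $F$ and its second derivatives near the origin — handled by the fact that $F\equiv 0$ on $B(0,R_1)$ — and the degenerate case $d=1$, where the tangential eigenvalue is simply absent and the statement reduces to $f'\geq 0$ with $f'>0$ on $r>R_1$.
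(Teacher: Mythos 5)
Your proposal is correct and follows essentially the same route as the paper: define $F(x)=\int_0^{|x|} f(s)\,ds$, verify \eqref{e:basicF} by differentiating, compute the Hessian (your expression $f'(r)\hat x\hat x^{\mathsf T}+\tfrac{f(r)}{r}(I-\hat x\hat x^{\mathsf T})$ is the same as the paper's $\tfrac{f'(|x|)}{|x|^2}x\otimes x+\tfrac{f(|x|)}{|x|^3}(|x|^2 I-x\otimes x)$), and read off positivity from \eqref{e:fProp}.
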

\begin{proof}
 We follow~\cite[Example on Page 269]{DyZw:19}. If
$$
g (r)=\int_{0}^r f(s)\,ds,
$$
then $\widetilde{\rhs}_\theta(x)= x+i\tan \theta \partial_x g(|x|)$. With $F(x)= g(|x|)$, direct calculation shows that
$$
\partial_x^2F(x)= \frac{f(|x|)}{|x|^3}(|x|^2I-x\otimes x)+\frac{f'(|x|)}{|x|^2}x\otimes x
$$
which is positive semi-definite everywhere and positive definite on $|x|>R_1$.
\end{proof}

We can now define the complex-scaled operator for a black-box Hamiltonian. Suppose that $\Gamma_\theta$ is given by~\eqref{e:GammaTheta}, with $\widetilde{\rhs}_\theta \in C^{2,t}$, for some $0<t<1$, and $\widetilde{\rhs}_\theta$ satisfying~\eqref{e:basicF}, and that $P$ is a black-box Hamiltonian as in \S\ref{s:blackBox}. With $\chi \in C_c^\infty(B(0,R_1))$ equal to 1 on $B(0,R_0)$, define 
\begin{equation}
\label{e:defP}
\begin{gathered}
\mc{H}_\theta=\mc{H}_{R_0}\oplus L^2(\Gamma_\theta \setminus B(0,R_0)),\\
\mc{D}_\theta=\{u\in \mc{H}_\theta \,:\, \chi u\in \mc{D},\, (1-\chi)u\in H^2(\Gamma_\theta)\},\\
P_\theta u= P(\chi u)+ (-\Delta_\theta)((1-\chi )u),
\end{gathered}
\end{equation}
with 
$
\Delta_\theta:=\Delta_{\Gamma_\theta}
$
defined as in Lemma~\ref{l:holExtend}.
\subsection{Fredholm properties of the scaled operator}\label{sec:A2}

Throughout this section we use the following standard characterization of Fredholm operators.
\begin{lem}
\label{l:fredEst}
Let $X$ and $Y$, $Z_X$ and $Z_{Y^*}$ be Banach spaces such that $X\subset Z_X$ is compact and $Y^*\subset Z_{Y^*}$ is compact. Suppose that there is $C>0$ such that  $P:X\to Y$ satisfies
$$
\|u\|_{X}\leq C(\|Pu\|_{Y}+\|u\|_{Z_X})\qquad \text{and}\qquad 
\|u\|_{Y^*}\leq C(\|P^*u\|_{X^*}+\|u\|_{Z_{Y^*}}).
$$
Then $P:X\to Y$ is Fredholm.
\end{lem}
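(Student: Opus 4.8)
The plan is to reduce the statement to the classical semi-Fredholm criterion, applied once to $P$ and once to $P^*$, and then to combine the two conclusions.

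First I would isolate the following sub-claim, where essentially all of the work lies: if $T\colon V\to W$ is a bounded operator between Banach spaces, $V\hookrightarrow Z$ is a \emph{compact} inclusion, and $\|v\|_V\leq C(\|Tv\|_W+\|v\|_Z)$ for all $v\in V$, then $\ker T$ is finite-dimensional and $\operatorname{ran}T$ is closed. For the finite-dimensionality of $\ker T$, note that on $\ker T$ the hypothesis reads $\|v\|_V\leq C\|v\|_Z$, so the $V$- and $Z$-norms are equivalent there; hence the closed $V$-unit ball of $\ker T$ is bounded in $V$, therefore precompact in $Z$, therefore (by the norm equivalence) precompact in $V$, and Riesz's lemma forces $\dim\ker T<\infty$. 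For the closedness of $\operatorname{ran}T$, choose, using Hahn--Banach, a closed subspace $V_1\subset V$ with $V=\ker T\oplus V_1$; then $\operatorname{ran}T=T(V_1)$ and $T|_{V_1}$ is injective. I claim $\|v\|_V\leq C'\|Tv\|_W$ on $V_1$: if not, there are $v_n\in V_1$ with $\|v_n\|_V=1$ and $Tv_n\to 0$; passing to a $Z$-convergent subsequence (compactness of the inclusion) and invoking the a priori estimate shows $(v_n)$ is Cauchy in $V$, hence converges to some $v\in V_1$ with $\|v\|_V=1$ and $Tv=0$, contradicting injectivity of $T|_{V_1}$. Since $T|_{V_1}$ is bounded below, $\operatorname{ran}T=T(V_1)$ is closed.

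Applying this sub-claim to $T=P\colon X\to Y$ with $Z=Z_X$ gives $\dim\ker P<\infty$ and $\operatorname{ran}P$ closed; applying it to $T=P^*\colon Y^*\to X^*$ with $Z=Z_{Y^*}$ gives $\dim\ker P^*<\infty$ (we will not need closedness of $\operatorname{ran}P^*$). Finally I would pass to the cokernel: since $\operatorname{ran}P$ is closed, $Y/\operatorname{ran}P$ is a Banach space whose dual is isometrically isomorphic to the annihilator $(\operatorname{ran}P)^\perp\subset Y^*$, and the elementary identity $\phi\in(\operatorname{ran}P)^\perp\iff P^*\phi=0$ identifies $(\operatorname{ran}P)^\perp=\ker P^*$. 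Hence $(Y/\operatorname{ran}P)^*$ is finite-dimensional, so $Y/\operatorname{ran}P$ is itself finite-dimensional of the same dimension, i.e.\ $\dim\operatorname{coker}P=\dim\ker P^*<\infty$. Together with $\dim\ker P<\infty$ this is precisely the assertion that $P\colon X\to Y$ is Fredholm.

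The only genuinely delicate point is the closed-range half of the sub-claim — producing the closed complement $V_1$ and running the compactness--contradiction argument cleanly — while everything else is bookkeeping; alternatively, the sub-claim may simply be quoted from a standard reference on elliptic operators, e.g.\ the analogous lemma in the treatments of Hörmander or McLean.
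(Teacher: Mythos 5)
The paper states this lemma without proof, referring to it as ``the following standard characterization of Fredholm operators,'' so there is no author argument to compare against. Your proof is correct and is the standard argument: the a priori estimate plus compactness of $X\hookrightarrow Z_X$ gives $\dim\ker P<\infty$ and, via the usual bounded-below-on-a-complement contradiction, closedness of $\operatorname{ran}P$; the dual estimate plus compactness of $Y^*\hookrightarrow Z_{Y^*}$ gives $\dim\ker P^*<\infty$; and the identification $(Y/\operatorname{ran}P)^*\cong(\operatorname{ran}P)^\perp=\ker P^*$ then forces $\dim\operatorname{coker}P<\infty$. This is exactly the argument the paper is implicitly invoking, and all the steps (Riesz's lemma, existence of a closed complement for a finite-dimensional subspace, the quotient-dual isomorphism) are supplied cleanly.
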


 It is easy to check that $\Delta_\theta$ is an elliptic second order differential operator given by 
\beq\label{eq:Delta_theta}
\Delta_\theta u=((I+iF_\theta''(x))^{-1}\partial_x)\cdot ((I+iF_\theta''(x))^{-1}\partial_x u),\qquad u\in C^{\ell,t}(\Gamma_\theta);
\eeq
see \cite[Equation 4.5.13 and Theorem 4.32]{DyZw:19}.
\begin{lem}
\label{l:intByParts}
For $u\in H^1(\mathbb{R}^d)$, and all $\e>0$
\begin{equation}
\label{e:basicFred0}
\begin{gathered}
\Im \langle -\Delta_\theta u, u\rangle \leq \e\|u\|^2_{H^1}+C\e^{-1}\|u\|^2_{L^2},\qquad \|u\|_{H^1}^2\leq C\big|\langle -\Delta_\theta u,u\rangle\big|+C\|u\|_{L^2}^2.
\end{gathered}
\end{equation}
Furthermore, 
\begin{equation}
\label{e:basicFred}
\|u\|^2_{H^1}\leq C\big(\big|\Re\langle -\Delta_\theta u,u\rangle\big|-\Im \langle -\Delta_\theta u,u\rangle+\|u\|_{L^2}^2\big).
\end{equation}
\end{lem}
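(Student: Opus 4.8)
The plan is to prove \eqref{e:basicFred0} and \eqref{e:basicFred} by direct integration by parts using the explicit form \eqref{eq:Delta_theta} of $\Delta_\theta$, and then extracting real and imaginary parts of the resulting quadratic form. First I would write, for $u\in H^1(\mathbb{R}^d)$ (or $C_c^\infty$, then pass to the limit by density, noting that the coefficients $(I+iF''_\theta(x))^{-1}$ are bounded and continuous since $F_\theta\in C^{2,t}$ and $F''_\theta\geq 0$),
\[
\langle -\Delta_\theta u,u\rangle = \int_{\mathbb{R}^d} \big\langle (I+iF''_\theta(x))^{-1}\partial_x u,\; \overline{(I+iF''_\theta(x))^{-1}\partial_x u}\,\big\rangle\, dx,
\]
wait — one must be careful: the conjugate transpose of $(I+iF''_\theta)^{-1}$ is $(I-iF''_\theta)^{-1}$, not $(I+iF''_\theta)^{-1}$, so after integrating by parts the integrand is $\big\langle (I-iF''_\theta(x))^{-1}(I+iF''_\theta(x))^{-1}\partial_x u,\partial_x u\big\rangle$. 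Using the symmetry of $F''_\theta$, set $G_\theta(x):=(I+(F''_\theta(x))^2)^{-1}$ and $w:=\partial_x u$; then $(I-iF''_\theta)^{-1}(I+iF''_\theta)^{-1} = G_\theta$ is symmetric positive definite with eigenvalues in $(0,1]$, so one computes (this is exactly the structure recorded in \eqref{e:absVal} used later in the paper)
\[
\Re\langle -\Delta_\theta u,u\rangle = \int \langle G_\theta w,w\rangle\,dx,\qquad \Im\langle -\Delta_\theta u,u\rangle = -2\int \langle F''_\theta(x) G_\theta(x) w, G_\theta(x) w\rangle\, dx \leq 0,
\]
modulo the commutator terms coming from differentiating the coefficient matrix, which are lower order: they are of the form $\hbar^0$ times $\langle (\text{bounded } C^{0,t})\, w, u\rangle$ and hence bounded by $C\|u\|_{H^1}\|u\|_{L^2}$.

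The key structural fact to exploit is that $G_\theta(x)$ is uniformly positive definite: there is $c>0$ (depending on $\sup_x\|F''_\theta(x)\|$, hence on $\tan\theta$) with $G_\theta(x)\geq c\,I$, so $\Re\langle-\Delta_\theta u,u\rangle\geq c\|\partial_x u\|_{L^2}^2 - C\|u\|_{H^1}\|u\|_{L^2}$. From this, the second inequality in \eqref{e:basicFred0} follows by absorbing the cross term with Cauchy–Schwarz ($C\|u\|_{H^1}\|u\|_{L^2}\leq \tfrac12\|\partial_x u\|^2 + C'\|u\|^2_{L^2}$) and noting $\|u\|_{H^1}^2\sim\|\partial_x u\|^2_{L^2}+\|u\|_{L^2}^2\leq C|\langle-\Delta_\theta u,u\rangle|+C\|u\|^2_{L^2}$. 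For the first inequality, since $|\Im\langle-\Delta_\theta u,u\rangle|$ is controlled by the same cross/commutator terms (the genuinely quadratic-in-$w$ imaginary part has a favourable sign), one gets $|\Im\langle-\Delta_\theta u,u\rangle|\leq C\|u\|_{H^1}\|u\|_{L^2}\leq \e\|u\|_{H^1}^2 + C\e^{-1}\|u\|_{L^2}^2$ by Young's inequality.

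For \eqref{e:basicFred} the point is that $-\Im\langle-\Delta_\theta u,u\rangle\geq 0$, so adding it is harmless, and it can be combined with $|\Re\langle -\Delta_\theta u,u\rangle|$: we have $\Re\langle-\Delta_\theta u,u\rangle+\big(-\Im\langle -\Delta_\theta u,u\rangle\big)\geq c\|\partial_x u\|^2_{L^2}-C\|u\|_{H^1}\|u\|_{L^2}$, and then absorb the cross term exactly as before to obtain $\|u\|_{H^1}^2\leq C(|\Re\langle-\Delta_\theta u,u\rangle| - \Im\langle-\Delta_\theta u,u\rangle + \|u\|_{L^2}^2)$. The main obstacle — and the only thing requiring genuine care rather than bookkeeping — is the handling of the terms produced when the integration by parts moves a derivative off $(I+iF''_\theta(x))^{-1}$: because $F_\theta$ is only $C^{2,t}$, $\partial_x G_\theta$ is merely $C^{0,t}$ (not smooth), so one cannot invoke pseudodifferential calculus and must instead estimate these terms directly as $L^2$-bounded multiplication operators acting between $H^1$ and $L^2$; this is where the regularity hypothesis $f_\theta\in C^{2,\alpha}$ (equivalently $F_\theta\in C^{2,t}$, via Lemma~\ref{l:tiger}) is used, and it is exactly the phenomenon that forces the $C^{3}$ assumption elsewhere in the paper. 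All constants are uniform for $\theta$ in a compact subinterval of $(0,\pi/2)$ because they depend only on $\sup_x\|F''_\theta(x)\|$, which is bounded by $C\tan\theta$.
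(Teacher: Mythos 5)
Your proposal has a genuine gap, rooted in a sign error that hides the main difficulty. With $v:=G_\theta w = (I+(F''_\theta)^2)^{-1}\partial_x u$, the paper's identity \eqref{e:absVal} gives
\[
\langle (I+iF''_\theta)^{-1}\partial_x u, (I-iF''_\theta)^{-1}\partial_x u\rangle = \|v\|^2_{L^2} - \|F''_\theta v\|^2_{L^2} - 2i\langle F''_\theta v, v\rangle,
\]
so, modulo the first-order commutator terms, $\Re\langle-\Delta_\theta u,u\rangle = \|v\|^2_{L^2}-\|F''_\theta v\|^2_{L^2}$. Your formula $\int\langle G_\theta w, w\rangle\,dx$ instead equals $\langle v,(I+(F''_\theta)^2)v\rangle = \|v\|^2_{L^2}+\|F''_\theta v\|^2_{L^2}$; the operator produced by pairing $(I+iF''_\theta)^{-1}\partial_x u$ against $(I-iF''_\theta)^{-1}\partial_x u$ in the sesquilinear form is $(I+iF''_\theta)^{-2}$, not $(I-iF''_\theta)^{-1}(I+iF''_\theta)^{-1}=G_\theta$. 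The sign matters: whenever some eigenvalue of $F''_\theta(x)$ exceeds $1$ — which happens for all $\theta>\pi/4$, since by Lemma~\ref{l:tiger} the eigenvalues of $F''_\theta$ are of size $\tan\theta$ — the correct real part is negative on the corresponding eigendirection, so your claim that $\Re\langle-\Delta_\theta u,u\rangle\geq c\|\partial_x u\|^2_{L^2}-C\|u\|_{H^1}\|u\|_{L^2}$ is false and the second inequality of \eqref{e:basicFred0} does not follow from your argument.

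This non-coercivity is exactly what the paper's proof has to overcome. It proves $\|u\|^2_{H^1}\leq C|\langle-\Delta_\theta u,u\rangle|+C\|u\|^2_{L^2}$ by a dichotomy that your proposal does not contain: either $\langle F''_\theta v,v\rangle\geq 2\e\|v\|^2_{L^2}$, in which case the imaginary part already dominates $\|v\|^2_{L^2}$; or $\langle F''_\theta v,v\rangle<\e\|v\|^2_{L^2}$, in which case the spectral theorem for the positive semidefinite multiplication operator $F''_\theta$ (splitting along the spectral projector onto spectrum $\leq\e^{1/3}$, see \eqref{e:posSemi}) yields $\|F''_\theta v\|^2_{L^2}\leq C\e^{2/3}\|v\|^2_{L^2}$, and only then is $|\Re\langle-\Delta_\theta u,u\rangle|$ bounded below by $(1-C\e^{2/3})\|v\|^2_{L^2}$. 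Your derivation of \eqref{e:basicFred} inherits the same defect: pointwise in an eigendirection of $F''_\theta$ with eigenvalue $\mu$, $\Re\langle-\Delta_\theta u,u\rangle-\Im\langle-\Delta_\theta u,u\rangle$ equals $(1-\mu^2+2\mu)|v|^2$, which is negative for $\mu>1+\sqrt{2}$, so ``adding $-\Im$'' to $\Re$ does not produce coercivity either. Note that \eqref{e:basicFred} involves $|\Re\langle-\Delta_\theta u,u\rangle|$ with the absolute value, and the paper obtains it as a consequence of the two inequalities in \eqref{e:basicFred0}, not by a direct lower bound on $\Re-\Im$.
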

\begin{proof}
By the definition of the operator $-\Delta_\theta$ \eqref{eq:Delta_theta} acting on $H^1$,
\begin{align}\label{eq:A41}
\langle -\Delta_\theta u,u\rangle_{_{H^{-1},H^{1}}}&= \langle (I+iF_\theta''(x))^{-1}\partial_x u,(I-iF_\theta''(x))^{-1}\partial_x u\rangle +\langle A(x)\partial_x u,u\rangle 
\end{align}
where $A(x)\in C^{0,\alpha}$. First, note that
$$
\big|\langle A(x)\partial_x u,u\rangle\big|\leq C\|u\|_{H^1}\|u\|_{L^2}.
$$
Next, put $v= (I +F_\theta''(x)^2)^{-1}\partial_x u$.  (Note that the inverse exists and is bounded since $F_\theta''(x)$ is real, symmetric, and tends to $\tan\theta \,I$.) Then, 
\begin{equation}
\label{e:absVal}
\begin{aligned}
\langle (I+iF_\theta''(x))^{-1}\partial_x u,(I-iF_\theta''(x))^{-1}\partial_x u\rangle&=\langle (I-iF_\theta''(x))v,(I+iF_\theta''(x))v\rangle\\
&=\langle (I-iF_\theta''(x))^2v,v\rangle\\
&=\|v\|_{L^2}^2-\|F_\theta''(x) v\|_{L^2}^2 -2i\langle F_\theta''(x)v,v\rangle.
\end{aligned}
\end{equation}
Therefore, since $F_\theta''$ is positive semi-definite, the first inequality in~\eqref{e:basicFred0} holds.

To obtain the second inequality in~\eqref{e:basicFred0}, observe that if 
$$
-\Im \big\langle (I+iF_\theta''(x))^{-1}\partial_x u,(I-iF_\theta''(x))^{-1}\partial_x u\big\rangle=2\langle F_\theta''(x)v,v\rangle \geq 2\e\|v\|^2,
$$
then~\eqref{e:basicFred0} holds. On the other hand, since $F_\theta''$ is positive semi-definite,
\begin{equation}
\label{e:posSemi}
\langle F_\theta''(x)v,v\rangle \leq \e\|v\|_{L^2}^2 \qquad \text{ implies that } \qquad  \|F_\theta''(x)v\|^2_{L^2}\leq C\e^{\frac{2}{3}}\|v\|^2_{L^2}.
\end{equation}
Indeed, the multiplication operator $F_\theta'':L^2(\mathbb{R}^d;\mathbb{C}^d)\to L^2(\mathbb{R}^d;\mathbb{C}^d)$ is positive semidefinite and self adjoint. Therefore, letting $\Pi_\e$ be the spectral projector onto the spectrum $\leq \e^{\frac{1}{3}}$, we have 
\begin{align*}
\e \|v\|^2\geq \langle F_\theta'' v,v\rangle& = \langle F_\theta'' \Pi_\e v,\Pi_\e v\rangle +\langle F_\theta'' (I-\Pi_\e)v,(I-\Pi_\e )v\rangle \\
&\geq \e^{\frac{1}{3}}\|(I-\Pi_\e)v\|_{L^2}^2.
\end{align*}
Therefore, 
$$
\|F_\theta''v\|_{L^2}^2=\|F_\theta'' \Pi_\e v\|_{L^2}^2+\|F_\theta''(I-\Pi_\e)v\|_{L^2}^2\leq C\e^{\frac{2}{3}}\|v\|_{L^2}^2.
$$
Thus, using~\eqref{e:absVal} together with~\eqref{e:posSemi} with $\e>0$ small enough, we have
$$
\big|\big\langle (I+iF_\theta''(x))^{-1}\partial_x u,(I-iF_\theta''(x))^{-1}\partial_x u\big\rangle\big|\geq c\|v\|_{L^2}^2\geq c\|\partial_x u\|_{L^2}^2,
$$
and~\eqref{e:basicFred0} follows.

To obtain~\eqref{e:basicFred}, we use the second equation in \eqref{e:basicFred0} to obtain that
\begin{align*}
\|u\|_{H^1}^2&\leq C|\Re\langle -\Delta_\theta u,u\rangle|+C|\Im \langle -\Delta_\theta u,u\rangle|+C\|u\|_{L^2}^2\\
&\leq  C|\Re\langle -\Delta_\theta u,u\rangle|+C\big|\Im \langle -\Delta_\theta u,u\rangle-\e\|u\|_{H^1}^2-C\e^{-1}\|u\|_{L^2}^2\big|+C(1+\e^{-1})\|u\|_{L^2}^2+C\e \|u\|_{H^1}^2\\
&=C|\Re\langle -\Delta_\theta u,u\rangle|- C\Im \langle -\Delta_\theta u,u\rangle+C\e\|u\|_{H^1}^2+C^2\e^{-1}\|u\|_{L^2}^2+C(1+\e^{-1})\|u\|_{L^2}^2+C\e \|u\|_{H^1}^2,
\end{align*}
and $\e>0$ small enough.
\end{proof}

\begin{lem}
\label{l:Fred1}
The operator 
$$
-\Delta_\theta-\lambda^2:H^1(\mathbb{R}^d)\to H^{-1}(\mathbb{R}^d)
$$
is an analytic family of Fredholm operators with index zero in $\Im (e^{i\theta} \lambda)>0$. Furthermore,
$$
R_{0,\theta}(\lambda):=(-\Delta_\theta-\lambda^2)^{-1}:H^{-1}(\mathbb{R}^d)\to H^1(\mathbb{R}^d)
$$
is a meromorphic family of operators with finite rank poles and there is $t_0>0$ such that for $t>t_0$,
$$
\|R_{0,\theta}(e^{i\frac{\pi}{4}}t)\|_{H^{-1}\to L^2}\leq \frac{C}{t}.
$$
\end{lem}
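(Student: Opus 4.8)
The plan is to prove the three assertions in order: first that $-\Delta_\theta-\lambda^2\colon H^1(\mathbb{R}^d)\to H^{-1}(\mathbb{R}^d)$ is Fredholm of index $0$ on the connected set $\{\Im(e^{i\theta}\lambda)>0\}$; then, by analytic Fredholm theory, that $R_{0,\theta}(\lambda)$ is meromorphic there with finite-rank poles; and finally the quantitative bound at $\lambda=e^{i\pi/4}t$. The first two steps follow the scheme of \cite[\S4.5, esp.\ Theorem 4.32]{DyZw:19}, using the general $C^{2,\alpha}$ scaling theory developed earlier in Appendix~\ref{a:scale}, whereas the third is the genuinely new quantitative input and is a short energy estimate built from Lemma~\ref{l:intByParts}.

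For the Fredholm property I would apply the criterion Lemma~\ref{l:fredEst} with $X=H^1(\mathbb{R}^d)$, $Y=H^{-1}(\mathbb{R}^d)$. The required a priori estimate $\|u\|_{H^1}\le C\big(\|(-\Delta_\theta-\lambda^2)u\|_{H^{-1}}+\|\chi u\|_{L^2}\big)$, with $\chi\in C_c^\infty(\mathbb{R}^d)$, is obtained by gluing a near-infinity piece to an interior piece: outside $B(0,R_2)$ one has $-\Delta_\theta-\lambda^2=e^{-2i\theta}\bigl(-\Delta-(e^{i\theta}\lambda)^2\bigr)$, and since $\Im(e^{i\theta}\lambda)>0$ forces $(e^{i\theta}\lambda)^2\notin[0,\infty)$, the constant-coefficient operator $-\Delta-(e^{i\theta}\lambda)^2$ is boundedly invertible $H^{-1}(\mathbb{R}^d)\to H^1(\mathbb{R}^d)$, so multiplying $u$ by a cutoff supported away from $B(0,R_2)$ and commuting that cutoff through (the commutator is first order with compactly supported coefficients, hence bounded $L^2\to H^{-1}$) controls the part of $u$ near infinity; on a compact set the ellipticity of $-\Delta_\theta$ recorded in \eqref{eq:Delta_theta} gives the standard interior estimate, the cutting-off errors again being compactly supported and of lower order. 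Multiplication by a $C_c^\infty$ function is compact $H^1(\mathbb{R}^d)\to L^2(\mathbb{R}^d)$ by Rellich, so one may take $Z_X$ to be $L^2$ with a decaying weight, into which $H^1(\mathbb{R}^d)$ embeds compactly; running the identical argument for the $L^2$-adjoint of $-\Delta_\theta-\lambda^2$ — which, modulo a first-order term with bounded continuous coefficients, is $-\Delta_{-\theta}=\overline{-\Delta_\theta}$, and for which one uses the complex-conjugate form of \eqref{e:basicFred} (apply \eqref{e:basicFred} to $\bar u$) — supplies the second estimate needed by Lemma~\ref{l:fredEst}. Thus $-\Delta_\theta-\lambda^2$ is Fredholm; it is analytic in $\lambda$, so its index is constant on $\{\Im(e^{i\theta}\lambda)>0\}$, and the bijectivity at $\lambda=e^{i\pi/4}t$ established in the next step (independently of any index information) pins this index to $0$. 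Analytic Fredholm theory then gives the meromorphy of $R_{0,\theta}$ with finite-rank poles.

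For the quantitative bound, take $\lambda=e^{i\pi/4}t$, so $\lambda^2=it^2$, and let $u\in H^1$ solve $(-\Delta_\theta-it^2)u=f$ with $f\in H^{-1}$. Pairing with $u$ and using \eqref{eq:A41}–\eqref{e:absVal} one writes $\langle-\Delta_\theta u,u\rangle=\|v\|_{L^2}^2-\|F_\theta''(x)v\|_{L^2}^2-2i\langle F_\theta''(x)v,v\rangle+\langle A(x)\partial_x u,u\rangle$ with $v:=(I+F_\theta''(x)^2)^{-1}\partial_x u$ and $A\in C^{0,\alpha}$. Taking imaginary parts of $\langle-\Delta_\theta u,u\rangle-it^2\|u\|_{L^2}^2=\langle f,u\rangle$, and using $F_\theta''\ge0$ together with $|\langle A\partial_x u,u\rangle|\le C\|u\|_{H^1}\|u\|_{L^2}$, gives $t^2\|u\|_{L^2}^2\le C\|u\|_{H^1}\|u\|_{L^2}+\|f\|_{H^{-1}}\|u\|_{H^1}$. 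Taking real parts gives $\Re\langle-\Delta_\theta u,u\rangle=\Re\langle f,u\rangle$, and since $-\Im\langle-\Delta_\theta u,u\rangle=-\Im\langle f,u\rangle-t^2\|u\|_{L^2}^2\le\|f\|_{H^{-1}}\|u\|_{H^1}$, estimate \eqref{e:basicFred} yields $\|u\|_{H^1}^2\le C\bigl(\|f\|_{H^{-1}}\|u\|_{H^1}+\|u\|_{L^2}^2\bigr)$, hence $\|u\|_{H^1}\le C(\|f\|_{H^{-1}}+\|u\|_{L^2})$. Substituting this into the first inequality and absorbing the resulting $\|u\|_{L^2}^2$ term into $t^2\|u\|_{L^2}^2$ for $t$ larger than some $t_0$ gives $\|u\|_{L^2}\le (C/t)\|f\|_{H^{-1}}$; the a priori estimates also show that $-\Delta_\theta-it^2$ has closed range and trivial kernel, and the conjugate version applied to the adjoint shows the adjoint has trivial kernel, so $-\Delta_\theta-it^2$ is bijective for $t>t_0$ and the claimed bound on $R_{0,\theta}(e^{i\pi/4}t)$ follows.

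The hard part is not the energy estimate of the last step but the Fredholm/index-$0$ argument at the regularity $f_\theta\in C^{2,\alpha}$: one must verify that the interior elliptic estimate and the comparison with the invertible operator at infinity tolerate the $C^{1,\alpha}$ and $C^{0,\alpha}$ coefficients of $-\Delta_\theta$ appearing in \eqref{eq:Delta_theta}, and identify the $L^2$-adjoint precisely enough to run the matching a priori estimate. The smooth-coefficient versions of all of this are in \cite[\S4.5]{DyZw:19}, and the low-regularity modifications are exactly the subject of Appendix~\ref{a:scale}; accordingly the present lemma should be read as the specialization of that general theory to the free operator $-\Delta_\theta$, with the quantitative bound on the diagonal ray added on top.
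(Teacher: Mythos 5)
Your proposal is correct and follows essentially the same route as the paper: the Fredholm property is obtained by gluing the invertible constant‑coefficient operator $e^{-2i\theta}(-\Delta-(e^{i\theta}\lambda)^2)$ near infinity to the interior estimate of Lemma~\ref{l:intByParts}, the dual estimate supplies the second hypothesis of Lemma~\ref{l:fredEst}, and the index‑$0$/invertibility claim is settled by the coercivity coming from \eqref{e:absVal} and \eqref{e:basicFred} at $\lambda=e^{i\pi/4}t$. The only (cosmetic) deviation is that you take real and imaginary parts of $\langle(-\Delta_\theta-it^2)u,u\rangle=\langle f,u\rangle$ separately before combining, whereas the paper bounds $|\langle(-\Delta_\theta-\lambda^2)u,u\rangle|$ from below in a single step via \eqref{e:L2Bound}; the two bookkeepings are equivalent and rely on the same ingredients.
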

\begin{proof}
First note that 
$$
-e^{-2i\theta}\Delta-\lambda^2= e^{-2i\theta}(-\Delta -(\lambda e^{i\theta})^2):H^{s}(\mathbb{R}^d)\to H^{s-2}(\mathbb{R}^d)
$$
is invertible for $\Im (\lambda e^{i\theta})\neq 0$ since $-\Delta:L^2\to L^2$ is self adjoint. 

Suppose that 
$$
(-\Delta_\theta-\lambda^2)u=f. 
$$
Let $\chi \in C_c^\infty(\mathbb{R}^d)$ with $\chi\equiv 1$ on $B(0,R_2)$. 
Then,
$$
(1-\chi)f=(1-\chi) (e^{-2i\theta}\Delta-\lambda^2)u=(e^{-2i\theta}\Delta-\lambda^2)(1-\chi)u +e^{-2i\theta}[-\Delta,\chi]u.
$$
Therefore, 
\begin{equation}
\label{e:outside}
\|(1-\chi)u\|_{H^1}\leq C\big(\|(1-\chi)\rhs\|_{H^{-1}}+\|u\|_{L^2(\supp \partial \chi)}\big).
\end{equation}
On the other hand, by Lemma~\ref{l:intByParts} for $\psi \in C_c^\infty(\mathbb{R}^d)$,  and $\psi_1\in C_c^\infty(\mathbb{R}^d)$ with $\psi_1\equiv 1$ on $\supp\psi$. 
\begin{equation}
\label{e:inside}
\| \psi u\|_{H^1}\leq C\big(\|\psi_1 \rhs\|_{H^{-1}}+\|\psi_1 u\|_{L^2}\big).
\end{equation}
In particular, combining~\eqref{e:outside} and~\eqref{e:inside}, there is $\psi_1\in C_c^\infty$ such that 
\beq\label{eq:kernelcontrol}
\|u\|_{H^1}\leq C\big(\|(-\Delta_\theta-\lambda^2)u\|_{H^{-1}}+\|\psi _1u\|_{L^2}\big).
\eeq

Now, since 
$$
(-e^{2i\theta}\Delta-\bar{\lambda}^2):H^1(\mathbb{R}^d)\to H^{-1}(\mathbb{R}^d)
$$
is invertible, an identical argument shows that 
$$
\|u\|_{H^1}\leq C\big(\|(-\Delta_\theta-\lambda^2)^*u\|_{H^{-1}}+\|\psi u\|_{L^2}\big).
$$
Lemma~\ref{l:fredEst} now shows that $(-\Delta_\theta-\lambda^2):H^1\to H^{-1}$ is Fredholm for $\Im (e^{i\theta}\lambda)>0$.

Finally, we check the index of this operator. For $u\in H^1$, $\lambda=e^{\frac{i\pi}{4}}t$, and $c=1/\sqrt{2}$,
by~\eqref{e:basicFred},
\begin{equation}
\label{e:L2Bound}
\begin{aligned}
\big|\langle (-\Delta_\theta-\lambda^2)u,u\rangle\big|&\geq c\big| \Re\langle -\Delta_\theta u,u\rangle\big| +c\big|(\Im \langle -\Delta_\theta u,u\rangle-t^2\|u\|^2\big|\\
&\geq  c\big| \Re\langle -\Delta_\theta u,u\rangle\big| -c\Im \langle -\Delta_\theta u,u\rangle+ ct^2\|u\|^2  \\
&\geq c\|u\|_{H^1}^2+ (ct^2-C)\|u\|_{L^2}^2.
\end{aligned}
\end{equation}
Thus,
$$
 c\|u\|_{H^1}^2+ (ct^2-C)\|u\|_{L^2}^2\leq \tfrac{1}{2\e}\|(-\Delta-\lambda^2)u\|^2_{H^{-1}}+\tfrac{\e}{2}\|u\|^2_{H^{1}}
$$
and hence, choosing $\e>0$ small enough,
$$
\sqrt{(ct^2-C)}\|u\|_{L^2}+c\|u\|_{H^1}\leq \|(-\Delta_\theta-\lambda^2)u\|_{H^{-1}}. 
$$
Similarly,
$$
\sqrt{(ct^2-C)}\|u\|_{L^2}+c\|u\|_{H^1}\leq \|(-\Delta_\theta-\lambda^2)^*u\|_{H^{-1}},
$$
and hence, for $t$ sufficiently large, $(-\Delta_\theta - (e^{\frac{i\pi}{4}}t)^2):H^1\to H^{-1}$ is invertible. 
\end{proof}

\begin{lem}
\label{l:freeScaledEstimate}
For $\Im (e^{i\theta}\lambda)>0$, $R_{0,\theta}(\lambda):L^2(\mathbb{R}^d)\to H^2(\mathbb{R}^d)$ and there are $C>0$ and $t_0>0$ such that for $t>t_0$, and $\ell=0,1,2$,
$$
\|R_{0,\theta}(e^{i\frac{\pi}{4}}t)\|_{L^2\to H^\ell }\leq C t^{\ell-2}.
$$
\end{lem}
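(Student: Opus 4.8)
The plan is to bootstrap off Lemma~\ref{l:Fred1}, whose proof already contains the coercivity estimate~\eqref{e:L2Bound} and shows that $-\Delta_\theta-(e^{i\pi/4}t)^2:H^1\to H^{-1}$ is invertible once $t>t_0$ for $t_0$ large. Fix $\theta\in(0,\pi/2)$, write $\lambda=e^{i\pi/4}t$ (so $\lambda^2=it^2$), let $f\in L^2$ and set $u:=R_{0,\theta}(\lambda)f\in H^1$. Pairing the identity $(-\Delta_\theta-\lambda^2)u=f$ against $u$ and applying~\eqref{e:L2Bound}, which gives $|\langle(-\Delta_\theta-\lambda^2)u,u\rangle|\ge c\|u\|_{H^1}^2+(ct^2-C)\|u\|_{L^2}^2$, we obtain $\|f\|_{L^2}\|u\|_{L^2}\ge(ct^2-C)\|u\|_{L^2}^2$ and hence $\|u\|_{L^2}\le Ct^{-2}\|f\|_{L^2}$ for $t>t_0$; this is the case $\ell=0$. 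Substituting this back into the same inequality yields $c\|u\|_{H^1}^2\le\|f\|_{L^2}\|u\|_{L^2}\le Ct^{-2}\|f\|_{L^2}^2$, i.e.\ $\|u\|_{H^1}\le Ct^{-1}\|f\|_{L^2}$, which is the case $\ell=1$.

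For $\ell=2$ I would rewrite the equation as $-\Delta_\theta u=f+\lambda^2 u$; by the $\ell=0$ bound and $|\lambda^2|=t^2$ the right-hand side is bounded in $L^2$ by $C\|f\|_{L^2}$, uniformly in $t>t_0$. It then remains to invoke the interior elliptic-regularity estimate $\|u\|_{H^2}\le C(\|\Delta_\theta u\|_{L^2}+\|u\|_{L^2})$. This is legitimate because, by~\eqref{eq:Delta_theta} and Lemma~\ref{l:tiger}, $-\Delta_\theta$ is uniformly elliptic, it equals the constant-coefficient operator $e^{-2i\theta}\Delta$ outside $B(0,R_2)$, and on $\overline{B(0,R_2)}$ its leading coefficients are $C^{1,\alpha}$ (hence Lipschitz, being smooth functions of $F_\theta''\in C^{1,\alpha}$) while its first-order part $A(x)\partial_x$ from~\eqref{eq:A41} has bounded coefficient $A\in C^{0,\alpha}$; a standard difference-quotient argument and a finite covering then give the $H^2$ bound, with the first-order and commutator contributions absorbed using the $H^1$ bound of the previous paragraph. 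Combining gives $\|u\|_{H^2}\le C\|f\|_{L^2}=Ct^{\ell-2}\|f\|_{L^2}$ for $\ell=2$. The qualitative assertion that $R_{0,\theta}(\lambda):L^2\to H^2$ for every $\lambda$ with $\Im(e^{i\theta}\lambda)>0$ away from the poles of $R_{0,\theta}$ follows in the same way: $f\in L^2\subset H^{-1}$ gives $R_{0,\theta}(\lambda)f\in H^1$ by Lemma~\ref{l:Fred1}, hence $-\Delta_\theta(R_{0,\theta}(\lambda)f)=f+\lambda^2R_{0,\theta}(\lambda)f\in L^2$, and one application of elliptic regularity puts it in $H^2$.

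Given Lemma~\ref{l:Fred1}, everything here is routine; the only place calling for a little care is the interior $H^2$ estimate for $-\Delta_\theta$, where one must verify that the rough first-order term $A(x)\partial_x$, with $A$ merely $C^{0,\alpha}$, does not obstruct the difference-quotient argument. It does not: $A$ is bounded, the leading coefficients are Lipschitz, and the resulting lower-order term is controlled by the $H^1$ bound already obtained. I do not expect any genuine obstacle beyond keeping track of the powers of $t$ through this bootstrap.
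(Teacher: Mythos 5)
Your proof is correct and follows essentially the same route as the paper: both use the coercivity estimate \eqref{e:L2Bound} from the proof of Lemma~\ref{l:Fred1} to obtain the $\ell=0$ bound, and both invoke $H^2$ elliptic regularity for $-\Delta_\theta$ to obtain the $\ell=2$ bound. The only cosmetic difference is that you obtain the $\ell=1$ bound directly from the coercivity estimate (by substituting the $L^2$ bound back into $c\|u\|_{H^1}^2\le\|f\|_{L^2}\|u\|_{L^2}$), while the paper gets it by interpolating between $\ell=0$ and $\ell=2$; the two routes are interchangeable here.
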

\begin{proof}
Suppose $f\in L^2$. Then, $R_{0,\theta}(\lambda)f\in H^1(\mathbb{R}^d)$ and $(-\Delta_\theta-\lambda^2)R_{0,\theta}(\lambda)f=f$, and using~\eqref{e:L2Bound}, we obtain
$$
\|R_{0,\theta}(\lambda)\|_{L^2\to L^2}\leq Ct^{-2}.
$$
By $H^2$ elliptic regularity (see, e.g.,~\cite[Section 6.3, Theorem 1]{Eva}), for $u\in H^1$,
\begin{equation}
\label{e:elliptic}
\|u\|_{H^2}\leq C(\|(-\Delta_\theta-\lambda^2)u\|_{L^2}+\| u\|_{L^2}).
\end{equation}
Therefore $R_{0,\theta}(\lambda):L^2\to H^2$ and
$$
\|R_{0,\theta}(\lambda)\|_{L^2\to H^2}\leq C;
$$
the bound $L^2\to H^1$ follows by interpolation.
\end{proof}

\begin{lem}
\label{l:merInverse}
Suppose that $P(\lambda):X\to Y$ is an analytic family of operators in $\Omega\subset \mathbb{C}$ and there are $Q(\lambda):Y\to X$ and $S(\lambda):Y\to X$ meromorphic families of operators with finite rank poles such that 
$$
P(\lambda)Q(\lambda)=I+K_1(\lambda),\qquad S(\lambda)P(\lambda)=I+K_2(\lambda)
$$
with $K_1:Y\to Y$ compact and $K_2:X\to X$ compact. Then, $P(\lambda)$ is Fredholm.
\end{lem}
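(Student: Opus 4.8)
The plan is to reduce everything to Atkinson's theorem: a bounded operator $T:X\to Y$ is Fredholm as soon as it has parametrices on both sides modulo compacts, i.e.\ there are bounded $A,B:Y\to X$ with $AT-\Id_X$ and $TB-\Id_Y$ compact. So I will fix an arbitrary $\lambda_0\in\Omega$ and produce such $A$ and $B$ for $T=P(\lambda_0)$; since $\lambda_0$ is arbitrary, this shows $P(\lambda)$ is Fredholm for every $\lambda\in\Omega$.

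If $\lambda_0$ is a pole of neither $Q$ nor $S$, there is nothing to do: $A:=S(\lambda_0)$ and $B:=Q(\lambda_0)$ are bounded, and the hypotheses say precisely that $AP(\lambda_0)-\Id_X=K_2(\lambda_0)$ and $P(\lambda_0)B-\Id_Y=K_1(\lambda_0)$ are compact.

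The case where $\lambda_0$ is a pole is the only one with real content, and I would handle it by a Laurent splitting. On a punctured disc $0<|\lambda-\lambda_0|<\rho$ chosen small enough to exclude all the other (isolated) poles of $S$, write $S(\lambda)=\widetilde S(\lambda)+\sum_{j=1}^{N}(\lambda-\lambda_0)^{-j}B_j$ where $\widetilde S$ is holomorphic on the full disc $|\lambda-\lambda_0|<\rho$ and each $B_j:Y\to X$ has finite rank (here is where the hypothesis that the poles are of finite rank enters). Then one notes that $\widetilde S(\lambda)P(\lambda)-\Id_X$ is holomorphic on the full disc, being assembled from the analytic families $\widetilde S$ and $P$, whereas on the punctured disc it coincides with $K_2(\lambda)-\sum_{j=1}^{N}(\lambda-\lambda_0)^{-j}B_jP(\lambda)$, which there is a compact operator (a compact operator plus a finite-rank one). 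Thus $\lambda\mapsto\widetilde S(\lambda)P(\lambda)-\Id_X$ is norm-continuous up to $\lambda_0$ and compact-valued on the punctured disc, so, the compact operators being norm-closed, its value $\widetilde S(\lambda_0)P(\lambda_0)-\Id_X$ at $\lambda_0$ is compact; hence $A:=\widetilde S(\lambda_0)$ is a bounded left parametrix for $P(\lambda_0)$. The mirror-image argument applied to the Laurent splitting of $Q$ at $\lambda_0$ (trivial if $\lambda_0$ is not a pole of $Q$) produces a bounded right parametrix $B:=\widetilde Q(\lambda_0)$ with $P(\lambda_0)B-\Id_Y$ compact. Atkinson's theorem then gives that $P(\lambda_0)$ is Fredholm, which completes the argument.

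The only genuine subtlety — and the step I expect to be the crux rather than bookkeeping — is exactly this passage from the punctured disc to $\lambda_0$ itself: one must see simultaneously that $\widetilde S(\lambda)P(\lambda)$ carries no pole at $\lambda_0$ (because it is a product of holomorphic operator families) and that compactness survives the norm limit $\lambda\to\lambda_0$. Everything else reduces to the standard parametrix characterisation of Fredholm operators.
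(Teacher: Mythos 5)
Your proof is correct, and its overall architecture mirrors the paper's: fix $\lambda_0\in\Omega$, split each meromorphic parametrix into its regular and singular (finite-rank) Laurent parts at $\lambda_0$, show that the regular parts evaluated at $\lambda_0$ are two-sided parametrices modulo compacts, and conclude via the parametrix characterisation of Fredholm operators (Atkinson's theorem, which the paper cites as (C.2.8) of Dyatlov--Zworski). The one step where you genuinely diverge is the justification that $\widetilde S(\lambda_0)P(\lambda_0)-\Id$ is compact. The paper tries to establish this by arguing that the singular part $\sum_{j}(\lambda-\lambda_0)^{-j}P(\lambda)A_j$ is itself an analytic family of compact operators, via a Taylor expansion of $P$ at $\lambda_0$ combined with the assertion that $\Id+K_1(\lambda)-P(\lambda)A_0(\lambda)$ is analytic; that assertion is not immediate from the stated hypotheses, which give compactness of $K_1(\lambda)$ but not analyticity. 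Your route avoids this bookkeeping entirely: $\widetilde S(\lambda)P(\lambda)-\Id$ is holomorphic on a full disc about $\lambda_0$ because it is a product of holomorphic operator families, it is compact on the punctured disc because there it equals $K_2(\lambda)$ minus a finite-rank operator, and therefore by norm-continuity together with norm-closedness of the compact operators its value at $\lambda_0$ is compact. This is a cleaner and more robust version of what the paper is after, reaching the same conclusion with less machinery.
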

\begin{proof}
Let $\lambda_0\in \Omega$. 
By the definition of a meromorphic family of operators (see, e.g., \cite[Definition C.7]{DyZw:19}),
there are $J\geq 0$, $A_0(\lambda):Y\to X$ and $A_j:Y\to X$ such that $A_0(\lambda)$ is holomorphic near $\lambda_0$, $A_j$ is finite rank, $j=1,\dots, J$, and 
$$
Q(\lambda)=A_{0}(\lambda)+\sum_j \frac{A_j}{(\lambda-\lambda_0)^j}.
$$
Then, we claim that 
\beq\label{eq:Taylor}
\sum_{j=1}^J \frac{P(\lambda)A_j}{(\lambda-\lambda_0)^j}=P(\lambda)(Q(\lambda)-A_{0}(\lambda))=I+K_1(\lambda)-P(\lambda)A_0(\lambda).
\eeq
is an analytic family of compact operators. Indeed, the left hand side of this equality is a meromorphic family of operators with uniformly bounded rank. On the other hand, the right hand side $I+K_1(\lambda)-P(\lambda)A_0(\lambda)$ is analytic. In fact, 
by Taylor-expanding $P(\lambda)$ about $\lambda=\lambda_0$ and demanding that the coefficients of $(\lambda-\lambda_0)^{k-J}$ on the left-hand side of \eqref{eq:Taylor} equal zero for $k=0,\ldots,J-1$, we see that, for $0\leq k\leq J-1$,
$$
\sum_{n=0}^k \frac{\partial_{\lambda}^{n}P|_{\lambda=\lambda_0}}{n!}A_{J-k+n}=0.
$$
 Thus,
$$
\sum_{j=1}^J \frac{P(\lambda)A_j}{(\lambda-\lambda_0)^j}= \sum_{j=1}^J\big[ \partial_\lambda^jP|_{\lambda=\lambda_0}A_j +O(|\lambda-\lambda_0|)_{X\to Y}\big]A_j,
$$
and this operator is an analytic family of compact operators as claimed. 
We then observe that 
$$
P(\lambda)A_0(\lambda)=I+K_1(\lambda)-\sum_j \frac{P(\lambda)A_j}{(\lambda-\lambda_0)^j}=I+\tilde{K}_1(\lambda)
$$
with $\tilde{K}_1(\lambda)$ an analytic family of compact operators.

Writing 
$$
S(\lambda)=B_0(\lambda)+\sum_{j=1}^J\frac{B_j}{(\lambda-\lambda_0)^j}
$$ 
with $B_0(\lambda):Y\to X$ analytic and $B_j:Y\to X$ finite rank, $j=1,\dots, J$, and applying the same argument shows that $B_0(\lambda)$ is an approximate left inverse for $P(\lambda)$.
Since $P(\lambda)$ has both an approximate left and right inverse, it is Fredholm (see, e.g., \cite[(C.2.8)]{DyZw:19}).
\end{proof}

\begin{prop}
\label{p:scaledFredholm}
Let $P_\theta$, $\mc{D}_\theta$, and $\mc{H}_\theta$, $0\leq \theta <\pi/2$, be as in~\eqref{e:defP}. If $\Im (e^{i\theta}\lambda)>0$, then 
$$
P_\theta -\lambda^2:\mc{D}_\theta\to \mc{H}_\theta 
$$
is a Fredholm operator of index zero and there is $t_0>0$ such that for $t>t_0$, and $0\leq s \leq 1$,
\beq\label{eq:RAM2}
\|(P_\theta-it^2)^{-1}\|_{\mc{H}_\theta\to \mc{D}_\theta^s}\leq Ct^{2s-2}.
\eeq
Moreover,  let $R_0<R_1$ with $R_1$ as in~\eqref{e:GammaTheta}. Then 
\beq\label{eq:RAM1}
\indicator_{B(0,R_1)}(P-\lambda^2)^{-1}\indicator_{B(0,R_1)} =\indicator_{B(0,R_1)}(P_\theta-\lambda^2)^{-1}\indicator_{B(0,R_1)},\qquad \Im (e^{i\theta}\lambda)>0.
\eeq
\end{prop}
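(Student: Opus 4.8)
The plan is to follow the standard complex-scaling argument (see \cite[Theorem~4.37]{DyZw:19}), constructing a two-sided approximate inverse for $P_\theta-\lambda^2$ by gluing the black-box resolvent $R_P(\lambda)$---available near $B(0,R_0)$ by Proposition~\ref{p:cutoff}---to the free scaled resolvent $R_{0,\theta}(\lambda)$---available away from $B(0,R_0)$ by Lemmas~\ref{l:Fred1} and~\ref{l:freeScaledEstimate}. The gluing works because $\Delta_\theta=\Delta$ on $B(0,R_1)$ and $\chi\equiv1$ near $B(0,R_0)$ in~\eqref{e:defP}: hence $P_\theta v=Pv$ whenever $\supp v\subset\{\chi\equiv1\}$, while $P_\theta v=-\Delta_\theta v$ whenever $v$ vanishes in a neighbourhood of $B(0,R_0)$ (the latter using the black-box structure to identify $P(\chi v)$ with $-\Delta(\chi v)$ when $\chi v$ is supported in $B(0,R_1)$ and vanishes near $B(0,R_0)$).

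\emph{Fredholmness.} Fix cutoffs $\rho_0,\rho_1,\rho_2\in C_c^\infty(B(0,R_1))$, each $\equiv1$ near $B(0,R_0)$ and near the support of the previous one, with $\chi\equiv1$ near $\supp\rho_2$, and set
$$
E(\lambda):=\rho_2\,R_P(\lambda)\,\rho_1+(1-\rho_0)\,R_{0,\theta}(\lambda)\,(1-\rho_1).
$$
Using $R_{0,\theta}(\lambda)\colon L^2\to H^2$ (Lemma~\ref{l:freeScaledEstimate}) and the black-box axiom placing $H^2$-functions vanishing near $B(0,R_0)$ in $\mc D$, one checks that $E(\lambda)\colon\mc H_\theta\to\mc D_\theta$ is a meromorphic family with finite-rank poles. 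A direct computation---in which the diagonal contributions sum to $\rho_1+(1-\rho_1)=I$ and only the commutators survive---gives
$$
(P_\theta-\lambda^2)E(\lambda)=I+[-\Delta,\rho_2]R_P(\lambda)\rho_1-[-\Delta,\rho_0]R_{0,\theta}(\lambda)(1-\rho_1)=:I+K_1(\lambda),
$$
and an analogous computation yields $E(\lambda)(P_\theta-\lambda^2)=I+K_2(\lambda)$. Each commutator is a compactly supported first-order operator composed with a resolvent mapping into $H^2_{\loc}$, so $K_1,K_2$ are meromorphic families of compact operators (Rellich); Lemma~\ref{l:merInverse} then shows $P_\theta-\lambda^2$ is Fredholm on $\{\Im(e^{i\theta}\lambda)>0\}$, with a meromorphic inverse with finite-rank poles.

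\emph{The estimate~\eqref{eq:RAM2} and index zero.} At $\lambda=e^{i\pi/4}t$, so $\lambda^2=it^2$, self-adjointness of $P$ gives $\|(P-it^2)^{-1}\|_{\mc H\to\mc H}\le t^{-2}$ and $\|(P-it^2)^{-1}\|_{\mc H\to\mc D}\le C$, while Lemma~\ref{l:freeScaledEstimate} gives $\|R_{0,\theta}(e^{i\pi/4}t)\|_{L^2\to H^\ell}\le Ct^{\ell-2}$ for $\ell=0,1,2$. Applying $E(e^{i\pi/4}t)$ to $(P_\theta-it^2)u=f$ gives $u=E(e^{i\pi/4}t)f-K_2(e^{i\pi/4}t)u$; the commutator remainders are supported in a fixed compact annulus and are lower order in $t$ (one gains a factor $t^{-1}$ by interpolating $H^1$ between $L^2$ and $H^2$), so a short bootstrap absorbs them for $t$ large and yields $\|(P_\theta-it^2)^{-1}\|_{\mc H_\theta\to\mc H_\theta}\le Ct^{-2}$ and $\|(P_\theta-it^2)^{-1}\|_{\mc H_\theta\to\mc D_\theta}\le C$; interpolation gives~\eqref{eq:RAM2} for all $s\in[0,1]$. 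The same estimate for the adjoint (obtained exactly as in the proof of Lemma~\ref{l:Fred1}) shows $P_\theta-it^2$ is invertible for $t$ large, and since $\lambda\mapsto P_\theta-\lambda^2$ is an analytic family of Fredholm operators on the connected set $\{\Im(e^{i\theta}\lambda)>0\}$, its index is $0$ throughout.

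\emph{The identity~\eqref{eq:RAM1}.} Both sides are meromorphic families of operators on $\{\Im(e^{i\theta}\lambda)>0\}$---the left by the continuation of the cutoff resolvent in Proposition~\ref{p:cutoff}, the right by the meromorphy just established---so it suffices to identify them on the nonempty open set $\{\Im\lambda>0\}\cap\{\Im(e^{i\theta}\lambda)>0\}$. There, for $f$ supported in $B(0,R_1)$, let $u=R_P(\lambda)f\in\mc D$ be the genuine resolvent; since $\Im\lambda>0$, $u$ is outgoing and decays exponentially, and as it solves a constant-coefficient elliptic equation away from $B(0,R_0)$ it continues holomorphically to a neighbourhood of $\mathbb R^d\setminus B(0,R_0)$ in $\mathbb C^d$. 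By Lemma~\ref{l:holExtend} its restriction $\widetilde u$ to $\Gamma_\theta$ lies in $\mc H_\theta$, equals $u$ on $B(0,R_1)$, and---since $f$ is supported where $\Gamma_\theta=\mathbb R^d$---solves $(P_\theta-\lambda^2)\widetilde u=f$; hence $\widetilde u=(P_\theta-\lambda^2)^{-1}f$ by invertibility for $t$ large and analytic continuation. Restricting to $B(0,R_1)$ gives the identity there, and then on the whole region by meromorphic continuation. The part I expect to be the main obstacle is the careful tracking of supports in the gluing, so that the remainders $K_1,K_2$ reduce to the compact commutator terms---this is what forces every cutoff to equal $1$ near $B(0,R_0)$, keeping the nonlocal part of $P$ away from them---together with the holomorphic-continuation step for~\eqref{eq:RAM1}, which is where the $C^{2,\alpha}$ regularity of $f_\theta$ and the totally-real-manifold constructions of Appendix~\ref{a:scale} (Lemma~\ref{l:holExtend} in particular) are genuinely used.
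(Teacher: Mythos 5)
Your parametrix construction and Fredholm/index argument follow the same blueprint as the paper, which simply cites the proofs of Theorems 4.36 and 4.37 of Dyatlov--Zworski together with Lemma~\ref{l:merInverse}; the gluing, the support-tracking so that $P_\theta$ reduces to $P$ or $-\Delta_\theta$ on each piece, and the large-$t$ absorption for \eqref{eq:RAM2} are all in agreement with what the paper invokes from that reference. Where you differ is in establishing~\eqref{eq:RAM1}. The paper takes the smooth-cutoff identity $\chi R_P(\lambda)\chi=\chi(P_\theta-\lambda^2)^{-1}\chi$ as given by DyZw and then does two short upgrades: unique continuation (using $P=P_\theta$ on $B(0,R_1)$ and $P=-\Delta$ outside $B(0,R_0)$) to replace $\chi$ by $\indicator_{B(0,R_1)}$ on the left, and a density/continuity argument (approximating $f$ by compactly supported $f_n$ with $\supp f_n\Subset B(0,R_1)$) to replace $\chi$ by $\indicator_{B(0,R_1)}$ on the right. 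You instead re-derive the identity from scratch via holomorphic continuation of $R_P(\lambda)f$ restricted to $\Gamma_\theta$ and analytic continuation in $\lambda$. That route works, and arguably avoids the smooth-cutoff detour, but it amounts to re-proving the substance of DyZw Theorem~4.37: the nontrivial content is that the outgoing solution's holomorphic extension stays in $L^2(\Gamma_\theta)$ (so $\widetilde u\in\mc D_\theta$), and Lemma~\ref{l:holExtend} is not the tool for that---it is about defining $\Delta_\theta$ via almost-analytic extensions, not about Paley--Wiener-type decay of continued solutions. You flag this as the main obstacle, correctly; the paper sidesteps it by using the DyZw identity as a black box and doing the cheaper indicator upgrade. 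So: same overall strategy, with a genuinely different (and somewhat heavier) route to~\eqref{eq:RAM1}, and one misattributed lemma citation that should point to the exponential-decay/analytic-continuation estimates in the DyZw proof rather than to Lemma~\ref{l:holExtend}.
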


\bpf
Together with Lemma~\ref{l:merInverse}, the proofs of~\cite[Theorems 4.36, 4.37]{DyZw:19} prove the result with \eqref{eq:RAM1} replaced by 
\begin{equation}
\label{e:chiRAM1}
\chi(P-\lambda^2)^{-1}\chi =\chi(P_\theta-\lambda^2)^{-1}\chi,\qquad \Im (e^{i\theta}\lambda)>0,
\end{equation}
for $\chi \in C_c^\infty(B(0,R_1))$ with $\chi\equiv 1$ on $B(0,R_0)$. 
(Although the bound \eqref{eq:RAM2} is not explicitly stated in \cite[Theorems 4.36, 4.37]{DyZw:19}, it is essentially contained in Step 3 of the proof of \cite[Theorem 4.36]{DyZw:19}.)

Replacing $\chi$ on the left of both sides of~\eqref{e:chiRAM1} by the indicator functions in~\eqref{eq:RAM1} follows by the unique continuation principle since $P=P_\theta$ on $B(0,R_1)$. To replace $\chi$ on the right of both sides of~\eqref{e:chiRAM1}, we approximate $f\in \mc{H}_{R_0}\oplus L^2(B(0,R_1)\setminus B(0,R_0))$ by $f_n\in \mc{H}_{R_0}\oplus L^2(B(0,R_1-n^{-1})\setminus B(0,R_0))$ and use continuity of $(P_\theta-\lambda^2)^{-1}:\mc{H}_\theta\to \mc{H}_\theta$ and $R_P(\lambda):\mc{H}_{\comp}\to \mc{H}_{\loc}$.
\epf

\subsection{Fredholm properties for the PML operator}

Now that we have obtained the Fredholm property of $P_\theta$, we study the Fredholm properties of the corresponding PML operator. Let $\Omega_\theta\Subset \Gamma_\theta$ have Lipschitz boundary and $B(0,R_1)\subset \Omega_\theta$. We study the PML operator $P_\theta -\lambda^2$ on $\Omega_\theta$. Let
\begin{equation}
\label{e:defPML}
\begin{gathered}
\mc{H}_{\theta}(\Omega_\theta):=\mc{H}_{R_0}\oplus L^2(\Omega_\theta \setminus B(0,R_0)),\\
\mc{D}_\theta(\Omega_\theta):=\big\{u\in \mc{H}_\theta(\Omega_\theta) \,:\, \chi u\in \mc{D},\, (1-\chi)u\in H_0^1(\Omega_\theta),\, -\Delta_\theta ((1-\chi)u)\in L^2(\Omega_\theta)\big\},\\
P_\theta u:= P(\chi u)+ (-\Delta_\theta)((1-\chi )u),
\end{gathered}
\end{equation}

We start by showing the Fredholm property when there is no black-box Hamiltonian i.e. when $P_\theta=-\Delta_\theta$. 
\begin{lem}
\label{l:freePMLFredholm}
The operator 
$$
(-\Delta_\theta-\lambda^2): H_0^1(\Omega_\theta)\to  H^{-1}(\Omega_\theta)
$$
is Fredholm with index zero.  Let $R_{0,\theta}^D(\lambda):=(-\Delta_\theta-\lambda^2)^{-1}:H^{-1}(\Omega_\theta)\to  H_0^1(\Omega_\theta)$.
Then 
there is $t_0>0$ such that for $t>t_0$, and $0\leq s\leq 1$,
\beq\label{eq:RAM3}
\|R_{0,\theta}^D(e^{i\frac{\pi}{4}}t)\|_{L^2(\Omega_\theta)\to H^s(\Omega_\theta)}\leq Ct^{s-2}.
\eeq
\end{lem}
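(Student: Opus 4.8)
The plan is to transcribe the proofs of Lemmas~\ref{l:Fred1} and~\ref{l:freeScaledEstimate} to the bounded domain $\Omega_\theta$. The two simplifications are that the exterior cutoff estimates~\eqref{e:outside}--\eqref{eq:kernelcontrol} are unnecessary (since $\Omega_\theta$ is bounded, $H^1_0(\Omega_\theta)\hookrightarrow L^2(\Omega_\theta)$ is already compact), while the only genuine point of care is verifying that the coercivity estimates of Lemma~\ref{l:intByParts} survive the passage to $H^1_0(\Omega_\theta)$, and that the Lipschitz boundary forces the estimate to stop at $s=1$ because the $H^2$ elliptic regularity step~\eqref{e:elliptic} is unavailable.

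\emph{Step 1 (Fredholmness via Lemma~\ref{l:fredEst}).} I would define $-\Delta_\theta-\lambda^2\colon H^1_0(\Omega_\theta)\to H^{-1}(\Omega_\theta)$ through the sesquilinear form on the right of~\eqref{eq:A41} restricted to $\Omega_\theta$, which is bounded since $(I\pm iF_\theta''(x))^{-1}\in L^\infty$ and $A\in C^{0,\alpha}$. For $u\in H^1_0(\Omega_\theta)$, the zero extension of $u$ lies in $H^1$ with the same gradient, so~\eqref{e:basicFred0} and~\eqref{e:basicFred} hold verbatim for every $u\in H^1_0(\Omega_\theta)$. From the second inequality of~\eqref{e:basicFred0} together with $|\langle(-\Delta_\theta-\lambda^2)u,u\rangle|\le\|(-\Delta_\theta-\lambda^2)u\|_{H^{-1}(\Omega_\theta)}\|u\|_{H^1_0}+|\lambda|^2\|u\|_{L^2}^2$, absorbing the $\|u\|_{H^1_0}$ term gives
$$\|u\|_{H^1_0(\Omega_\theta)}\le C\big(\|(-\Delta_\theta-\lambda^2)u\|_{H^{-1}(\Omega_\theta)}+\|u\|_{L^2(\Omega_\theta)}\big),$$
and conjugating $|\langle\cdot,\cdot\rangle|$ yields the identical estimate for the Hilbert adjoint. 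Since $(H^{-1}(\Omega_\theta))^\ast=H^1_0(\Omega_\theta)$ and $H^1_0(\Omega_\theta)\hookrightarrow L^2(\Omega_\theta)$ compactly, Lemma~\ref{l:fredEst} with $X=H^1_0(\Omega_\theta)$, $Y=H^{-1}(\Omega_\theta)$, $Z_X=Z_{Y^\ast}=L^2(\Omega_\theta)$ shows $-\Delta_\theta-\lambda^2$ is Fredholm whenever $\Im(e^{i\theta}\lambda)>0$.

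\emph{Step 2 (index $0$ and invertibility at $e^{i\pi/4}t$).} The half-plane $\{\Im(e^{i\theta}\lambda)>0\}$ is connected and $\lambda\mapsto-\Delta_\theta-\lambda^2$ is analytic, so the index is constant there; it suffices to show invertibility at one point. Repeating the computation of~\eqref{e:L2Bound}: for $\lambda^2=it^2$ and $u\in H^1_0(\Omega_\theta)$, the triangle inequality and~\eqref{e:basicFred} give
$$\big|\langle(-\Delta_\theta-it^2)u,u\rangle\big|\ge c\|u\|_{H^1_0(\Omega_\theta)}^2+(ct^2-C)\|u\|_{L^2(\Omega_\theta)}^2,$$
and the same bound holds for the adjoint since $\langle(-\Delta_\theta-it^2)^\ast u,u\rangle=\overline{\langle-\Delta_\theta u,u\rangle}+it^2\|u\|_{L^2}^2$ has real part $\Re\langle-\Delta_\theta u,u\rangle$ and imaginary part $t^2\|u\|_{L^2}^2-\Im\langle-\Delta_\theta u,u\rangle$, so~\eqref{e:basicFred} applies as before. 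For $t$ large these imply $\sqrt{ct^2-C}\,\|u\|_{L^2}+c\|u\|_{H^1_0}\le\|(-\Delta_\theta-it^2)u\|_{H^{-1}(\Omega_\theta)}$ and its adjoint analogue, hence $-\Delta_\theta-it^2$ is injective with closed, dense range, i.e.\ invertible; therefore the index is $0$ throughout the half-plane.

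\emph{Step 3 (resolvent bound).} Fix $t>t_0$ and $f\in L^2(\Omega_\theta)$, and set $u:=R^D_{0,\theta}(e^{i\pi/4}t)f$, so $(-\Delta_\theta-it^2)u=f$. Pairing with $u$ and using the display of Step~2 gives $(ct^2-C)\|u\|_{L^2}^2\le\|f\|_{L^2}\|u\|_{L^2}$, hence $\|u\|_{L^2(\Omega_\theta)}\le Ct^{-2}\|f\|_{L^2(\Omega_\theta)}$; feeding this back, $c\|u\|_{H^1_0}^2\le\|f\|_{L^2}\|u\|_{L^2}\le Ct^{-2}\|f\|_{L^2}^2$, so $\|u\|_{H^1(\Omega_\theta)}\le Ct^{-1}\|f\|_{L^2(\Omega_\theta)}$. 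Interpolating these two estimates, using $[L^2(\Omega_\theta),H^1(\Omega_\theta)]_s=H^s(\Omega_\theta)$ for $0\le s\le1$ on a Lipschitz domain, yields $\|R^D_{0,\theta}(e^{i\pi/4}t)\|_{L^2(\Omega_\theta)\to H^s(\Omega_\theta)}\le Ct^{s-2}$, which is~\eqref{eq:RAM3}. The only place where new work (beyond transcription) enters is the zero-extension remark in Step~1; everything else mirrors the $\mathbb{R}^d$ arguments, and the ceiling $s\le1$ is exactly the loss caused by the Lipschitz boundary.
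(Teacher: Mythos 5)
Your proposal is correct and follows essentially the same route as the paper: extend the coercivity estimates of Lemma~\ref{l:intByParts} to $H^1_0(\Omega_\theta)$, deduce the Fredholm property from Lemma~\ref{l:fredEst}, use the e:L2Bound computation at $\lambda=e^{i\pi/4}t$ for index $0$ and the endpoint resolvent bounds at $s=0,1$, then interpolate. The paper simply reruns the integration-by-parts argument directly on $\Omega_\theta$ rather than invoking zero extension as you do, but the two formulations are interchangeable, and your observation that the ceiling $s\leq1$ comes from the unavailability of $H^2$ elliptic regularity on the Lipschitz domain is exactly the point.
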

\begin{proof}
Repeating the arguments in the proof of Lemma~\ref{l:intByParts} for $u\in H_0^{1}(\Omega_\theta)$ instead of $u\in H^1(\Rea^d)$, we obtain that, for $u\in H_0^{1}(\Omega_\theta)$,
\begin{equation}
\label{e:basicFred0Omega}
\begin{gathered}
\Im \langle -\Delta_\theta u, u\rangle_{\Omega_\theta} \leq \e\|u\|^2_{H^1(\Omega_\theta)}+C\e^{-1}\|u\|^2_{L^2(\Omega_\theta)},\qquad \|u\|_{H^1(\Omega_\theta)}^2\leq C|\langle -\Delta_\theta u,u\rangle_{\Omega_\theta}|+C\|u\|_{L^2(\Omega_\theta)}^2,
\end{gathered}
\end{equation}
and
\begin{equation}
\label{e:basicFredOmega}
\|u\|^2_{H^1(\Omega_\theta)}\leq C(|\Re\langle -\Delta_\theta u,u\rangle_{\Omega_\theta}|-\Im \langle -\Delta_\theta u,u\rangle_{\Omega_\theta}+\|u\|_{L^2(\Omega_\theta)}^2).
\end{equation}
The second estimate in~\eqref{e:basicFred0Omega} together with the fact that $-\Delta_\theta:H^1_0(\Omega_\theta)\to H^{-1}(\Omega_\theta)$ is bounded, implies the Fredholm property for $(-\Delta_\theta-\lambda^2):H^1_0(\Omega_\theta)\to H^{-1}(\Omega_\theta)$
(similar to in the proof of Lemma \ref{l:Fred1}). To check that the index of the operator is 0 we argue as in~\eqref{e:L2Bound}.
The estimate \eqref{e:basicFredOmega} implies that, for $\lambda =e^{\frac{i\pi}{4}}t$, the estimate \eqref{e:L2Bound} holds. 
The bound \eqref{eq:RAM3} for $s=0,1,$ then follows from  \eqref{e:L2Bound}  (exactly as in Lemma~\ref{l:freeScaledEstimate}), and the bound 
\eqref{eq:RAM3} for $0<s<1$ then follows via interpolation.
\end{proof}

Finally, we show that the black-box PML operator~\eqref{e:defPML} is Fredholm with index zero.
\begin{prop}
\label{p:fredPML}
Let $P_\theta$, $\mc{H}_\theta(\Omega_\theta)$, and $\mc{D}_\theta(\Omega_\theta)$ be as in~\eqref{e:defPML}. Then, $P_\theta-\lambda^2:\mc{D}_\theta(\Omega_\theta)\to \mc{H}_{\theta}(\Omega_\theta)$ is Fredholm with index zero. 
\end{prop}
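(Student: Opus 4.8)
The plan is to mimic the proof of Proposition~\ref{p:scaledFredholm} — that is, the proofs of \cite[Theorems 4.36, 4.37]{DyZw:19} together with Lemma~\ref{l:merInverse} — the only change being that, away from the black box, the free scaled resolvent on $\Gamma_\theta$ is replaced by the Dirichlet free PML resolvent $R^D_{0,\theta}$ on $\Omega_\theta$ supplied by Lemma~\ref{l:freePMLFredholm}. This substitution is consistent because $P_\theta$ coincides with $-\Delta_\theta$, subject to the Dirichlet condition on $\partial\Omega_\theta$, outside $B(0,R_1)$, so the operator glued in there is precisely the local model of $P_\theta$. Concretely, take the black-box cutoff $\chi\in C_c^\infty(B(0,R_1))$ from \eqref{e:defPML} and $\chi_1,\chi_2\in C_c^\infty(B(0,R_1))$, each equal to $1$ near $B(0,R_0)$, with $\chi_1\equiv1$ on a neighbourhood of $\supp\chi$ and $\chi_2\equiv1$ on a neighbourhood of $\supp\chi_1$, and for $\Im(e^{i\theta}\lambda)>0$ define
$$
Q(\lambda):=\chi_1\,(P_\theta-\lambda^2)^{-1}\,\chi_2+(1-\chi_1)\,R^D_{0,\theta}(\lambda)\,(1-\chi),
$$
where $(P_\theta-\lambda^2)^{-1}:\mc{H}_\theta\to\mc{D}_\theta$ is the full-space scaled resolvent of Proposition~\ref{p:scaledFredholm} (a meromorphic family with finite-rank poles). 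Using \eqref{e:defPML} one checks that $Q(\lambda):\mc{H}_\theta(\Omega_\theta)\to\mc{D}_\theta(\Omega_\theta)$: $\chi_1$ localises the first summand strictly inside $\Omega_\theta$, $1-\chi$ removes its $\mc{H}_{R_0}$-component before it is fed to $R^D_{0,\theta}$, and $1-\chi_1$ annihilates the black-box part of the second summand since $\chi_1\equiv1$ near $\supp\chi$. Applying $P_\theta-\lambda^2$ and using that $P_\theta=-\Delta_\theta$ on $\supp\nabla\chi_1$ — a compact annulus inside $B(0,R_1)\setminus B(0,R_0)$, hence disjoint from $\partial\Omega_\theta$ — the two main pieces telescope, since $\chi_1+(1-\chi_1)=I$, and the remainder is $[-\Delta_\theta,\chi_1]$ applied to $(P_\theta-\lambda^2)^{-1}\chi_2-R^D_{0,\theta}(\lambda)(1-\chi)$; this is a first-order differential operator with fixed compact support, so it maps $\mc{H}_\theta(\Omega_\theta)$ into compactly supported $H^1$ functions and is compact on $\mc{H}_\theta(\Omega_\theta)$ by Rellich. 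Thus $(P_\theta-\lambda^2)Q(\lambda)=I+K_1(\lambda)$ with $K_1$ compact, and the mirror-image construction gives $S(\lambda)$ with $S(\lambda)(P_\theta-\lambda^2)=I+K_2(\lambda)$, $K_2$ compact. Since $Q$ and $S$ are meromorphic with finite-rank poles (the two resolvents being so), Lemma~\ref{l:merInverse} shows $P_\theta-\lambda^2:\mc{D}_\theta(\Omega_\theta)\to\mc{H}_\theta(\Omega_\theta)$ is Fredholm whenever $\Im(e^{i\theta}\lambda)>0$.

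To reach all $\lambda$ and to determine the index, I would first note that $\Omega_\theta\Subset\Gamma_\theta$ is bounded, so the inclusion $\mc{D}_\theta(\Omega_\theta)\hookrightarrow\mc{H}_\theta(\Omega_\theta)$ is compact: its component supported near $B(0,R_0)$ by the black-box compactness axiom in \eqref{e:blackBox} together with $1_{\mathbb{R}^d\setminus B(0,R_0)}\mc{D}\subset H^2$, its component in $H^1_0(\Omega_\theta)$ by Rellich. Consequently $P_\theta-\lambda^2$ and $P_\theta-\mu^2$ differ by $(\mu^2-\lambda^2)$ times this compact inclusion, so Fredholmness and the value of the index are independent of $\lambda$; hence $P_\theta-\lambda^2$ is Fredholm, with a constant index, for every $\lambda$. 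I would then compute this index at $\lambda=e^{i\pi/4}t$ with $t$ large, exactly as in the proofs of Proposition~\ref{p:scaledFredholm} and Lemma~\ref{l:freePMLFredholm}: there both resolvents are genuine bounded operators whose norms decay in $t$ by \eqref{eq:RAM2} and \eqref{eq:RAM3}, so $\|K_1(\lambda)\|_{\mc{H}_\theta(\Omega_\theta)\to\mc{H}_\theta(\Omega_\theta)}=O(t^{-1})$; thus for $t$ large $I+K_1(\lambda)$ is invertible and $Q(\lambda)(I+K_1(\lambda))^{-1}$ is a bounded right inverse of $P_\theta-\lambda^2$, while feeding the same decaying estimate into the commutator $[-\Delta_\theta,\chi_1]u$ for $u$ in the kernel gives $[-\Delta_\theta,\chi_1]u=O(t^{-1})[-\Delta_\theta,\chi_1]u$, whence $[-\Delta_\theta,\chi_1]u=0$ and then $u=0$. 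So $P_\theta-\lambda^2$ is invertible at this $\lambda$, has index $0$ there, and therefore is Fredholm of index $0$ for every $\lambda$. (Alternatively, the index is that of the $\theta=0$ operator — the Dirichlet realisation of the self-adjoint black-box Hamiltonian on a bounded set, which is $0$ — the family $\theta\mapsto P_\theta-\lambda^2$ being norm-continuous once the $\Gamma_\theta$ are identified with $\mathbb{R}^d$ via $\tilde f_\theta$.)

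The main obstacle is not a hard estimate but the bookkeeping already implicit in the proof of Proposition~\ref{p:scaledFredholm}: verifying that $Q(\lambda)$ and $S(\lambda)$ genuinely land in the spaces $\mc{D}_\theta(\Omega_\theta)$ and $\mc{H}_\theta(\Omega_\theta)$ of \eqref{e:defPML}, and that every commutator error is supported in a fixed compact subset of the interior of $\Omega_\theta$, so that it is compact and never interacts with the Dirichlet boundary $\partial\Omega_\theta$. Once this is checked, the argument runs identically to the untruncated case with $R^D_{0,\theta}$ playing the role of the free scaled resolvent.
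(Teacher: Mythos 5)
Your overall strategy---gluing the full-space scaled resolvent $(P_\theta-\lambda^2)^{-1}$ near the black box to the Dirichlet free PML resolvent $R^D_{0,\theta}$ away from it, applying Lemma~\ref{l:merInverse}, and computing the index at $\lambda=e^{i\pi/4}t$ via the decay estimates \eqref{eq:RAM2}, \eqref{eq:RAM3}---is the same as the paper's, and your cutoff bookkeeping is a harmless variant of the paper's $Q_0+Q_1$, $S_\theta$ construction. But there is one real gap and one genuinely different ingredient worth flagging.

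The gap: you assert that the left remainder ``$K_2$'' is compact, but Lemma~\ref{l:merInverse} requires $K_2:\mc{D}_\theta(\Omega_\theta)\to\mc{D}_\theta(\Omega_\theta)$ compact \emph{in the graph norm} \eqref{eq:normDtr}, not merely in $\mc{H}_\theta(\Omega_\theta)$. This is the single most delicate step in the paper's proof. Compactness in $\mc{H}_\theta(\Omega_\theta)$ is easy (it follows as for $K_1$); but to upgrade it, the paper observes that by the definition of the graph norm it suffices to show that $(P_\theta-\lambda^2)L_\theta:\mc{D}_\theta(\Omega_\theta)\to\mc{H}_\theta(\Omega_\theta)$ is compact, and then explicitly identifies $(P_\theta-\lambda^2)L_\theta$ (using the nesting of the cutoffs so that the two ``exterior'' applications of $P_\theta-\lambda^2$ cancel the resolvents and leave only commutator terms) to get an operator landing in compactly supported $H^1$ near $B(0,R_1)\setminus B(0,R_0)$. ``The mirror-image construction works'' does not cover this: naively $L_\theta:\mc{D}\to\mc{D}$ does not follow from $L_\theta:\mc{H}\to\mc{H}$ compact together with $L_\theta$ bounded on $\mc{D}$. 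Similarly, the injectivity argument at $\lambda=e^{i\pi/4}t$ you sketch (``$[-\Delta_\theta,\chi_1]u=O(t^{-1})[-\Delta_\theta,\chi_1]u$'') is stated without the intermediate cutoff and resolvent chain that actually produces such a fixed-point identity; the paper instead shows $(I+L_\theta(\lambda))^{-1}$ exists on $\mc{D}_\theta(\Omega_\theta)$, from which injectivity is immediate.

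The genuinely new ingredient is your observation that the inclusion $\iota:\mc{D}_\theta(\Omega_\theta)\hookrightarrow\mc{H}_\theta(\Omega_\theta)$ is compact. That claim is correct: near the black box it follows from the black-box compactness axiom in \eqref{e:blackBox} together with $1_{\mathbb{R}^d\setminus B(0,R_0)}\mc{D}\subset H^2$, and on the remainder $u\in H^1_0(\Omega_\theta)$ with $\Delta_\theta u\in L^2$ embeds compactly in $L^2$ (even $H^1_0\hookrightarrow L^2$ is enough on the bounded Lipschitz set $\Omega_\theta$). The paper does \emph{not} use this; it instead quotes constancy of the index from \cite[Theorem C.5]{DyZw:19} for the holomorphic Fredholm family. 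Your observation would actually let you bypass Lemma~\ref{l:merInverse} and the troublesome $K_2$ entirely: it suffices to prove $P_\theta-\lambda_0^2$ is \emph{bijective} at a single $\lambda_0=e^{i\pi/4}t$ (surjectivity via the right parametrix and the Neumann series, injectivity via the commutator argument fleshed out with a chain of cutoffs), since then $P_\theta-\lambda^2=(P_\theta-\lambda_0^2)\bigl[I+(\lambda_0^2-\lambda^2)(P_\theta-\lambda_0^2)^{-1}\iota\bigr]$ is invertible composed with identity-plus-compact, hence Fredholm of index $0$ for every $\lambda$. As written, your proof both includes this shortcut and still relies on the unverified compactness of $K_2$; if you commit to the shortcut, the gap disappears, but the injectivity step then needs to be written out in detail.
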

\begin{proof}
To show that $P_\theta-\lambda^2$ is Fredholm, we find meromorphic families of operators giving both an approximate left and right inverse for $P_\theta-\lambda^2$. Lemma~\ref{l:merInverse} then shows that $P_\theta-\lambda^2$ is Fredholm. To show $P_\theta-\lambda^2$ has index zero we find $\lambda_0$ where $P_\theta-\lambda_0^2$ is invertible (since the index is constant in $\lambda$ by, e.g., \cite[Theorem C.5]{DyZw:19}). 

\smallskip
\noindent
\emph{Approximate right inverse.}

Let $\chi_0\in C_c^\infty(\mathbb{R}^d;[0,1])$ with $\chi_0 \equiv 1$ on $B(0,R_0+\e)$ for some $\e>0$. Then choose $\chi_j\in C_c^\infty(\mathbb{R}^d;[0,1])$, $j=1,2$ such that 
\beq\label{eq:supports}
\chi_j\equiv 1\,\,\text{on}\,\,\supp \chi_{j-1,},\qquad \supp \chi_j\subset B(0,R_1). 
\eeq
Let
$$
Q_0:=(1-\chi_0)R_{0,\theta}^D(\lambda)(1-\chi_1),\qquad Q_1:=\chi_2(P_\theta-\lambda^2)^{-1}\chi_1
$$
where $(P_\theta-\lambda^2)^{-1}:\mc{H}_\theta \to \mc{D}_\theta$.
Then
\begin{gather*}
(P_\theta-\lambda^2)Q_0=(1-\chi_1)+[\Delta_\theta,\chi_0]R_{0,\theta}^D(\lambda)(1-\chi_1),\\ 
(P_\theta-\lambda^2)Q_1=\chi_1+[-\Delta_\theta,\chi_2](P_\theta-\lambda^2)^{-1}\chi_1,
\end{gather*}
and thus
\begin{gather*}
(P_\theta-\lambda^2)(Q_0+Q_1)=\Id +K(\lambda),\quad \text{ where } \quad K(\lambda):=K_0(\lambda)+K_1(\lambda),\\ K_0(\lambda):=[\Delta_\theta,\chi_0]R_{0,\theta}^D(\lambda)(1-\chi_1),\quad K_1(\lambda):=[-\Delta_\theta,\chi_2](P_\theta-\lambda^2)^{-1}\chi_1.
\end{gather*}
By Lemma~\ref{l:freePMLFredholm}, $R_{0,\theta}^D:L^2(\Omega_\theta)\to \mc{D}_\theta(\Omega_\theta)$.
Since $\Omega_\theta$ is Lipschitz, $\mc{D}_\theta(\Omega_\theta)\subset H^{3/2}(\Omega_\theta)$ by \cite[Lemme 2]{CoDa:98}, \cite[Corollary 5.7]{JeKe:95}.
Therefore, since $(1-\chi_1):\mc{H}_{\theta}(\Omega_\theta)\to L^2(\Omega_\theta)$ and $[-\Delta_\theta,\chi_0]:H^{3/2}(\Omega_\theta)\to H^{1/2}(B(0,R_1)\setminus B(0,R_0+\e))$, 
$K_0(\lambda):\mc{H}_{\theta}(\Omega_\theta)\to H^{1/2}(B(0,R_1)\setminus B(0,R_0+\e))$. Thus, $K_0(\lambda):\mc{H}_\theta(\Omega_\theta)\to \mc{H}_\theta(\Omega_\theta)$ is compact. 

Next, by Proposition~\ref{p:scaledFredholm}, $(P_\theta-\lambda^2)^{-1}:\mc{H}_\theta\to \mc{D}_\theta$. Therefore, since $\chi_1:\mc{H}_\theta(\Omega_\theta)\to \mc{H}_\theta$, and $[-\Delta_\theta,\chi_2]:\mc{D}_\theta\to H^1_{\comp}(B(0,R_1)\setminus B(0,R_0+\e)$, 
$$
K_1(\lambda):\mc{H}_\theta(\Omega_\theta)\to H^1_{\comp}(B(0,R_1)\setminus B(0,R_0+\e).
$$
In particular, $K_1(\lambda):\mc{H}_\theta(\Omega_\theta)\to \mc{H}_\theta(\Omega_\theta)$ is compact and thus $K(\lambda):\mc{H}_\theta(\Omega_\theta)\to \mc{H}_{\theta}(\Omega_\theta)$ is compact.

\smallskip
\noindent
\emph{Invertibility of the right inverse.}

We now show that for $\lambda=e^{\frac{i\pi}{4}}t$ and $t$ sufficiently large, $\Id+K(\lambda)$ is invertible. 
By  Lemma \ref{l:freePMLFredholm} and Proposition \ref{p:scaledFredholm}, respectively, for $t>t_0$,
\beq\label{eq:boundsappendix1}
 \|R_{0,\theta}^D(e^{\frac{\pi i}{4}}t)\|_{L^2(\Omega_\theta)\to H^1(\Omega_\theta)}\leq Ct^{-1}
\quad\tand\quad
\|(P_\theta-it^2)^{-1}\|_{\mc{H}_\theta\to \mc{D}_\theta^{\frac{1}{2}}}\leq Ct^{-1}.
\eeq
Furthermore, $[\Delta_\theta,\chi_0]:H^1(\Omega_\theta)\to \mc{H}_\theta(\Omega_\theta)$ and 
$[\Delta_\theta,\chi_2]:\mc{D}^{\frac{1}{2}}_\theta\to \mc{H}_\theta(\Omega_\theta)$.
Using these bounds and mapping properties in the definition of $K(\lambda)$, we find that, for $t>t_0$,
$$\|K(e^{\frac{i\pi}{4}}t)\|_{\mc{H}_\theta(\Omega_\theta)\to \mc{H}_\theta(\Omega_\theta)}\leq Ct^{-1};$$
hence $\Id+K(e^{\frac{i\pi}{4}}t)$ is invertible for $t$ sufficiently large.

\noindent
\emph{Approximate left inverse.}

For the left inverse, let 
$$
S_\theta(\lambda):=(1-\chi_1)R_{0,\theta}^D(\lambda)(1-\chi_0)+\chi_1(P_\theta-\lambda^2)^{-1}\chi_2,
$$
and observe that
\beqs
S_\theta(\lambda)(P_\theta-\lambda^2)=I+ L_\theta(\lambda),
\eeqs
where
\beqs
L_\theta(\lambda):= (1-\chi_1)R_{0,\theta}^D(\lambda)[\chi_0,\Delta_\theta]+\chi_1(P_\theta-\lambda^2)^{-1}[\chi_2,-\Delta_\theta].
\eeqs
Note that $S_\theta:\mc{H}_\theta(\Omega_\theta)\to \mc{D}_\theta(\Omega_\theta)$ and hence,  $L_\theta:\mc{D}_\theta(\Omega_\theta)\to \mc{D}_\theta(\Omega_\theta)$.

The fact that $L_\theta:\mc{H}_\theta(\Omega_\theta)\to \mc{H}_\theta(\Omega_\theta)$ is compact follows from the mapping properties
\beq\label{eq:mapping}
R_{0,\theta}^D(\lambda):H^{-1}(\Omega_\theta)\to H^1_0(\Omega_\theta),\qquad (P_\theta-\lambda^2)^{-1}:\mc{D}_\theta^{-1/2}\to \mc{D}_\theta^{1/2},
\eeq
with the former coming from Lemma \ref{l:freePMLFredholm}, and the latter coming from Proposition \ref{p:scaledFredholm} plus duality and interpolation.
Therefore, by the definition of $\|\cdot\|_{\mc{D}_\theta(\Omega_\theta)}$ (inherited from \eqref{eq:normD}), to show that $L_\theta:\mc{D}_\theta(\Omega_\theta)\to \mc{D}_\theta(\Omega_\theta)$ is compact, it is enough to show that 
$(P_\theta-\lambda^2)L_\theta(\lambda): \mc{D}_\theta(\Omega_\theta)\to \mc{H}_\theta(\Omega_\theta)$ is compact. Now, 
using \eqref{eq:supports}, we obtain that
\beq\label{eq:PL}
(P_\theta-\lambda^2)L_\theta(\lambda)=[\Delta_\theta,\chi_1]R_{0,\theta}^D(\lambda)[\chi_0,\Delta_\theta]+[-\Delta_\theta,\chi_1](P_\theta-\lambda^2)^{-1}[\chi_2,-\Delta_\theta].
\eeq
The compactness of $(P_\theta-\lambda^2)L_\theta(\lambda): \mc{D}_\theta(\Omega_\theta)\to \mc{H}_\theta(\Omega_\theta)$ then follows since $[-\Delta_\theta, \chi_i]:\mc{D}_\theta\to H^1(B(0,R_1)\setminus B(0,R_0+\e))$, 
$$
[-\Delta,\chi_i](P_\theta-\lambda^2)^{-1},\,[-\Delta,\chi_i]R_{0,\theta}^D(\lambda): H^1(B(0,R_1)\setminus B(0,R_0+\e))\to  H^1(B(0,R_1)\setminus B(0,R_0+\e)),
$$
and  $I:H^1(B(0,R_1)\setminus B(0,R_0+\e))\to \mc{H}_\theta(\Omega_\theta)$ is compact.

\noindent
\emph{Invertibility of the left inverse.}

Finally, we show that for $\lambda = e^{\frac{i\pi}{4}}t$ and $t$ sufficiently large, $I+L_\theta(\lambda):\mc{D}_\theta(\Omega_\theta)\to \mc{D}_\theta(\Omega_\theta)$ is invertible.
As a map $\mc{H}_\theta(\Omega_\theta)\to \mc{H}_\theta(\Omega_\theta)$, $(I+L_\theta(\lambda))^{-1}$ exists by the same argument used to show that $I+K(\lambda)$ was invertible (and the  corresponding
estimates on $R_{0,\theta}^D(\lambda):H^{-1}(\Omega_\theta)\to L^2(\Omega_\theta)$, and $(P_\theta-\lambda^2):\mc{D}_\theta^{-1/2}\to  \mc{H}_\theta$ obtained from \eqref{eq:boundsappendix1} by duality).

Therefore, by the definition of $\|\cdot\|_{\mc{D}_\theta(\Omega_\theta)}$, to show that $(I+L_\theta(\lambda))^{-1}:\mc{D}_{\theta}(\Omega_\theta)\to \mc{D}_\theta(\Omega_\theta)$, it is sufficient to show that $(P_\theta-\lambda^2)(I+L_\theta(\lambda))^{-1}: \mc{D}_{\theta}(\Omega_\theta) \to \mc{H}_{\theta}(\Omega_\theta)$.
Since
$$
(P_\theta-\lambda^2)(I+L_\theta(\lambda))^{-1}= P_\theta-\lambda^2-(P_\theta-\lambda^2)L_\theta(\lambda)(I+L_\theta(\lambda))^{-1},
$$
it is enough to prove that $(P_\theta-\lambda^2)L_\theta(\lambda):\mc{H}_\theta(\Omega_\theta)\to \mc{H}_\theta(\Omega_\theta)$, and this follows from \eqref{eq:PL} and the mapping properties \eqref{eq:mapping}.
\end{proof}

\section{Semiclassical analysis}\label{app:SCA}

\subsection{Semiclassical pseudo-differential operators}

We review here the notation and definitions for semiclassical pseudodifferential operators on $\mathbb{R}^d$ used in this paper.

\paragraph{\textbf{Semiclassical Sobolev spaces}}
We say that $u\in H_\hbar^s(\mathbb{R}^d)$ if 
$$
 \|\langle \xi\rangle ^s \mathcal{F}_\hbar(u)(\xi)\|_{L^2}<\infty, \quad\text{ where } \quad\langle \xi\rangle:=(1+|\xi|^2)^{\frac{1}{2}}
\quad\tand\quad
\mathcal{F}_\hbar(u)(\xi):=\int e^{-\frac{i}{\hbar}\langle y,\xi\rangle}u(y)\,dy
$$
is the \emph{semiclassical Fourier transform}.

\paragraph{\textbf{Symbols and operators}} We say that $a\in C^\infty(T^*\mathbb{R}^d)$ is a symbol of order $m$ if 
$$
|\partial_x^\alpha \partial_\xi^\beta a(x,\xi)|\leq C_{\alpha\beta}\langle \xi\rangle^m,
$$
and write $a\in S^m(T^*\mathbb{R}^d)$. 
Throughout this section we fix $\chi_0\in C_c^\infty(\mathbb{R}))$ to be identically 1 near 0.  
We then say that an operator $A:C_c^\infty(\mathbb{R}^d)\to \mathcal{D}'(\mathbb{R}^d)$ 
is a \emph{semiclassical pseudodifferential operator} of order $m$, and write $A\in \Psi_\hbar^m(\mathbb{R}^d)$, if $A$ can be written as
\begin{equation}
\label{e:basicPseudo}
Au(x)=\frac{1}{(2\pi \hbar)^d}\int_{\Rea^d} e^{\frac{i}{\hbar}\langle x-y,\xi\rangle}a(x,\xi)\chi_0(|x-y|)
u(y)
dyd\xi +E
\end{equation}
where $a\in S^m(T^*\mathbb{R}^d)$ and $E=O(\hbar^\infty)_{\Psi^{-\infty}}$, i.e.~for all $N>0$ there exists $C_N>0$ such that
$$
\|E\|_{H_\hbar^{-N}(\mathbb{R}^d)\to H_\hbar^N(\mathbb{R}^d)}\leq C_N\hbar^N. 
$$
We use the notation $\operatorname{Op}_\hbar a$ or $a(x,hD_x)$ for the operator $A$ in~\eqref{e:basicPseudo}  with $E=0$.  
  We then define 
$$
\Psi^{-\infty}:=\bigcap_m \Psi^m,\qquad S^{-\infty}:=\bigcap_m S^m,\qquad  \Psi^\infty:=\bigcup_m \Psi^m, \qquad S^\infty:=\bigcup_m S^m.
$$

\begin{theorem}\mythmname{\cite[Propositions E.17 and E.19]{DyZw:19}} If $A\in \Psi_{\hbar}^{m_1}$ and $B  \in \Psi_{\hbar}^{m_2}$, then
\begin{itemize}
\item[(i)]  $AB \in \Psi_{\hbar}^{m_1+m_2}$,
\item[(ii)]  $[A,B] \in \hbar\Psi_{\hbar}^{m_1+m_2-1}$,
\item[(iii)]  For any $s \in \mathbb R$, $A$ is bounded uniformly in $\hbar$ as an operator from $H_\hbar^s$ to $H_\hbar^{s-m_1}$.
\end {itemize}
\end{theorem}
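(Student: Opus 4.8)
The plan is to establish the standard symbolic calculus for semiclassical pseudodifferential operators on $\mathbb{R}^d$, following \cite[Appendix E]{DyZw:19}. Since the error terms $E$ in~\eqref{e:basicPseudo} lie in $O(\hbar^\infty)_{\Psi^{-\infty}}$, they are negligible for all three claims, and I may take $A=\Op_\hbar(a)$, $B=\Op_\hbar(b)$ with $a\in S^{m_1}$, $b\in S^{m_2}$. The central step is the composition formula: I would first show that there is $a\#b\in S^{m_1+m_2}$ with
\[
\Op_\hbar(a)\,\Op_\hbar(b)=\Op_\hbar(a\#b)+O(\hbar^\infty)_{\Psi^{-\infty}},\qquad a\#b\sim\sum_{\alpha}\frac{\hbar^{|\alpha|}}{\alpha!}\,\partial_\xi^\alpha a\;D_x^\alpha b .
\]
This is obtained by writing the Schwartz kernel of $\Op_\hbar(a)\Op_\hbar(b)$ as an oscillatory integral, Taylor-expanding the amplitude in the $\xi$-variable about the diagonal frequency, and estimating the remainder integrals by non-stationary phase; a Borel summation then produces an honest symbol $a\#b$ whose $N$-th remainder lies in $\hbar^{N+1}S^{m_1+m_2-N-1}$, while the part of the integral away from the diagonal (controlled by the cutoff $\chi_0$) contributes only $O(\hbar^\infty)_{\Psi^{-\infty}}$. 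Since the leading term of $a\#b$ is $ab\in S^{m_1+m_2}$, this gives (i).

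For (ii), I would apply the composition formula to both $AB$ and $BA$: the $\hbar^0$ terms $ab$ and $ba$ agree and so cancel in $a\#b-b\#a$, whence the leading surviving term is the $\hbar^1$ term $\tfrac{\hbar}{i}\{a,b\}\in\hbar S^{m_1+m_2-1}$ and all further terms are of strictly lower order, so $[A,B]\in\hbar\Psi_\hbar^{m_1+m_2-1}$. For (iii), I would reduce to $L^2$-boundedness of order-zero operators: the Fourier multiplier $\langle\hbar D\rangle^t=\Op_\hbar(\langle\xi\rangle^t)$ lies in $\Psi_\hbar^t$ and is invertible with inverse $\langle\hbar D\rangle^{-t}\in\Psi_\hbar^{-t}$, so by (i) the conjugate $\langle\hbar D\rangle^{\,s-m_1}A\,\langle\hbar D\rangle^{-s}$ lies in $\Psi_\hbar^0$, and $A:H_\hbar^s\to H_\hbar^{s-m_1}$ is bounded uniformly in $\hbar$ exactly when this conjugate is bounded on $L^2$ uniformly in $\hbar$.

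It therefore remains to show that every $\Op_\hbar(c)$ with $c\in S^0$ is bounded on $L^2$ with norm uniform in $\hbar$ --- the semiclassical Calder\'on--Vaillancourt theorem. I would prove this either via the Cotlar--Stein lemma applied to a dyadic (or unit-lattice) decomposition of $c$ in phase space, or, exploiting the benign symbol class, by a $TT^*$ argument: $\Op_\hbar(c)\,\Op_\hbar(c)^*=\Op_\hbar(|c|^2)+\hbar R$ with $R\in\Psi_\hbar^{-1}$ and seminorms controlled by those of $c$, which yields $\|\Op_\hbar(c)\|_{L^2\to L^2}^2\le \|c\|_{L^\infty}^2+\hbar\|R\|_{L^2\to L^2}$ and closes by bootstrapping the uniform $L^2$ bound over the class $S^0$. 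I expect this uniform $L^2$ estimate to be the only genuinely technical point; everything else is bookkeeping with the composition formula.
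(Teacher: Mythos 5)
The paper gives no proof of this theorem---it is stated as a direct citation of \cite[Propositions E.17 and E.19]{DyZw:19}---and your outline matches the standard argument in that reference: composition by oscillatory-integral expansion and Borel summation for (i), cancellation of the $\hbar^0$ terms for (ii), and $\langle\hbar D\rangle$-conjugation plus a uniform $L^2$ bound for order-zero operators for (iii). One caution, though, on your $TT^*$ alternative for Calder\'on--Vaillancourt: the displayed inequality $\|\Op_\hbar(c)\|_{L^2\to L^2}^2\le\|c\|_{L^\infty}^2+\hbar\|R\|_{L^2\to L^2}$ does \emph{not} follow from $\Op_\hbar(c)\Op_\hbar(c)^*=\Op_\hbar(|c|^2)+\hbar R$, because $\|\Op_\hbar(|c|^2)\|_{L^2\to L^2}$ is not a priori equal to $\|c\|_\infty^2$---bounding it is exactly the statement at issue. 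Closing that route requires H\"ormander's square-root device (take $M>\|c\|_\infty$, set $b:=\sqrt{M^2-|c|^2}\in S^0$, and use $0\le\langle\Op_\hbar(b)^*\Op_\hbar(b)u,u\rangle$ to get $\|\Op_\hbar(c)u\|^2\le M^2\|u\|^2+\hbar\|\tilde R\|\,\|u\|^2$) \emph{together with} an a priori polynomial-in-$\hbar^{-1}$ bound to break the circularity in the $\hbar\|\tilde R\|$ term; otherwise the Cotlar--Stein decomposition you offer as the other option is the clean way to go, and that is essentially what \cite{DyZw:19} uses.
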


\paragraph{\textbf{Principal symbol}}  There exists a map 
$$
\sigma^m_\hbar:\Psi^m \to S^m/hS^{m-1}
$$
called the \emph{principal symbol map} and such that the sequence 
$$
0\to hS^{m-1}\overset{\operatorname{Op}_\hbar}{\rightarrow} \Psi^{m}\overset{\sigma^m_\hbar}{\rightarrow} S^m/hS^{m-1}\to 0
$$
is exact where $\operatorname{Op}_\hbar(a)=a(x,hD)$. 
When applying the map $\sigma^m_\hbar$ to elements of $\Psi^m$, we denote it by $\sigma_\hbar$ (i.e.~we omit the $m$ dependence). Key properites of $\sigma_\hbar$ are the following
\beq\label{eq:symbol}
\sigma_\hbar(AB)=\sigma_\hbar(A)\sigma_\hbar(B),\qquad \sigma_\hbar(A^*)=\overline{\sigma}_\hbar(A),\qquad i\hbar^{-1}\sigma_\hbar([A,B])=\{\sigma_\hbar(A),\sigma_\hbar(B)\}
\eeq
where $\{\cdot,\cdot\}$ denotes the Poisson bracket; see \cite[Proposition E.17]{DyZw:19}.

\paragraph{\textbf{Operator wavefront set}} 
To introduce a notion of wavefront set that respects both decay in $\hbar$ as well as smoothing properties of pseudodifferential operators, we introduce the set 
$$
\overline{T^*\mathbb{R}^d}:=T^*\mathbb{R}^d\sqcup ( \mathbb{R}^d\times S^{d-1})
$$
where $\sqcup$ denotes disjoint union and we view $\mathbb{R}^d\times S^{d-1}$ as the `sphere at infinity' in each cotangent fiber (see also~\cite[\S E.1.3]{DyZw:19} for a more systematic approach where $\overline{T^*\mathbb{R}^d}$ is introduced as the fiber-radial compactification of $T^*\mathbb{R}^d$). We endow $\overline{T^*\mathbb{R}^d}$ with the usual topology near points $(x_0,\xi_0)\in T^*\mathbb{R}^d$ and define a system of neighbourhoods of a point $(x_0,\xi_0)\in \mathbb{R}^d\times S^{d-1}$ to be
\begin{align*}
U_\e:=&\Big\{ (x,\xi)\in T^*\mathbb{R}^d\,\big|\, |x-x_0|<\e, |\xi|>\e^{-1}, \big|\tfrac{\xi}{\langle \xi\rangle }-\xi_0\big|<\e\Big\}\\
&\qquad\quad \sqcup \big\{ (x,\xi)\in \mathbb{R}^d\times S^{d-1}\,:\, |x-x_0|<\e.,\, |\xi-\xi_0|<\e\big\}.
\end{align*}

We now say that a point $(x_0,\xi_0)\in \overline{T^*\mathbb{R}^d}$ is not in the wavefront set of an operator $A\in \Psi^m$, and write $(x_0,\xi_0)\notin \WF(A)$, if there exists a neighbourhood $U$ of $(x_0,\xi_0)$ such that $A$ can be written as in~\eqref{e:basicPseudo} with 
$$
\sup_{(x,\xi)\in U} | \partial^\alpha_x \partial_\xi^\beta a(x,\xi)\langle \xi\rangle^N|\leq C_{\alpha \beta N} \hbar^N.
$$

\paragraph{\textbf{Elliptic set and elliptic parametrix}} 
We say that $(x_0,\xi_0)\in \overline{T^*\mathbb{R}^d}$ is in the elliptic set of $A$, and write $(x_0,\xi_0)\in \Ell(A)$, if there exists a neighbourhood $U$ of $(x_0,\xi_0)$ such that $A$ can be written as in~\eqref{e:basicPseudo} with 
$$
\inf_{(x,\xi)\in U} |a(x,\xi)\langle \xi\rangle^{-m}|\geq c >0.
$$
The motivation behind this definition is that semiclassical pseudo-differential operators are, up to a negligible term, micro-locally invertible on their elliptic set, as appears in the following elliptic parametrix construction. 
\begin{theorem}\mythmname{\cite[Proposition E.32]{DyZw:19}}
\label{t:ellip} 
Suppose that $A\in \Psi^{m_1}$ and $B\in \Psi_\hbar^{m_2}$ with $\WF(A)\subset \Ell(B)$. Then there exist $E_1,E_2\in \Psi_\hbar^{m_1-m_2}$ such that 
$$
A=E_1B+O(\hbar^\infty)_{\Psi^{-\infty}},\qquad A=BE_2+O(\hbar^\infty)_{\Psi^{-\infty}}.
$$
\end{theorem}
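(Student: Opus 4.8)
The plan is to reduce the statement to a symbolic construction, exploiting that $\WF(A)$ is a closed subset of the open set $\Ell(B)$ in $\overline{T^*\mathbb{R}^d}$. First I would write $B=\Op_\hbar(b)+O(\hbar^\infty)_{\Psi^{-\infty}}$ with $b\in S^{m_2}$ and fix, once and for all, nested sets $\WF(A)\subset U_0\Subset U_1\Subset \Ell(B)$ together with a cutoff $\chi\in S^0$ with $\chi\equiv 1$ on $U_0$ and $\supp\chi\subset U_1$; on $\supp\chi$ one has $|b|\geq c\langle\xi\rangle^{m_2}$, so $c_0:=\chi b^{-1}\in S^{-m_2}$ is well defined. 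By the composition and principal-symbol calculus recalled in \S\ref{app:SCA}, $\Op_\hbar(c_0)B=\Op_\hbar(\chi)+\hbar R_1$ with $R_1\in\Psi^{-1}$ represented by a symbol supported in $U_1$.

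Next I would iterate to remove the error order by order: given $c_0,\dots,c_j$ with $\Op_\hbar\big(\sum_{i\leq j}\hbar^i c_i\big)B=\Op_\hbar(\chi)+\hbar^{j+1}R_{j+1}$, where $R_{j+1}\in\Psi^{-j-1}$ is represented by a symbol supported in $U_1$, I set $c_{j+1}:=-\chi b^{-1}\,\sigma_\hbar(R_{j+1})\in S^{-m_2-j-1}$; this cancels the leading symbol of the error and produces a new error in $\hbar^{j+2}\Psi^{-j-2}$, still supported in $U_1$. Dividing by $b$ is legitimate at every stage because everything lives in $\supp\chi$. Asymptotically summing the symbols, $c\sim\sum_{j\geq 0}\hbar^j c_j$, yields $c\in S^{-m_2}$ with
$$
\Op_\hbar(c)B=\Op_\hbar(\chi)+O(\hbar^\infty)_{\Psi^{-\infty}},
$$
where the remainder is genuinely $O(\hbar^\infty)$ in the $\Psi^{-\infty}$ sense because all the $c_j$ and all the intermediate remainders stay supported in the fixed set $U_1$.

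To conclude, I would set $E_1:=A\,\Op_\hbar(c)\in\Psi^{m_1-m_2}$, so that $E_1B=A\,\Op_\hbar(c)B=A\,\Op_\hbar(\chi)+O(\hbar^\infty)_{\Psi^{-\infty}}$; and since $\chi\equiv 1$ near $\WF(A)$, the microlocal-cutoff fact $A=A\,\Op_\hbar(\chi)+O(\hbar^\infty)_{\Psi^{-\infty}}$ gives $E_1B=A+O(\hbar^\infty)_{\Psi^{-\infty}}$. For the right parametrix I would run the same construction on the other side, producing $\tilde c\in S^{-m_2}$ with $B\,\Op_\hbar(\tilde c)=\Op_\hbar(\chi)+O(\hbar^\infty)_{\Psi^{-\infty}}$, and set $E_2:=\Op_\hbar(\tilde c)\,A$, whence $BE_2=\Op_\hbar(\chi)\,A+O(\hbar^\infty)_{\Psi^{-\infty}}=A+O(\hbar^\infty)_{\Psi^{-\infty}}$ by the same argument. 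The only genuinely delicate point — everything else being routine symbol calculus — is the bookkeeping of ``microlocally near $\WF(A)$'': one must ensure that all correction symbols and all intermediate remainders stay supported in the single fixed neighbourhood $U_1$ on which $b$ is elliptic, so that the final remainders can be upgraded from ``small on a shrinking family of neighbourhoods'' to ``$O(\hbar^\infty)$ in the operator norm $H_\hbar^{-N}\to H_\hbar^N$''. Fixing the nested pair $U_0\Subset U_1$ at the outset is exactly what makes this go through.
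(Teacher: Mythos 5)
The paper does not prove this theorem; it is cited verbatim from \cite[Proposition E.32]{DyZw:19}. Your strategy is the standard one and the overall shape of the argument is right, but there is a genuine gap in the iterative step.

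The issue is the cutoff factor in $c_{j+1}:=-\chi b^{-1}\sigma_\hbar(R_{j+1})$. The remainder symbol $\sigma_\hbar(R_{j+1})$ is supported in $\supp\chi\subset U_1$, not in $\{\chi\equiv 1\}$: already at the first step, $\sigma_\hbar(R_1)$ contains a term proportional to $(\partial_\xi\chi)\,b^{-1}\,\partial_x b$, which lives precisely where $\chi$ is strictly between $0$ and $1$. Consequently $c_{j+1}b=-\chi\,\sigma_\hbar(R_{j+1})\neq-\sigma_\hbar(R_{j+1})$, and after the $(j+1)$st correction the coefficient of $\hbar^{j+1}$ is $(1-\chi)\sigma_\hbar(R_{j+1})$, which is nonzero in $U_1\setminus\{\chi=1\}$. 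The iteration therefore does not push the error to order $\hbar^{j+2}$, and the claimed conclusion $\Op_\hbar(c)B=\Op_\hbar(\chi)+O(\hbar^\infty)_{\Psi^{-\infty}}$ is false as stated; what you actually obtain is $\Op_\hbar(c)B=\Op_\hbar(\chi)+S$ with $S\in\hbar\Psi^{-1}$ and $\WF(S)\subset\overline{U_1}\setminus\{\chi=1\}^{\circ}$.

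There are two ways to repair this without changing your overall plan. One is to introduce a second cutoff $\chi_1\in S^0$ with $\chi_1\equiv 1$ on a neighbourhood of $\supp\chi$ and $\supp\chi_1\subset\Ell(B)$, and iterate with $c_{j+1}:=-\chi_1 b^{-1}\sigma_\hbar(R_{j+1})$; since $\chi_1\equiv 1$ on $\supp\sigma_\hbar(R_{j+1})\subset\supp\chi$, the cancellation $c_{j+1}b=-\sigma_\hbar(R_{j+1})$ is now exact, and an induction shows every $c_j$ and every $\sigma_\hbar(R_j)$ stays supported in $\supp\chi$, so Borel summation does give $\Op_\hbar(c)B=\Op_\hbar(\chi)+O(\hbar^\infty)_{\Psi^{-\infty}}$. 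The other is to keep your $c_{j+1}$ but drop the incorrect intermediate claim: the residual $S$ satisfies $\WF(S)\cap\WF(A)=\emptyset$ (because $\WF(A)\subset U_0\subset\{\chi=1\}^\circ$), so $AS=O(\hbar^\infty)_{\Psi^{-\infty}}$ and $E_1B=A\Op_\hbar(\chi)+AS=A+O(\hbar^\infty)_{\Psi^{-\infty}}$ still holds. Either fix recovers the theorem; as written, though, the asserted intermediate identity and the claim that each step lands in $\hbar^{j+2}\Psi^{-j-2}$ are wrong.
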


\paragraph{\textbf{Wavefront set of a tempered family of distributions}} 
We say that $u_\hbar$ is \emph{tempered} if for all $\chi\in C_c^\infty(\mathbb{R}^d)$ there exists $N>0$ such that
$$
\|\chi u\|_{H_\hbar^{-N}}<\infty. 
$$
For a tempered family of functions, $u_\hbar$  we say that $(x_0,\xi_0)\in \overline{T^*\mathbb{R}^d}$ is \emph{not} in the wavefront set of $u_\hbar$ and write $(x_0,\xi_0)\notin \WF(u_\hbar)$ if there exists $A\in \Psi^0$ with $(x_0,\xi_0)\in \Ell(A)$ such that for all $N$ there is $C_N>0$ such that 
$$
\|Au_\hbar\|_{H_\hbar^N}\leq C_N\hbar^N.
$$

\paragraph{\textbf{Semiclassical defect measures}} If $\hbar_n \rightarrow 0$, we say that a sequence  $(u_{n})_{n\geq 0} \subset L^2_{\rm loc}$ has 
semiclassical defect measure $\mu$ as $n \rightarrow \infty$ (associated to $\hbar_n$) if $\mu$ is a positive Radon measure on $T^* \mathbb R^d$
such that, as $n \rightarrow \infty$
\beq\label{eq:defectmeasure}
\tfa a\in C^\infty_c(T^*\mathbb R^d), \hspace{0.5cm}\langle a(x, \hbar_n D_x) u_n, u_n \rangle \rightarrow \int a \, d\mu.
\eeq
In addition, if $(f_{n})_{n\geq 0} \subset L^2_{\rm loc}$, we say that $u_n$ and $f_n$ have joint measure $\mu^j$ if $\mu^j$ is a Radon measure such that
\beq\label{eq:jointmeasure}
\tfa a\in C^\infty_c(T^*\mathbb R^d), \hspace{0.5cm}\langle a(x, \hbar_n D_x) u_n, \rhs_n \rangle \rightarrow \int a \, d\mu^j.
\eeq

\begin{theorem}\mythmname{\cite[Theorem 5.2]{Zworski_semi}} 
Assume that $(u_{n})_{n\geq 0} \subset L^2_{\rm loc}$ is uniformly bounded in $L^2_{\rm loc}$, that is, for any $\chi \in C^\infty_c(\mathbb R^d)$,
there exists $C>0$ such that for any $n$, $\Vert \chi u_n \Vert_{L^2} \leq C$. Then, $(u_{n})_{n\geq 0} $ has a subsequence $(u_{n_\ell})_{\ell\geq 0}$ admitting a semi-classical defect measure. If, in addition, $(f_{n})_{n\geq 0} \subset L^2$ is bounded in $L^2$ independently of $n$, $n_\ell$ can be taken such that $(u_{n_\ell})_{\ell\geq 0}$ and $(f_{n_\ell})_{\ell\geq 0}$ have a joint defect measure.
\end{theorem}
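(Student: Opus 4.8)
The plan is to realize $\mu$ as a weak-$*$ subsequential limit of the quadratic functionals $\mathcal{L}_n\colon a\mapsto\langle\operatorname{Op}_{\hbar_n}(a)u_n,u_n\rangle$ on $C_c^\infty(T^*\mathbb{R}^d)$ and then invoke the Riesz--Markov representation theorem, the positivity of the limit being supplied by the sharp G\aa rding inequality; the joint measure $\mu^j$ is produced in the same way from the sesquilinear functionals $a\mapsto\langle\operatorname{Op}_{\hbar_n}(a)u_n,f_n\rangle$. First I would fix an exhaustion $K_1\subset K_2\subset\cdots$ of $\mathbb{R}^d$ by compacts, and for each $j$ a cutoff $\chi_j\in C_c^\infty$ with $\chi_j\equiv1$ near $K_j$. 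For $a\in C_c^\infty(T^*\mathbb{R}^d)$ with $\operatorname{supp}a$ projecting into $K_j$ one has $\operatorname{Op}_\hbar(a)=\chi_j\operatorname{Op}_\hbar(a)\chi_j+O(\hbar^\infty)_{\Psi^{-\infty}}$, so by the uniform $L^2\to L^2$ boundedness of pseudodifferential operators recalled in \S\ref{app:SCA} (quantitatively, the Calder\'on--Vaillancourt bound $\|\operatorname{Op}_\hbar(a)\|_{L^2\to L^2}\le C_d\sum_{|\alpha|\le M_d}\|\partial^\alpha a\|_{L^\infty}$) together with the uniform $L^2_{\rm loc}$ bound on $(u_n)$,
$$|\mathcal{L}_n(a)|\le C_j\sum_{|\alpha|\le M_d}\|\partial^\alpha a\|_{L^\infty}+o(1).$$
Thus $(\mathcal{L}_n)$ is bounded on each Fr\'echet space $C_c^\infty(T^*K_j)$.

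Next I would choose, for each $j$, a countable subset $\mathcal{A}_j\subset C_c^\infty(T^*K_j)$ dense in the $C^{M_d}$-topology, and extract by a diagonal argument a subsequence $(n_\ell)$ along which $\mathcal{L}_{n_\ell}(a)$ converges for every $a\in\bigcup_j\mathcal{A}_j$; by the bound just displayed the convergence extends to all $a\in C_c^\infty(T^*\mathbb{R}^d)$, defining a linear functional $L(a):=\lim_\ell\mathcal{L}_{n_\ell}(a)$. To see $L$ is given by a measure, note that if $a\ge0$ the sharp G\aa rding inequality gives $\mathcal{L}_n(a)\ge-C\hbar_n\|\chi_j u_n\|_{L^2}^2\ge-C'\hbar_n$, so $L(a)\ge0$; applying the same inequality to $\|a\|_{L^\infty}\pm a$ on a fixed compact set shows $|L(a)|\le C_j\|a\|_{L^\infty}$, so $L$ extends to a positive linear functional on $C_c(T^*\mathbb{R}^d)$. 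By the Riesz--Markov theorem there is a positive Radon measure $\mu$ with $L(a)=\int a\,d\mu$, which is precisely \eqref{eq:defectmeasure} along $(u_{n_\ell})$.

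For the joint measure, under the extra hypothesis that $(f_n)$ is $L^2$-bounded, Cauchy--Schwarz and the operator-norm bound above give $|\langle\operatorname{Op}_{\hbar_n}(a)u_n,f_n\rangle|\le C_j\sum_{|\alpha|\le M_d}\|\partial^\alpha a\|_{L^\infty}$ uniformly in $n$; a further diagonal extraction within $(n_\ell)$ yields a complex-valued limit functional $L^j$. Splitting into real and imaginary parts and again using sharp G\aa rding (now on $u_n$ and $f_n$ separately, after polarization) one checks that $|L^j(a)|\le C_j\|a\|_{L^\infty}$, so Riesz--Markov for complex Radon measures produces $\mu^j$ with \eqref{eq:jointmeasure}. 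The only genuinely analytic ingredient is the sharp G\aa rding inequality, which is what upgrades a bounded weak-$*$ limit of functionals to an honest positive (hence locally finite) measure; the remaining work --- the diagonal extraction, the density argument, and the Riesz--Markov representation --- is soft, the only mild bookkeeping being the control of the $\chi_j$-localization and of the $O(\hbar^\infty)_{\Psi^{-\infty}}$ remainders uniformly on each $K_j$.
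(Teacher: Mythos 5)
Your proof is correct and is essentially the standard argument from Zworski's book (diagonal extraction over a countable family of compactly-supported test symbols, Calder\'on--Vaillancourt for the uniform bound, sharp G\aa rding for positivity, Riesz--Markov to produce the measure, and polarization for the joint measure); the paper itself does not prove this result but cites it as \cite[Theorem 5.2]{Zworski_semi}, so there is no in-paper proof to compare against. The only point worth flagging is that with the paper's properly-supported quantization \eqref{e:basicPseudo} (the $\chi_0(|x-y|)$ cutoff), the identity $\operatorname{Op}_\hbar(a)=\chi_j\operatorname{Op}_\hbar(a)\chi_j$ holds exactly once $\chi_j\equiv 1$ on a large enough set, so the $O(\hbar^\infty)_{\Psi^{-\infty}}$ remainder you mention never actually needs to act on the merely-$L^2_{\rm loc}$ sequence $u_n$ --- a detail worth making explicit since such a remainder is a priori not well-defined on $L^2_{\rm loc}$.
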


\subsection{Rough calculus}\label{sec:rough}

We need a semi-classical pseudo-differential calculus for $C^{r, \alpha}$ symbols. We collect here
the definition and properties of such operators that we use throughout the paper.
For $r\in\mathbb N$, $0<\alpha<1$ and $0\leq \rho <1$, we say that $p\in C^{r, \alpha}S^m$ if
$$
\Vert D^\beta_\xi p(\cdot, \xi)\Vert_{C^{r, \alpha}} \leq C_\beta \langle \xi \rangle^{m-|\beta|}.
$$
Moreover, we say that $B \in C^{r, \alpha}\Psi^m$ if $B = \operatorname{Op}_\hbar(b)$ with $b \in C^{r, \alpha}S^m$. 
\begin{lem}\mythmname{\cite[Lemma 3.8]{GaSpWu:20}}\label{lem:rough_calc}
For any $r\geq 0$, $0<\alpha<1$, $-r-\alpha<s<r+\alpha$, and $m \in \mathbb R$,
the map $\operatorname{Op}_{\hbar} : {C^{r, \alpha}}S^m \rightarrow \mathcal L({H^{s+m}_{\hbar}, H_{\hbar}^s})$ 
is bounded independently of $\hbar$. Moreover, for $a\in C_c^\infty$,
$$
 C^{1,\alpha}S^m\ni p\mapsto \hbar^{-1}[\operatorname{Op}_{\hbar}(p),\operatorname{Op}_{\hbar}(a)]\in \mc{L}(L^2,L^2)
$$
is bounded independently of $\hbar$.
\end{lem}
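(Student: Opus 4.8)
The plan is to establish the mapping property for $\operatorname{Op}_\hbar$ in three stages --- removing the order $m$, an $L^2\to L^2$ base case, and the full Sobolev range --- and then to deduce the commutator bound from a short asymptotic expansion.

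First I would remove the order $m$. Since $\langle\xi\rangle^{-m}$ does not depend on $x$, the Fourier multiplier $\langle\hbar D\rangle^{-m}$ composes exactly on the right, so $\operatorname{Op}_\hbar(p)\langle\hbar D\rangle^{-m}=\operatorname{Op}_\hbar\big(p(x,\xi)\langle\xi\rangle^{-m}\big)$ with $p\langle\cdot\rangle^{-m}\in C^{r,\alpha}S^0$ (same seminorms); as $\langle\hbar D\rangle^{-m}\colon H^s_\hbar\to H^{s+m}_\hbar$ is an isometry it suffices to take $m=0$ and show $\operatorname{Op}_\hbar(p)\colon H^s_\hbar\to H^s_\hbar$ uniformly in $\hbar$ for $p\in C^{r,\alpha}S^0$ and $|s|<r+\alpha$. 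Next I would perform a (paradifferential) symbol smoothing $p=p^\#+p^\flat$ with $p^\#$ a genuinely smooth symbol in the exotic class $S^0_{1,\delta}$, $\delta\in(0,1)$ fixed close to $1$ (seminorms uniform in $\hbar$), and a remainder $p^\flat\in C^{0,\alpha}S^{-(r+\alpha)\delta}$ that is only H\"older in $x$ but of strictly negative order, the $r+\alpha$ units of $x$-regularity having been exchanged for decay in $\xi$. The smooth part is controlled by the semiclassical Calder\'on--Vaillancourt bound for $S^0_{1,\delta}$ (proved exactly as the calculus recalled from \cite[Propositions E.17 and E.19]{DyZw:19}), giving $\operatorname{Op}_\hbar(p^\#)\colon H^s_\hbar\to H^s_\hbar$ for \emph{all} $s$.

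The heart of the matter is an $L^2\to L^2$ bound, uniform in $\hbar$, for $\operatorname{Op}_\hbar(q)$ with $q\in C^{0,\alpha}S^0$, $\alpha>0$; I would obtain it by a Cotlar--Stein almost-orthogonality argument applied to the dyadic-in-$\xi$ pieces $q_j:=q\,\varphi_j(\xi)$ together with a further dyadic decomposition of the $x$-Fourier transform of each $q_j$, the H\"older exponent $\alpha$ furnishing the summable off-diagonal factors $2^{-\alpha|j-k|}$. Feeding this into the smoothing --- the negative order $-(r+\alpha)\delta$ of $p^\flat$ upgrading the $L^2$ bound to $\operatorname{Op}_\hbar(p^\flat)\colon H^s_\hbar\to H^s_\hbar$ on the window $|s|<r+\alpha$ as $\delta\uparrow1$ --- completes the first assertion. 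For the commutator, $a\in C^\infty_c$ is smooth, so the expansion of $\operatorname{Op}_\hbar(p)\operatorname{Op}_\hbar(a)$ is asymptotic to all orders (only $a$ is differentiated in $x$), whereas in $\operatorname{Op}_\hbar(a)\operatorname{Op}_\hbar(p)$ one differentiates $p$ in $x$ and stops after one term at the cost of the $C^{1,\alpha}$ regularity; subtracting gives $[\operatorname{Op}_\hbar(p),\operatorname{Op}_\hbar(a)]=\tfrac{\hbar}{i}\operatorname{Op}_\hbar(\partial_\xi p\cdot\partial_x a)+\hbar\operatorname{Op}_\hbar(e)+O(\hbar^\infty)_{\Psi^{-\infty}}$ with $\partial_\xi p\cdot\partial_x a\in C^{1,\alpha}S^{m-1}$ and $e\in C^{0,\alpha}S^{m-2}$, and dividing by $\hbar$ and applying the first part of the lemma to each term finishes the proof. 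I expect the main obstacle to be precisely this base $L^2$ estimate for H\"older-in-$x$ symbols, together with the rough composition estimates that underpin both the symbol smoothing and the commutator expansion --- this is where the hypothesis $\alpha>0$ and the sharp window $|s|<r+\alpha$ are forced --- the reductions and the final assembly being routine.
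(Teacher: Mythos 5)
The paper itself does not prove this lemma; it is cited directly from \cite[Lemma 3.8]{GaSpWu:20}, so there is no in-text argument to compare against. Your symbol-smoothing route (Littlewood--Paley in $\xi$, mollification in $x$ at scale $\langle\xi\rangle^{-\delta}$) is the standard one and is the same toolkit the paper deploys nearby in Lemmas~\ref{l:roughNorm} and~\ref{l:roughApprox} (both from \cite[Ch.~13]{Ta:11}), so the first assertion is in good shape modulo the usual bookkeeping: $p^\flat$ in fact retains the full $C^{r,\alpha}$ seminorms at order $0$ --- only the $L^\infty$ scale drops to order $-(r+\alpha)\delta$ --- and reaching the two-sided window $|s|<r+\alpha$ also needs a duality step for $s<0$ on top of the gain in order.

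The commutator step as written, however, does not close, and the reason is structural rather than cosmetic. You expand $[\operatorname{Op}_\hbar(p),\operatorname{Op}_\hbar(a)]=\tfrac{\hbar}{i}\operatorname{Op}_\hbar(\partial_\xi p\cdot\partial_x a)+\hbar\operatorname{Op}_\hbar(e)+O(\hbar^\infty)_{\Psi^{-\infty}}$, assign $\partial_\xi p\cdot\partial_x a\in C^{1,\alpha}S^{m-1}$ and $e\in C^{0,\alpha}S^{m-2}$, and then ``apply the first part of the lemma to each term.'' But the first part applied to an order-$(m-1)$ symbol yields $H^{s+m-1}_\hbar\to H^s_\hbar$, not $L^2\to L^2$; for $m>1$ (and $m=2$ is precisely what Lemma~\ref{l:commutator} feeds in) this is strictly weaker than the claim, and indeed without further input the claim is simply false: take $a=\chi(x)$, $p=\langle\xi\rangle^m$, and note that $\hbar^{-1}[\langle\hbar D\rangle^m,\chi]$ is an order-$(m-1)$ operator that is unbounded on $L^2$. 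The missing, and essential, observation is that here $a\in C_c^\infty(T^*\mathbb{R}^d)$ is compactly supported \emph{in $\xi$} as well as in $x$, so $\partial_x a$, $\partial_\xi a$, and hence $\{p,a\}$ and every Taylor-remainder symbol have compact $\xi$-support and lie in $C^{0,\alpha}S^{-\infty}$; only then does the first part deliver a uniform $L^2\to L^2$ bound for each piece. Two secondary inaccuracies: the one-step Taylor remainder from $\operatorname{Op}_\hbar(a)\operatorname{Op}_\hbar(p)$ is only $O(\hbar^{1+\alpha})$ in operator norm and cannot be written as $\hbar\operatorname{Op}_\hbar(e)+O(\hbar^\infty)_{\Psi^{-\infty}}$ with $e$ a bona-fide symbol (a $C^{1,\alpha}$ symbol has no $\partial_x^2 p$); and the dropped half of the Poisson bracket, $-\partial_\xi a\cdot\partial_x p$, sits in $C^{0,\alpha}S^{m-1}$, not $S^{m-2}$.
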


\begin{lem}
\label{l:commutator}
Let $0<\alpha<1$ and $Q = \operatorname{Op}_{\hbar}(q_0)+\hbar \operatorname{Op}_{\hbar}(q_1)$  with $q_0\in C^{1,\alpha}S^2$ and $q_1\in C^{0,\alpha}S^0$ and suppose that $u$ has defect measure $\mu$. Then for $a,b\in C_c^\infty$, 
$$
i\langle \hbar^{-1}\operatorname{Op}_{\hbar}(b)[\operatorname{Op}_{\hbar}(a),Q]u,u\rangle \to \mu(bH_{q_0}a),\qquad -i\langle u,\hbar^{-1}\operatorname{Op}_{\hbar}(b)[\operatorname{Op}_{\hbar}(a),Q]u\rangle \to \overline{\mu(bH_{q_0}a)}
$$
\end{lem}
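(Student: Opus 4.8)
\textbf{Proof proposal for Lemma~\ref{l:commutator}.}

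The plan is to reduce the commutator statement to the smooth-symbol case via the rough-calculus machinery of Lemma~\ref{lem:rough_calc} and then compute the limit with the standard defect-measure symbol calculus. First I would split $Q = \Op_\hbar(q_0) + \hbar\Op_\hbar(q_1)$ and handle the two pieces separately. For the $\hbar\Op_\hbar(q_1)$ term, I claim its contribution to $\hbar^{-1}\langle\Op_\hbar(b)[\Op_\hbar(a),Q]u,u\rangle$ vanishes as $\hbar\to0$: indeed $\hbar^{-1}[\Op_\hbar(a),\hbar\Op_\hbar(q_1)] = [\Op_\hbar(a),\Op_\hbar(q_1)]$, and since $a\in C_c^\infty$ and $q_1\in C^{0,\alpha}S^0$, Lemma~\ref{lem:rough_calc} (the second assertion applies with the roles adapted, or one uses that $[\Op_\hbar(a),\cdot]$ gains an $\hbar$) shows this commutator is $O(\hbar)_{L^2\to L^2}$; composing with $\Op_\hbar(b)\in\Psi^0$ and pairing against the $L^2_{\loc}$-bounded family $u$ gives $O(\hbar)\to0$. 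So only $\Op_\hbar(q_0)$ matters.

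Next, for the principal piece, I would mollify $q_0$ in the $x$-variable: let $q_0^\eps := q_0 * \phi_\eps$ (convolution in $x$ only, with a standard mollifier), so $q_0^\eps\in S^2$ and, by the $C^{1,\alpha}$ bound on $q_0$, $\|q_0^\eps - q_0\|_{C^{1}S^2}\to 0$ and in fact $\Op_\hbar(q_0^\eps) - \Op_\hbar(q_0) = o_\eps(1)$ uniformly in $\hbar$ in the relevant operator norms (this is exactly the kind of estimate underlying Lemma~\ref{lem:rough_calc} and \eqref{eq:res_reg1}). Since $a\in C_c^\infty$, $\hbar^{-1}[\Op_\hbar(a),\Op_\hbar(q_0)]$ and $\hbar^{-1}[\Op_\hbar(a),\Op_\hbar(q_0^\eps)]$ are both bounded on $L^2$ uniformly in $\hbar$ (Lemma~\ref{lem:rough_calc}), and their difference is $o_\eps(1)$ uniformly in $\hbar$; composing with $\Op_\hbar(b)$ and pairing against $u$, the error is $o_\eps(1)$ uniformly in $\hbar$. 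For the smooth symbol $q_0^\eps$ the standard semiclassical calculus gives $\sigma_\hbar(\hbar^{-1}[\Op_\hbar(a),\Op_\hbar(q_0^\eps)]) = \tfrac{1}{i}\{a,q_0^\eps\} = \tfrac{1}{i}(-H_{q_0^\eps}a)$, hence $\Op_\hbar(b)\cdot \hbar^{-1}[\Op_\hbar(a),\Op_\hbar(q_0^\eps)]$ has principal symbol $\tfrac{1}{i} b\,(-H_{q_0^\eps}a) = \tfrac{i}{1} b H_{q_0^\eps}a$ up to sign bookkeeping; therefore, by the definition of the defect measure \eqref{eq:defectmeasure}, $i\langle \hbar^{-1}\Op_\hbar(b)[\Op_\hbar(a),\Op_\hbar(q_0^\eps)]u,u\rangle \to \mu(b H_{q_0^\eps}a)$. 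Finally, letting $\eps\to0$: $H_{q_0^\eps}a \to H_{q_0}a$ pointwise and with a uniform compactly-supported bound (as in the proof of Lemma~\ref{lem:inva_C2}, since $q_0\in C^{1,\alpha}$ so $\partial_x q_0$ is continuous and $a$ has compact support), so $\mu(b H_{q_0^\eps}a)\to\mu(b H_{q_0}a)$ by dominated convergence. Combining the three limits (over $\hbar$, then $\eps$, with the errors controlled uniformly) yields $i\langle \hbar^{-1}\Op_\hbar(b)[\Op_\hbar(a),Q]u,u\rangle \to \mu(b H_{q_0}a)$. The second assertion is just the complex conjugate of the first, using that $\langle Tu,u\rangle$ and $\langle u,Tu\rangle$ are conjugate and that $b,a$ are real-valued so $\overline{bH_{q_0}a}$ sits correctly relative to the real measure $\mu$.

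The main obstacle is the bookkeeping of \emph{uniformity}: I must ensure that the mollification error $\Op_\hbar(q_0^\eps)-\Op_\hbar(q_0)$, after taking the commutator with $\Op_\hbar(a)$ and dividing by $\hbar$, is genuinely $o_\eps(1)$ in $\mathcal L(L^2,L^2)$ \emph{uniformly in $\hbar$} — a naive estimate would only give $O(\hbar^{-1})\cdot o_\eps(1)$, which is useless. The resolution is that the commutator with a smooth symbol already extracts a factor $\hbar$ at the symbol level, and the $C^{1,\alpha}\to C^{1}$ (really $C^{0,\alpha}$ on the $x$-gradient) convergence of $q_0^\eps$ is exactly what Lemma~\ref{lem:rough_calc}'s second statement is designed to exploit; this is the one place where the $C^{1,\alpha}$ hypothesis (rather than merely $C^1$) is used. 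Everything else is routine application of results already in the excerpt.
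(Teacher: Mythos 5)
Your overall strategy matches the paper's: mollify the rough symbols, apply the smooth semiclassical calculus and defect-measure convergence to the mollified operator, and take the iterated limit $\hbar\to0$ then $\eps\to0$. But there are two related gaps, both at exactly the point you flag as ``the main obstacle,'' and the resolution you offer does not actually close them.

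First, your dismissal of the $q_1$ term is unjustified. You invoke the second assertion of Lemma~\ref{lem:rough_calc} (or ``one uses that $[\Op_\hbar(a),\cdot]$ gains an $\hbar$'') to conclude $[\Op_\hbar(a),\Op_\hbar(q_1)]=O(\hbar)_{L^2\to L^2}$, but that assertion requires $p\in C^{1,\alpha}S^m$, whereas $q_1\in C^{0,\alpha}S^0$. With only Hölder regularity, the commutator does not automatically gain a full power of $\hbar$. The paper does not claim this; it mollifies $q_1$ as well and uses the specific estimate
\begin{equation*}
\bigl\|[\hbar\Op_\hbar(q_1-q_{1,\eps}),\Op_\hbar(a)]u\bigr\|_{L^2}\le C\hbar\,\eps^{\alpha/2}\|u\|_{L^2}+O_\eps(\hbar^2)
\end{equation*}
from \cite[Equations 3.8--3.9]{GaSpWu:20}, so that the $q_1$ contribution to $\hbar^{-1}\langle\cdots\rangle$ is $O(\eps^{\alpha/2})+O_\eps(\hbar)$, which vanishes only after sending $\hbar\to0$ and then $\eps\to0$.

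Second, your key claim that $\hbar^{-1}\bigl[\Op_\hbar(a),\Op_\hbar(q_0-q_0^\eps)\bigr]=o_\eps(1)$ uniformly in $\hbar$ is not delivered by the lemma you cite. Lemma~\ref{lem:rough_calc}'s second assertion says the map $C^{1,\alpha}S^m\ni p\mapsto\hbar^{-1}[\Op_\hbar(p),\Op_\hbar(a)]\in\mathcal{L}(L^2,L^2)$ is \emph{bounded} uniformly in $\hbar$; applying it to $p=q_0-q_0^\eps$ gives a bound proportional to $\|q_0-q_0^\eps\|_{C^{1,\alpha}S^2}$, which does \emph{not} tend to zero as $\eps\to0$ (mollification of a $C^{1,\alpha}$ function converges in $C^1$ but not in $C^{1,\alpha}$). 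What one actually needs is the sharper quantitative bound $\|[\Op_\hbar(q_0-q_{0,\eps}),\Op_\hbar(a)]u\|_{L^2}\le C\hbar\,\eps\|u\|_{L^2}$ of \cite[Equation 3.9]{GaSpWu:20}, which does exploit the $C^{1,\alpha}\to C^1$ convergence of the mollified gradients. Your final paragraph correctly identifies this as the crux, but asserting that Lemma~\ref{lem:rough_calc}'s second statement ``is exactly what's designed to exploit'' this is not right: that lemma gives uniform boundedness, not smallness. Without citing or re-deriving the quantitative commutator estimate, the crucial uniformity step is unsupported. Everything else --- the elliptic/defect-measure manipulation for the smooth part and the dominated-convergence passage in $\eps$ --- is fine and parallels the paper.
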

\begin{proof}
Let $\psi \in C^\infty_c(\mathbb R)$ be such that $\psi = 1$ on $[-2, 2]$, and for $\epsilon >0$ we define
$$
q_{i, \epsilon}(x, \xi) := (\psi(\epsilon |D_x|) q_i)(x, \xi),
$$
where $q_0 \in S^2$ and $q_1 \in S^1$, and
\begin{equation}
\label{e:qeps}
Q_\epsilon := \operatorname{Op}_{\hbar} (q_{0,\epsilon}) + \hbar \operatorname{Op}_{\hbar} (q_{1,\epsilon}), \hspace{0.5cm} \tilde q_\epsilon := \lim_{\hbar\rightarrow 0} q_{0,\epsilon}.
\end{equation}
By~\cite[Equations 3.8 and 3.9]{GaSpWu:20},
$$
\begin{cases} 
\Vert [\hbar\operatorname{Op}_{\hbar}(q_1 - q_{1, \epsilon}), \operatorname{Op}_{\hbar}(a)u \Vert_{L^2} \leq C\hbar\epsilon^{\frac{\alpha}{2}}\Vert u \Vert_{L^2} + O_{\epsilon}(\hbar^2), \\ 
\Vert [\operatorname{Op}_\hbar(q_0 - q_{0, \epsilon}),\operatorname{Op}_{\hbar}(a)]u \Vert_{L^2} \leq C\hbar\epsilon \Vert u \Vert_{L^2}.
\end{cases} 
$$
Therefore,
\begin{equation} \label{eq:inva_approx1}
\Big| \hbar^{-1} \langle [\operatorname{Op}_{\hbar}(a), Q-Q_\e] u, u \rangle u,  \Big|+\Big|\langle u, \hbar^{-1} \langle [\operatorname{Op}_{\hbar}(a), Q-Q_\e] u \rangle u,  \Big| \leq C \epsilon^{\frac {\alpha}{2}} + O_\epsilon(\hbar).
\end{equation}
On the other hand, since, for any $T, U\in \Psi$, 
\begin{equation} \label{eq:symb_com}
\hbar^{-1} \sigma_\hbar([T, U]) = -i \{ \sigma_\hbar(T), \sigma_\hbar(U) \}, 
\end{equation}
we have that, as $\hbar\rightarrow 0$,
$$
i\hbar^{-1} \langle \operatorname{Op}_{\hbar}(b)[\operatorname{Op}_{\hbar}(a), Q_\epsilon] u, u \rangle \rightarrow \mu(bH_{\tilde q_\epsilon} a),\qquad -i\hbar^{-1} \langle u,  \operatorname{Op}_{\hbar}(b)[\operatorname{Op}_{\hbar}(a), Q_\epsilon] u \rangle \rightarrow \overline{\mu(H_{\tilde q_\epsilon} a)}.
$$
Therefore, sending $h \rightarrow 0$ in (\ref{eq:inva_approx1}) we obtain, by the above,
$$
\Big| i\lim_{\hbar\rightarrow 0} \hbar^{-1} \langle \operatorname{Op}_{\hbar}(b)[\operatorname{Op}_{\hbar}(a), Q]u, u \rangle-  \mu(bH_{\tilde q_\epsilon} a) \Big| +\Big| -i\lim_{\hbar\rightarrow 0} \hbar^{-1} \langle u, \operatorname{Op}_{\hbar}(b)[\operatorname{Op}_{\hbar}(a), Q]u \rangle-  \overline{\mu(bH_{\tilde q_\epsilon} a)} \Big| \leq C \epsilon^{\frac{\alpha}{2}}.
$$
Finally, since $q_0 \in C^{1,\alpha}S^2$ uniformly in $\hbar$, $H_{\tilde q_\epsilon} \rightarrow H_{ q_0}$. Sending $\epsilon \rightarrow 0$ and applying the dominated convergence theorem then proves the lemma.
\end{proof}

\begin{lem}
\label{l:roughNorm}
Suppose $0\leq \delta<1$ and
$$
|D_{\xi}^\beta a|\leq C_\beta \hbar^{(r+\alpha)\delta}\langle \xi\rangle^{m-|\beta|-(r+\alpha)\delta},\qquad \|D_{\xi}^\beta a\|_{C^{r,\alpha}_x}\leq C_\beta \langle \xi\rangle^{m-|\beta|}.
$$
Then, $\operatorname{Op}_{\hbar}(a):H_{\hbar}^{m}\to L^2$, and
$$
\|\operatorname{Op}_{\hbar}(a)\|_{H_{\hbar}^{m}\to L^2}\leq C\hbar^{(r+\alpha)\delta}.
$$
\end{lem}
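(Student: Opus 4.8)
The plan is to bypass (semiclassical) Calderón--Vaillancourt entirely -- for a symbol whose $x$-regularity is only $C^{r,\alpha}$ that estimate is too lossy, since each $x$-derivative costs a negative power of $\hbar$ with no compensating gain, so it would only produce $O(\hbar^{-\delta N_d})$. Instead I would argue directly with the Schwartz kernel, exploiting that all the non-stationary-phase integration takes place in the $\xi$ variable, where $a$ is smooth.

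\emph{Step 1 (reduction to $m=0$).} Write $\sigma:=(r+\alpha)\delta\geq 0$. Since $\langle\xi\rangle^{-m}$ depends on $\xi$ alone, the Fourier multiplier $\langle\hbar D\rangle^{-m}$ composes \emph{exactly} on the right with a left-quantized symbol, so $\operatorname{Op}_\hbar(a)\langle\hbar D\rangle^{-m}=\operatorname{Op}_\hbar(a\langle\xi\rangle^{-m})+O(\hbar^\infty)_{L^2\to L^2}$, the error coming only from the $\chi_0(|x-y|)$ cutoff in the definition of $\operatorname{Op}_\hbar$ (the discarded piece has a kernel supported in $|x-y|\gtrsim 1$ and is $O(\hbar^\infty)$ in every Sobolev operator norm by repeated integration by parts in $\xi$). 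By the Leibniz rule $\widetilde a:=a\langle\xi\rangle^{-m}$ satisfies $|D_\xi^\beta\widetilde a|\leq C_\beta\hbar^{\sigma}\langle\xi\rangle^{-|\beta|-\sigma}$, and $\|\operatorname{Op}_\hbar(a)\|_{H_\hbar^m\to L^2}=\|\operatorname{Op}_\hbar(a)\langle\hbar D\rangle^{-m}\|_{L^2\to L^2}$. Hence it suffices to prove $\|\operatorname{Op}_\hbar(\widetilde a)\|_{L^2\to L^2}\leq C\hbar^{\sigma}$, and I rename $\widetilde a$ as $a$.

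\emph{Step 2 (kernel bound and Schur test).} The operator $\operatorname{Op}_\hbar(a)$ has Schwartz kernel $K(x,y)=(2\pi\hbar)^{-d}\chi_0(|x-y|)\int e^{i(x-y)\cdot\xi/\hbar}a(x,\xi)\,d\xi$. Using $(x-y)^\alpha e^{i(x-y)\cdot\xi/\hbar}=(\hbar D_\xi)^\alpha e^{i(x-y)\cdot\xi/\hbar}$, integrating by parts in $\xi$ arbitrarily many times (legitimate since $a$ is smooth in $\xi$ with all derivatives controlled), and using $|D_\xi^\alpha a(x,\xi)|\leq C_\alpha\hbar^{\sigma}\langle\xi\rangle^{-|\alpha|-\sigma}$ -- which is integrable in $\xi$ once $|\alpha|+\sigma>d$, uniformly in $x$ -- gives, for every $N$, $|K(x,y)|\leq C_N\hbar^{-d}\hbar^{\sigma}\langle (x-y)/\hbar\rangle^{-N}$; near the diagonal one uses the $\langle\cdot\rangle^{-N}$ form of the integration-by-parts identity to avoid a spurious blow-up. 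Taking $N>d$, $\sup_x\int|K(x,y)|\,dy\leq C\hbar^{\sigma}\int\langle z\rangle^{-N}\,dz\leq C\hbar^{\sigma}$, and symmetrically $\sup_y\int|K(x,y)|\,dx\leq C\hbar^{\sigma}$; Schur's lemma yields $\|\operatorname{Op}_\hbar(a)\|_{L^2\to L^2}\leq C\hbar^{\sigma}$. Combined with Step 1 this is the claim, $\|\operatorname{Op}_\hbar(a)\|_{H_\hbar^m\to L^2}\leq C\hbar^{(r+\alpha)\delta}$.

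\emph{Where the difficulty (isn't) and what to watch.} There is no serious obstacle: the heart of the proof is a routine non-stationary-phase estimate plus the Schur test, and in fact only the pointwise bounds on the $\xi$-derivatives of $a$ are used -- the $C^{r,\alpha}$-in-$x$ hypothesis is not needed for $L^2\to L^2$ boundedness, precisely because the off-diagonal decay of $K$ is generated entirely by integration by parts in $\xi$. The only points requiring a modicum of care are the bookkeeping of the cutoff error and of $\langle\hbar D\rangle^{-m}$ in Step 1 (trivial since only $m\geq 0$ occurs in the applications) and the near-diagonal behaviour of $K$ in Step 2; neither is more than standard. (If one preferred to use both hypotheses, one could instead Littlewood--Paley decompose $a$ in $x$ at scales $\leq\hbar^{-\delta}$ and $>\hbar^{-\delta}$, summing the high-frequency pieces absolutely via the $C^{r,\alpha}$ bound, but the kernel argument above is cleaner and avoids a spurious logarithm.)
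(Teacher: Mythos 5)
Your approach is genuinely different from the paper's, which conjugates by the unitary rescaling $u\mapsto \hbar^{d\delta/2}u(\hbar^{\delta}\cdot)$ to reduce to $\hbar=1$ and then invokes Taylor's rough Calder\'on--Vaillancourt theorem \cite[Ch.~13, Prop.~9.10]{Ta:11}, which uses both the $\xi$-derivative and the $C^{r,\alpha}_x$ hypotheses. Your kernel/Schur argument is more self-contained, but the central kernel estimate is not correct as stated. After Step~1 the hypothesis is $|D_\xi^\beta a|\leq C_\beta\hbar^\sigma\langle\xi\rangle^{-|\beta|-\sigma}$ with $\sigma=(r+\alpha)\delta$, so that (up to the $\chi_0$ cutoff) $K(x,y)=\hbar^{\sigma-d}\widehat b\big(x,(x-y)/\hbar\big)$, where $\widehat b(x,\cdot)$ is the $\xi$-Fourier transform of the uniformly-in-$x$ order $-\sigma$ symbol $b:=\hbar^{-\sigma}a$. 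Since $\sigma<r+\alpha$, typically $\sigma<d$, so $b(x,\cdot)\notin L^1_\xi$ and $\widehat b(x,z)$ genuinely blows up like $|z|^{-(d-\sigma)}$ as $z\to 0$. The $\langle z\rangle^{-N}$ integration-by-parts identity does \emph{not} remove this: $(1-\hbar^2\Delta_\xi)^N a$ contains $a$ itself, which is not $\xi$-integrable, so your claimed bound $|K(x,y)|\leq C_N\hbar^{-d+\sigma}\langle(x-y)/\hbar\rangle^{-N}$ is false in the region $|x-y|\lesssim\hbar$. The correct pointwise bound, obtained by splitting the $\xi$-integral at $|\xi|\sim\hbar/|x-y|$, is $|K(x,y)|\lesssim\hbar^{\sigma-d}\min\big(|(x-y)/\hbar|^{-(d-\sigma)},|(x-y)/\hbar|^{-N}\big)$.

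The overall plan can be repaired for $\delta>0$: the singularity $|z|^{-(d-\sigma)}$ is locally integrable precisely when $\sigma>0$, so both Schur integrals are $O(\hbar^\sigma/\sigma)$, and in that regime your observation that the $C^{r,\alpha}_x$ hypothesis is not needed is in fact correct. But this breaks down at the endpoint $\delta=0$, which the lemma allows: then $\sigma=0$, the singularity is $|z|^{-d}$, the Schur integrals diverge, and the $x$-regularity becomes essential --- that endpoint \emph{is} the rough Calder\'on--Vaillancourt statement of Lemma~\ref{lem:rough_calc}, which an argument using only $\xi$-derivative bounds cannot recover. Moreover the constant your argument produces degenerates like $1/\delta$ as $\delta\downarrow 0$, whereas the paper's reduction to Taylor's theorem is uniform over $\delta\in[0,1)$. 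In short: fix the near-diagonal kernel bound, and note explicitly that the argument only covers (and only needs, in the paper's applications) $\delta>0$; the $\delta=0$ case must be handled separately, e.g.\ by Lemma~\ref{lem:rough_calc}.
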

\begin{proof}
It is enough to check this for $m=\delta (r+\alpha)$. For this, we unitarily transform to the case $\hbar=1$. Let $T_{\hbar}u(x)=\hbar^{d\delta/2}u(\hbar^\delta x)$. Then,  $T_{\hbar}:L^2\to L^2$ is unitary and $T\operatorname{Op}_{\hbar}(a)T^*= \operatorname{Op}_1(a_{\hbar})$ with
$$
a_{\hbar}(x,\xi)= a(\hbar^\delta x,\hbar^{1-\delta}). 
$$
It is now easy to check that 
$$
|D_{\xi}^\beta a_{\hbar}|\leq C_\beta \hbar^{\delta(r+\alpha)}\langle \xi\rangle^{-|\beta|},\qquad \|D_{\xi}^\beta a\|_{C^{r,\alpha}}\leq C_\beta \hbar^{\delta r}\langle \xi\rangle ^{\delta |\alpha|-\beta}.
$$
Therefore, the lemma follows from~\cite[Chapter 13, Proposition 9.10]{Ta:11}.
\end{proof}

\begin{lem}
\label{l:roughApprox}
Suppose that $a\in C^{r,\alpha}S^m$. Then there is $a_{\hbar}$ satisfying
$$
\begin{gathered}
|\partial_x^\gamma \partial_\xi^\beta a_{\hbar}(x,\xi)|\leq C_{\gamma \beta} \hbar^{-\delta\gamma }\langle \xi\rangle ^{m-|\beta|+\delta \gamma},\\
|D_\xi^\beta (a-a_\hbar)|\leq \hbar^{(r+\alpha)\delta}\langle \xi\rangle^{m-|\beta|-(r+\alpha)\delta},\quad \|D_{\xi}^\beta(a-a_{\hbar})(\cdot,\xi)\|_{C^{r,\alpha}}\leq C_\beta \langle \xi\rangle ^{m-|\beta|}.
\end{gathered}
$$
\end{lem}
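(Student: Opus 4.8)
\textbf{Proof proposal for Lemma \ref{l:roughApprox}.}

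The plan is to obtain $a_\hbar$ by mollifying $a$ in the $x$-variable at the $\xi$-dependent frequency scale $\lambda_\hbar(\xi):=\hbar^{-\delta}\langle\xi\rangle^{\delta}$. Fix $\chi\in C_c^\infty(\mathbb R^d)$ with $\chi\equiv 1$ near $0$ and $0\le\chi\le1$, and let $\phi:=(2\pi)^{-d}\int e^{iy\cdot\eta}\chi(\eta)\,d\eta$, a Schwartz function with $\int\phi=1$. Define
$$
a_\hbar(x,\xi):=\big(\chi(D_x/\lambda_\hbar(\xi))\,a(\cdot,\xi)\big)(x)=\int_{\mathbb R^d}\lambda_\hbar(\xi)^{d}\,\phi\big(\lambda_\hbar(\xi)(x-y)\big)\,a(y,\xi)\,dy.
$$
Since $\|\lambda^{d}\phi(\lambda\,\cdot\,)\|_{L^1}=\|\phi\|_{L^1}$ is independent of $\lambda$, Young's inequality shows that convolution against this kernel is bounded, uniformly in $\hbar$ and $\xi$, on $L^\infty$ and on $C^{r,\alpha}$; in particular $a_\hbar(\cdot,\xi)\in C^\infty(\mathbb R^d)$, $\|a_\hbar(\cdot,\xi)\|_{L^\infty}\le C\langle\xi\rangle^m$, and $\|a_\hbar(\cdot,\xi)\|_{C^{r,\alpha}}\le C\langle\xi\rangle^m$. (The plan is to carry out everything below by hand with these rescaled convolution kernels, exactly paralleling the reduction to $\hbar=1$ in the proof of Lemma \ref{l:roughNorm}.)

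Next I would verify the exotic bounds for $a_\hbar$. For pure $x$-derivatives, $\partial_x^\gamma a_\hbar(\cdot,\xi)$ is the Fourier multiplier with symbol $(i\eta)^\gamma\chi(\eta/\lambda_\hbar(\xi))$ applied to $a(\cdot,\xi)$; its convolution kernel is $\lambda_\hbar(\xi)^{|\gamma|}$ times a fixed rescaled Schwartz function, so $\|\partial_x^\gamma a_\hbar(\cdot,\xi)\|_{L^\infty}\le C_\gamma\lambda_\hbar(\xi)^{|\gamma|}\|a(\cdot,\xi)\|_{L^\infty}\le C_\gamma\hbar^{-\delta|\gamma|}\langle\xi\rangle^{m+\delta|\gamma|}$. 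For $\xi$-derivatives, write $\Lambda_\hbar^\xi:=\chi(D_x/\lambda_\hbar(\xi))$ and use $\partial_{\xi_j}(\Lambda_\hbar^\xi g)=\Lambda_\hbar^\xi(\partial_{\xi_j}g)+(\partial_{\xi_j}\Lambda_\hbar^\xi)g$, where (since $|\partial_\xi^{\beta'}\lambda_\hbar|\le C_{\beta'}\lambda_\hbar\langle\xi\rangle^{-|\beta'|}$) each derivative landing on the cutoff produces a Fourier multiplier still supported in $\{|\eta|\lesssim\lambda_\hbar(\xi)\}$ whose rescaled size gains a factor $\langle\xi\rangle^{-1}$. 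Iterating, $\partial_x^\gamma\partial_\xi^\beta a_\hbar$ is a finite sum of terms obtained by applying to $\partial_\xi^{\beta_2}a$ (with $\beta_1+\beta_2=\beta$) a multiplier supported in $\{|\eta|\lesssim\lambda_\hbar(\xi)\}$ of $L^1$-kernel-size $\lesssim\langle\xi\rangle^{-|\beta_1|}$, and then $\partial_x^\gamma$; each such term is bounded by $C\lambda_\hbar(\xi)^{|\gamma|}\langle\xi\rangle^{-|\beta_1|}\langle\xi\rangle^{m-|\beta_2|}=C\hbar^{-\delta|\gamma|}\langle\xi\rangle^{m-|\beta|+\delta|\gamma|}$, giving the first claimed bound.

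Finally, for the difference $a-a_\hbar=(I-\Lambda_\hbar^\xi)a=:\widetilde\Lambda_\hbar^\xi a$: the Hölder bound $\|D_\xi^\beta(a-a_\hbar)\|_{C^{r,\alpha}}\le C_\beta\langle\xi\rangle^{m-|\beta|}$ is immediate, since $a\in C^{r,\alpha}S^m$ by hypothesis and $a_\hbar\in C^{r,\alpha}S^m$ uniformly by the previous paragraph's bookkeeping applied in the $C^{r,\alpha}$ rather than $L^\infty$ norm. For the gain, $\widetilde\Lambda_\hbar^\xi$ is a Fourier multiplier supported in $\{|\eta|\gtrsim\lambda_\hbar(\xi)\}$, and a function of class $C^{r,\alpha}$ has the $L^\infty$-size of its frequencies $\ge\lambda$ controlled by $\lambda^{-(r+\alpha)}$ times its $C^{r,\alpha}$-norm (Littlewood--Paley, cf.\ \cite[Theorem 13.8.3]{Ta:11}); hence $\|\widetilde\Lambda_\hbar^\xi(\partial_\xi^\beta a)\|_{L^\infty}\le C_\beta\lambda_\hbar(\xi)^{-(r+\alpha)}\langle\xi\rangle^{m-|\beta|}$. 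The remaining terms in $D_\xi^\beta(\widetilde\Lambda_\hbar^\xi a)$, where at least one $\xi$-derivative hits the cutoff, carry multipliers \emph{supported in the annulus} $\{|\eta|\sim\lambda_\hbar(\xi)\}$ with an extra $\langle\xi\rangle^{-1}$ per such derivative, so they inherit the same frequency localisation and are likewise $\lesssim\lambda_\hbar(\xi)^{-(r+\alpha)}\langle\xi\rangle^{m-|\beta|}=\hbar^{(r+\alpha)\delta}\langle\xi\rangle^{m-|\beta|-(r+\alpha)\delta}$, which is the remaining bound. I expect the only genuinely delicate point to be exactly this last bookkeeping step: tracking the $\xi$-derivatives of the $\xi$-dependent regularisation scale $\lambda_\hbar(\xi)$ and checking that the commutator terms so produced retain the frequency localisation to $\{|\eta|\gtrsim\lambda_\hbar(\xi)\}$ (equivalently to the annulus) that is responsible for the decay $\lambda_\hbar(\xi)^{-(r+\alpha)}$; the rest is Young's inequality and rescaling.
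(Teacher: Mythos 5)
Your proposal is correct, and it takes a slightly different route from the paper. The paper mollifies $a$ in $x$ at the same $\xi$-dependent scale $\hbar^{-\delta}\langle\xi\rangle^\delta$, but realises this through a dyadic Littlewood--Paley decomposition of $\xi$-space: it multiplies by a dyadic partition $\varphi_j^2(\xi)$ and applies, on the $j$-th block $\{\langle\xi\rangle\sim 2^j\}$, a low-pass filter $\varphi_0^2(\hbar^\delta 2^{-j\delta}|D_x|)$ with a \emph{constant} cutoff scale per block; $\xi$-derivatives then only ever land on the dyadic bump functions, which automatically absorb $\langle\xi\rangle^{-1}$ per derivative, and the entire bookkeeping is outsourced to~\cite[Chapter~13, Proposition~9.9]{Ta:11}. (The paper's proof writes $\rho$ where the statement writes $\delta$; these are the same parameter.) You instead keep a continuously $\xi$-varying cutoff $\chi(D_x/\lambda_\hbar(\xi))$ and pay for it by differentiating the scale $\lambda_\hbar(\xi)=\hbar^{-\delta}\langle\xi\rangle^\delta$ directly. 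The trade is exactly the one you flag at the end: the dyadic version makes the $\xi$-derivative bookkeeping trivial at the cost of introducing a sum (harmless, since only $O(1)$ terms overlap at each $\xi$), while your version avoids the sum but requires the observation---which you make correctly---that every $\xi$-derivative that hits $\chi(\eta/\lambda_\hbar(\xi))$ produces a multiplier supported in the annulus $\{|\eta|\sim\lambda_\hbar(\xi)\}$ (since $\nabla\chi$, not $\chi$, is involved) together with a gain of $\langle\xi\rangle^{-1}$, so that the $C^{r,\alpha}$-to-high-frequency decay $\lambda_\hbar^{-(r+\alpha)}$ still applies to those commutator terms. With the additional (correct) observations that convolution against an $L^1$-normalised Schwartz kernel is uniformly bounded on $L^\infty$ and on $C^{r,\alpha}$, and that $C^{r,\alpha}=C_*^{r,\alpha}$ for $0<\alpha<1$ so the Littlewood--Paley bound for frequencies $\gtrsim\lambda$ holds, your argument delivers all three estimates. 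Both approaches are standard; yours is self-contained where the paper's is a two-line appeal to Taylor.
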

\begin{proof}
Let $\varphi_0\in C_c^\infty(-1,1)$, $\varphi\in C_c^\infty(\frac{1}{2},2)$ such that 
$$
\varphi^2_0(|s|)+\sum_{j\geq 0} \varphi^2(2^{-j}|s|)\equiv 1.
$$ 
Then put
$$
a_{\hbar}(x,\xi)=(\varphi^2_0(\hbar^\rho |D_x|)a)(x,\xi)\varphi^2_0(\xi)+\sum_{j\geq 0}(\varphi^2_0(\hbar^\rho 2^{-j\rho}|D_x|)a)(x,\xi)\varphi^2_j(\xi).
$$
The estimates now follow as in the proof of~\cite[Chapter 13, Proposition 9.9]{Ta:11}.
\end{proof} 

\section{Properties of $\Phi_\theta(r)$}\label{app:Phi}
\begin{proof}[Proof of Lemma~\ref{lem:Phi}]
We first note that, using the principle square root,
$$
\Big\{ z\in \mathbb{C}\,:\, \Im \sqrt{z}=a\Big\}:=\Big\{ \frac{y^2}{4a^2}-a^2+iy\,:\, y\in \mathbb{R}\Big\}=:\mc{Z}_a.
$$
Therefore, if $z$ lies to the left of $\mc{Z}_a$, then $\Im \sqrt{z}>a$.

We are interested in 
$$
z(t,r)=(1+if_\theta'(r))^2-\frac{t(1+if_\theta'(r))^2}{(r+if_\theta(r))^2},\qquad t\geq 0.
$$
Note in particular, that $z(0,r)\in \mc{Z}_{f_\theta'(r)}$ and the tangent to  $\mc{Z}_{f_\theta'(r)}$ at $z(0,r)$ is given by 
$$
\frac{2f_\theta'(r)}{2f_\theta'(r)^2}+i=\frac{1}{f_\theta'(r)}(1+if_\theta'(r)).
$$

Next, observe that
$$
\partial_t z(t,r)=-(1+if_\theta'(r))\frac{(1+if_\theta'(r))}{(r+if_\theta(r))^2}.
$$
Hence, since $\mc{Z}_{f_\theta'(r))}$ is convex, $z(t,r)$ lies to the left of $\mc{Z}_{f_\theta'(r)}$ for $t>0$ (and thus $\displaystyle{\min_{t\geq 0}}\Im \sqrt{z(t,r)}=z(0,r)$)
 if and only if
$$
\Im-\frac{(1+if_\theta'(r))}{(r+if_\theta(r))^2}=\Im- \frac{(1+i\tan \theta f'(r))}{(r+i\tan \theta f (r))^2}\geq 0\qquad \Leftrightarrow\qquad \tan^2 \theta\geq \frac{r^2}{f(r)^2}-\frac{2r}{f'(r)f(r)},
$$
and Point \eqref{i:iffTheta} follows. Point \eqref{i:evLinear} then follows from Point \eqref{i:iffTheta}

Now, fix $\delta>0$ and let $g(r)$ denote the right-hand side of \eqref{e:PhitCond}. Then, there is $c_\delta>0$ such that both $f(r)>c_\delta$ and $f'(r)>c_\delta$ on $r>R_1+\delta$, and thus $g(r)<C_\delta$. Then by~\eqref{e:PhitCond}, since $\tan\theta \to \infty$ as $\theta\uparrow \pi/2$, there is $\theta_\delta$ such that for $\theta>\theta_\delta$, $\Phi_\theta(r)=f_\theta'(r)$ and hence~\eqref{i:rSmall} holds.  

To obtain~\eqref{i:thetaDep}, observe that by~\eqref{i:rSmall}, for $r>R_1+\delta$, and $\theta>\theta_\delta$, $\Phi_\theta(r)= f(r)\tan\theta>c_\delta \tan \theta$. Therefore, the result follows if $\Phi_\theta(r)>c_{\delta}$ for $\delta \leq \theta \leq \theta_\delta$, which was proved in Lemma~\ref{l:carleman1}.

Finally, we prove~\eqref{i:continuous}. Indeed, for $r\leq R_1$, $\Phi_\theta(r)\equiv 0$, and for $r\geq R_2$, $\Phi_\theta(r)=r\tan\theta$. Therefore, we need only consider $(r,\theta)\in[R_1,R_2]\times(0,\pi/2)$.

Since we are using the principle square root and $f_\theta\geq0$, $f'_\theta \geq0$, we have, for $t\geq 0$,
\beqs
\Arg \sqrt{1-\frac{t}{(r+if_\theta(r))^2}}\in [0,\pi/2),
\eeqs
and thus
$$\Phi_\theta(r)=\inf_{t\geq0 }\tilde{\Phi}_\theta(r,t)\quad\text{ where } \quad\tilde{\Phi}_\theta(r,t):=\Im \Big((1+if_\theta'(r))\sqrt{1-\frac{t}{(r+if_\theta(r))^2}}\,\Big).$$
Next, for $r>R_1$, $\theta>0$
$$
\lim_{t\to \infty}\tilde{\Phi}_{\theta}(r,t)=\infty;
$$
therefore, the infimum is achieved at some finite $t$, which we denote by $t_m=t_m(r,\theta)$. It is easy to check that, when~\eqref{e:PhitCond} does not hold,
\begin{align}
t_m(r,\theta)
=\max\bigg(\frac{\Im\big( (1+if_\theta')^2(r-if_\theta)^4\big)}{\Im\big((1+if_\theta')^2(r-if_\theta)^2\big)},0\bigg).
\label{e:tCrit}
\end{align}
Therefore
$$
t_m(r,\theta):=\begin{cases} 0& \text{if }\Im (1+if_\theta')(r-if_\theta)^2\leq 0,\\
\max\bigg(\dfrac{\Im \big((1+if_\theta')^2(r-if_\theta)^4\big)}{\Im\big((1+if_\theta')^2(r-if_\theta)^2\big)},0\bigg)
&\text{otherwise}
\end{cases}
$$
Note that $t_m(r,\theta)$ is continuous since 
the numerator of the left entry of the maximum in  \eqref{e:tCrit} is zero when $\Im\big( (1+if_\theta')(r-if_\theta)^2\big)=0$, and
the singularity in the left entry of the maximum in~\eqref{e:tCrit} occurs when $\Im\big( (1+if_\theta')(r-if_\theta)^2\big)\geq 0$;
this completes the proof.
\end{proof}

\bibliographystyle{amsalpha}
\bibliography{biblio_GLS}

\end{document}